\newtheorem{theorem}{Theorem}[section] 
\newtheorem{claim}[theorem]{Claim}
\newtheorem{subc}[theorem]{Subclaim}
\newtheorem{lemma}[theorem]{Lemma} 
\newtheorem{conclusion}[theorem]{Conclusion}
\newtheorem{observation}[theorem]{Observation}
\theoremstyle{definition}
\newtheorem{definition}[theorem]{Definition}
\newtheorem{convention}[theorem]{Convention}
\newtheorem{fact}[theorem]{Fact}
\newtheorem{subf}[theorem]{Subfact}
\theoremstyle{remark}
\newtheorem{remark}[theorem]{Remark}
\newtheorem{notation}[theorem]{Notation}
\newtheorem{context}[theorem]{Context}
\newcommand{\tp}{{\rm tp}}
\newcommand{\pp}{{\rm pp}}
\newcommand{\oor}{{\rm or}}
\newcommand{\pcf}{{\rm pcf}}
\newcommand{\stp}{{\rm stp}}
\newcommand{\lub}{{\rm lub}}
\newcommand{\acl}{{\rm acl}}
\newcommand{\otp}{{\rm otp}}
\newcommand{\acc}{{\rm acc}}
\newcommand{\tr}{{\rm tr}}
\newcommand{\cd}{{\rm cd}}
\newcommand{\Ord}{{\rm Ord}}
\newcommand{\Suc}{{\rm Suc}}
\newcommand{\cbe}{{\rm cbe}}
\newcommand{\card}{{\rm card}}
\newcommand{\finite}{{\rm finite}}
\newcommand{\bd}{{\rm bd}}
\newcommand{\eq}{{\rm eq}}
\newcommand{\PC}{{\rm PC}}
\newcommand{\EM}{{\rm EM}}
\newcommand{\Min}{{\rm Min}}
\newcommand{\Dom}{{\rm Dom}}
\newcommand{\Rang}{{\rm Rang}}
\newcommand{\rang}{{\rm rang}}
\newcommand{\rest}{{\restriction}}
\newcommand{\set}{{\rm set}}
\newcommand{\wilog}{{\rm without loss of generality}}
\newcommand{\Wilog}{{\rm Without loss of generality}}
\newcommand{\then}{{\underline{then}}}
\newcommand{\when}{{\underline{when}}}
\newcommand{\Then}{{\underline{Then}}}
\newcommand{\If}{{\underline{if}}}
\newcommand{\Iff}{{\underline{iff}}}
\newcommand{\mn}{{\medskip\noindent}}
\newcommand{\sn}{{\smallskip\noindent}}
\newcommand{\cA}{{\mathscr A}}
\newcommand{\seteq}{{\mathbf \Upsilon}}
\newcommand{\modG}{{\dot{\bbG}}}
\newcommand{\primep}{{\dot{p}}}
\newcommand{\isoI}{{\dot{\mathbb I}}}
\newcommand{\numbIE}{{\dot{I}\dot{E}}}
\newcommand{\numbp}{{\dot{p}}}
\newcommand{\filterD}{{\dot{D}}}
\newcommand{\gb}{{\mathfrak b}}
\newcommand{\ga}{{\mathfrak a}}
\newcommand{\gC}{{\mathfrak C}}
\newcommand{\cH}{{\mathscr H}}
\newcommand{\bF}{{\bf F}}
\newcommand{\bfF}{{\mathbf{F}}}
\newcommand{\bfc}{{\mathbf{c}}}
\newcommand{\bfd}{{\mathbf{d}}}
\newcommand{\cF}{{\mathscr F}}
\newcommand{\cG}{{\mathscr G}}
\newcommand{\bbL}{{\mathbb L}}
\newcommand{\bbI}{{\mathbb I}}
\newcommand{\cM}{{\mathscr M}}
\newcommand{\cP}{{\mathscr P}}
\newcommand{\gs}{{\mathfrak s}}
\newcommand{\bbG}{{\mathbb G}}
\newcommand{\bbZ}{{\mathbb Z}}
\newcommand{\bfn}{{\bf n}}
\newcommand{\cS}{{\mathscr S}}
\newcommand{\cT}{{\mathscr T}}
\newcommand{\cU}{{\mathscr U}}
\newcommand{\cf}{{\rm cf}}
\def\mathunderaccent#1#2 {\let\theaccent#1\skewfactor#2
\mathpalette\putaccentunder}
\def\putaccentunder#1#2{\oalign{$#1#2$\crcr\hidewidth
\vbox to.2ex{\hbox{$#1\skew\skewfactor\theaccent{}$}\vss}\hidewidth}}
\newenvironment{PROOF}[2][\proofname.]
   {\begin{proof}[#1]}
   {\end{proof}}
\begin{document}
\makeatletter\def\shfiuwefootnote{\gdef\@thefnmark{}\@footnotetext}\makeatother\shfiuwefootnote{Version 2026-04-21\_2. See \url{https://shelah.logic.at/papers/331/} for possible updates.}

\title[A Complicated Family of trees]
{A complicated family of trees with $\omega +1$ levels  \\
Sh331}

\author{Saharon Shelah}

\address{Einstein Institute of Mathematics\\
Edmond J. Safra Campus, Givat Ram\\
The Hebrew University of Jerusalem\\
Jerusalem, 91904, Israel\\
and \\
 Department of Mathematics\\
 Hill Center - Busch Campus \\ 
 Rutgers, The State University of New Jersey \\
 110 Frelinghuysen Road \\
 Piscataway, NJ 08854-8019 USA}
 
\email{shelah@math.huji.ac.il}

\urladdr{http://shelah.logic.at}

\thanks{Was mostly ready in the early nineties, and public to some extent. This was written as Chapter VI of the book \cite{Sh:e}, which hopefully will materialize someday, but in meanwhile is it \cite{Sh:E59}. The intentions were: \cite{Sh:E58} (revising \cite{Sh:229} for Chapter VI), \cite{Sh:421} for Ch. II, \cite{Sh:E59} for Ch.III, \cite{Sh:309} for Ch.IV, \cite{Sh:363} for Ch.V, \cite{Sh:331} for Ch.VI, \cite{Sh:511} for Ch.VII, \cite{Sh:E60}, revision of \cite{Sh:128} for Ch.VIII, \cite{Sh:384} for Ch.IV \cite{Sh:482} for Ch.X, \cite{Sh:E62} for the appendix, and probably \cite{Sh:757}, and \cite{Sh:800}. References like \cite[q17 = Lc2]{Sh:E62} means that c2 is the label of 3.19 in \cite{Sh:E62}, will only help the author if changes in the paper \cite{Sh:E62} will change the number. The reader should note that the version in author's website is usually more updated than the one in the mathematical archive.\\
For versions up to 2019, the author thanks Alice Leonhardt for the beautiful typing. In the latest version, the author would like to thank the typist for his work and is also grateful for the generous funding of typing services donated by Craig Falls. The author would like to thank for partially supporting this research an NSF-BSF 2021: grant with M. Malliaris, NSF 2051825, BSF 3013005232 (2021/10-2026/09). ISF 2320/23: The Israel Science Foundation (ISF) (2023-2027). ACAD: Grant(s) from the Israeli Academy of Sciences. BSF: United States-Israel Binational Science Foundation (several projects).  \\
This paper is number 331 in the author's publication list.}

\subjclass[2020]{Primary: 03C45, 03C30, 03C55; Secondary: 03E05}

\keywords {Model theory, set-theoretical model theory, non-structure theory, number of non-isomorphic}


\date{April 21, 2026}

\begin{abstract}
    Our main aim is to prove that if $T$ is a complete first-order theory, which is not
    superstable (no knowledge on this notion is required), included in a first-order theory $T_1$ then for any $\lambda > |T_1|$ there are $2^\lambda$ models of $T_1$ such
    that for any two of them,  the $\tau(T)$-reducts of one is not 
    elementarily embeddable into the $\tau(T)$-reduct of the other, thus 
    completing the investigation of the $1978$ author's book ``\emph{Classification Theory and the Number of Non-Isomorphic Models}''.  Note the difference
    with the case of unstable $T$: there $\lambda \ge |T_1| + \aleph_{1}$ suffices for the existence of $2^{\lambda}$ pairwise non-isomorphic such models. 
    
    As earlier, it suffices for every such $\lambda$ to find a 
    complicated enough family of trees with $\omega + 1$ levels of cardinality
    $\lambda$.  If $\lambda$ is regular this is done already in Chapter VIII of the author's book.  The proof here (in sections 1 and 2) goes by dividing 
    into cases, each with its own combinatorics. In particular, we have
    to use guessing clubs which was discovered for this aim.
    
    In \S3 we improve the combinatorics, an aim is to consider strongly $\aleph_\epsilon $-saturated models of 
    stable $T$ (so if you do not know stability better just ignore this).
     We also deal with separable reduced Abelian $p$-groups.  We then deal
     with various improvements of the earlier combinatorial results.
\end{abstract}

\maketitle
\numberwithin{equation}{section}
\setcounter{section}{-1}
\newpage

\section{Introduction}

In \cite[Ch.VIII,\S2]{Sh:a} for unsuperstable (complete first order) theory $T$, it was proved that $\lambda > |T| + \aleph_1 \Rightarrow  \isoI(\lambda,T) = 2^\lambda$, in fact for every $T_{1}$ extending  $T,\lambda > \mu  := |T_{1}| + \aleph_0 \Rightarrow \isoI(\lambda,T_{1},T) = 2^\lambda$, and we have gone to considerable  troubles to prove it for all cases, in ZFC (recall that  $\isoI(\lambda,T) = \isoI(\lambda,T,T)$ where $\isoI(\lambda,T_1,T)$  is the number of $\tau(T)$-reducts of models of $T_1$ of cardinality $\lambda$ up to isomorphism, where $T\subseteq T_1$, and now both are first-order complete theories). $\numbIE(\lambda,T_1,T)$ is the maximal number of such models no one elementarily embedded into another (pedantically, the supremum); see Definition \cite[1.4=L1.4new]{Sh:E59}.

Now \cite[Ch.VIII,\S2]{Sh:a} gets results of the form  $\numbIE(\lambda,T_{1},T) = 2^\lambda $ under some constraints on $\lambda > |T_{1}|$  but have not tried to exhaust.  Later this
was put in a more general framework (see \cite{Sh:136} or \cite[\S2]{Sh:E59}) with several applications and more cases for  $\lambda > |T_{1}|$;  the cases left open were:
\mn
\begin{enumerate}
\item[$(\alpha)$]  $\lambda$ strong limit of cofinality $\aleph_{0}$
\end{enumerate}
\sn
and
\mn
\begin{enumerate}
\item[$(\beta)$]  $\lambda$ not strong limit, $\neg(\exists \chi)[\mu \le
\chi = \chi^{\aleph_{0}} < \lambda \le 2^\chi]$ (for example $\lambda <
2^{\aleph_{0}}$).
\end{enumerate}
\mn
Looking through \cite[Ch.VIII]{Sh:a} you may get the impression that the general case $(\lambda > |T_{1}|)$ is obviously true, just needs a
proof (as this holds in so many cases with diverse proofs).  Now in  addition to the accepted wisdom (at least among mathematicians) that such arguments are not proofs, there was until recently 
(i.e. before this was done in 1988) a reasonable argument for the other side: For most of the cases which were left open in \cite[Ch.VIII]{Sh:a}, their negations have been proved consistent 
(by \cite{Sh:100}, \cite{Sh:262}). However, here we prove this in all the cases.

Here we replace the properties from \cite[\S2]{Sh:E59} with stronger ones (variants of ``super unembeddable"), prove they imply the ones  from \cite[\S2]{Sh:E59}, look at their interrelations and mainly prove the existence of such families of trees for the various cardinalities. In \ref{7.5A}--\ref{7.7} we have the parallel of old theorems in the present frame see \ref{7.1} and \ref{7.2}; in \S2 new ones.  Lastly, in \ref{7.11} we draw the conclusions.

For this we prove in ZFC theorems of the form ``there is a club-guessing sequence'' (continued, see \cite[Ch.III]{Sh:e}, \cite{Sh:572} and more
current summary in \cite{Sh:E12}). Our main theorem is \ref{7.11}: for $\lambda > \mu, K^\omega_{\tr}$ has the full $(\lambda,\lambda,\mu,\aleph_0)$-super bigness property (defined in \ref{7.1}, \ref{7.2} below).  As a consequence, we here shall get that for $\lambda > \mu,K^\omega_{\tr}$ has the full strong
$(\lambda,\lambda,\mu,\aleph_0)$-bigness property (see definition in \cite[2.2(3)=f5(3)]{Sh:E59}, by \ref{7.12}(2)).

Lastly in \ref{7.11} we sum up what we get for $K^\omega_{\tr}$ for every
$\lambda>\mu$. The proof of \ref{7.11} is split into cases (each being an earlier claim) using several combinatorial ideas (in
some we get stronger combinatorial results than in others). 

The results are phrased such that they apply to many non-first-order classes. 

We conclude by deriving some further results dealing with some specific cases
in \S3. There, we begin by deducing the results on  $\numbIE(\lambda,T_1,T)$ being $2^\lambda$
when $\lambda>|T_1|,T\subseteq T_1$ are first order complete theories, $T$ unsuperstable  (in \ref{7.12}(1)). Then we get similar results for the number of strongly $\aleph_\varepsilon$-saturated models: the case which requires work is $T$ stable not superstable $\lambda = \lambda(T)+\aleph_1,T=T_1$ this requires some knowledge of stability  theory but is not used elsewhere; naturally this requires the so-called ${\bf F}^f_{\aleph_0}$-construction from \cite[Ch.~IV]{Sh:e}. We then deal with  the number of reduced separable abelian $\dot p$-groups on $\lambda$ no one
embeddable in another (not necessarily purely). We prove it by assuming $\lambda>2^{\aleph_0}$ (in \ref{7.15}) for this we need to improve the main
conclusion of \S2 (in \ref{7.14}).

In \S1 we, in a sense, redo results from \cite[\S2,VIII]{Sh:c} and 
\cite{Sh:136} restated in terms of super unembeddability 
in particular in \ref{7.6} (so we prove somewhat more). 

The results in \S2 were presented in a mini course in Rutgers, fall 88; it
contains ``guessing clubs in ZFC'', which because of the delay in 
publication was also represented and continued in
\cite[Ch.III]{Sh:g}, see more \cite{Sh:572}; printed version exists
since the early nineties. 

The results on the number of strongly $\aleph_\varepsilon$-saturated 
model \ref{7.13} improve Theorem \cite[2.1]{Sh:225} and
\cite[2.1]{Sh:225a} (see explanations below in \ref{7.13a}),
they assume knowledge of \cite{Sh:a} or \cite{Sh:c} but the reader can skip it as this theorem is not used later and move to
\ref{7.14}; some definitions are recalled in \ref{7.13a} below.  We
refer to \cite{Sh:E62} for various combination facts, see history
there (this will help if the book on non-structure will materialize).

We thank Haim Horowitz and Thilo Weinert for their help with the proofs, as well as Santiago Pinz\'on  and the referee for meticulously adding many helpful comments. 

\begin{convention}\label{z2}
1) $K_{\tr} = K^\omega_{\tr}$ (defined in \cite[1.9(4)=Lb11(4)]{Sh:E59} for $\kappa = \aleph_{0}$) is  restricted (in this
section) to the cases each $\Suc(\eta)$ is well ordered, so $K^\omega_{\tr}$ is the class of trees $I$ with $\omega +1$ levels expanded
by a well ordering on each $\Suc_I(\eta)$ (see \cite[1.9 = Lb11(2)]{Sh:E59}), so getting a lexicographic order $<_{lx}^{I}$ on $I$, and $P_{\alpha}^{I}$ for $\alpha \leq \omega$ is the set of elements of level $\alpha$ and if $\beta < \alpha \wedge \eta \in P^{I}$, then $\nu = \eta \rest \beta$ is the unique $\nu \in P_{\beta}^{I}$, which is an initial segment of $\eta$.

\noindent
2) Also, $\cM_{\mu, \kappa}(J)$ from \cite[2.1 = Lf2]{Sh:E59} and $\kappa$ regular for transparency. 

\noindent
3) Strong finitary, see Definition \cite[2.2 = Lf5]{Sh:E59}, that is: 

\begin{itemize}
    \item a member $a$ of $\cM_{\mu, \kappa}(J)$ is called \emph{strongly finitary} in $\cM_{\mu, \kappa}(J)$ if there are $n$, a $\tau_{\mu, \kappa}$-term $\sigma(x_{0}, \dots, x_{n-1})$ with finitely-many sub-terms using only functions with finite arity are $\tau_{0}, \dots, \tau_{n-1} \in J$ such that $\cM_{\mu, \kappa}(J) \models$``$a = \sigma(\tau_{0}, \dots, \tau_{n-1})$''. Recall $\tau_{\mu, \kappa}$ is the vocabulary of $\cM_{\mu, \kappa}(J)$. 
\end{itemize}

\begin{notation}\label{z5}
    1) Let $\cH(\chi)$ be the family of sets with transitive closure of cardinality $< \chi$ (so  of cardinality $2^{< \chi}$) and we let $<_{\chi}^{\ast}$ be a well-ordering of $\cH(\chi)$. 

    2) We use $M, N$ for models, $\tau_{M} = \tau(M)$ is the vocabulary of $M$, and $T$ for a theory in the vocabulary $\tau_{T} = \tau(T)$, complete if not said otherwise.  

    3) For a class $K$ of models and a cardinal $\lambda$, let $\dot{\bbI}(\lambda, K)$ be the number of models $M \in K$ up to isomorphisms. We may write $\dot{\bbI}(\lambda, T)$ for a theory $T$ (usually of first-order) instead of $K = \mathrm{Mod}_{T}$ and $\dot{\bbI}(\lambda, T_{1}, T)$ for theories $T_{1} \supseteq T$ (usually first-order complete), instead of $K = \PC(T_{1}, T) = \{ M \rest \tau_{T} \colon M \text{ is a model of } T_{1} \}$. 
\end{notation}

Recall,

\begin{definition}\label{z8}
    A complete first-order theory $T$ is called \emph{unsuperstable} \underline{when} there is some $\varphi$ witnessing it, which means: 

    \begin{enumerate}
        \item[(a)] $\varphi = \langle \varphi_{n}(x, \bar{y}_{n}) \colon n < \omega \rangle$, 

        \item[(b)] $\varphi_{n}(x, \bar{y}_{n}) \in \bbL(\tau_{T})$, i.e., it is a first-order formula in the vocabulary $\tau_{T}$ of $T$, and

        \item[(c)] for every $\lambda$ there are a model $M$ of $T$, sequences $\bar{a}_{\eta} \in {}^{\lg(\bar{y}_{m})} M$ for $\eta \in {}^{n} \lambda$ and elements $b_{\eta}$ of $M$ for $\eta \in {}^{\omega} \lambda$ such that if $\eta \in {}^{\omega} \lambda$, and $\nu \in {}^{n} \lambda$ then $M \models \varphi_{n}[b_{\eta}, \bar{a}_{\nu}] \Leftrightarrow \nu = \eta \rest n$. 
    \end{enumerate}
\end{definition}


\end{convention}

\newpage

\section{Properties saying trees are complicated}

See also with \cite[\S1]{Sh:511}, \ref{7.5}. 

\begin{definition}
\label{7.1}
We say $I \in K^\omega_{\tr}$ is $(\mu,\kappa)$-super-unembeddable
into $J \in K^\omega_{\tr}$ \If \, : for every regular large enough  
$\chi^*$ (for which $\{I,J,\mu,\kappa\} \in \cH(\chi^*)$, recalling  \ref{z5}(1)), and for
simplicity $<^*_{\chi^*}$ is a well-ordering  of $\cH(\chi^*)$ and
$x \in {\cH}(\chi^*)$ we have:
\mn
\begin{enumerate}
\item[$(*)$]  there are $\eta, M_{n},N_{n}$ (for $n<\omega$) such that:
\sn
\begin{enumerate}
\item[$(i)$]  $M_{n} \prec N_{n} \prec M_{n+1} \prec ({\cH}(\chi^*),
\in,<^*_{\chi^*})$, 
\sn
\item[$(ii)$]  $M_{n} \cap \mu  = M_{0} \cap \mu$ and $\kappa \subseteq M_0$,
\sn
\item[$(iii)$]  $I,J,\mu,\kappa$ and $x$ belong to $M_0$,
\sn
\item[$(iv)$]  $\eta \in P^I_{\omega}$, i.e. is of level $\omega$ in $I$, 
\sn
\item[$(v)$]   for each $n$  for some $k,\eta \rest k \in M_n,
\eta \rest (k+1) \in N_n \backslash M_n$, 
\sn
\item[$(vi)$]  for each $\nu \in P^J_{\omega}$, for every large enough $n < \omega$, and
\[
\{\nu \rest \ell:\ell < \omega\} \cap N_n \subseteq M_n.
\]
\end{enumerate}
\end{enumerate}
\end{definition}

\begin{notation}
\label{7.1A}
We may write $\mu$ instead $(\mu,\mu)$ and may omit it if $\mu = \aleph_0$.
\end{notation}

\begin{remark} 
    \label{7.1f}
    \mn
    \begin{enumerate}
    \item In Definition \ref{7.1} the ``and $x \in \cH(\chi^{\ast})$'' and ``and $x$'' in clause (iii) can be omitted (and we get equivalent definition
    using a bigger $\chi^*$). However, in using the definition, with $x$ it is  more natural: we construct something from a sequence of $I$'s,  we would like to show that ``there are no objects such that...'' and 
    $x$ will be such an undesirable object in a proof by contradiction.
    
    \sn
    \item   We can also omit $<^*_{\chi^{\ast}}$ at the price of increasing $\chi^*$.
    \end{enumerate}
\end{remark}

Recall from \cite{Sh:E59}. 

\begin{definition}
    \label{7.2}
    \mn
    \begin{enumerate}
    \item   $K^\omega_{\tr}$ has the $(\chi,\lambda,\mu, \kappa)$-super-bigness property \If \,: there are $I_{\alpha} \in K^\omega_{\tr}$ of cardinality $\lambda$ for $\alpha < \chi$ such that for  $\alpha \ne \beta,I_{\alpha}$ is $(\mu,\kappa)$-super unembeddable into $I_{\beta}$.
    
    \sn
    \item $K^\omega_{\tr}$ has the full
    $(\chi,\lambda,\mu,\kappa)$-super-bigness property \If \,: \\
    there are $I_{\alpha} \in K^\omega_{\tr}$ of cardinality $\lambda$ for $\alpha <
    \chi$ such that for $\alpha < \chi,I_{\alpha}$ is $(\mu,\kappa)$-super unembeddable into $J_{\alpha} := \sum\limits_{\beta < \chi,\beta \ne \alpha} I_{\beta}$, which means, assuming for simplicity that the $I_{\beta}$-s are pairwise disjoint (except the common root $\mathrm{rt} = \mathrm{rt}_{I_{\beta}}$) that (recalling Definition \cite[1.9 = Lb11(4)]{Sh:E59} of $K_{\tr}^{\omega})$: 

     \begin{itemize}
         \item the set of elements of $J_{\alpha}$ is $\{ s \colon s \in I_{\beta}$ for some $\beta < \chi,$ $\beta \neq \alpha\}$,

         \item if $s_{\ell} \in I_{\beta_{\ell}}$ for $\ell = 1, 2$ (and $\beta_{1}, \beta_{2} \in \chi \setminus \{ \alpha \}$) \underline{then}:
         \[
         s_{1} <_{ex}^{J_{\alpha}} s_{2} \Leftrightarrow \beta_{1} < \beta_{2} \vee (\beta_{1} = \beta_{2} \wedge s_{1} <_{ex}^{I_{\beta_{1}}} s_{2}).
         \]
     \end{itemize}
       
    \sn
    \item   We may omit $\kappa$ if $\kappa = \aleph_0$.
    \end{enumerate}
\end{definition}

\bigskip
\centerline {$* \qquad * \qquad *$}
\bigskip

\noindent
The next definition gives many variants of Definition \ref{7.1} (clause (D)$^{+}$ repeat Definition \ref{7.1}); but the
reader may understand the rest of the section without it; just 
ignore \ref{7.3}, \ref{7.3A}, \ref{7.4}, \ref{7.5}(1); and from 
\ref{7.5} on, ignore the superscript to ``super" (we are getting 
stronger results).

\begin{definition}
    \label{7.3}
    We say $I \in K^\omega_{\tr}$ is
    $(\mu,\kappa)$-super$^\ell$-unembeddable into $J \in K^\omega_{\tr}$
    \If \, one of the following holds (and in this paper, $\ell$ is always one of those):
    
    \mn
    \begin{enumerate}
    \item[$(A)$]  $\ell = 1$ and for every regular large enough cardinal $\chi^*$
    and $x \in {\cH}(\chi^*)$ where $\{I,J,\mu,\kappa\} \in {\cH}(\chi^*)$
    and $f:I \rightarrow {\cM}_{\mu,\kappa}(J)$, which is strongly finitary on
    $P^I_{\omega}$ [i.e. for $\nu \in P^I_{\omega},f(\nu)$ is a strongly 
    finitary member of  ${\cM}_{\mu,\kappa}(J)$; see \ref{z2}(3)] and $g$ a function from $I$ 
    (really $P^I_{\omega}$) to finite sets of ordinals \underline{there is}
    $\eta \in P^I_{\omega}$ such that,

    \begin{enumerate}
        \item[$(\ast)$]  letting $f(\eta ) = \sigma(\nu_{0},
    \dotsc,\nu_{n-1})$, for infinitely many $k < \omega$ there are $M,N$  
    satisfying:
    \sn
    \begin{enumerate}
    \item[$(i)$]  $M \prec N \prec ({\cH}(\chi^*),\in,<^*_{\chi^*})$,
    \sn
    
    \item[$(ii)$] $M \cap \mu  = N \cap \mu,$ and $\kappa \subseteq M$,
    \sn
    \item[$(iii)$]  $\{I,J,\mu,\kappa,x\} \in M$,
    \sn
    \item[$(iv)$]  $\eta \rest k \in M$
    \sn
    \item[$(v)$]  $\eta \rest (k+1) \in N \setminus M$,
    \sn
    \item[$(vi)$]   for each $m < n$:
    \sn
    \item[${{}}$]  $(a) \quad \nu_{m} \in M$ or,
    \sn
    \item[${{}}$]  $(b) \quad$ for 
    some $k_{m} < \lg(\nu_{m}), \nu_{m} \rest k_m \in M,\nu_m(k_m) \notin N$ or,
    \sn
    \item[${{}}$]  $(c) \quad \ell g(\nu_{m}) = \omega,\nu_{m} \notin N,
    (\forall \ell < \omega) [\nu_{m} \rest \ell \in M]$,
    \sn
    \item[$(vii)$] $m < n$ and if $\alpha = \nu_m(k_m)$ (where $(vi)(b)$ holds for 
    $\nu_m,k_m$) or $\alpha \in g(\eta)$ then:
    \[
    \Min[(\chi^* \setminus \alpha) \cap M] =
    \Min[(\chi^* \setminus \alpha) \cap N].
    \]
    \end{enumerate}
    \sn
    \end{enumerate}

    \item[$(B)$]  $\ell = 2$ and for every regular large enough $\chi^*$
    satisfying $\{I,J,\mu,\kappa\} \in {\cH}(\chi^*)$ and $x \in
    {\cH}(\chi^*)$  there is $\eta \in P^I_{\omega}$ such that:

    \begin{enumerate}
        \item[$(\ast)$]  for any finite $w \subseteq \chi^*,n <
      \omega$ and $\nu_{0},...,\nu_{n-1} \in J$, for infinitely many 
    $k < \omega$ there are models $M,N$ satisfying:

    \begin{enumerate}
        \item[$(i)$]  $M \prec N \prec ({\cH}(\chi^*),\in,<^*_{\chi^*})$, 

        \item[$(ii)$]  $M \cap \mu = N \cap \mu, $ and $\kappa \subseteq M$,

        \item[$(iii)$] $ \{ I,J,\mu,\kappa,x\} \in M$,

        \item[$(iv)$] $\eta \rest k \in M$,

        \item[$(v)$]  $\eta \rest (k+1) \in N \backslash M$, 

        \item[$(vi)$] for each $m < n$ one of the following occurs:

        \begin{enumerate}
            \item[(a)]  $\nu_{m} \in M$,

            \item[(b)] for some $k_{m} < \lg(\nu_{n}), \nu_{m} \rest k_{m} \in M, \nu_{m} \rest (k_{m}+1) \notin N$,

            \item[(c)] $ \ell g(\nu_m) = \omega,\nu_m \notin N, (\forall \ell < \omega )[\nu_m \rest \ell \in M]$.
        \end{enumerate}

        \item[$(vii)$] for each $\alpha$, if $\alpha = \nu_m(k_m)$ (where $m < n, $ and $\nu_m, k_{m}$ satisfy sub-clause \noindent (b) of (vi)) or $\alpha \in w$ then\footnote{e.g. both can be undefined} $\Min[(\chi^{\ast} \setminus \alpha) \cap M] = \Min[(\chi^{\ast} \setminus \alpha) \cap N].$
    \end{enumerate}
    \end{enumerate}
    
    \sn
    \item[$(C)$]   $\ell = 3$ and for every regular large enough $\chi^*$,
    and $x \in {\cH}(\chi^*)$ such that $\{I,J,\mu,\kappa\} \in {\cH}(\chi^*)$,
    there is $\eta \in P^I_{\omega}$ satisfying:

    \begin{enumerate}
        \item[$(\ast)$]  for any $n < \omega, \nu_{0},\ldots,\nu_{n-1} \in J$, there are sequences
        $\langle M_{i},N_{i}:i < \omega \rangle, \langle \bar{k}^{i} \colon i < \omega \rangle, $ where $ \bar{k}^{i} = \langle k^i,k^i_{0},\ldots, k^i_{n-1}\rangle$. 
    such that:  
    
    \begin{enumerate}
        \item[$(i)$] $M_{i} \prec N_{i} \prec M_{i + 1} \prec (\cH(\chi^{\ast}), \in, <_{\chi^{\ast}}^{\ast})$, 

        \item[$(ii)$] $M_{i} \cap \mu = N_{i} \cap \mu$ and $\kappa \subseteq M_{0}$, 

        \item[$(iii)$] $\{ I, J, \mu, \kappa, x \} \in M$, 

        \item[$(iv)$] $\eta \rest k^{i} \in M_{i}$, 

        \item[$(v)$] $\eta \rest (k^{i} + 1) \in N_{i} \setminus M_{i}$,

        \item[$(vi)$] for each $i < \omega$ and $m < n$ one of the following occurs: 

        \begin{enumerate}
            \item[(a)] $\nu_{m} \in M_{i}$,

            \item[(b)] $k_{m}^{i} < \omega$, $\nu_{m} \rest k_{m}^{i} \in M_{i}$, $\nu_{m} \rest (k_{m}^{i} + 1) \notin N_{i}$, 

            \item[(c)] $\lg(\nu_{m}) = \omega$; $\nu_{m} \notin N_{i}$, $(\forall \ell < \omega)[ \nu_{m} \rest \ell \in M ]$, 
        \end{enumerate}
            \item[(viii)] for each $\alpha$, if $\alpha = \nu_{m}(k_{m})$ (where $m < n,$ $\nu_{m}$ satisfies (b) of (vi))  then: 
            $$ \Min[(\chi^{\ast} \setminus \alpha) \cap M] = \Min[(\chi^{\ast} \setminus \alpha) \cap N]. $$
    \end{enumerate}
    

    \end{enumerate}
    
    \sn
    \item[$(D)$] $\ell = 4$ and for every regular large enough $\chi^*$
    (for which $\{I,J,\mu,\kappa\} \in {\cH}(\chi^*))$ and $x \in 
    {\cH}(\chi^*)$ we have:
    \sn
    \item[${{}}$]  $(*) \quad$ there are $\eta,\filterD$ 
    and $M_n$ for $n < \omega$  such that:
    \sn
    \begin{enumerate}
    \item[${{}}$]   $(i) \quad M_{n} \prec M_{n+1} \prec
      ({\cH}(\chi^*),\in,<^*_{\chi^*})$,
    \sn
    \item[${{}}$]   $(ii) \quad M_{n} \cap \mu = M_{0} \cap \mu$ and $\kappa
    \subseteq M_0$,
    \sn
    \item[${{}}$]   $(iii) \quad \{ I,J,\mu,\kappa \}$ and $x$ belongs to $M_0$,
    \sn
    \item[${{}}$]   $(iv) \quad \eta \in I$, in fact $\eta \in
      P^I_{\omega}$,
    \sn
    \item[${{}}$]   $(v) \quad \filterD$ is a filter on $\omega$
     containing the filter of all co-finite sets (usually it is equal to
     it),
    \sn
    \item[${{}}$]   $(vi) \quad \{n < \omega$: for some $k,\eta \rest k
     \in M_n,\eta \rest (k+1) \in (M_{n+1} \setminus M_{n})\}$ belongs to 
    $\filterD$,
    \sn
    \item[${{}}$]   $(vii) \quad$ for every $\nu \in P^J_{\omega}$, we have\\
    $\{n$: for some $k < \omega,\nu \rest k \in M_n,\nu \rest (k+1) \in
    M_{n+1}\setminus M_{n}\}$\\
    is $\emptyset \mod \filterD$.
    \end{enumerate}
    \sn
    \item[$(D^-)$] $\ell =4^-$ and the condition of (D) holds just
    weakening (ii) to:
    \sn
    \begin{enumerate}
    \item[$(ii)'$]  $\{n:M_{n} \cap \mu = M_{n+1} \cap \mu\} \in \filterD$
    and $\kappa \subseteq M_0$,
    \end{enumerate}
    \sn
    
    \item[$(D^+)$]  $\ell = 4^+$ and the condition of Definition \ref{7.1} holds 
    (so $(\mu,\kappa)$-super$^{4^+}$-unembeddable will mean 
    $(\mu,\kappa)$-super-unembeddable).
    \sn
    
    \item[$(E)$]  $\ell = 5$  and for every regular large enough $\chi^*$
    (for which $\{I,J,\mu,\kappa\} \in {\cH}(\chi^*))$ and 
    $x \in {\cH}(\chi^*)$ we have:
    \sn
    \item[${{}}$]  $(*) \quad$ there are $\eta,\filterD,M_{n}$ (for $n < \omega$)
    such that:
    \sn
    \begin{enumerate}
    \item[${{}}$]   $(i) \quad M_{n} \prec M_{n+1} \prec ({\cH}(\chi^*),\in
    ,<^*_{\chi^*})$,
    \sn
    \item[${{}}$]  $(ii) \quad \mu \subseteq M_{n} \in M_{n+1}$ (so 
    $\kappa \subseteq M_0)$,
    \sn
    \item[${{}}$]  $(iii) \quad \{I,J,\mu,\kappa\}$ and $x$ belong to $M_0$,
    \sn
    \item[${{}}$]  $(iv) \quad \eta \in I$, in fact, $\eta \in
      P^I_{\omega}$,
    \sn
    \item[${{}}$]   $(v) \quad \filterD$ is a filter on $\omega$ 
    extending the filter of all co-finite sets (usually it is equal to
    it),
    \sn
    \item[${{}}$]  $(vi) \quad \{n < \omega$: for some $k,\eta \rest k 
    \in M_{n},\eta \rest (k+1) \in (M_{n+1} \setminus M_{n})\}$ belongs to  
    $\filterD$,
    \sn
    \item[${{}}$]  $(vii) \quad$ for every $\nu \in P^J_{\omega}$ we have
    $\{n$: for some $k < \omega,\nu \rest k \in M_{n},\nu \rest (k+1) \in
    M_{n+1} \setminus M_{n}\}$ is $\equiv \emptyset \mod \filterD$.
    \end{enumerate}
    \sn
    
    \item[$(F)$]   $\ell = 6$ and for every regular large enough $\chi^*$
    for which $\{I,J,\mu,\kappa\} \in {\cH}(\chi^*)$, and $x \in {\cH}(\chi^*)$
    there are $\langle M_{n}:n < \omega \rangle,\eta$ such that:
    \sn
    \begin{enumerate}
    \item[$(*)$]  $(i) \quad M_{n} \prec M_{n+1} \prec ({\cH}(\chi^*),\in
    ,<^*_{\chi^*})$,
    \sn
    \item[${{}}$]  $(ii) \quad M_{n} \cap \mu = M_{0} \cap \mu$ and $\kappa
    \subseteq M_0$,
    \sn
    \item[${{}}$]  $(iii) \quad \{I,J,\mu,\kappa,x\} \in M_0$,
    
    \sn
    \item[${{}}$]  $(iv) \quad \eta \in P^I_{\omega}$,
    \sn
    \item[${{}}$]  $(v) \quad \eta \rest n \in M_n$,
    \sn
    \item[${{}}$]  $(vi) \quad \eta \rest (n+1) \notin M_n$
    \sn
    \item[${{}}$]  $(vii) \quad$ for every $\nu \in P^J_{\omega}$, for some $n,
    \{\nu \rest \ell:\ell < \omega\} \cap (\bigcup\limits_{m < \omega} M_m)
    \subseteq M_n$.
    \end{enumerate}
    \sn
    
    \item[$(F^+)$] $\ell = 6^+$ and (i) - (vi) of (F) and: 
    \sn
    \begin{enumerate}
    \item[$(vii)^+$]  for every $\nu \in P^J_{\omega}$ we have:
    \[
    \left[\bigwedge\limits_{\ell} \nu \rest \ell \in 
    \bigcup\limits_{n < \omega} M_n \Rightarrow \nu \in
    \bigcup\limits_{n < \omega} M_n\right]\!.
    \]
    \end{enumerate}
    \sn
    \item[$(F^-)$]  $\ell=6^-$ and the conditions in (F) hold but replace clause
    (v) by
    \sn
    \begin{enumerate}
    
    \item[$(v)^-$]   $(\forall n)(\exists m)[\eta\restriction n \in M_m]$
    but $(\forall m)(\exists n) [\eta\restriction n\notin M_m]$.
    \end{enumerate}
    
    \sn
    \item[$(F^\pm)$] $\ell=6^\pm$, and the condition in (F) when we  make both changes (use $(vii)^{+}$ and $(v)^{-}$). 
    \sn
    \item[$(G^-)$]   $\ell = 7^-$ and for every regular large enough
      $\chi^*$ for which $\{I,J,\mu,\kappa\} \in {\cH}(\chi^*)$ and $x \in 
    {\cH}(\chi^*)$  we have:
    \sn
    \item[${{}}$]   $(*) \quad$ there are $M_{n}(n < \omega),\eta$ such that:
    \sn
    \begin{enumerate}
    \item[${{}}$]   $(i) \quad M_{n} \prec M_{n+1} \prec ({\cH}(\chi^*),\in
    ,<^*_{\chi^*})$, 
    \sn
    \item[${{}}$]  $(ii) \quad M_{n} \in M_{n+1}$ and $\mu \subseteq M_0$,
    \sn
    \item[${{}}$]  $(iii) \quad \{I,J,\mu,\kappa,x\} \in M_0$,
    \sn
    \item[${{}}$]  $(iv) \quad \eta \in P^I_{\omega}$,
    \sn
    \item[${{}}$]  $(v) \quad$ for every  $k < \omega,\eta \rest k \in
    \bigcup\limits_{n < \omega} M_n$,
    \sn
    \item[${{}}$]  $(vi) \quad$ for every $n$ for some $k,\eta \rest k 
    \notin M_n$,
    \sn
    \item[${{}}$]  $(vii) \quad$ for every $\nu \in P^J_{\omega}$, for some $n$
    \[
    \{\nu \rest \ell:\ell < \omega\} \cap (\bigcup\limits_{m < \omega} M_m)
    \subseteq M_n.
    \]
    \end{enumerate}
    \sn
    \item[$(G)$]  $\ell = 7$ and (i) - (iv),(vii) of $(G^-)$ and:
    \sn
    \begin{enumerate}
    \item[$(v)^+$]   $\eta\restriction n \in M_n$,
    \sn
    \item[$(vi)^+$]  $\eta \rest (n+1) \notin M_n$.
    \end{enumerate}
    
    \sn
    \item[$(G^+)$]  $\ell = 7^+$ and (i) -- (iv) of $(G^-)$ and, (v)$^+$, (vi)$^+$ of (G), and:
    \sn
    \begin{enumerate}
    \item[$(vii)^+$]  for every $\nu \in P^J_{\omega}$, 
    \[
    \{\nu \rest \ell:\ell < \omega\} \subseteq
    \bigcup\limits_{m < \omega} M_m \Rightarrow \nu
    \in \bigcup\limits_{m < \omega} M_m.
    \]
    \end{enumerate}
    \sn
    \item[$(G^\pm)$] $\ell=7^\pm$ and (i)-(iv) of (G)$^-$ of \ref{7.1} and (vii)$^+$ of 
    $(G^+)$ and 
    \sn
    \begin{enumerate}
    \item[$(vi)^{++}$]   for every $n$, for some $k$ we have $\eta
    \rest k \in M_n,\eta \rest (k+1)\in M_{n+1} \setminus M_n$ .
    \end{enumerate}
    \end{enumerate}
\end{definition}

\begin{definition}\label{7.3A}
    The $(\chi, \lambda, \mu, \kappa)$-super$^{l}$-bigness property and the full $(\chi, \lambda, \mu, \kappa)$-super$^{l}$-bigness property are defined in a similar way to Definition 1.4. 
\end{definition}

\begin{fact}\label{7.4}
    \mn
    \begin{enumerate}
    \item  If $I \in K^\omega_{\tr}$ is
    $(\mu,\kappa)$-super$^m$-unembeddable into $J \in K^\omega_{\tr}$
      \then \,  $I$ is $(\mu,\kappa)$-super$^\ell$-unembeddable into $J$ 
    when $1 \le \ell \le m \le 7,(\ell,m) \ne (5,6),\ell,m \in \{1,2,3,4,5,6,7\}$
    and when $(\ell,m) \in \{(3,4^-),(4^-,4),(4,4^+),(4^+,6),(6,6^+),
    (4^+,7^-)$,\\
    $(7^-,7),(7,7^+),(7^-,7^\pm),(7^\pm,7^+),(6^+,7^+),(6,7),
    (6^-,6^\pm),(6^\pm,6^+)\}$.
    \sn
     
    \item  If $K^\omega_{\tr}$ has the 
    $(\chi,\lambda,\mu,\kappa)$-super$^m$-bigness property \then \, 
    $K^\omega_{\tr}$ has the $(\chi,\lambda,\mu,\kappa)$-super$^\ell$-bigness
    property for  $(\ell,m)$  as above. 
    \sn
    
    \item    If $K^\omega_{\tr}$ has the full
    $(\chi,\lambda,\mu,\kappa)$-super$^m$-bigness property \then \,
    $K^\omega_{\tr}$ has the full
    $(\chi,\lambda,\mu,\kappa)$-super$^\ell $-bigness property for $(\ell,m)$
    as above. 
    \sn
    \item   All those properties has obvious monotonicity properties: we
    can decrease $\mu,\kappa$ and $\chi$ and increase $\lambda$ (if we add
    to $I$ a well-ordered set in level 1, nothing happens).
    \sn
    
    \item   The notions \\
    ``$(\mu,\kappa)$-super$^{4^+}$-unembeddable" and \\
    ``$(\mu,\kappa)$-super-unembeddable" are the same; also \\
    ``[full]$(\chi,\lambda,\mu,\kappa)$-super$^{4^+}$-bigness" and \\
    ``[full]$(\chi,\lambda,\mu,\kappa)$-super bigness" are the same.
    \end{enumerate}
\end{fact}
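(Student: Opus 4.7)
The strategy is to verify each of the listed implications in part (1) by unpacking the definitions and checking that the witnessing data for the stronger notion also witnesses the weaker one, possibly after a cosmetic reindexing. Parts (2)--(5) will then follow with very little extra work.

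First I would handle the implications among the integer indices $1 \le \ell \le m \le 7$ (except $(5,6)$). The key observation is that notions (A), (B) (i.e.\ $\ell=1,2$) give witnesses of the local form ``for infinitely many $k$ there exist $M \prec N$\ldots'' with progressively weaker clause (vi), while notions (C), (D), (E), (F) (i.e.\ $\ell=3,4,5,6$) organize these into a chain $M_0 \prec M_1 \prec \cdots$ with corresponding adjacent pairs $(M_n, M_{n+1})$ (or $(M_i, N_i)$ in (C)). Thus the implications ``chain $\Rightarrow$ local'' (e.g.\ (C) to (B), (B) to (A)) are immediate: take the pairs produced by consecutive stages of the chain, and note that clause (vi)(c) is only more permissive than (vi)(b), while clause (vii) of (B) with ``or $\alpha \in w$'' is covered by the $g$-function in (A). The implications ``stronger filter'' (such as $(4^-,4)$) are immediate since (4) demands $M_n \cap \mu = M_0 \cap \mu$ unconditionally, which is stronger than ``on a set in $\filterD$''. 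Similarly for $(4, 4^+)$: a witness to (D$^+$), which adds interpolating models $N_n$, in particular yields a witness to (D) by dropping the $N_n$'s and taking $\filterD$ to be the cofinite filter.

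Next I would address the implications involving the ``$F$'' and ``$G$'' families. The differences among (F), (F$^+$), (F$^-$), (F$^\pm$) and their (G)-counterparts are in the side-conditions on $\eta$ (clause (v) versus (v)$^-$ versus (v)$^+$, (vi)$^+$) and in whether the conclusion (vii) is ``escape'' or the stronger ``downward closure'' (vii)$^+$. The implications $(6,6^+)$, $(6^-,6^\pm)$, $(6^\pm,6^+)$, $(7^-,7)$, $(7,7^+)$, $(7^-,7^\pm)$, $(7^\pm,7^+)$, $(6^+,7^+)$, $(6,7)$ each go from a clause-by-clause strictly stronger hypothesis to a weaker one, so the same $\langle M_n : n<\omega\rangle, \eta$ work. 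The interesting ones are $(4^+, 6)$ and $(4^+, 7^-)$: given the witness from \ref{7.1} to super$^{4^+}$, with triples $M_n \prec N_n \prec M_{n+1}$ and $\eta$ satisfying clause (v), merge $N_n$ into $M_{n+1}$ to get a chain $\langle M'_n \rangle$; clauses (v)/(vi) of (F) and the ``escape'' conclusion (vii) of (F), (G$^-$) then follow directly from clauses (v) and (vi) of \ref{7.1}.

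Part (5) is just a restatement of definition (D$^+$). For part (4), monotonicity in $\chi$ is immediate (take a sub-family), in $\lambda$ via the comment in the statement (adding a well-ordered set at level $1$), and in $\mu,\kappa$ because loosening $M \cap \mu$ and $\kappa \subseteq M$ only relaxes clauses (ii)/(iii) of each definition. Parts (2) and (3) follow mechanically from (1): given a sequence $\langle I_\alpha : \alpha<\chi\rangle$ witnessing $(\chi,\lambda,\mu,\kappa)$-super$^m$-bigness (resp.\ full bigness), the \emph{same} sequence witnesses the super$^\ell$ version for any $(\ell,m)$ in the list of (1), since the implication is established pair-by-pair for the relevant $I_\alpha$, $I_\beta$ (resp.\ $\sum_{\beta \ne \alpha} I_\beta$). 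The main obstacle is simply bookkeeping: getting the chain/pair translation right for $(4^+,6)$ and $(4^+,7^-)$ (where one must verify that the ``escape to some $M_n$'' conclusion (vii) of (F)/(G$^-$) really follows from clause (vi) of \ref{7.1}, rather than from a stronger downward-closure property), and making sure that no transitive pair is asserted which requires passing through $(5,6)$, the single excluded arrow.
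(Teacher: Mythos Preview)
The paper's own proof is simply ``Left to the reader,'' so your proposal already supplies more than the paper does, and your overall strategy---check each arrow by reusing the witnessing data---is the right one. However, you have the direction reversed on precisely the two implications you flag as ``interesting.'' In the statement, the pair $(\ell,m)=(4^+,6)$ means super$^6\Rightarrow$ super$^{4^+}$ (the higher index is the stronger hypothesis, consistent with the integer ordering $1\le\ell\le m\le 7$), and likewise $(4^+,7^-)$ means super$^{7^-}\Rightarrow$ super$^{4^+}$. You instead start from a super$^{4^+}$ witness and try to manufacture a super$^6$ one by ``merging $N_n$ into $M_{n+1}$.'' That cannot work: clauses (v)/(vi) of (F) demand the tight synchronization $\eta\rest n\in M_n$ and $\eta\rest(n+1)\notin M_n$ for \emph{every} $n$, whereas Definition~\ref{7.1}(v) only gives, for each $n$, \emph{some} $k$ with $\eta\rest k\in M_n$ and $\eta\rest(k+1)\in N_n\setminus M_n$; the $k$'s may skip values, so no reindexing of the $4^+$ chain yields an (F)-chain. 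The implication you attempt is not asserted and need not hold.

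The correct arguments run the other way. From an (F)-witness $\langle M_n\rangle,\eta$, split the chain by parity: set $M'_i=M_{2i}$, $N'_i=M_{2i+1}$. Then (F)(v),(vi) give $\eta\rest(2i)\in M'_i$ and $\eta\rest(2i+1)\in N'_i\setminus M'_i$, so $k=2i$ witnesses \ref{7.1}(v); and (F)(vii) gives, for each $\nu\in P^J_\omega$, an $n_0$ with $\{\nu\rest\ell\}\cap\bigcup_m M_m\subseteq M_{n_0}$, whence for $2i\ge n_0$ we have $\{\nu\rest\ell\}\cap N'_i\subseteq M_{n_0}\subseteq M'_i$, which is \ref{7.1}(vi). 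For (G$^-$)$\Rightarrow 4^+$ one first passes to a subsequence $\langle M_{n_j}\rangle$ so that consecutive terms separate some $\eta\rest k$ from $\eta\rest(k+1)$ (possible by (G$^-$)(v),(vi)), and then splits by parity as above; the condition $\mu\subseteq M_0$ yields $M'_i\cap\mu=\mu=M'_0\cap\mu$. With this correction your outline goes through.
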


\begin{PROOF}{\ref{7.4}}
    The proof follows by the definitions. 
\end{PROOF}










    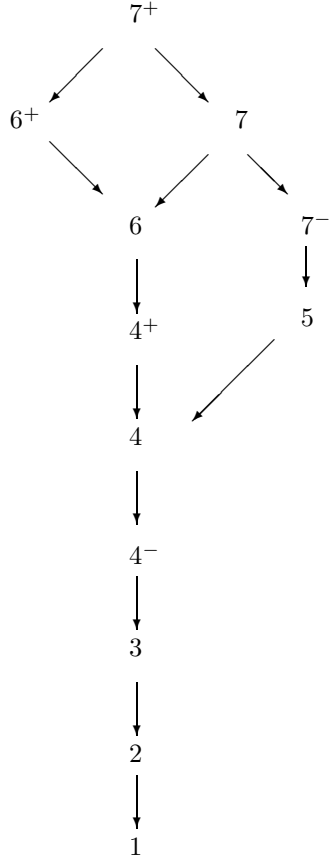
\begin{figure}
    \centering
    
    \begin{tikzpicture}
        \small{
        
        \node (1) at (3,1) {$1$};
        
        \node (2) at (3,2){$2$};
        
        \node (3) at (3,3){$3$};
        
        \node (4-) at (3,4) {$4^{-}$} ;

        \node (4) at (3, 5) {$4$};

        \node (4+) at (3, 6) {$4^{+}$};

        \node (6-) at (2,7) {$6-$};

        \node (6pm) at (1, 8) {$6^{\pm}$};
       
        \node (6) at (3, 8) {$6$};

        \node (7+) at (3, 10) {$7^{+}$};

        \node (6+) at (2, 9) {$6^{+}$};

        \node (7) at (4, 9) {$7$};

        \node (7pm) at (5, 10) {$7^{\pm}$};

        \node (7-) at (5, 8) {$7-$};

        \node (5) at (5, 7) {$5$};
        
        \draw  (7+) edge [->] (6+)

        (6pm) edge [->] (6+)

        (7) edge [->] (6)

        (6+) edge [->] (6)

        (7+) edge [->] (7)
        
            (2) edge [->] (1)
        
              (3) edge[->] (2)
        
              (4-) edge [->] (3)
        
              (4)edge [->] (4-)
        
              (4+) edge [->] (4)

              (5) edge [->] (4)

              (7) edge [->] (7-)

              (7-) edge [->] (4+)

              (7-) edge [->] (5)

              (6) edge [->] (6-)

              (6) edge [->] (4+)

              (6pm) edge [->] (6-)

              (6-) edge [->] (4+)

              (7+) edge [->] (7pm)

              (7pm) edge [->] (7-);
        }
    \end{tikzpicture}
    
        \caption{Implication diagram.}   
    \label{f25}
\end{figure}

We shall now observe two things:

\begin{observation}
    1) First (\ref{7.5}(1)) the ``full" version (see Def. \ref{7.2},\ref{7.3A}) is much stronger (increasing the $\chi$) hence we shall later concentrate on it.

    2) Second (\ref{7.5}(2)), the super version (from here) implies the ``strong'' version from \cite[\S2]{Sh:E59}, hence, for example, implies the results on unsuperstable theories.
\end{observation}

\begin{claim}\label{7.5}
    1) If $K^\omega_{\tr}$ has full 
    $(\chi,\lambda,\mu,\kappa)$-super$^\ell$-bigness property, \then \,
    $K^\omega_{\tr}$ has the
    $(2^{\Min\{\lambda,\chi\}},\lambda,\mu,\kappa)$-super$^\ell$-bigness
    property.
    
    \noindent
    2) If $K^\omega_{\tr}$ has the [full] $(\chi,\lambda,\mu^{<\kappa},
    2^{<\kappa})$-super$^\ell $-bigness property,
    $(\chi \ge \lambda)$ \then \,  $K^\omega_{\tr}$ has the [full] strong
    $(\chi,\lambda,\mu,\kappa)$-bigness property for $\varphi_{\tr}$ for
    functions $f$ which are strongly finitary on $P_{\omega}$.
\end{claim}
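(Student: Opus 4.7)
\textbf{Part (1).} The approach is antichain-sum amalgamation. Let $\theta=\min\{\lambda,\chi\}$ and choose $2^\theta$ subsets $\{S_h:h\in{}^\theta 2\}$ of $\chi$ that are pairwise incomparable — for instance $S_h=\{2\alpha+h(\alpha):\alpha<\theta\}$, so that for $h\ne h'$ both $S_h\setminus S_{h'}$ and $S_{h'}\setminus S_h$ are non-empty. Take the witness family $(I_\alpha:\alpha<\chi)$ to full $(\chi,\lambda,\mu,\kappa)$-super$^\ell$-bigness and define $J_h:=\sum_{\alpha\in S_h}I_\alpha$ as a disjoint tree-sum identified at the root; since $|S_h|\le\theta\le\lambda$ and each $|I_\alpha|\le\lambda$, we obtain $J_h\in(K^\omega_{\tr})_\lambda$. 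For distinct $h,h'$ fix $\alpha_*\in S_h\setminus S_{h'}$; then $I_{\alpha_*}\subseteq J_h$ while $J_{h'}\subseteq\sum_{\beta\ne\alpha_*}I_\beta$. The hypothesis gives $(\mu,\kappa)$-super$^\ell$-unembeddability of $I_{\alpha_*}$ into $\sum_{\beta\ne\alpha_*}I_\beta$, and I claim this is monotone in the standard way: any witness $\eta\in P^{I_{\alpha_*}}_\omega$ still lies in $P^{J_h}_\omega$, and the universally quantified clause on $\nu\in P^J_\omega$ in Definitions~\ref{7.1},~\ref{7.3} only becomes easier when $J$ shrinks. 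Hence $J_h$ is $(\mu,\kappa)$-super$^\ell$-unembeddable into $J_{h'}$, producing the required $2^\theta$ witnesses.

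\textbf{Part (2).} By Fact~\ref{7.4}(1) it suffices to derive the full strong bigness property for $\varphi_{\tr}$ at $(\mu,\kappa)$ from full super$^1$-bigness at the inflated parameters $(\mu^{<\kappa},2^{<\kappa})$. Given a strongly finitary $f:I\to\cM_{\mu,\kappa}(J)$ on $P^I_\omega$, the value $f(\eta)=\sigma_\eta(\nu^\eta_0,\dots,\nu^\eta_{n_\eta-1})$ is encoded by a finite-subterm term $\sigma_\eta$ from $\tau_{\mu,\kappa}$ together with a finite tuple from $J$; the substitution data for the $<\kappa$-many subterm slots filled from a $\mu$-sized vocabulary is naturally an element of $\cM_{\mu^{<\kappa},2^{<\kappa}}(J)$, so $f$ recodes as a strongly finitary $\tilde f$ into that enriched structure. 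Applying Definition~\ref{7.3}(A) to $\tilde f$ (and $g$ collecting the ordinal-parameters demanded by $\varphi_{\tr}$) yields $\eta\in P^I_\omega$ and, for infinitely many $k<\omega$, submodels $M\prec N$ satisfying clauses (i)–(vii). The minimum-absorption clause~(vii), combined with $M\cap\mu^{<\kappa}=N\cap\mu^{<\kappa}$ and $2^{<\kappa}\subseteq M$, forces each $\nu_m^\eta$ appearing in $\sigma_\eta(\vec\nu^\eta)$ to evaluate equally in $M$ and $N$, so $f(\eta)$ is computed inside $M$; meanwhile clause~(v) puts $\eta\rest(k+1)\in N\setminus M$. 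This is exactly the obstruction defining failure of strong embedding via $\varphi_{\tr}$. The ``full'' adjective transfers because the hypothesis supplies a witness for $I_\alpha$ against $\sum_{\beta\ne\alpha}I_\beta$ uniformly in $\alpha$.

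The main technical burden is the encoding check in part~(2): one has to verify that the strongly finitary $\tau_{\mu,\kappa}$-terms — which carry $<\kappa$-many subterm slots filled from a $\mu$-sized alphabet — are genuinely visible as unary operations in $\cM_{\mu^{<\kappa},2^{<\kappa}}(J)$, and that the absorption supplied by clause~(vii) of Definition~\ref{7.3}(A) is strong enough to keep every subterm evaluation stable between $M$ and $N$. Part~(1) is then little more than a counting argument once the monotonicity of super$^\ell$-unembeddability in source and target — readable off each of the clauses (A)–(G$^\pm$) of Definition~\ref{7.3} directly — is recorded.
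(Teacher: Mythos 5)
Your antichain-sum argument is exactly the paper's route: take the witness family $\langle I_\alpha:\alpha<\chi\rangle$, reduce to $\chi\le\lambda$ by Fact~\ref{7.4}(4), choose $2^\theta$ pairwise $\subseteq$-incomparable subsets of $\chi$ (the paper uses the same parity trick), and form the sums. The monotonicity step you invoke --- that super$^\ell$-unembeddability of $I_{\alpha_*}$ into $\sum_{\beta\ne\alpha_*}I_\beta$ descends to the pair $(J_h,J_{h'})$ since $I_{\alpha_*}\subseteq J_h$ and $J_{h'}\subseteq\sum_{\beta\ne\alpha_*}I_\beta$ --- is sound (one just folds the extra data into the auxiliary parameter $x$). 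This part is fine and matches the paper.

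\textbf{Part (2):} The reduction to $\ell=1$ via Fact~\ref{7.4}(1) and the framing as a single-pair subclaim agree with the paper (Subclaim~\ref{7.5A}). However, the core of your argument is wrong. You claim that clause (vii) of Definition~\ref{7.3}(A), together with $M\cap\mu^{<\kappa}=N\cap\mu^{<\kappa}$ and $2^{<\kappa}\subseteq M$, forces each $\nu^\eta_m$ to ``evaluate equally in $M$ and $N$, so $f(\eta)$ is computed inside $M$.'' This is false by design: clause (vi) of Definition~\ref{7.3}(A) explicitly allows, in cases (b) and (c), that $\nu_m\notin N$ (a fortiori $\nu_m\notin M$). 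The inflated parameters buy something more modest: $M\cap\mu^{<\kappa}=N\cap\mu^{<\kappa}$ ensures that the \emph{term shape} $\sigma_{\eta\rest(k+1)}$ (an element of the $\mu^{<\kappa}$-sized set of $\tau_{\mu,\kappa}$-terms) transfers from $N$ down to $M$, and $2^{<\kappa}\subseteq M$ makes $N$ closed under taking $\omega$-sequences from small sets when $\kappa>\aleph_0$. Neither puts the parameters $\nu_m$ --- let alone $f(\eta)$ --- inside $M$.

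Moreover, even if $f(\eta)\in M$ and $\eta\rest(k+1)\in N\setminus M$ both held, that pair of facts alone is not ``the obstruction defining failure of strong embedding via $\varphi_{\tr}$''; nothing immediately contradicts $f$ respecting the tree relation. The missing idea --- and the bulk of the paper's proof of Subclaim~\ref{7.5A} --- is to use the clauses of~\ref{7.3}(A), the well-ordering $<^*$ of $\cM_{\mu,\kappa}[J]$, and the auxiliary function $g$ recording the positions of the subterms of $f(\eta)$, to manufacture a \emph{sibling} $\rho\in P^I_{k+1}\cap M$, $\rho\rest k=\eta\rest k$, $\rho(k)\ne\eta(k)$, such that $\sigma_\rho=\sigma_{\eta\rest(k+1)}$ and the parameter sequence $\langle\nu_{\rho,i}\rangle$ realizes the same quantifier type as $\langle\nu_{\eta\rest(k+1),i}\rangle$ over a finite set $A_2$ (with matching subterm-order data $A_{3},A_{4}$). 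Clause (vii), the $k_\ell$ choices from $(*)_4$--$(*)_5$, and the $\mu^{<\kappa}$-cardinality bounds all feed into ensuring the similarity type and $A_{3,\rho},A_{4,\rho,\eta}$ are $M$-definable, so that a first-order formula $\psi(x)$ with parameters in $M$ is satisfied in $M$ by something $<_{\ell x}\eta\rest(k+1)$. That $\rho$ is the witness to $\varphi_{\tr}$-unembeddability. Your sketch does not reach this mechanism, so Part~(2) as written is not a proof.
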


\begin{remark}\label{7.5z}
    (1) On ``strongly finitary on $P_{\omega}$'' (see Definition \cite[2.2=Lf5]{Sh:E59} and \ref{z2}(3) here). 

    (2) On ``$(\chi, \lambda, \mu^{< \kappa}, 2^{< \kappa})$-super$^{\ell}$-bigness property, see Definition \ref{7.2}(1) with ``full'', see \ref{7.2}(2).

    (3) On the ``strong $(\chi, \lambda, \mu, \kappa)$-bigness property'', see \cite[2.2 = Lf5(1), (2), pg. 21]{Sh:E59} and for the full version, see \cite[2.2=Lf5(3)]{Sh:E59}.
\end{remark}

\begin{PROOF}{\ref{7.5}}
1)  Easy (and similar in essence to \cite[2.20=Lj11(1)]{Sh:E59}).
Suppose $\langle I_{\alpha}:\alpha < \chi \rangle$ witnesses the full
$(\chi,\lambda,\mu,\kappa)$-super$^\ell$-bigness property, and 
(by \ref{7.4}(4)) \wilog \, $\chi \le \lambda$.  \Wilog \, 
the $I_{\alpha}$'s have a common root  $< >$,  and except this are pairwise
disjoint.  We can find $\langle A_{\alpha}:\alpha < 2^\chi \rangle$ such
that:
\[
A_{\alpha} \subseteq \chi, |A_{\alpha}| \le \lambda \text{ and }
[\alpha \ne \beta \Rightarrow A_{\alpha} \nsubseteq A_{\beta}]
\]

\mn
(use just $A \subseteq \lambda$ such that $[2 \alpha \in A
 \Leftrightarrow 2 \alpha + 1 \notin  A])$.

Now, let $$I^*_{\alpha} \coloneqq  \sum\limits_{i \in A_\alpha} I_i,$$ defined naturally: the universe is the union of the universes, where each $I_i$ is a submodel of $I^*_\alpha$ when $i \in A_{\alpha}$, and  the lexicographic order is such that:
$$i < j \ \& \ \eta \in I_i \setminus \{< >\} \ \& \ \nu \in 
I_j \setminus \{< >\} \Rightarrow \eta <_{lx}\nu.$$

\mn
[Note:  $\chi > 2^\lambda$ never occurs].

\noindent
2) It suffices by \ref{7.4}(1), of course, to prove for the case  $\ell  = 1$,
and clearly, it suffices to show the following Subclaim.
\end{PROOF}

\begin{subc}\label{7.5A}
    If $\mu_{1} = \mu^{< \kappa}$ and $\kappa_{1} = \sum \{ (\vert \alpha \vert^{\aleph_{0}})^{+} \colon \alpha < \kappa \}$ and, if $I \in K^\omega_{\tr}$ is $(\mu_{1}, \kappa_{1})$-super$^1$-unembeddable into $J \in
    K^\omega_{\tr}$ \then \, $I$ is strongly $\varphi_{\tr}$-unembeddable for $(\mu,\kappa)$ into $J$, for functions $f$ which are strongly finitary on $P_\omega^I$ (see \cite[2.2=Lf5(1),~(4),~(5)]{Sh:E59}).
\end{subc}

\begin{PROOF}{\ref{7.5A}}
Recalling Definition \ref{7.3}(A), \wilog \, $I,J$ are subsets of ${}^{\omega \ge}\theta$ for  some cardinal $\theta$ (see \ref{z2} and \cite[1.9(2)=Lb11(2)]{Sh:E59}), and let $<^*$ be a well-ordering of ${\cM}_{\mu,\kappa}(J)$ (respecting being a subterm, i.e. if $a$ is a subterm of $b$ then $a \le^* b$). Suppose $f$ is a function from $I$ into ${\cM}_{\mu,\kappa}(J)$, so

\mn
\begin{enumerate}
    \item[$(*)_1$]  for $\varrho \in I$, we have $f(\varrho) = \sigma_{\varrho}(\nu_{\varrho,0},...,\nu_{\varrho,i},...)_{i<\alpha_\varrho}$ for some $\alpha_{\varrho} < \kappa,\nu_{\varrho,i} \in J$.
\end{enumerate}

\mn
Recalling that we are assuming $f$ is strongly finitary on $P_\omega^{I}$ we have,
\mn
\begin{enumerate}
    \item[$(*)_2$]  $\varrho \in P^I_{\omega} \Rightarrow \alpha_\varrho <
      \omega \, \& \,$ [$\sigma_\varrho$ has finitely many subterms].
\end{enumerate}
\mn
We should now find $\bar{s}$, $\bar{t}$ as in \cite[2.2=Lf5(1)]{Sh:E59}. Let $\chi$ be regular large enough, $x = \langle
\mu,\kappa,I,J,f\rangle$ and define for $\varrho \in P^I_{\omega}$,
\mn
\begin{enumerate}
\item[$(*)_3$]  $g(\varrho) = \{\alpha$: the $\alpha$-th element by $<^*$
is a subterm of $f(\varrho)\}$,
\end{enumerate}
\mn
which is finite (so we use ``the strongly finitary" so that $g(\eta)$ is finite, this is the only use). Now we shall  use Definition \ref{7.3}(A).

So let $\eta,k,M,N$ be as in (A) of Definition \ref{7.3} (for the given $\chi, f, g$ we use  just one $k$), hence noting  $(\ast)_{1}$, $(\ast)_{2}$ apply in particular for $\varrho= \eta$, and clearly  $\sigma_\eta(\nu_{\eta,0},\ldots,\nu_{\eta,i}, \ldots)_{i<\alpha_\eta}$ is well defined and equal to $f(n)$.  So by reordering $\nu_{\eta,\ell} (\ell < \alpha_{\eta})$ we can have: 

\begin{subf}\label{7.5D}
    There are $n_{0} \leq
    n_{1} \leq n_{2} = \alpha_{\eta} $ such that:

    \begin{enumerate}
\item[$(*)_4$]   $(a) \quad$ for  $\ell < n_0,\nu_{\eta,\ell} \in M$, 
and let $j_{\ell} = 
\ell g
(\nu_{\eta, \ell})$, 
\sn

\item[${{}}$]  $(b) \quad$ for $\ell \in 
[n_{0},n_{1})$ for a (unique) $j_{\ell},
\nu_{\eta,\ell} \rest j_{\ell} \in M,\nu_{\eta,\ell}(j_{\ell})
\notin N$ and
\sn
\begin{equation*}
\begin{array}{clcr}
\gamma_{\ell} &:= \min\{\gamma:\gamma \text{ an ordinal from } M,
\nu_{\eta,\ell}(j_{\ell}) \le \gamma\} \\
  &= \min\{\gamma:\gamma \text{ an ordinal from } N,\nu_{\eta,\ell}
(j_{\ell}) \le \gamma\},
\end{array}
\end{equation*}
\item[${{}}$]  $(c) \quad$  for $\ell \in [n_{1},n_{2}), \nu_{\eta,\ell} \notin N$, $\lg(\nu_{\eta, \ell}) = \omega$ but $\{\nu_{\eta, \ell} \rest m:m < \omega\} \subseteq M$.
\end{enumerate}
\end{subf}

\begin{PROOF}{\ref{7.5D}}
    The sets $u_{3} \coloneqq \{ \ell < \alpha_{\eta} \colon \nu_{\eta, \ell} \notin N, \lg(\nu_{\eta, \ell}) = \omega \text{ and } \{ \nu_{\eta, \ell} \rest j \colon j < \omega \} \subseteq M \}$ and  $u_{1} \coloneqq \{ \ell < \alpha_{\eta} \colon \nu_{\eta, \ell} \in M\}$ are disjoint. So letting $u_{2} =  \alpha_{\eta} \setminus (u_{1} \cup u_{3})$, clearly $(u_{1}, u_{2}, u_{3})$ is a partition of $\alpha_{\eta}$, so remaining $u_{1} = [0, n_{0}],$ $u_{2} = [n_{0}, n_{1}]$, $u_{3} = [n_{1}, n_{2}]$, where $n_{2} = \alpha_{\eta}$ . So checking $(\ast)_{4}$, the least trivial  point is why the two definitions of $\gamma_{\ell}$ (for $\ell \in u_{2}$) are equivalent, which holds by clause (vii) of Definition \ref{7.3}(A). 
\end{PROOF}

Clearly $k$ was chosen together with $\eta,M,N$ and the 
sequence $\langle \nu_{\eta \rest (k+1),i}:i <
\alpha_{\eta \rest (k+1)}\rangle$ evidently belong to $N$ (as $f \in N$
and $\eta \rest (k+1)$ belongs to $N$).  

Now,

\begin{fact}\label{7.5E}
    We have:

    \begin{enumerate}
    \item[$(*)_5$]   for each $\ell \in [n_{1},n_{2})$ for some $k_{\ell} 
    < \omega$ we have:
    $\nu_{\eta,\ell} \rest k_{\ell} \notin A_1 := B_0 \cup B_1 \cup B_2$
    where:
    \begin{enumerate}
    \item[$\bullet$]  $B_0 = \{\nu_{\eta \rest (k+1),i} \rest m \colon i < \alpha_{\eta \rest (k + 1)}  , m \le \ell g(\nu_{\eta \rest (k+1),i}), i < \alpha_{\eta \rest
      (k+1)}\}$
    \sn
    \item[$\bullet$]  $B_1 = \{\nu_{\eta,j} \rest m:j < n_0,m \le 
    \ell g(\nu_{\eta,j})\}$
    \sn
    \item[$\bullet$]  $B_2 = \{(\nu_{\eta,\ell} \rest j_{\ell}) \char94
    \langle \gamma_j \rangle: \ell \in [n_0,n_1)\}$.
    \end{enumerate}
    \end{enumerate}
\end{fact}

\mn
[Recall $\gamma_{\ell}$ is from clause (b) of \ref{7.5D}.]

\begin{PROOF}{\ref{7.5E}}
    
\noindent
\underline{Case 1}:  $\kappa > \aleph_{0}$

So for each $\ell \in [n_{1},n_{2})$ if no such $k_{\ell}$ exists, so by clause (e) of \ref{7.5D}, we have $\{\nu_{\eta,\ell} \rest m \colon m < \omega\}$ is a subset of the set $A_1$ appearing
in the right side above, which belongs to $N$.

\noindent
[Why?  The first set $B_0$ in the union belongs to $N$ as $\eta \rest (k+1) \in N$ by the choice of $\eta, k, M, N$. The second set $B_1$ as $j < n_{0} \nRightarrow \nu_{\eta,j} \in M$ by clause (a) of \ref{7.5D}, i.e. the choice of $n_{0}$ and the third set $B_2$ by the choice of $\gamma_j$ in clause (b) of \ref{7.5D}.] 

Now $A_1$ has cardinality $<\kappa$ (as $\alpha_{\eta \rest (k+1)} <
\kappa \wedge \aleph_0 < \kappa)$; hence $A_1 \subseteq N$ and (as $ \kappa_{1} +1 \subseteq M$ because $\kappa_{1}$ in \ref{7.5A} plays the  role of $\kappa$ in Definition \ref{7.3A} recalling we are assuming $\kappa >\aleph_0$)  not only is included in it, but every $\omega$-sequence from it belongs to $N$, hence $\nu_{\eta,\ell} \in N$,  contradicting $\ell \in [n_{1},n_{2})$.  
\smallskip

\noindent
\underline{Case 2}:  $\kappa = \aleph_{0}$

So $\alpha_{\eta \rest (k+1)} < \omega$ and let $\ell \in
[n_{1},n_{2})$; toward contradiction assume the conclusion in \ref{7.5E} fails for $\ell$.  So one of the following
possibly occurs.  First, if $(\exists^\infty m) \nu_{\eta,\ell} \rest m
\in B_0$,  then for some  $i < \alpha_{\eta \rest (k+1)}$ for 
infinitely many $m < \omega,\nu_{\eta,\ell} \rest m = 
\nu_{\eta \rest (k+1),i} \rest m$.  As $\ell g(\nu_{\eta,\ell}) 
= \omega$ (remembering $\ell \in [n_{1},n_{2}))$ this implies 
$\nu_{\eta,\ell} = \nu_{\eta \rest (k+1),i}$, but 
$\nu_{\eta \rest (k+1),i}$ belongs to $N$ whereas
$\nu_{\eta,\ell}$ does not belong to $N$ (remembering $\ell \in
  [n_1,n_2)$), contradiction.  Second, if
$(\exists^\infty m)(\nu_{\eta,\ell} \rest m \in B_1)$, similarly some  
$j < n_0,\nu_{\eta,\ell} = \nu_{\eta,j}$ but $j < n_0 < \ell$,
 contradiction to clause (a) of \ref{7.5D}.  Third, if $(\exists^\infty n) (\nu_{\eta,\ell} \rest m \in B_2)$ but $B_2$ is finite,
 contradiction.  So \ref{7.5E} holds indeed.
\end{PROOF}

Note:

\begin{fact}\label{7.5F}
    We have:
    \begin{enumerate}
        \item[$(*)_6$]  $\sigma_{\eta \rest (k+1)}$ belongs to $M$.
    \end{enumerate}
\end{fact}

\begin{PROOF}{\ref{7.5F}}
    Why?  It belongs to $N$ (as $f,\eta \rest (k+1) \in N)$  and it belongs
    to a set of cardinality  $\mu_{1} := \mu^{<\kappa}$ from $M$ (the set of
    $\tau_{\mu,\kappa}$-terms) and $M \cap \mu^{<\kappa} = N \cap
    \mu^{<\kappa}$ by clause (ii) of Definition \ref{7.3}(A) as in
    subclaim \ref{7.5A} the cardinal $\mu^{<\kappa}$ play the role 
    of $\mu$ in \ref{7.3}(A)].
\end{PROOF}

\underline{Continuation of the proof of \ref{7.5}(2)}. Now recalling the definition of $\varphi_{\tr}$ (in \cite[2.6=Lg5, pg. 23]{Sh:E59}) and of unembeddable (in \cite[2.2(1)=Lf5(1)]{Sh:E59}) clearly, it is enough to show:
\mn
\begin{enumerate}
\item[$(*)_7$]   there is $\rho$ such that:
\sn
\begin{enumerate}
\item[$(A)$]   $\rho \in P^I_{k+1},\rho(k) \ne \eta(k),
\rho \rest k = \eta \rest k,\rho \in M$ and $\sigma_{\rho} = 
\sigma_{\eta \rest (k+1)}$ (so $\alpha_{\rho} = \alpha_{\eta \rest
  (k+1)})$,
\sn
\item[$(B)$]  the sequence $\langle \nu_{\rho,i}:i < \alpha_\rho \rangle$
is similar (i.e. realizes the same quantifier free type in $J$) to $\langle \nu_{\eta \rest (k+1),i}:i < \alpha_{\eta \rest (k+1)}\rangle$
over the set
\begin{equation*}
\begin{array}{clcr}
A_2 = \{\nu_{\eta,\ell}:\ell < n_{0}\} &\cup \{(\nu_{\eta,\ell}
\rest j_{\ell}) \char94 \langle \gamma_\ell \rangle: \ell \in 
[n_{0},n_{1})\} \\
  &\cup\{\nu_{\eta,\ell} \rest k_{\ell}:\ell \in [n_{1},n_{2})\}
\end{array}
\end{equation*}
\sn
\item[$(C)$] Let $A_{3,\rho} = A_{3,\eta \rest (k+1)}$ and 
  $A_{4,\rho,\eta} = A_{4,\eta \rest (k+1),\eta}$ 
\newline
where  for $\rho \in I$
\sn
\item[${{}}$]   $\bullet \quad A_{3,\rho} = 
\{(\sigma^1,\sigma^2):\sigma^1,\sigma^2$ subterms of 
$\sigma_\rho$

\hskip35pt  and $\sigma^1(\ldots,\nu_{\rho,i},\ldots) <^*
\sigma^2(\ldots,\nu_{\eta,\rho,i},\ldots)\}$
\newline
and
\sn
\item[${{}}$]  $\bullet \quad A_{4,\rho,\eta} = 
\{(\iota,\sigma^1,\sigma^3):\iota \in \{0,1\},\sigma^1 
\text{ subterm of } \sigma_{\rho},\sigma^3 \text{ a subterm of}$

\hskip35pt $\sigma_{\eta}, $ we have that 
\[\iota = 0 \Rightarrow 
\sigma^1(\ldots,\nu_{\eta \rest
  (k+1),i},\ldots) <^* \sigma^3(\ldots,\nu_{\eta,i},\ldots), \text{ and}\]
\hskip35pt  $\iota=1 \Rightarrow$ they are equal$\}.$
\sn
\item[$(*)_8$]  the sets $A_{3,\rho},A_{4,\rho,\eta}$ belongs to $M$.
\end{enumerate}
\end{enumerate}
\mn
\mn
[Why?  Like the proof of \ref{7.5F}, remembering \ref{7.5F}.

Now, the set $A_2$ is a finite subset of $M$ by clause (a) of \ref{7.5D}, the choice
of $k_\ell$ in clause (b) of \ref{7.5D} and the choice of $k_\ell$ in \ref{7.5E}.  Also
the ``similarly type in $J$" of $\langle \nu_{\eta \rest (k+1),i}:
i < \alpha_{\eta \rest (k+1)}\rangle$ over $A_2$ belongs to 
$M$ (in whatever reasonable way we represent it), as the set of 
such similarly types over $A_{2}$ is of cardinality $\le 2^\kappa$ and it belongs to $M$, hence there is a first-order formula $\psi(x)$ in the vocabulary 
of $({\cH}(\chi),\in,<^*_{\chi})$, with parameters from $M$
saying $x \in I$ is an immediate successor of $\eta \rest k,\sigma_{x}
= \sigma_{\eta \rest (k+1)}$, and $\langle \nu_{x,i}:i <
\alpha_x \rangle$ is similar to $\langle \nu_{\eta \rest (k+1),i}:i <
\alpha_{\eta \rest (k+1)}\rangle$ over $A_2$ in $J$ 
and $\psi$ express what clauses of $(C)$ from $(*)_7$ says (using the choice of $g$ and (vii) of \ref{7.3} clause (A)).  So there is a
solution to $\psi$ in $M$ (as $M \prec N \prec ({\cH}(\chi), \in,<^*_\chi))$,  now $\eta \rest (k+1)$ cannot be the $<_{\chi}^{\ast}$-first in  
$\{x:\psi(x)\}$,  but the first is in $M$, hence there is an  $x \in M$ such that $\psi(x) \, \& \, x <_{lx} \eta \rest (k+1)$.]

So we have finished.   
\end{PROOF}

\begin{remark}\label{7.5B}
    In \ref{7.5A}, if we weaken the conclusion ``$I$ is strongly $\varphi_{\tr}$-unembeddable..." to ``$I$ is $\varphi_{\tr}$-unembeddable"
    (see middle of Definition \cite[2.1(1)=Lf5(1)]{Sh:E59}) then we can weaken the demand 
    on $f$ replacing ``$f$ is strongly finitary for $\eta \in P_{\omega}^{I}$'' to ``$f(\eta)$ is finitary for $\eta\in P_\omega^I$".
\end{remark}

\begin{claim}\label{7.6}
1) If $\lambda$ is regular $> \mu$ \then \, 
$K^\omega_{\tr}$ has the full 
$(\lambda,\lambda,\mu,\mu)$-super$^{7^\pm}$-bigness property.

\noindent
2)  If $\lambda$ is singular $> \chi = \chi^{\kappa}$ and $2^\chi \ge
\lambda$ \then \,  $K^\omega_{\tr}$ has the full
$(\lambda,\lambda,\chi,
\aleph_0)$-super$^6$-bigness property (even the full 
$(2^\chi,\lambda,\chi,\kappa)$-super$^6$ bigness property) getting
$M_n$'s such that $(\forall \theta)[ \kappa^\theta= \kappa
\Rightarrow {}^\theta(M_n) \subseteq M_n]$; so if
$\kappa^{\aleph_0} = \kappa$ we actually have the full
$(\lambda,\lambda,\chi,\kappa)$-super$^{6^+}$-bigness property 
(and even the full $(2^\chi,\lambda,\chi,\kappa)$-super$^{6^+}$ 
bigness property).

\noindent 2A) If $\lambda$ is singular $> \chi = \chi^{\kappa}$ and $2^{\chi} \geq \lambda$, \underline{then} $K_{\tr}^{\omega}$ has the full $(\lambda, \lambda, \kappa, \kappa)$-supper$^{7}$-bigness-property, and if $\kappa > \kappa^{\aleph_{0}}$ then even the full $(\lambda, \lambda, \kappa, \kappa)$-supper$^{7^{+}}$-bigness-property.

\noindent
3) If $\lambda$ is strong limit singular of cofinality $>\kappa \ge
\aleph_0,\kappa \le \mu < \lambda$ \then \, $K^\omega_{\tr}$ has the
full $(\lambda,\lambda,\mu,\kappa)$-super$^6$-bigness property and even the
full $(\lambda, \lambda,\mu,\kappa)$-super$^6$-bigness property.

\noindent
4) We can, in part (3), weaken ``$\lambda$ strong limit" to
$(\forall \theta < \lambda )[\theta^\kappa < \lambda]$.
\end{claim}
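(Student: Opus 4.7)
The four parts share a common template: for each set of cardinal parameters, construct a family $\langle I_\alpha:\alpha<\chi\rangle$ of trees in $(K^\omega_{\tr})_\lambda$ whose $\omega$-branches carry a distinguishing ``color,'' and verify super$^\ell$-unembeddability of $I_\alpha$ into $J_\alpha:=\sum_{\beta<\chi,\beta\ne\alpha}I_\beta$ by producing an elementary chain $\langle M_n:n<\omega\rangle$ of submodels of $(\cH(\chi^*),\in,<^*_{\chi^*})$ together with a branch $\eta\in P^{I_\alpha}_\omega$ whose ``limit color'' places it in $I_\alpha$ but in no $I_\beta$ with $\beta\ne\alpha$. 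The principal technical obstacle throughout is clause (vii) (in its various super$^\ell$-specific forms) of Definition \ref{7.3}: ruling out that any $\nu\in P^{J_\alpha}_\omega$ is coherently approximated by initial segments from $\bigcup_n M_n$ without $\nu$ itself lying there.

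\textbf{Part (1).} For regular $\lambda>\mu$, split $S^\lambda_{\aleph_0}:=\{\delta<\lambda:\cf(\delta)=\aleph_0\}$ by Solovay's theorem into pairwise disjoint stationary sets $\langle S_\alpha:\alpha<\lambda\rangle$, and set
\[ I_\alpha \;:=\; {}^{\omega>}\lambda \;\cup\; \{\eta\in{}^\omega\lambda:\eta \text{ strictly increasing},\ \sup\rng(\eta)\in S_\alpha\},\]
with each $\Suc$ ordered by the ordinals. Given $\chi^*,x$, build an elementary chain $M_n\prec M_{n+1}$ of size $\mu$ with $\mu\subseteq M_0$, $M_n\in M_{n+1}$, and all parameters in $M_0$. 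Since $\cf(\lambda)>\mu\ge|M_n|$, each $\delta_n:=\sup(M_n\cap\lambda)<\lambda$; the set of achievable $\delta:=\sup_n\delta_n$ is club in $\lambda$, so the chain can be steered so that $\delta\in S_\alpha$. Choose a strictly increasing $\eta\in{}^\omega\lambda$ with $\sup\eta=\delta$ and, for every $n$, some $k$ with $\eta\rest k\in M_n$ and $\eta\rest(k+1)\in M_{n+1}\setminus M_n$: this realizes clauses (v)$^+$,(vi)$^+$ of super$^{7^\pm}$. For (vii)$^+$: any $\nu\in P^{J_\alpha}_\omega$ with $\{\nu\rest\ell:\ell<\omega\}\subseteq\bigcup_n M_n$ is strictly increasing with $\sup\rng\nu\le\delta$; if $\sup\rng\nu=\delta$ then $\delta\in S_\beta$ for the $\beta\ne\alpha$ with $\nu\in I_\beta$, contradicting $S_\alpha\cap S_\beta=\emptyset$, and otherwise $\sup\rng\nu<\delta_n$ for some $n$ and one argues $\nu\in\bigcup_m M_m$ by exploiting $M_n\in M_{n+1}$ and the well-ordering $<^*_{\chi^*}$.

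\textbf{Part (2).} For singular $\lambda>\chi=\chi^\kappa$ with $2^\chi\ge\lambda$, use the cardinal arithmetic to realize $2^\chi$ many trees $I_\alpha\in(K^\omega_{\tr})_\lambda$ whose $\omega$-branches are encoded by pairwise almost-disjoint elements of ${}^\omega\chi$ (any two distinguished branches from different $I_\alpha$'s share only finitely many coordinates). Build an elementary chain $\langle M_n\rangle$ of size $\chi$ with ${}^\kappa M_n\subseteq M_n$ (available by $\chi^\kappa=\chi$) and $\chi\subseteq M_0$; clause (ii) of Definition \ref{7.3}(F) is then automatic since $M_n\cap\chi=\chi=M_0\cap\chi$. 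A branch $\eta\in P^{I_\alpha}_\omega\setminus M_0$ can be chosen (since there are $2^\chi>\chi=|M_0|$ branches) with $\eta\rest n\in M_n$ for every $n$; clauses (i)--(vi) of super$^6$ follow, and clause (vii) follows from almost-disjointness, which prevents any $\omega$-branch of a different $I_\beta$ from having infinitely many initial segments in $\bigcup_n M_n$ matching those of a distinguished branch of $I_\beta$. When $\kappa^{\aleph_0}=\kappa$, $\bigcup_n M_n$ is closed under $\omega$-sequences, upgrading (vii) to (vii)$^+$ of super$^{6^+}$.

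\textbf{Parts (3) and (4).} For strong limit singular $\lambda$ with $\cf(\lambda)>\kappa$, or more generally $(\forall\theta<\lambda)[\theta^\kappa<\lambda]$, combine (a) a Solovay-style splitting of a stationary subset of $\cf(\lambda)$ into $\cf(\lambda)$ disjoint stationary pieces $\langle T_\alpha\rangle$ with (b) a pcf scale $\langle f_\gamma:\gamma<\lambda\rangle$ in $\prod_{i<\cf(\lambda)}\lambda_i$ for some cofinal $\langle\lambda_i:i<\cf(\lambda)\rangle$. The tree $I_\alpha$ is built componentwise along the scale so that each $\omega$-branch of $I_\alpha$ projects to a strictly increasing $\omega$-sequence in $\cf(\lambda)$ whose limit lies in $T_\alpha$. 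The cardinal hypothesis ensures $|M_n|<\lambda$ and $M_n\cap\lambda$ is bounded in $\lambda$, so the regular-case argument transfers coordinatewise through the scale; the club-guessing ZFC sequences developed in \S2 supply the combinatorial input on $\cf(\lambda)$ that lets the $T_\alpha$'s be recognized from $\bigcup_n M_n$. The main obstacle and reason for the case split is again clause (vii): each part requires a different device --- Solovay decomposition (part 1), almost-disjointness (part 2), and club-guessing along a pcf scale (parts 3, 4) --- to prevent $\omega$-branches of $J_\alpha$ from being approximated from $\bigcup_n M_n$ unless they themselves lie there.
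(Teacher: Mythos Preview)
Your overall template is right, but Part~(1) has a concrete gap that breaks the argument, and Parts~(2)--(4) diverge substantially from the paper's actual construction.

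\textbf{The gap in Part~(1).} You define $I_\alpha$ to contain \emph{all} strictly increasing $\eta\in{}^\omega\lambda$ with $\sup\rng(\eta)\in S_\alpha$. This is too many branches: for a fixed limit $\gamma$ there are $2^{\aleph_0}$ such $\eta$, and none of them is definable from $\gamma$ alone. When you try to verify clause~(vii)$^+$, you take $\nu\in P^{J_\alpha}_\omega$ with all $\nu\rest\ell\in\bigcup_m M_m$ and $\sup\rng(\nu)<\delta$, and claim $\nu\in\bigcup_m M_m$ ``by exploiting $M_n\in M_{n+1}$ and $<^*_{\chi^*}$.'' But nothing singles $\nu$ out among the continuum of increasing sequences to the same limit; if $\mu<2^{\aleph_0}$ (which is allowed) the union $\bigcup_m M_m$ cannot contain all of them, and there is no reason it contains the particular $\nu$ you are given. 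The paper avoids this by fixing, for each $\delta\in S_\zeta$, a single designated sequence $\eta_\delta$ (built from a chosen club $C_\delta\subseteq\delta$), and putting only $\{\eta_\delta:\delta\in S_\zeta\}$ into $I_\zeta$. Then any $\nu\in P^J_\omega$ has the form $\langle\xi\rangle\char94\eta_\delta$ for unique $\xi,\delta$, and since the map $\delta\mapsto\eta_\delta$ lies in $M_0$, recovering $\nu$ from $(\xi,\delta)$ is elementary. The paper also builds a length-$\lambda$ continuous chain $\langle M^*_\alpha:\alpha<\lambda\rangle$ (of models of size $<\lambda$, not $\mu$) and extracts the $M_n$'s as a subsequence indexed by $C_{\delta(*)}$; your direct ``steering'' of a countable chain to hit $S_\alpha$ is not wrong in spirit but is less clean and interacts badly with the branch-counting problem above.

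\textbf{Parts (2)--(4).} Your sketches here are not so much wrong as orthogonal to the paper. For (2) the paper does not use an almost-disjoint family in ${}^\omega\chi$; it uses an Engelking--Karlowicz family $\langle A_i:i<2^\chi\rangle$ of subsets of $\chi$, pairwise disjoint stationary sets $S^\zeta\subseteq\{\delta<\chi^+:\cf\delta=\aleph_0\}$, and a \emph{tree of models} $\langle N_\eta:\eta\in\cT\rangle$ with $N_\eta\cap N_\nu=N_{\eta\cap\nu}$; the branch is then found by a Baire-category/closed-cover argument on $\lim(\cT)$. For (3)--(4) the paper does not invoke pcf scales or the club-guessing of \S2 at all; it again builds a tree of models indexed by $\prod_{\ell<n}\lambda_{\delta,\ell}$, passes to a $\Delta$-system subtree (to force $N_\eta\cap N_\nu=N_{\eta\cap\nu}$ and $N_\eta\cap\mu=N_{\langle\rangle}\cap\mu$), and then diagonalizes. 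Your almost-disjoint/pcf ideas might be made to work, but as written they do not supply the key mechanism the paper relies on---the tree-of-models with controlled intersections---and your verification of clause~(vii) in (2) (``almost-disjointness prevents\ldots'') is not an argument: a $\nu\in P^{J_\alpha}_\omega$ need not be one of your distinguished almost-disjoint branches.
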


\begin{remark}
    On part (1) see also \ref{7.8I}.
\end{remark}

\begin{PROOF}{\ref{7.6}}
1) Earlier relative is the proof of \cite[Ch.VIII 2.2]{Sh:a}), 
latter relatives is \ref{7.14}(1) case 1, (and see in \cite{Sh:511}) but we give it fully.

Let $S = \{\delta < \lambda:\cf (\delta) = \aleph_{0} \}$,  let $\langle S_{\zeta}:\zeta < \lambda \rangle$ be a sequence of pairwise disjoint stationary subsets of $S$.  For each $\zeta$ we can find $\bar C=\langle C_{\delta}:\delta \in S_{\zeta} \rangle$ such that:
\mn
\begin{enumerate}
\item[$(a)$]  $C_{\delta}$ is an unbounded subset of $\delta$
\sn
\item[$(b)$]  \otp$(C_{\delta}) = \omega$.
\end{enumerate}
\mn
For $\delta \in S_{\zeta}$ let $\eta_{\delta} \in {}^\omega \lambda$ be
defined by:
\[
\eta_{\delta} (n) \text{ is the (2n)-th member of } C_{\delta}.
\]

\mn
For $\zeta < \lambda$, let $I_{\zeta}  =  \{ \langle \zeta \rangle^\char94 \nu \colon \nu \in {}^{\omega >} \lambda \} \cup
\{ \langle \zeta \rangle^{\char94} \eta_{\delta}:\delta \in S_{\zeta}\} \cup \{ \langle \, \rangle \}$,  and we shall show that
$\langle I_{\zeta}:\zeta < \lambda \rangle$ exemplify the conclusion
(for super$^{7^\pm}$). For this, we use $\langle \zeta \rangle^{\char94} \nu$ to make the $I_{\zeta}$-s essentially pairwise disjoint. 

So let $\zeta(*) < \lambda, I := I_{\zeta(*)}$ and $J =:
\sum\limits_{\zeta \ne \zeta(*)} I_{\zeta}$ and we should prove that $I \in K_{\tr}^{\omega}$ is $(\mu, \kappa)$-super-$7^{+}$-embeddable into $J \in J_{\tr}^{\omega}$.   

Let $\chi^* = \chi(\ast)$ be regular large enough, $<^*_{\chi(\ast)}$ a well-ordering of
${\cH}(\chi^*)$ and $x \in {\cH}(\chi^*)$.  
We choose by induction on $\alpha < \lambda,M^*_{\alpha}$ such that:
\mn
\begin{enumerate}
\item[$\boxplus \ (\alpha)$]   $M^*_{\alpha} \prec ({\cH}(\chi),\in ,<^*_{\chi})$
 increasing and continuous with $\alpha$,
\sn
\item[$(\beta)$]  $\|M^*_{\alpha}\| < \lambda$,
\sn
\item[$(\gamma)$]  $M^*_{\alpha} \cap \lambda$ an ordinal,
\sn
\item[$(\delta)$]  $\langle M^*_{\beta}:\beta \le \alpha \rangle$
belongs to $M^*_{\alpha +1}$,
\sn
\item[$(\varepsilon)$]  $\mu \subseteq M^*_{0}$,
\sn
\item[$(\zeta)$]   $\{ \lambda,\mu,I,J,x,\langle \langle \eta_\delta: 
\delta\in S_\zeta\rangle:\zeta < \lambda\rangle,
\langle I_{\zeta}:\zeta < \lambda \rangle$, $\zeta(*) \}$ belongs to $M^*_0$.
\end{enumerate}
\mn

Let $E = \{\delta < \lambda:M^*_{\delta} \cap \lambda = \delta\}$,  it is a club of $\lambda$ (by clauses $(\alpha)$, $(\gamma)$, and $(\delta)$), so for some $\delta(*) \in S_{\zeta(*)}$ we have $\delta(*)\in \acc(E)$. Let $\langle m_\ell:\ell<\omega\rangle$ be a strictly increasing  sequence of natural numbers such that  letting $k_\ell$ be minimal satisfying $\eta_{\delta(*)} (k_\ell) \ge \lambda \cap M^*_{\eta_{\delta(*)}(m_\ell)}$, we have $k_\ell<m_{\ell+1}$  (or just $\eta_{\delta(*)} (k_\ell) \in M^\ast_{\eta_{\delta(*)}(m_{\ell+1})}$
which follows as $\alpha\subseteq M^*_\alpha$).

Let $M_n=M^*_{\eta_{\delta(*)}(m_n)}$ and $\eta=\eta_{\delta(*)}$.

Let us check the conditions in $(*)$ of Def.\ref{7.3}$(G^\pm)$ 
(see $(i)$-$(iv)$ of $(G)^-$, and $(v)^{+}$, $(vi)^{++}$ of (G)$^{\pm}$, and $(v)^{+}$, $(vii)^+$ from \ref{7.3}(G)$^{+}$) hold
for those $M_n$, $\eta$.

\noindent
\underline{Clause (i)}:  is obvious, as $\eta_{\delta(*)}(n)$ is strictly
increasing, and $M^*_\alpha$ is $\prec$-increasing with $\alpha$.
\medskip

\noindent
\underline{Clause (ii)}: Now $\mu+1\subseteq M_n$ as $M_n\cap \lambda$ is an ordinal (by clause $(\gamma)$ of $\boxplus$) and $\mu \in M_n$ (by clause $(\zeta)$ of $\boxplus$) and $\mu<\lambda$ by assumption. Also $M_n\in M_{n+1}$ by
clauses $(\gamma)+(\delta)$ of $\boxplus$.


\noindent
\underline{Clause (iii)}:  by clause $(\zeta)$ of $\boxplus$ above.
\medskip

\noindent
\underline{Clause (iv)}:  $\eta\in P^I_\omega$ as
$I=I_{\zeta(*)}$, $\delta(*) \in S_{\zeta(*)},\eta=\eta_{\delta(*)}$ and
our definitions.

\medskip

\noindent
\underline{Clause (vi)$^{++}$}:  By our choice of $k_\ell$ and of $M_{0}$.  

\noindent
\underline{Clauses (vii)$^+$}: Note: if $\nu \in P^J_{\omega},\alpha<\lambda$ and 
$\{\nu \rest \ell :\ell < \omega\} \subseteq M^*_{\alpha}$ then for some 
$\xi<\lambda$ and $\delta \in S_\xi$ we have $\xi \ne
\zeta(*),\nu = \langle\xi\rangle \char94 \eta_{\delta}$,  so
$\cf(\delta) = \aleph_{0}$ and $\delta = \sup(\delta \cap
M^*_{\alpha})$ but $\langle S_\xi: \xi<\lambda\rangle$ are
pairwise disjoint so for every $\alpha,\delta < \lambda$ we have 
at most one such $\nu$, so $\{\nu \in P^J_{\omega}:
\bigwedge\limits_{\ell < \omega} \nu \rest \ell \in M^*_{\alpha}\}$  
has cardinality $\le \|M^*_{\alpha}\|$ hence is a subset of 
$M^*_{\alpha +1}$ (as $M^*_{\alpha},J \in M^*_{\alpha +1})$.
Moreover $\delta\in  S_\xi$.
\medskip

To prove Clause (vii)$^+$ we assume $\nu\in P^J_\omega$; we should prove that $\{\nu\rest\ell:\ell<\omega\} \subseteq \bigcup\limits_{m<\omega}  M_m \Rightarrow \nu \in \bigcup\limits_{n<\omega} M_n$,
but this union is equal to $M^*_{\delta(*)}$. 
So using $\alpha=\delta(*)$ above we have 
$(\xi,\delta)$ as there and one of the following cases occurs, and it 
suffice to check the implication in each of them.
\medskip

\noindent
\underline{Case 1}:  $\xi< \delta(*) \, \& \, \delta< \delta(*)$

Clearly $\Rang(\nu) \subseteq ((\xi+1)\cup \delta)$ and so $\nu\in
M^*_{((\xi+1)\cup\delta)+1}$ hence $\nu\in
M^*_{\delta(*)}=\bigcup\limits_{n<\omega} M_n$.
\medskip

\noindent
\underline{Case 2}:  $\xi \ge \delta(*)$

So even $\nu\restriction 1 \notin M_{\delta(*)}$ hence
\[
(\exists k<\omega)(\nu(k)\notin \bigcup\limits_{n<\omega} M_n \, \& \, 
\nu \restriction k\in \bigcup_{n<\omega} M_n)
\]

\noindent
\underline{Case 3}:  $\xi < \delta(*) \le \delta$

So as $\delta(*) \in S_{\zeta(*)} $ necessarily $\delta \notin S_{\zeta(*)}$, hence $\delta(*)< \delta$. Recall that $\nu = \eta_{\delta}$, so for some $k$ we have $(\forall \ell< k) \eta_\delta(\ell)< \delta(*)$ and $\eta_\delta(k)\geq \delta(*)$. So $\eta_\delta\restriction k \in
M^*_{\delta(*)}$, and $\eta_\delta(k) \ge \delta(*)$,  hence as $M^*_{\delta(*)} \cap \lambda = \delta(*)$ because $\delta(*) \in E$  we have $\eta_\delta\rest (k+1)\notin M^*_{\delta(*)}$, but $M^*_{\delta(*)}=\cup\{M_n:n<\omega\}$, so we are done.


%

\noindent
2) Compare with the earlier version \cite[2.7,pg.116,\S3]{Sh:136},  it is easier than the proof of part (3).

We are assuming $\chi = \chi^{\kappa}$, now 
there are subsets $A_{i}$ of $\chi$ for $i < 2^\chi$ \, ($i < \lambda$ 
is enough) such that (see \cite[3.13=L4.EK]{Sh:E62}, i.e. by 
Engelking-Karlowicz \cite{EK}):
\mn
\begin{enumerate}
\item[$(*)$]   if $w \subseteq \chi$ has cardinality $\le \kappa$ and  
$i \in \chi \setminus w$ then $A_{i} \nsubseteq 
\bigcup\limits_{j \in w} A_{j}$.
\end{enumerate}
\mn
Let $S^\zeta \subseteq \{\delta < \chi^+:\cf(\delta) = \aleph_{0}\}$  be
stationary pairwise disjoint for  $\zeta < \chi$.  For $i < 2^\chi$ let
$S_{i} = \bigcup\limits_{\zeta \in A_i} S^\zeta$ and
\begin{multline*}
    I_{i} = \{ \langle \, \rangle \} \cup \{ \langle i \rangle^{\char94} \nu \colon \nu \in {}^{\omega >}\lambda \} \cup \{ \langle i \rangle^{\char94} \eta \colon \eta \in {}^\omega(\chi^+) \text{ and }\\ \eta \text{ strictly increasing with limit } \sup\limits_{n < \omega} \eta(n) \in S_{i} \}.
\end{multline*}

We shall show that $\langle I_{i}:i < 2^\chi \rangle$ is as required, so for $j < 2^\chi$ let $J_j = \sum\limits_{i < 2^\chi,i \ne j} I_i$ and $\chi^*$ large enough, $x \in {\cH}(\chi^*)$, so for the rest of the proof of \ref{7.6}(2), 

\begin{enumerate}
    \item[$(\ast)$] we fix $i_{\ast} = i(\ast) < 2^{\chi}$ and will prove $I_{i(\ast)}$ is $(\kappa, \aleph_{0})$-super$^{6^{+}}$-unembeddable into $J_{i(\ast)}$.   
\end{enumerate}

By \cite[1.17(2)=La48(2), pg. 10]{Sh:E62} with $\chi^{+}$ here standing for $\lambda$ there, we can find a sequence $\langle N_{\eta}:\eta \in {\cT}
\rangle$, such that:
\begin{enumerate}
    \item[$\boxplus_{1} (a)$]  $\langle \rangle \in {\cT} \subseteq {}^{\omega >} (\chi^+)$,
    \sn
    \item[$(b)$]  $\nu \triangleleft \eta \in {\cT} \Rightarrow \nu \in {\cT}$,
    \sn
    \item[$(c)$]  $(\forall \eta \in {\cT})(\exists^{\chi^+} \alpha < 
    \chi^+)[\eta \char94 \langle \alpha \rangle \in {\cT}]$,
    \sn
    \item[$(d)$]  $N_{\eta} \prec ({\cH}(\chi^*),\in,<^*_{\chi}),
    \|N_\eta\|=\kappa,\kappa \subseteq N_\eta$,
    \sn
    \item[$(e)$]  $N_{\eta} \cap N_{\nu} = N_{\eta \cap \nu}$, where  $\eta \cap \nu$ is the maximal
      $\rho$ such that $\rho \trianglelefteq \eta \wedge \rho
      \triangleleft \nu$,
    \sn
    \item[$(f)$]  $\eta \in N_{\eta}$, 
    \sn
    \item[$(g)$]  $(\forall \theta)[\kappa = \kappa^\theta 
    \Rightarrow {}^\theta(N_\eta) \subseteq N_\eta]$,
    
    \sn 
    \item[$(h)$]  $\{i_{\ast}, \kappa, \chi, x, \langle S_{i} \colon i < 2^{\chi} \rangle , \langle S^{\varepsilon} \colon \varepsilon < \chi^{+} \rangle, \langle (A_{i}, J_{i}) \colon i < 2^\chi \rangle \} $ belongs to $N_{<>}$,
    
    \sn 
    \item[$(i)$] $N_{\eta} \cap \chi = N_{\langle \rangle} \cap \chi$. 
\end{enumerate}

Clearly $(2^{\chi} \setminus \{ i_{\ast} \}) \cap N_{\langle \rangle}$ has cardinality  $\leq \kappa$.

For each $\eta \in \lim ({\cT}) := \{\eta \in {}^\omega(\chi^+)$: if $n < \omega$ then $\eta \rest n \in \cT\}$, clearly $N_{\eta} :=  \bigcup\limits_{{\ell} < {\omega}} N_{\eta \rest \ell}$  has cardinality $\kappa$ so there is $\epsilon_\eta \in A_{i(\ast)} \setminus \bigcup \{ A_{j} \colon j \in 2^{\chi} \cap N_{\langle \rangle}$ but $j \neq i_{\ast} \}$ $(\subseteq \chi)$.  For each $\epsilon  < \chi$, let $Y_{\epsilon}$ be the set of $\eta \in \lim(\cT)$ such that: 
\begin{itemize}
    \item $\eta$ is strictly increasing with limit $\varepsilon$, and

    \item $\epsilon \in A_{i(\ast)}
\setminus \bigcup\{A_{j}: j \in N_{\eta} \text{ and } j \ne i(\ast)\}$.
\end{itemize}

Clearly $Y_\varepsilon$ is a closed subset of $\lim({\cT})$, (for the topology where $\cU \subseteq \lim(\cT)$ is open \underline{iff} for every $\eta \in \cT$, there is $n < \omega$ such that $(\forall \nu)(\eta \rest \nu \lhd \nu \in  \lim(\cT) \Rightarrow \nu \in \cU)$). Now those $\chi$ closed sets $\langle Y_\epsilon \colon \epsilon< \chi \rangle$ cover $\lim({\cT})$ as $\eta \in \lim(\cT) \Rightarrow \varepsilon_\eta$ well defined by a previous paragraph,  
so possibly shrinking $\cT$, by \cite[1.17(1)=La48(1), pg.~10]{Sh:E62}, without loss of generality $\epsilon_{\eta}  = \epsilon(\ast)$ for every $\eta \in \lim({\cT})$.  

Now the set
\[
C = \{\delta < \chi^+:\nu \in {}^{\omega >}\delta \Rightarrow
\sup(N_{\nu} \cap \chi^+) < \delta\}
\]
is a club of $\chi^+$ and we can find $\rho \in \lim({\cT})$, strictly increasing with limit $\delta \in C \cap S^{\epsilon(\ast)}$. 
Now let $M_n = N_{\rho \restriction n}$ and choose by induction on $n$ an ordinal $\alpha_n \in (M_{n+1} \setminus M_n) \cap \chi^+$ and let $\eta = \langle \alpha_n: n<\omega\rangle$, and we can prove as in the
proof of part (1) of \ref{7.6} that it is as required.

\noindent
2A) Until $\boxplus_{1}$, we repeat the proof of part (2), but replace $\boxplus_{1}$ by:

\begin{enumerate}
    \item[$\boxplus_{2}$] there is $\bar{N}^{\ast} = \langle N_{\eta}^{\ast} \colon \eta \in \cT_{\ast} \rangle$ satisfying clauses (a)-(d), (f)-(h) of $\boxplus_{1}$ and: 

    \begin{enumerate}
        \item[(e$'$)] if $\nu \lhd \eta \in \cT_{\ast}$, \underline{then} $N_{\nu} \in N_{\eta}$ (so $N_{\nu} \prec N_{\eta}$).  
    \end{enumerate}
\end{enumerate}

As we waive $\boxplus_{1}$(e), there is no problem doing it. As above, we can find

\begin{enumerate}
    \item[$\boxplus_{3}$] there is $\bar{N} = \langle N_{\eta} \colon \eta \in \cT \rangle$ satisfying $\boxplus_{1}$ such that $\cT \subseteq \cT_{\ast}$ and $\bar{N}^{\ast} \in N_{\eta}$ for $\eta \in \cT$.   
\end{enumerate}

Clearly, 

\begin{enumerate}
    \item[$\boxplus_{a}$] if $\eta \in \cT$, then $\eta \in N_{\eta}^{\ast} \prec N_{\eta}$.  
\end{enumerate}

Continuing the proof of part (2), getting $\rho, \delta$, we let $M_{\eta} = N_{\rho \rest n}^{\ast}$, choose $\alpha_{n} \in M_{n + 1} \cap \chi^{+}$, $\sup(M_{\eta} \cap \chi^{+})$, $\eta = \langle \alpha_{n} \colon n < \omega \rangle$, we get the desired conclusion for the $(\lambda, \lambda, \kappa, \kappa)$-supper$^{7}$-stability-property. The ``super$^{7^{+}}$'' variant is similar.  

\noindent
3)-4) Choose an increasing continuous sequence $\langle \lambda_{i}:i <
\cf(\lambda)\rangle$ of cardinals, such that:
\begin{enumerate}
\item[$(a)$]  $\lambda = \sum\limits_{i < \cf(\lambda)} \lambda_{i}$
\sn
\item[$(b)$] $i$ non-limit $\Rightarrow \lambda_{i} = \mu^+_{i} \ \&
  \, \mu^\kappa_{i} = \mu_{i}$ for some $\mu_{i} > \mu$,
\sn
\item[$(c)$]   $\lambda_{0} > \mu^\kappa + \cf(\lambda)$.
\end{enumerate}
\mn
Choose further for any
\[
\delta \in S =: \{i:i < \cf(\lambda) \text{ and } \cf(i) = \aleph_{0}\}
\]

a sequence $\langle \lambda_{\delta,n}:n < \omega \rangle$ such that:
\[
\lambda_{\delta,n} \in \{\lambda_{j+1}:j < \delta\} \text{  and }
\lambda_{\delta,n} < \lambda_{\delta,n+1} \text{ and }
\lambda_{\delta} = \sum\limits_{n<\omega} \lambda_{\delta,n}.
\]

\mn
For $\delta \in S,$ let $\gs^0_{\delta}$ be the family of sequences $\bar N =
\langle N_{\eta}:\eta \in \cT \rangle$ satisfying:
\mn
\begin{enumerate}
\item[$(A)$]  ${\cT}$ is a subset of $\bigcup\limits_{n < \omega} \, \prod\limits_{\ell < n} \lambda_{\delta,\ell}$, closed under
initial segments, $\langle \rangle \in {\cT},[\eta \in {\cT} \, \& \, 
\ell g(\eta) = n \Rightarrow (\exists^{\lambda_{\delta,n}} 
\alpha)(\eta \char94 \langle \alpha \rangle \in {\cT})]$.
\sn
\item[$(B)$]  for some countable vocabulary $\tau=\tau_{\bar N}$ where   $<_*$ belongs to $\tau$, each $N_{\eta}$ is a $\tau$-model with universe a bounded subset of 
$\lambda_{\delta}$ of 
cardinality $\kappa, \kappa +1 \subseteq N_{\eta},
\{\lambda_{\delta,n}:n<\omega\}\subseteq N_\eta, <_*^{N_{\eta}} =  < \rest N_{\eta},$  $N_{\eta \rest k} \prec N_{\eta}$ and $N_{\eta} \cap
N_{\nu} \prec N_{\eta}$ and\footnote{if $\eta\in {\cT} \Rightarrow N_\eta \prec {\gC}$ and ${\gC}$ has Skolem 
functions then $N_\eta \cap N_\nu \prec N_\eta$ follows, we can add 
$\kappa^\theta=\kappa\Rightarrow [N_\eta]^{\le \theta}\subseteq N_\eta$}
$(\forall \ell < \ell g(\ell))[ \eta(\ell) \in N_{\eta}$ and $\eta \in \lim(\cT) \Rightarrow \sup \{ N_{\eta \rest n} \colon n < \omega \} = \delta]$.
\end{enumerate}

For ${\cT}$ as in clause (A), recall
\begin{equation*}
    \begin{array}{clcr}
    \lim({\cT}) = \{\eta:&\eta \text{ an } \omega\text{-sequence such that 
    every proper initial} \\
     &\text{ segment of } \eta \text{ is in } \cT \}.
    \end{array}
\end{equation*}

For a given $\bar N \in {\gs}^0_\delta$, and $\eta \in \lim({\cT})$ 
 we use freely $N_{\eta}$ as $\bigcup\limits_{\ell < \omega} 
N_{\eta \rest \ell}$, (clearly still $N_{\eta}$ is a $\tau$-model of
cardinality $\kappa$ with universe $\subseteq \lambda_{\delta},
\kappa + 1 \subseteq N_{\eta}$ and $N_{\eta \rest \ell} \prec N_{\eta})$.

Recall that $\eta \cap \nu$ is the largest common initial segment of $\eta$ and $\nu$.

Let $\gs^1_{\delta,\mu}$ be the family of $\bar N = \langle N_{\eta}:\eta
\in {\cT}\rangle$ satisfying (in addition to being in
$\gs^0_{\delta}$):
\mn
\begin{enumerate} 
\item[$(C)$]  $(i) \quad$ if $\eta,\nu \in \lim({\cT})$ 
then\footnote{this simplifies the clause before last in $(B)$ above}
$N_{\eta} \cap N_{\nu} = N_{\eta \cap \nu}$
\sn
\item[${{}}$]  $(ii) \quad$ if $\eta,\nu \in {\cT}$ and 
$\Rang(\eta) \subseteq N_{\nu}$ then $\eta\trianglelefteq \nu$
\sn
\item[${{}}$]  $(iii) \quad N_{\eta} \cap \mu = N_{<>} \cap \mu$.
\end{enumerate}

\noindent
Before finishing the proof of \ref{7.6}, we prove the following subfacts:
\begin{subf}
\label{7.6A}
(Recall $\lambda$ be strong limit singular, $\cf(\lambda)> \kappa$.)
Suppose $M^*$ is a model with countable vocabulary and universe 
$\lambda$ and $<_*^{M^*} = < \rest \lambda$.  \Then \, for some club $C$
of $\cf(\lambda)$, for every $\delta \in S \cap C$ we have:
\mn
\begin{enumerate}
\item[$(*)^\delta_{1}$]   for some $\langle N_{\eta}:\eta \in
{\cT} \rangle \in \gs^0_{\delta}$ we have:
\[
\text{ for every } \eta \in \cT, \quad N_{\eta} \prec M^*.
\]
\end{enumerate}
\end{subf}

\begin{PROOF}{\ref{7.6A}}  
Define a function $f$ from $^{\omega >}\lambda$ to 
$\{A:A \subseteq \lambda,|A| = \kappa < \cf(\lambda)\}$ by:  $f(\eta)$ is the
(universe of the) Skolem Hull of ($\Rang(\eta)) \cup \{i:i \le \kappa\}\cup
\{\langle \lambda_i:i < \cf(\lambda)\rangle,\langle \lambda_{\delta,n}:\delta
\in S,n < \omega\rangle\}$ in $({\cH}(\chi^*),\in,<_{\chi^*})$.
Now apply \cite[1.23=L1.17]{Sh:E62}.

\begin{subf}
\label{7.6B}
In \ref{7.6A} we can strengthen $(*)^\delta_{1}$ to:
\mn
\begin{enumerate}
\item[$(*)^\delta_{2}$]  for some $\langle N_{\eta}:\eta \in \cT \rangle \in
\gs^1_{\delta,\mu}$ we have:
\[
\text{ for every } \eta \in {\cT},N_{\eta} \prec M^*.
\]
\end{enumerate}
\end{subf}

\begin{PROOF}{\ref{7.6B}}
Let $\langle N_{\eta}:\eta \in {\cT}\rangle$ be a member of
$\gs^0_{\delta}$ satisfying $\eta \in \cT$ implies $N_{\eta} \prec M^*$.

We now will apply \cite[1.19=La54]{Sh:E62} with $|N_\eta|$ here 
standing for $A_n$ there.

So there is ${\cT}' \subseteq {\cT}$ such that $\langle N_{\eta}:\eta \in 
\cT'\rangle$  is a $\Delta$-system; i.e. 
\mn
\begin{enumerate}
\item[$(i)$]  ${\cT}' \subseteq {\cT}$ satisfies (A)
\sn
\item[$(ii)$]  there is a function ${\bf h}$ with domain ${\cT}' \times \omega
\times \omega$ such that for all incomparable  $\eta,\nu \in {\cT}'$ we
have:
\[
N_{\eta} \cap N_{\nu} = {\bf h}(\eta \cap \nu,\ell g(\eta),\ell g(\nu)).
\]
\end{enumerate}
\mn
Let
\[
{\bf h}^+(\eta) := \bigcup\limits_{n,m > \ell g(\eta)}{\bf h}(\eta,n,m) 
\]
so $\mathbf h^+(\eta)$ is a subset of $\lambda_\delta$ of cardinality
$\kappa$; as $[\eta \triangleleft \nu \Rightarrow N_{\eta} \prec
  N_{\nu}]$ clearly:
\mn
\begin{enumerate}
\item[$(*)$]  if $\eta \ne \nu \in \lim (\cT')$ then
$\mathbf h^+(\eta \cap \nu) = N_{\eta} \cap N_\nu$.
\end{enumerate}
\mn
As $M^*$ has definable Skolem functions, if $\eta,\nu \in
\lim({\cT}')$ then
\[
M_{\eta \cap \nu} := N_{\nu} \rest \mathbf h^+(\eta \cap \nu) =
N_{\eta} \rest \mathbf h^+(\eta \cap \nu)
\]

\mn
is an elementary submodel of $N_{\eta},N_{\nu}$ 
(remember:  $<^{N_{\eta}} = < \rest N_{\eta}$ is a well ordering).  So
it is easy to check $\langle M_{\eta}:\eta \in {\cT}'\rangle$
is almost as required.  The missing point is $M_{\eta} \cap \mu = M_{<>} \cap
\mu$ for every $\eta \in \cT'$.  As $\langle N_{\eta \char 94 \langle
  \alpha \rangle}:\eta \char 94 \langle \alpha \rangle \in
\cT'\rangle$ are pairwise disjoint and $\lambda_{\delta,\ell g(\eta)}
> \mu$ for some $\alpha_\eta < \lambda_{\delta,\ell g(\eta)}$ we have
$\eta \char 94 \langle \alpha_{\eta} \rangle \in \cT' \Rightarrow N_{\eta \char
  94 \langle \alpha_{\eta} \rangle} \cap \mu = N_\eta \cap M$.  So
by throwing away enough members of ${\cT}'$  (i.e. we choose $\{\nu\in
{\cT}':\ell g(\nu) = n\}$ by induction on $n$) we can manage. 
\end{PROOF}

\begin{subf}
\label{7.6C}
We can find $\langle \eta^{\delta,\alpha},\langle M^{\delta,\alpha}_{n}:n <
\omega \rangle:\delta \in S,\alpha < 2^{\lambda_{\delta}} \rangle$
such that:
\mn
\begin{enumerate}
\item[$(i)$]   $M^{\delta,\alpha}_{n}$ is a model of power $\kappa$,
countable vocabulary $\subseteq {\cH}(\aleph_0)$ including the 
predicate $<_*$, universe including $\kappa + 1$ and being included in
$\lambda_{\delta}$
\sn
\item[$(ii)$]   $M^{\delta,\alpha}_{n} \prec M^{\delta,\alpha}_{n+1}$
and $M^{\delta,\alpha}_n$ is a proper initial segment of 
$M^{\delta,\alpha}_{n+1}$
\sn
\item[$(iii)$]  $M^{\delta,\alpha}_{n} \cap \mu =
  M^{\delta,\alpha}_{0} \cap \mu$
\sn
\item[$(iv)$]  $\eta^{\delta, \alpha} \in \prod\limits_{n}
  \lambda_{\delta,n}$ and $\eta^{\delta,\alpha} \rest (n+1)$  belongs to
$M^{\delta,\alpha}_{n+1}$ but not to $M^{\delta,\alpha}_{n}$
\sn
\item[$(v)$] for some increasing sequence $\bar{k}_{\delta, \alpha} = \langle k_{\delta, \alpha}(\ell) \colon \ell < \omega \rangle$ from ${}^{\omega} \omega$ we have $\bigcup\limits_{\ell < n} \lambda_{\delta,k_{\delta,\alpha}(\ell)} 
< \eta^{\delta,\alpha}(n) < \lambda_{\delta,k_{\delta,\alpha}(n)}$
[hence $\lambda_{\delta} = \bigcup\limits_{n} 
\eta^{\delta,\alpha}(n)$],
\sn
\item[$(vi)$]  if $\alpha < \beta< 2^{\lambda_\delta}$ and 
$\delta \in S$ then for some $m< \omega$ we have
\[
(\bigcup\limits_{n < \omega} M^{\delta,\beta}_n) \cap
(\bigcup\limits_{n < \omega} M^{\delta,\alpha}_n) \subseteq M^{\delta,\beta}_m
\]
\end{enumerate}
\mn
hence
\mn
\begin{enumerate}
\item[$(vii)$]  for $\alpha < 2^{\lambda_\delta},\delta\in S$ we have
$\sup(M^{\delta,\alpha}_n \cap \lambda_{\delta,n}) = \sup(M^{\delta,
\alpha}_{n+1} \cap \lambda_{\delta,n})$
\sn 
\item[$(viii)$]  if $M^*$ is a model with countable vocabulary 
$\subseteq {\cH}(\aleph_0)$ and universe $\lambda$ with 
$<_*^{M^*} = < \rest \lambda$ then for some
\footnote{really for a club of  $\delta \in S$ for ``many"  $\alpha <
2^{\lambda_{\delta}}$ this holds}
$\delta \in S$ and $\alpha < 2^{\lambda_{\delta}}$ we have 
$\bigwedge_{n} M^{\delta,\alpha}_{n} \prec M^*$
\sn
\item[$(ix)$]  if $\delta\in S,\alpha \ne \beta$ are $< 2^{\lambda_\delta}$
then $\{\eta^{\delta,\alpha}\rest n:n<\omega\} \nsubseteq 
\cup\{M^{\delta,\beta}_n:n <\omega\}$.
\end{enumerate}
\end{subf}

\begin{PROOF}{\ref{7.6C}}
Straightforward from \ref{7.6B} (and diagonalizing).
\end{PROOF}

\noindent
\underline{Proof of \ref{7.6}(3)}:  Should be clear now using \cite[1.16=La45]{Sh:E62} and similar to \cite[Th.~2.6, pg.~113]{Sh:1116}  
(and see the proof of \ref{7.8}(1) below).

\begin{PROOF}[Proof of \ref{7.6}(4)]{\ref{7.6}}
     Similar (and not used for \ref{7.11}). 
\end{PROOF}
\end{PROOF}
\end{PROOF}

\noindent
For our main conclusion \ref{7.11} we shall not actually use \ref{7.7}
(as other cases  cover it).

\begin{claim}
\label{7.7}
Suppose $\lambda$ is singular,  $\mu < \lambda$ and for
arbitrarily large $\theta < \lambda $ at least one of the conditions
$(*)^1_\theta$, $(*)^2_\theta$ below holds.
\Then \,  $K^\omega_{\tr}$ has the full 
$(\lambda,\lambda,\mu,\mu)$-super$^7$-bigness property
\mn
\begin{enumerate}
\item[$(*)^1_{\theta}$]  $\theta$ singular, 
$\pp(\theta) > \theta^+$ (see Definition \cite[3.16=Lprf.2]{Sh:E62})
\sn
\item[$(*)^2_{\theta}$]  there is a set $\ga$ of regular cardinals 
$< \theta$ unbounded below $\theta,|\ga| < \theta$,  such that $\partial <
\theta \Rightarrow \max \pcf(\ga \setminus \partial) > \theta^+.$
\end{enumerate}
\end{claim}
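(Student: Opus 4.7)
The plan is to mimic the proof of \ref{7.6}(3), replacing the strong-limit hypothesis on $\lambda$ by the pcf assumption. First I would note that by standard pcf theory (see \cite[Ch.IX]{Sh:g} and \cite{Sh:E62}) condition $(*)^1_\theta$ for singular $\theta$ already entails $(*)^2_\theta$: from $\pp(\theta) > \theta^+$ one extracts a set $\ga$ of regular cardinals below $\theta$, cofinal in $\theta$, with $|\ga|=\cf(\theta)<\theta$ and $\max\pcf(\ga\setminus\sigma)\ge\pp(\theta)>\theta^+$ for every $\sigma<\theta$. So without loss of generality $(*)^2_\theta$ holds for arbitrarily large $\theta<\lambda$. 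Choose a strictly increasing sequence $\langle\theta_i:i<\cf(\lambda)\rangle$ of such $\theta$'s, cofinal in $\lambda$, with $\theta_0>\mu+\cf(\lambda)$. Let $S=\{\delta<\cf(\lambda):\cf(\delta)=\aleph_0\}$ and for each $\delta\in S$ fix an $\omega$-sequence $\langle\theta_{\delta,n}:n<\omega\rangle$ cofinal in $\theta_\delta$ drawn from $\{\theta_{j+1}:j<\delta\}$, together with $\ga_\delta$ witnessing $(*)^2_{\theta_\delta}$.

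Second, using $\max\pcf(\ga_\delta\setminus\sigma)>\theta_\delta^+$ for every $\sigma<\theta_\delta$, I would build, for each $\delta\in S$, a family of $\theta_\delta^+$ many ``almost disjoint'' increasing $\omega$-chains of elementary submodels of cardinality $<\theta_\delta$. This replaces subfact \ref{7.6C}, with the bound $2^{\lambda_\delta}$ weakened to $\theta_\delta^+$. Concretely, the pcf hypothesis yields in $\prod\ga_\delta$ a $<_{J_{<\theta_\delta^+}[\ga_\delta]}$-increasing sequence $\langle f_\zeta:\zeta<\theta_\delta^+\rangle$ with no exact upper bound below $\theta_\delta^+$, so from each cofinal branch through a suitable tree of partial submodels one extracts $\langle M^{\delta,\alpha}_n:n<\omega\rangle$ whose union covers a cofinal subset of $\theta_\delta$, and whose ``end segments'' are determined by $f_\zeta\rest \ga_\delta$ on a set in the $\pcf$-ideal; two distinct branches $f_{\zeta_1},f_{\zeta_2}$ diverge on a positive set, so the intersection of the corresponding unions $\bigcup_n M^{\delta,\alpha_\ell}_n$ is swallowed by a single $M^{\delta,\alpha_\ell}_m$. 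This is exactly clause (vi) of \ref{7.6C}.

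Third, once the analogue of subfact \ref{7.6C} is in place with $2^{\lambda_\delta}$ replaced by $\theta_\delta^+$, I would mimic the construction and the verification in \ref{7.6}(3) and \ref{7.6}(1). Fix a partition $\langle S^\zeta:\zeta<\lambda\rangle$ of $\bigcup_{\delta\in S}(\{\delta\}\times\theta_\delta^+)$ with each $S^\zeta$ ``stationary'' in the diagonal sense, and set
\[
I_\zeta := {}^{\omega >}\lambda \cup \{\eta^{\delta,\alpha}:(\delta,\alpha)\in S^\zeta\}.
\]
Given $\zeta(*)<\lambda$, $I=I_{\zeta(*)}$ and $J=\sum_{\zeta\ne\zeta(*)} I_\zeta$, and given $\chi^*$, $x$, clause (viii) of the analogue of \ref{7.6C} lets us pick $(\delta(*),\alpha(*))\in S^{\zeta(*)}$ with $M^{\delta(*),\alpha(*)}_n\prec({\cH}(\chi^*),\in,<^*_{\chi^*})$ for all $n$; set $\eta=\eta^{\delta(*),\alpha(*)}$. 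Clauses (i)--(iv), (v)$^+$, (vi)$^+$ of Definition \ref{7.3}(G) follow from properties (i)--(v) of \ref{7.6C} together with the stationarity; for clause (vii), any $\nu\in P^J_\omega$ either has its ``level'' outside our chosen $\delta(*)$ (so $\nu$ or $\nu\rest 1$ is swallowed below some $M_m$ or escapes outright), or lies in some other piece $S^{\zeta}\cap(\{\delta(*)\}\times\theta^+_{\delta(*)})$, in which case clause (vi) of the analogue of \ref{7.6C} gives the needed $m$ with $\{\nu\rest\ell\}\cap\bigcup_n M_n\subseteq M_m$.

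The hard step is the second: extracting from $\max\pcf(\ga_\delta\setminus\sigma)>\theta_\delta^+$ a tower of submodels whose branches realize the diagonal clause (vi) of \ref{7.6C}. This is where pcf genuinely replaces the counting argument $\mu_i^\kappa=\mu_i$ available in the strong-limit case, and it leans on the standard construction of internally approachable continuity systems from \cite[Ch.III,VIII]{Sh:g} together with the fact that two cofinal branches of the pcf tower have almost-disjoint generators in $J_{<\theta_\delta^+}[\ga_\delta]$. Everything else is a straightforward transcription of the proof of \ref{7.6}(3), and indeed this is the reason the authors mark \ref{7.7} as not needed for the main conclusion \ref{7.11}.
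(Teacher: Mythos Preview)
Your approach differs substantially from the paper's, and there is a real gap.

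The paper's route is much shorter and does not try to imitate \ref{7.6}(3). Instead it reduces the pcf hypothesis $(*)^2_\theta$ (after passing to $\cf(\theta)=\aleph_0$ and shrinking $\ga$ to order type $\omega$) to an elementary ladder-system property $(*)^3_{\theta^+}$: there is a stationary $S\subseteq\{\delta<\theta^+:\cf(\delta)=\aleph_0\}$ and increasing $\omega$-sequences $\eta_\delta\to\delta$ such that for every $\alpha<\theta^+$ some $h:S\cap\alpha\to\omega$ makes the tails $\{\eta_\delta\rest\ell:\ell>h(\delta)\}$ pairwise disjoint. This single reduction (quoted from \cite{Sh:E62}) is the only genuine pcf step. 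One then picks a cofinal sequence of regular cardinals $\theta_i<\lambda$ each satisfying $(*)^3_{\theta_i}$, partitions each $S_i$ into $\theta_i$ stationary pieces $S_{i,\alpha}$, sets $I_\alpha={}^{\omega>}\lambda\cup\{\eta^i_\delta:\delta\in S_{i,\alpha}\}$, and verifies exactly as in the regular-cardinal proof of \ref{7.6}(1)/\ref{7.14} Case~1, carried out inside the single regular $\theta_i$: the chain $\langle M^*_\beta:\beta<\theta_i\rangle$ is built fresh with $\langle M^*_\gamma:\gamma\le\beta\rangle\in M^*_{\beta+1}$, so $M_n\in M_{n+1}$ holds automatically and super$^7$ drops out.

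Your plan, by contrast, runs the singular-strong-limit machinery of \ref{7.6}(3), replacing the count $2^{\lambda_\delta}$ by $\theta_\delta^+$. Two problems. First, \ref{7.6}(3) only proves super$^6$: the chains $\langle M^{\delta,\alpha}_n:n<\omega\rangle$ of \ref{7.6C} are fixed \emph{before} $\chi^*,x$ are given, and nothing in clauses (i)--(viii) there yields $M_n\in M_{n+1}$, which is clause (ii) of Definition \ref{7.3}(G$^-$) and is required for super$^7$. So even a perfect pcf-analogue of \ref{7.6C} lands you at super$^6$, not the claimed super$^7$. Second, your step~2 is where the content lives, and it is not carried out: a $<_{J_{<\theta_\delta^+}[\ga_\delta]}$-increasing $\theta_\delta^+$-sequence in $\prod\ga_\delta$ does not by itself produce $\theta_\delta^+$ chains of submodels satisfying the diagonal clause (vi) and the guessing clause (viii) of \ref{7.6C}; you would need a genuine construction here, and the phrase ``no exact upper bound below $\theta_\delta^+$'' is not the right pcf statement in any case. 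The paper sidesteps all of this by localizing to a single regular $\theta^+$, where the witnessing chain is built fresh for each $\chi^*,x$ and the only combinatorial input is the almost-disjoint ladder system $(*)^3_{\theta^+}$.
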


\begin{PROOF}{\ref{7.7}}
First, by \cite[3.22=Lpcf.8]{Sh:E62} we have $(*)^1_{\theta} \Rightarrow 
(*)^2_{\theta}$, second, by \cite[3.20=Lpcf.6a]{Sh:E62} \wilog \, 
$\cf(\theta)=\aleph_0$; third, by \cite[3.22=Lpcf.8]{Sh:E62} 
\wilog \, ${\ga}$ has order type $\omega$ and 
$J$ is the ideal of bounded subsets of ${\ga}$, lastly by 
\cite[3.10=Lpcf.1]{Sh:E62} (and easy manipulation) $(*)^2_{\theta} 
\Rightarrow (*)^3_{\theta^+}$, where
\begin{enumerate}
\item[$(*)^3_{\partial}$]  $\partial$ regular, there is a stationary 
$S \subseteq \{\delta < \partial:\cf(\delta) = \aleph_{0}\}$ 
and $\eta_{\delta}$,  an increasing $\omega$-sequence converging to
$\delta$, for $\delta \in S$, such that for every 
$\alpha < \partial$, for some $h:S \cap \alpha \rightarrow \omega$ we have:
$\{\eta_{\delta} \rest \ell:h(\delta) < \ell < \omega\}:
\delta \in S \cap \alpha\}$ are pairwise disjoint.
\end{enumerate}
\mn
So assume $\langle \theta_{i}:i < \cf(\lambda)\rangle$ is strictly increasing,

\[
\mu < \theta_{i} < \sum\limits_{j< \cf\lambda}\theta_{j} = \lambda,
\]

\mn
each $\theta_{i}$ regular and for each $i,\langle \eta^i_{\delta}:
\delta \in S_{i}\rangle$ is as required in $(*)^3_{\theta_{i}^+}$.
Let $\langle S_{i,\alpha }:\alpha < \theta_{i}\rangle$ be a partition 
of $S_{i}$ to (pairwise disjoint) stationary sets.
For $\bigcup\limits_{j < i} \theta_j \le \alpha < \theta_{i}$, let 
$I_{\alpha} = {}^{\omega >}\lambda \cup \{\eta^i_{\delta}:\delta \in 
S_{i,\alpha}\}$. The rest is as in  \ref{7.6}(1) above (or the proof of \ref{7.14}, Case 1 below).
\end{PROOF}

\begin{remark}
\label{7.7d}
In \ref{7.7} we can use $(*)^3_\partial,\partial$ regular arbitrarily
large $< \lambda$.
\end{remark} 
\newpage

\section{Further cases of super unembeddability}

\begin{lemma}
\label{7.8}
1) Suppose $\lambda \ge \mu^+ + \chi^{+2}$  \then \, $K^\omega_{\tr}$ has the full
$(\chi^{\aleph_{0}},\lambda,\mu,\mu)$-super-bigness property.

\noindent
2) In addition $K^\omega_{\tr}$ has the full 
$(\chi^{\aleph_{0}},\lambda,\mu,\mu)$-super$^5$-bigness property (with $\dot{D} =
D^{\cbe}_{\omega} = \{A \subseteq \omega$: every large enough even
number belongs to $A\}$).

\noindent
3) In part (2), we can add in Definition \ref{7.3}, Case $E$ the
 requirement:
\mn
\begin{enumerate}
\item[$\circledast$]  if $\nu \in P^I_{\omega} \cup P^J_{\omega}$ 
and $\{\nu \rest k:k < \omega\} \subseteq M_{n}$ then $\nu \in M_{n}$.
\end{enumerate}
\end{lemma}

\noindent
This claim will be proved later (after \ref{7.8I1}). Towards this we develop ``guessing of clubs'' in $ZFC$, in fact for
this it  was introduced.
\begin{claim}
\label{7.8A}
Suppose $\kappa,\lambda$ are regular cardinals, $\kappa^+ < \lambda$, and

\[
S \subseteq \{\delta < \lambda:\cf(\delta) = \kappa\}
\]

\mn
is a stationary subset of $\lambda$.

\Then \, we can find $\langle C_{\delta}:\delta \in S\rangle$ such
that:
\mn
\begin{enumerate}
\item[$(a)$]  $C_{\delta}$ is a club of $\delta$ of order type $\kappa$ (if
$\kappa = \aleph_{0},C_{\delta}$ is just an unbounded subsetd of $\delta$ and
$\otp(C_{\delta}) = \omega)$
\sn
\item[$(b)$]  for every club $C$ of $\lambda$, the set $\{\delta \in
S:C_{\delta} \subseteq C\}$ is stationary.
\end{enumerate}
\end{claim}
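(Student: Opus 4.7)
The plan is to argue by contradiction. Assume no club-guessing sequence exists on $S$. Starting from any candidate $\bar C^0 = \langle C^0_\delta : \delta \in S\rangle$ with $C^0_\delta$ a club of $\delta$ of order type $\kappa$, I would construct by recursion on $\alpha < \kappa^+$ a sequence of candidates $\bar C^\alpha = \langle C^\alpha_\delta : \delta \in S^\alpha\rangle$ together with witnessing clubs $E^\beta, F^\beta$ of $\lambda$ (at successor stages $\alpha = \beta+1$), satisfying: $S^\alpha \subseteq S$ is stationary and $\subseteq$-decreasing in $\alpha$; $C^\alpha_\delta$ is a club of $\delta$ of order type $\kappa$ for $\delta\in S^\alpha$; $C^\alpha_\delta \subseteq C^\beta_\delta$ for $\beta<\alpha$, with strict containment whenever $\alpha=\beta+1$.

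At a successor stage $\alpha=\beta+1$, the contradiction-assumption gives a club $E^\beta$ of $\lambda$ with $A^\beta := \{\delta\in S^\beta : C^\beta_\delta \subseteq E^\beta\}$ non-stationary, and a club $F^\beta$ disjoint from $A^\beta$. Put $S^{\beta+1} := S^\beta \cap F^\beta \cap \acc(E^\beta)$ and, for $\delta\in S^{\beta+1}$, $C^{\beta+1}_\delta := C^\beta_\delta \cap E^\beta$. Since $\delta\in\acc(E^\beta)$ makes $E^\beta\cap\delta$ a club of $\delta$, and $\cf(\delta)=\kappa$ matches the order type of $C^\beta_\delta$, the intersection $C^{\beta+1}_\delta$ is again a club of $\delta$ of order type $\kappa$; strictness follows from $\delta\in F^\beta$. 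At a limit stage $\alpha<\kappa^+$, set $C^\alpha_\delta := \bigcap_{\beta<\alpha} C^\beta_\delta$ and let $S^\alpha$ consist of those $\delta\in\bigcap_{\beta<\alpha} S^\beta$ for which this intersection is cofinal (hence a club) in $\delta$, whence it has order type $\kappa$.

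Once the construction can be continued through all $\alpha<\kappa^+$, the contradiction is easy: fix $\delta\in\bigcap_{\alpha<\kappa^+}S^\alpha$ (this set is stationary, hence nonempty); then $\langle C^\alpha_\delta:\alpha<\kappa^+\rangle$ is a strictly decreasing chain of subsets of $C^0_\delta$. Choosing $x_\alpha \in C^\alpha_\delta\setminus C^{\alpha+1}_\delta$ produces $\kappa^+$ distinct elements of a set of cardinality $\kappa$, absurd. Hence the construction must fail at some stage $\alpha<\kappa^+$, which means the only possible obstruction is that $\bar C^\alpha$ already guesses every club, proving the claim.

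The main obstacle is keeping $S^\alpha$ stationary at limit stages $\alpha$ with $\cf(\alpha)=\kappa$, because intersecting $\kappa$ clubs of $\delta$ with $\cf(\delta)=\kappa$ need not be cofinal in $\delta$. The hypothesis $\kappa^+<\lambda$ is used precisely here: the diagonal intersection $\triangle_{\beta<\alpha} E^\beta$ is a club of $\lambda$ (as $\alpha<\kappa^+<\lambda$), and for $\delta$ in this club the tails $E^\beta\cap\delta$ are sufficiently coherent that a pressing-down (Fodor's lemma) argument, applied to $\delta\mapsto\sup(\bigcap_{\beta<\alpha}E^\beta\cap\delta)$ on the would-be non-stationary remainder, gives a regressive function constant on a stationary set and contradicts the strict decrease at successor stages. (If $\cf(\alpha)<\kappa$, the intersection of $<\kappa$ clubs of $\delta$ is automatically a club, so no difficulty arises.) This Fodor-plus-diagonal-intersection step at the cofinality-$\kappa$ limits is what makes the argument go through within $ZFC$.
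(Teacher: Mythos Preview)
Your intersection approach works when $\kappa > \aleph_0$ --- and the paper notes exactly this in Remark~\ref{7.8A1} --- but it has a genuine gap when $\kappa = \aleph_0$, which is the main case needed in the applications. The difficulty is not at limit stages but already at the very first successor step: when $\cf(\delta) = \aleph_0$, the intersection of two clubs of $\delta$ need not be cofinal. Concretely, choose each $C^0_\delta$ to consist entirely of successor ordinals (always possible for an $\omega$-sequence) and let $E^0$ be the club of limit ordinals of $\lambda$; then $C^0_\delta \cap E^0 = \emptyset$ for every $\delta \in S$, so $C^1_\delta$ is undefined everywhere and $S^1$ would have to be empty. Your Fodor/diagonal-intersection patch is aimed at the wrong target: it addresses limits, but the construction already dies at stage $1$. (For $\kappa > \aleph_0$ this never happens, and in fact your limit-stage worry evaporates too: $\bigcap_{\beta<\alpha} C^\beta_\delta = C^0_\delta \cap \bigcap_{\beta<\alpha} E^\beta$, the right-hand intersection is a single club of $\lambda$ since $|\alpha|\le\kappa<\lambda$, and intersecting two clubs of $\delta$ with $\cf(\delta)>\aleph_0$ gives a club --- no Fodor needed.)

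The paper's fix is to replace intersection by downward projection. One keeps the initial $\langle C^*_\delta:\delta\in S\rangle$ fixed throughout and sets $C^\zeta_\delta := \{\sup(\alpha\cap E_\zeta):\alpha\in C^*_\delta,\ \alpha>\min E_\zeta\}$. For $\delta\in\acc(E_\zeta)$ this is automatically a cofinal subset of $\delta$ contained in $E_\zeta$, of order type $\le\kappa$, so the construction never collapses. The contradiction at the end is obtained differently too: for a single $\delta(*)\in S\cap\acc(\bigcap_{\zeta<\kappa^+} E_\zeta)$ and each fixed $\alpha\in C^*_{\delta(*)}$, the ordinals $\sup(\alpha\cap E_\zeta)$ form a non-increasing sequence in $\zeta<\kappa^+$ and hence eventually stabilize; since $|C^*_{\delta(*)}|=\kappa$, a single $\zeta(*)<\kappa^+$ works uniformly for all $\alpha$, giving $C^{\zeta(*)}_{\delta(*)} = C^{\zeta(*)+1}_{\delta(*)}$, which contradicts the choice of $E_{\zeta(*)+1}$.
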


\begin{PROOF}{\ref{7.8A}}
    Toward contradiction,  suppose that such $\langle C_{\delta}:\delta \in S\rangle$ does not
    exist.  Let  $\langle C^*_{\delta}:\delta \in S\rangle$ satisfy (a).  We
    choose $E_{\zeta}$ by induction on $\zeta < \kappa^+$ such that:
    \mn
    \begin{enumerate}
    \item[$(i)$]  $E_{\zeta}$ is a club of $\lambda,$ and $0 \notin E_{\zeta}$,
    \sn
    \item[$(ii)$]  $\xi < \zeta \Rightarrow  E_{\zeta} \subseteq E_{\xi}$,
    \sn
    \item[$(iii)$]  for no $\delta \in S$ does $C^\zeta_{\delta} \subseteq
    E_{\zeta +1}$ hence $\delta = \sup(E_{\zeta +1} \cap \delta)$ where
    \[
    C^\zeta_{\delta} := \{\sup(\alpha \cap E_{\zeta}):\alpha \in
    C^*_{\delta},\alpha > \min (E_{\zeta})\}.
    \]
    \end{enumerate}
    \mn
    For $\zeta = 0,\zeta$ limit: we have no problem.  For $\zeta = \xi + 1$, first define $C^\xi_{\delta}$ for $\delta \in S$ as in clause (iii). Then, let $E'_{\zeta}$ be the set of accumulation points of $E_{\xi}$, so clearly $\delta \in E'_\zeta \Rightarrow \delta = \sup(C_{\delta}^{\xi}) \wedge \kappa = \otp(C_{\delta}^{\xi})$. By our assumption toward contradictions, $\langle C_{\delta}^{\xi} \colon \delta \in E_{\zeta}' \cap S \rangle$ does not satisfy both properties (a) and (b), but obviously it satisfies clause (a), so it fails to satisfy clause (b) (it does not matter what we do for $\delta \in S \setminus E_{\zeta}'$). So for some club $E_\zeta''$ of $\lambda$, the set $A_{\zeta} = \{\delta \in E'_{\zeta} \cap S:C^\xi_{\delta} \subseteq E_\zeta''\}$ is not stationary, so  it is disjoint to some club $E_{\zeta}$ and without
    loss of generality $E_{\zeta}$ is a subset of $E_\zeta''\cap E'_{\zeta}$. So we have carried the induction. 
    
    In the end as $\kappa^{+} < \lambda = \cf(\lambda)$, clearly $E^+ = \bigcap\limits_{{\zeta} < {\kappa^+}}E_{\zeta}$ is a club of  $\lambda$, choose $\delta(*) \in S$ which is an accumulation point of $E^+$; so $\delta(*)\in E_\zeta\cap S$
    for every $\zeta < \kappa^+$. Now for each  $\alpha \in C_{\delta(*)}^{\ast}$, which is $ > \min(E^+)$, the sequence
    \[
    \langle \sup (\alpha \cap E_{\zeta}):\zeta < \kappa^+\rangle
    \]
    is a non-increasing sequence of ordinals
    $\le \alpha$,  hence is eventually constant.  As $\kappa^+$ is regular $>
    \kappa = |C_{\delta(*)}^{\ast}|$, for some  $\zeta(*) < \kappa^+$,  for
    every $\zeta \in [\zeta(*),\kappa^+)$ and $\alpha \in C^*_{\delta(\ast)}$, which is above $\min(E^{+})$, we have
    \[
    \sup(\alpha \cap E_{\zeta}) = \sup(\alpha \cap E_{\zeta(*)}).
    \]
    
    Hence $C^{\zeta(*)}_{\delta(*)} = C^{\zeta(*)+1}_{\delta(*)}$, and 
    we get a contradiction to the choice of $E_{\zeta(*)+1}$.
\end{PROOF}

\begin{remark}\label{7.8A1}
    If $\kappa > \aleph_0$, the proof is simpler, just $C^\zeta_\delta 
    = C^*_\delta \cap E_\zeta$ is O.K.
\end{remark}

\begin{claim}\label{7.8B}
Suppose that in \ref{7.8A} we have also $\kappa < \theta = \cf(\theta)
< \lambda$; \then \, we can add
\mn
\begin{enumerate}
\item[$(c)$]  for some club $C$ of $\lambda$, if $\delta \in S \cap C,
\alpha \in C_{\delta}$ and $\alpha > \sup(\alpha \cap C_{\delta})$
then $\cf(\alpha) \ge \theta$.
\end{enumerate}
\end{claim}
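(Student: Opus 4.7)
The plan is to mimic the proof of Claim~\ref{7.8A} with the set $T := \{\alpha < \lambda : \cf(\alpha) \ge \theta\}$ built into every step. Since $\theta$ is regular with $\theta < \lambda$ and $\lambda$ is regular, $T$ is stationary in $\lambda$, so its accumulation set $C := \acc(T)$ is a club of $\lambda$; I will use this $C$ to witness clause~(c). First I replace $S$ by the stationary set $S \cap C$, so that $T \cap \delta$ is unbounded in $\delta$ for every $\delta \in S$. Using $\kappa < \theta$ together with this unboundedness, I choose the initial family $\langle C^*_\delta : \delta \in S\rangle$ so that each $C^*_\delta$ is a club of $\delta$ of order type $\kappa$ whose successor elements all lie in $T$: inductively pick $\gamma^\delta_i \in T \cap \delta$ above $\sup_{j<i}\gamma^\delta_j$ and take suprema at limit stages.

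Next I rerun the $\kappa^+$-step inductive construction of clubs $E_\zeta$ from the proof of \ref{7.8A}. The only change is that I replace the projection $C^\zeta_\delta = \{\sup(\alpha \cap E_\zeta) : \alpha \in C^*_\delta,\ \alpha > \min E_\zeta\}$ used there by
$$
C^\zeta_\delta := \{\sup(\alpha \cap E_\zeta \cap T) : \alpha \in C^*_\delta,\ \alpha > \min(E_\zeta \cap T)\},
$$
with $\delta$ restricted to the club $\acc(E_\zeta \cap T)$, which is a club of $\lambda$ because $E_\zeta \cap T$ is stationary (intersection of a club with a stationary set in a regular cardinal). Since each successor element $\alpha$ of $C^*_\delta$ lies in $T$, and $E_\zeta$ is chosen so that such $\alpha$ lies in $\acc(E_\zeta \cap T)$, the projection returns $\alpha$ itself, keeping the successor elements of $C^\zeta_\delta$ inside $T$. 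For $\alpha$ a limit element of $C^*_\delta$, the projected element is again a limit element of $C^\zeta_\delta$, so clause~(c) imposes no condition on it.

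The contradiction step of \ref{7.8A} then goes through essentially verbatim: for $\delta(*) \in S$ chosen as an accumulation point of $\bigcap_{\zeta<\kappa^+} E_\zeta$ and of each club $\acc(E_\zeta \cap T)$, the sequence $\langle \sup(\alpha \cap E_\zeta \cap T) : \zeta < \kappa^+\rangle$ is weakly decreasing for each fixed $\alpha \in C^*_{\delta(*)}$, hence stabilizes at some $\zeta_\alpha < \kappa^+$. A pigeonhole over the $\kappa$-many $\alpha$'s and $\kappa^+$ stages yields a common $\zeta(*) < \kappa^+$ at which $C^{\zeta(*)}_{\delta(*)} = C^{\zeta(*)+1}_{\delta(*)}$, contradicting the inductive choice of $E_{\zeta(*)+1}$ as spoiling the guessing of $\langle C^{\zeta(*)}_\delta : \delta \in S\rangle$.

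The main obstacle is guaranteeing that the modified projection $\sup(\alpha \cap E_\zeta \cap T)$ genuinely lies in $T$ when $\alpha$ is a successor element of $C^*_\delta$. Since no club of $\lambda$ can lie entirely inside $T$ (any club meets the stationary set of ordinals of cofinality $< \theta$), this cannot be achieved by constraining $E_\zeta$ to $T$ outright; instead one must coordinate the choice of $E_\zeta$ with the $T$-admissibility of $C^*_\delta$, arranging at each stage that the successor elements of $C^*_\delta$ already lie in $\acc(E_\zeta \cap T)$. This coordination is where the hypothesis $\kappa < \theta$ is essential, and it constitutes the only new combinatorial input beyond the argument of \ref{7.8A}.
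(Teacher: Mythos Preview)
Your proposal has a genuine gap that you yourself identify in the final paragraph but do not resolve. The set $T = \{\alpha : \cf(\alpha) \ge \theta\}$ is stationary but not closed, so $\sup(\alpha \cap E_\zeta \cap T)$ need not lie in $T$; your assertion that ``$E_\zeta$ is chosen so that such $\alpha$ lies in $\acc(E_\zeta \cap T)$'' is never justified, and indeed cannot be: the $E_\zeta$'s are shrinking clubs chosen to spoil guessing, and there is no mechanism to keep the (uncountably many, varying with $\delta$) successor elements of the $C^*_\delta$'s inside $\acc(E_\zeta \cap T)$ as $\zeta$ grows. Saying ``one must coordinate'' is not a proof.

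The paper takes a completely different route. Rather than trying to force the projected points into $T$, it accepts that successor points of the projection may have small cofinality and \emph{fills in below them}: for each $\delta \in S^+ := \{\alpha : \cf(\alpha) < \theta\}$ fix in advance a club $C^*_\delta$ of $\delta$ of order type $\cf(\delta)$; then define $C^{\zeta,0}_\delta$ as the usual projection, and inductively set $C^{\zeta,n+1}_\delta$ to be $C^{\zeta,n}_\delta$ together with projections of points of $C^*_\beta$ for each $\beta \in C^{\zeta,n}_\delta$ with $\cf(\beta) < \theta$, finally $C^\zeta_\delta = \bigcup_n C^{\zeta,n}_\delta$. This produces a closed set of size $< \theta$ (here regularity of $\theta$ is used) in which every non-accumulation point has cofinality $\ge \theta$ \emph{or} is already a non-accumulation point of $E_\zeta$; intersecting with $\acc(E_\zeta)$ then yields clause~(c). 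The outer induction runs for $\theta$ steps (not $\kappa^+$), and the stabilization argument uses $|C^{\zeta,n}_{\delta(*)}| < \theta$ level by level.
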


\begin{PROOF}{\ref{7.8B}}
    Let $\bar{C} = \langle C_{\delta} \colon \delta \in S \rangle$ be as in \ref{7.8A}. Let $S^+ =: \{\delta < \lambda:\cf(\delta) < \theta\}$  (so $S
    \subseteq S^+$).  For each $\delta \in S^+$ choose a club
    $C^*_{\delta}$ of $\delta$ of order type $\cf(\delta)$.
    Assume that the conclusion fails,   that is, we cannot find $\bar{C}'$ satisfying clauses (a), (b), (c). Above me may weaken the demand ``$\otp(C_{\delta}') = \kappa$'' as we can shrink the $C_{\delta}'$-s appropriately.  
    
    We define by induction on $\zeta < \theta,E_{\zeta}$ such that:
    \mn
    \begin{enumerate}
        \item[$(i)$]  $E_{\zeta}$ is a club of $\lambda,0 \notin E_{\zeta}$
        \sn
        \item[$(ii)$]  $\xi < \zeta$ implies $E_{\zeta} \subseteq E_{\xi}$
        \sn
        \item[$(iii)$] for no $\delta \in S$ do we have $C^\zeta_{\delta} \cap E_{\zeta} \subseteq E_{\zeta +1},\delta = \sup(C^\zeta_{\delta} \cap E_{\zeta})$ where $C_{\delta}^{\zeta} = C_{\delta}^{\zeta, \omega}$ and by recursion on $n  \leq \omega$, we define $C_{\delta}^{\zeta, n}$ as follows:
    
        \begin{enumerate}
            \item[(a)] $C^{\zeta,0}_{\delta} \coloneqq \{\sup (\alpha \cap E_{\zeta}):\alpha \in C_{\delta},\alpha > \min(E_{\zeta})\}$,
    
            \item[(b)] $ C^{\zeta,n+1}_{\delta} \coloneqq C^{\zeta, n}_{\delta} \cup \{\sup(\alpha \cap  E_{\zeta}) \colon  \text{ for some } \beta \in C^{\zeta,n}_{\delta} \text{ we have }
            \cf(\beta) < \theta \text{ and }
            \alpha \in C^*_{\beta}, \alpha > \min(E_{\zeta}) \text{ and } \alpha > \sup[C^{\zeta,n}_{\delta} \cap \beta]\}$, and 
    
            \item[(c)] $C^\zeta_{\delta} \coloneqq \bigcup\limits_{n < \omega} C^{\zeta,n}_{\delta}$.
        \end{enumerate}
    \end{enumerate}

    For $\zeta = 0,\zeta$ limit: we have no problems. For $\zeta = \xi +1$,  
    we first define $C^{\xi,0}_{\delta}$ and then $C^{\xi,n}_{\delta}$ (by induction on $n$) and lastly $C^\xi_{\delta}$. We can show by induction on $n$ that $C_{\delta}^{\xi, n} \subseteq E_{\zeta}$ and 
    \begin{enumerate}
        \item[$(\ast)_{0}$] $C_{\delta}^{\zeta, n}$ is a closed subset of $\delta$ of cardinality $< \theta$.
    \end{enumerate}
    
    Now, 
     
    \begin{enumerate}
        \item[$(\ast)_{1}$] 

        \begin{enumerate}
            \item[(a)] $C_{\delta}^{\xi}$ is of cardinality $< \theta$ (as $\aleph_{0} < \theta = \cf(\theta)$), 

            \item[(b)] if $\gamma = \sup(C_{\delta}^{\xi} \cap \gamma) < \delta$, then 

    \begin{itemize}
        \item $\gamma$ limit of cofinality $\leq \vert C_{\delta}^{\xi} \vert < \theta$, 

        \item $\langle \min(C_{\delta}^{\xi, n} \setminus \gamma) \colon n < \omega \rangle$ is $\leq$-decreasing, hence, 

        \item for some $n(\ast) < \omega$, we have: 
        $$ n \geq n(\ast) \Rightarrow \min (C_{\delta}^{\xi, n} \setminus \gamma) = \min(C_{\delta}^{\xi, n(\ast)}),$$ 

        \item so by the choice of $C_{\delta}^{\xi, n(\ast) + 1}$ necessarily $\gamma \in C_{\delta}^{\xi, n(\ast)}$. 
    \end{itemize}
        \end{enumerate}
    \end{enumerate}

    Together, 

    \begin{enumerate}
        \item[$(\ast)_{2}$] $C_{\delta}^{\xi}$ is a closed subset of $\delta$.  
    \end{enumerate}

    Hence, 

    \begin{enumerate}
        \item[$(\ast)_{3}$] if $\delta$ is an accumulation point of $E_{\delta}^{\xi}$, then $C_{\delta}^{\xi}$ is a club of $\delta$.  
    \end{enumerate}



    If ``for every club $E$ of $\lambda$ for some $\delta \in S \cap
    \acc(E_{\xi}),C^\xi_{\delta} \subseteq E"$ then we can shrink the 
    club $E$; i.e. deduce  $C^\xi_{\delta}$ is included
    in the set of accumulation points of $E \cap E_{\xi}$ hence
    $\langle C^\xi_\delta:\delta\in S \cap \acc(E_\xi)\rangle$
    satisfies ``for every club $E$ of $\lambda$ for some $\delta \in S
    \cap E_{\xi}$, we have $C^\xi_\delta \subseteq E$ and $(\forall
    \alpha)[\alpha \in C^\delta_{\xi} \, \& \, \alpha > \sup
     ( C_{\delta}^{\xi} \cap \alpha) \Rightarrow \cf(\alpha) \ge \theta]"$ 
    so the desired conclusion holds.
    
    Hence we can assume that for some club $E^1_{\zeta}$ of $\lambda$, for no
    $\delta \in  S \cap E^1_{\zeta} \cap \acc(E_{\xi})$ does $C^\xi_{\delta}
    \subseteq E^1_{\zeta}$; let $E_{\zeta}$ be the set of accumulation points of
    $E^1_{\zeta} \cap E_{\xi}$. So we have finished defining $C_{\delta}^{\xi, n}$ for $n < \omega$ and $C_{\delta}^{\xi}$. 
    
    So we have finished defining $\langle E_{\zeta} \colon \zeta < \theta \rangle$ and we choose $\delta(*) \in S$  a accumulation point of $\bigcap\limits_{\zeta < \theta} E_{\zeta}$. Again as in the proof of \ref{7.8A}, for some $\zeta(0) < \theta$, we have

    \begin{enumerate}
        \item[$(\oplus)$] $ \zeta(0) \le \zeta < \theta \Rightarrow C^{\zeta,0}_{\delta(*)} = C^{\zeta(*),0}_{\delta(*)}.$
    \end{enumerate}

    Similarly we can prove by induction on $n$ that for some $\zeta(n)< \theta$:

    \begin{enumerate}
        \item[$\oplus_{2}$] $\zeta(n) \le \zeta < \theta \Rightarrow C^{\zeta,n}_{\delta(*)} = C^{\zeta(*),n}_{\delta(*)}.$      
    \end{enumerate}
    
    Let $\zeta(*) = \bigcup\limits_{n < \omega} \zeta(n)$, and we 
    get contradiction as in the proof of \ref{7.8A}.
\end{PROOF}

\noindent
Recall (see \cite[3.8]{Sh:309} or \cite[0.4=L0.2A]{Sh:E12}).

\begin{definition}\label{7.8B1}
    1) We say $\bar C$ is a pre-partial square sequence of $\lambda$: (omitting $\lambda$ means $(\exists \lambda)[\bar{C}$ is a partial square of $\lambda]$) \when \,: $\bar C$ has the form $\langle C_\alpha:\alpha \in S\rangle$ and satisfies:
    \begin{enumerate}
    \item[$(a)$]  $S \subseteq \lambda$
    \sn
    \item[$(b)$]  $C_\alpha$ is a closed subset of $\alpha$
    \sn
    \item[$(c)$]  $C_\alpha \subseteq S$
    \sn
    \item[$(d)$]  if $\beta \in C_\alpha,\alpha \in S$ \then \, $C_\beta =
      C_\alpha \cap \beta$.
    \end{enumerate}

    2) We say $\bar{C}$ is partial square when in addition:
    
    \begin{enumerate}
        \item[$(e)$]  if $\alpha \in S$ is a limit ordinal \then \, $\alpha = \sup(C_\alpha)$.

        \item[(f)] $\otp(C_{\alpha}) < \alpha$ if $0 < \alpha \in S$.
    \end{enumerate}
\end{definition}

We next state a consequence of the failure of partial square (the reader may suspect the assumption (1) is vacuous, e.g., by \ref{7.8E}, but e.g. there we speak on successor cardinals).

\begin{conclusion}\label{7.8C}
    If $\lambda > \kappa$ are regular, \underline{then} (A) $\Rightarrow$ (B), where: 

    \begin{enumerate}
        \item[(A)] There are no pairs $(S^{+}, \bar{C})$ such that: 

        \begin{itemize}
            \item $S^{+} \subseteq \lambda$, 

            \item $\bar{C} = \langle C_{\delta} \colon \delta \in S^{+} \rangle$ is a partial square, and

            \item $\{ \delta \in S^{+} \colon \cf(\delta) = \kappa \}$ is stationary. 
        \end{itemize}

        \item[(B)] For every regular $\lambda_{1} \ge \lambda$ there is a stationary $S \subseteq \{\delta < \lambda^+_{1}:\cf(\delta) = \kappa\}$ which does not reflect in any $\delta < \lambda^+_{1}$ of cofinality $\lambda$, (really one $S\subseteq \lambda^+_{1}$ works for all such $\lambda,\kappa < \lambda < \lambda_{1}$).
    \end{enumerate}
\end{conclusion}

\begin{PROOF}{\ref{7.8C}}
    The case $\kappa=\aleph_0$ is trivial because clause (A) never holds. In detail, we can choose $\langle C_{\alpha} \colon \alpha < \lambda \text{ is a successor ordinal} \rangle$ such that: 

    \begin{enumerate}
        \item[$(\ast)$] 

        \begin{enumerate}
            \item[(a)] $C_{\alpha}$ is a finite subset of $\alpha$, 

            \item[(b)] $\beta \in C_{\alpha} \Rightarrow C_{\beta} = C_{\alpha} \cap \beta$, 

            \item[(c)] if $C$ is a finite subset of $\alpha$, then for some $\beta < \alpha + \vert \alpha \vert + \vert \alpha \vert$ we have $C_{\beta + 1} = C$. 
        \end{enumerate}
    \end{enumerate}

    [Why? By induction on $\alpha$ belonging to the club $E \coloneqq \{ \beta < \lambda \colon \beta$ is infinite divisible by $\vert \beta \vert\}$, we can choose $\langle C_{\beta + 1} \colon \beta < \alpha \rangle$ satisfying the above (actually is ``$\beta < \alpha + \vert \alpha \vert$'' suffice).]

    Let $S \coloneqq \{ \alpha \in \Ord \colon \cf(\alpha) = \aleph_{0} \wedge \alpha = \sup(E \cap \alpha) \}$ and 
    \[
    S^{+} \coloneqq \{ \alpha \in \Ord \colon \alpha \text{ is a successor or } \alpha \in S  \}.
    \]

    For $\alpha \in S$, let $\langle \beta_{\alpha, n} \colon n < \omega \rangle$ be increasing with limit $\alpha$, and choose $\gamma_{\alpha, n}$ by induction on $n < \omega$ increasing with $n$ such that $\gamma_{\alpha, n} \in ( \beta_{\alpha, n}, \alpha)$ is a successor ordinal and $C_{\gamma_{\alpha, n}} = \{ \gamma_{\alpha, m} \colon m < n  \}$ and lastly, let $C_{\alpha} = \{ \gamma_{\alpha, n} \colon n < \omega \}$. 
    
    So assume $\kappa>\aleph_0$. By clauses (c) and (d) of \cite[3.8(2)=L6.4,3.9=L6.4B]{Sh:309} in the arXiv version, \cite[4.8(2)]{Sh:309} in the published one, we can find $S^+\subseteq \lambda^+_1$ and a partial  square $\bar C=\langle C_\delta: \delta\in S^+\rangle$ such that $\delta\in S^+ \Rightarrow \otp(C_\delta)\le \kappa$, and
    \[
    S = \{\delta \in S^+:\otp(C_\delta)=\kappa\} = \{\delta\in S^+:
    \cf(\delta)=\kappa\} \text{ is stationary.}
    \]
    
    Towards a contradiction, assume that $S$ reflect in some $\delta,\cf(\delta)=\lambda$, let $\langle
    \alpha_\zeta \colon \zeta<\lambda\rangle$ be a strictly increasing continuous sequence
    with limit $\delta$.
    
    Let $S^+_\lambda=\{\zeta<\lambda:\alpha_\zeta \in S^+\}$ and for $\zeta\in S^+_\lambda$, $C^\lambda_\zeta=\{\epsilon<\zeta:
    \alpha_\epsilon\in C_{\alpha_\zeta}\}$; now $\langle C^\lambda_\zeta:
    \zeta\in S^+_\lambda\rangle$ essentially show that for $\lambda$ satisfying the assumption, the conclusion holds; note possibly for some $\zeta \in S^+_\lambda$ we have $\sup(C^\lambda_\zeta) < \zeta$ but then
    $\cf(\zeta) = \aleph_0$ and for no $\varepsilon \in S_{\lambda}^{+}$ do we have $\zeta \in C_{\varepsilon}^{\lambda}$. To correct this, let 
    \[S'_\lambda = \{\zeta\in
    S_\lambda \colon \zeta = \sup(C^\lambda_\zeta)\}\cup \{\zeta+1: \zeta \in S_\lambda \text{ and } \zeta > \sup(C^\lambda_\zeta)\}
    \] and redefine
    $C^\lambda_\zeta$ accordingly.
\end{PROOF}

\begin{claim}\label{7.8D}
    Suppose $\lambda = \theta^+,\theta$ regular uncountable, $S^0 = \{\delta < \lambda:\cf(\delta) = \theta\}$.  \Then \, we can find $\langle C_{\delta}:\delta \in  S^0\rangle$ such that:
    
    \begin{enumerate}
    \item[$(a)$]  $C_{\delta}$ is a club of $\delta$ of order type $\theta$,
    \sn
    \item[$(b)$]  for any club $E$ of $\lambda$, the set
    \[
    \{\delta \in S^0:\delta = \sup\{\alpha :\alpha \in C_{\delta},
    \alpha > \sup(\alpha \cap  C_{\delta}) \text{ and } \alpha \in E\}\}
    \]
    is stationary.
    \end{enumerate}
    \mn
    Equivalently, 
    \mn
    \begin{enumerate}
    \item[$(c)$]   for any club $E$ of $\lambda$
    \[
    \{\delta \in S^0:\{\zeta < \theta: \text{ the }(\zeta +1)
    \text{-th member of } C_{\delta} \text{ is in } E\} \text{ is an unbounded subset of } \theta \}
    \]
    
    is a stationary subset of $\lambda$.
    \end{enumerate}
\end{claim}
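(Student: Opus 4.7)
The plan is to adapt the diagonal iterative club-guessing argument of Claim~\ref{7.8A} to the critical case $\lambda = \theta^+$, where the hypothesis $\kappa^+ < \lambda$ of \ref{7.8A} is unavailable. Fix at the outset $\bar C^* = \langle C^*_\delta : \delta \in S^0 \rangle$ with each $C^*_\delta$ a club of $\delta$ of order type $\theta$, enumerated increasingly as $\alpha^*_\delta(\xi)$ for $\xi < \theta$; assume toward contradiction that no $\bar C$ satisfying (a) also satisfies (c).

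By recursion on $\zeta < \theta^+$ the plan is to build a decreasing sequence of clubs $E_\zeta$ of $\lambda$ together with candidate modifications $\bar C^\zeta = \langle C^\zeta_\delta : \delta \in S^0 \rangle$ obtained from $\bar C^*$ and $E_\zeta$ by a uniform closure operation: in spirit, $C^\zeta_\delta$ is the closure in $\delta$ of $\{\sup(\alpha^*_\delta(\xi) \cap E_\zeta) : \xi < \theta,\ \alpha^*_\delta(\xi) > \min E_\zeta\}$, iterated finitely many times in the style of Claim~\ref{7.8B} and trimmed so that $\otp(C^\zeta_\delta) = \theta$ on a club of $\delta \in S^0$. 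The failure of (c) for $\bar C^\zeta$ then yields a club $F_\zeta$ of $\lambda$ and a club $U_\zeta \subseteq S^0$ such that for $\delta \in U_\zeta$ the set of $\xi < \theta$ for which the $(\xi+1)$-th element of $C^\zeta_\delta$ lies in $F_\zeta$ is bounded in $\theta$; set $E_{\zeta+1} := E_\zeta \cap F_\zeta$. At intermediate limit stages take ordinary intersection (always a club, since every $\zeta < \theta^+$ has cofinality $\le \theta$), and at the end form $E^* := \triangle_{\zeta < \theta^+}(E_\zeta \cap U_\zeta)$, which is still a club of $\lambda$.

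Then pick $\delta^* \in S^0 \cap \acc(E^*)$. For each $\xi < \theta$ the sequence $\langle \sup(\alpha^*_{\delta^*}(\xi) \cap E_\zeta) : \zeta < \theta^+ \rangle$ is non-increasing, hence stabilizes at some $\zeta(\xi) < \theta^+$; by regularity of $\theta^+ > \theta$, one has $\zeta^* := \sup_{\xi<\theta}\zeta(\xi) < \theta^+$. From stage $\zeta^*$ onward $C^\zeta_{\delta^*}$ is constant, and since $\delta^* \in U_{\zeta^*}$ while by design of the closure operation the successor elements of the enumeration of the stable $C^{\zeta^*}_{\delta^*}$ are forced into $E_{\zeta^*+1} \subseteq F_{\zeta^*}$, this contradicts the defining property of $F_{\zeta^*}$. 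Clause (b) then follows from (c) immediately: the successor elements of the enumeration of $C_\delta$ are precisely those $\alpha \in C_\delta$ with $\alpha > \sup(\alpha \cap C_\delta)$, and their unboundedness in the enumeration forces cofinality in $\delta$ because $\otp(C_\delta) = \theta = \cf(\delta)$. The main obstacle is the first step: defining $\bar C^\zeta$ so that $\otp(C^\zeta_\delta) = \theta$ is preserved on a club of $\delta$'s \emph{and} the stabilization at $\delta^*$ genuinely places the successor elements of $C^{\zeta^*}_{\delta^*}$ into the later club $E_{\zeta^*+1}$ rather than into some earlier $E_\zeta$; this parallels the $n$-fold iterated closure used in Claim~\ref{7.8B} but is tighter because we have no slack in $\otp(C^\zeta_\delta)$ to trim.
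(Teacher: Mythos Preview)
Your proposal has a genuine gap at the diagonal-intersection step. Iterating $\theta^{+}=\lambda$ times forces you to use $E^{*}=\triangle_{\zeta<\lambda}(E_\zeta\cap U_\zeta)$, so $\delta^{*}\in\acc(E^{*})$ only yields $\delta^{*}\in E_\zeta\cap U_\zeta$ for $\zeta<\delta^{*}$. Your stabilization index $\zeta^{*}=\sup_{\xi<\theta}\zeta(\xi)$ is a supremum of $\theta=\cf(\delta^{*})$ many ordinals, each below $\theta^{+}$; there is no reason whatsoever for this supremum to fall below $\delta^{*}$. Without $\zeta^{*}<\delta^{*}$ you cannot assert $\delta^{*}\in U_{\zeta^{*}}$ (nor even $\delta^{*}\in\acc(E_{\zeta^{*}+1})$), and the contradiction evaporates. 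The obstacle you flag at the end --- controlling $\otp(C^\zeta_\delta)$ --- is real but secondary; the length of the iteration is the fatal issue.

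The paper's proof iterates only $\omega$ times, so $E_\omega=\bigcap_{n<\omega}E_n$ is an honest club of $\lambda$ and any $\delta^{*}\in S^{0}\cap\acc(E_\omega)$ lies in every $E_n$. What makes $\omega$ steps sufficient is precisely that the target conclusion is weaker than in \ref{7.8A}: you need not $C_\delta\subseteq E$, only that unboundedly many successor points of $C_\delta$ lie in $E$. At $\delta^{*}$, the failure of $\bar C^{n}$ for $E_{n+1}$ gives a bound $\gamma_n<\delta^{*}$ above which no successor point of $C^{n}_{\delta^{*}}$ meets $E_{n+1}$; since $\cf(\delta^{*})=\theta>\aleph_0$, one has $\gamma:=\sup_{n}\gamma_n<\delta^{*}$. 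Above $\gamma$ a short stabilization argument (each $\langle\sup(\alpha\cap E_n):n<\omega\rangle$ is a non-increasing $\omega$-sequence of ordinals, hence eventually constant with limit in $E_\omega\subseteq E_{n+1}$) produces the contradiction. So the fix is to shorten your recursion from $\theta^{+}$ to $\omega$, not to lengthen or refine the closure operation.
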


\begin{remark}\label{7.8Da} 
    In clause (c) above, obviously for a club of $\zeta<\theta$, the
    $\zeta$-th member of $C_\delta$ is in $E$.
\end{remark}

\begin{PROOF}{\ref{7.8D}}
    Like the proof of \ref{7.8A}, again we continue $\omega$ times,
    and assume the failure of the statement here, noting: if $C_{\delta}$ is a club of $\delta$ of order type $\kappa$, $E$ is a club of $\delta$ such that $\otp(E)$ is divisible by $\kappa^{2}$, then for unboundedly many $\alpha \in C_{\delta}$ we have $\alpha > \sup (C_{\delta} \cap \alpha)$. 
\end{PROOF}

\noindent
Now we give a small improvement of \ref{7.8A}.

\begin{claim}\label{7.8E}
    Suppose $\lambda > \kappa + \aleph_{1}$ where $\lambda$ and $\kappa$ are regular cardinals, and $\epsilon(*)$ is a limit ordinal $< \lambda$ of cofinality $\kappa$.  \Then \, if $S_{\ast}$ satisfies (A) then we can find $\bar{C} = \langle C_{\delta} \colon \delta \in S_{\ast} \rangle$ satisfying\footnote{As in~\ref{7.8F}(3) we can add to clause (B)(d), there is a partial square $\langle C_{\delta}' \colon \delta \in S' \rangle$ where $S \subseteq S' \subseteq \lambda$ and $\bar{C}' \rest S = \bar{C}$.} (B), where: 

    \begin{enumerate}
        \item[(A)] 

        \begin{enumerate}
            \item[(a)] $S_{\ast} \subseteq \{ \delta < \lambda^{+} \colon \cf(\delta) = \kappa$ and $\delta$ is divisible\footnote{``\emph{$\delta$ divisible} by $\lambda \times \kappa$'' means $(\forall \alpha < \delta)(\alpha + \lambda \times \kappa \leq \delta)$} by $\lambda \times \kappa\}$, 

            \item[(b)] $S_{\ast}$ is a stationary subset of $\lambda^{+}$.
        \end{enumerate}

        \item[(B)] 

        \begin{enumerate}
            \item[(a)] $C_{\delta}$ is a closed unbounded subset of $\delta$, 

            \item[(b)] $\otp(C_{\delta}) = \epsilon(\ast)$, 

            \item[(c)] for every club $E$ of $\lambda^{+}$, $\{ \delta \in S_{\ast} \colon C_{\delta} \subseteq E \}$ is stationary.
        \end{enumerate}
    \end{enumerate}
    
\end{claim}

\noindent
Before we prove \ref{7.8E}, we phrase a further claim:

\begin{claim}\label{7.8F}
    In \ref{7.8E}:
    
    \noindent
    1) E.g.
    \[
    S_* := \{\delta < \lambda^+:\cf(\delta) = \kappa \text{ and } \delta 
    \text{ is divisible by } \lambda \times \kappa\}
    \]
    
    \mn
    is as required.
    
    \noindent
    2) We can add:
    \mn
    \begin{enumerate}
    \item[$(d)$]  $\alpha \in C_{\delta} \, \& \, \alpha > \sup(C_{\delta} \cap
    \alpha) \Rightarrow \cf(\alpha) = \lambda$.
    \end{enumerate}
    \mn
    3) For any sequence $\bar \epsilon =\langle \epsilon_\zeta(*):
    \zeta < \lambda \rangle$, where $\epsilon_\zeta(*) < \lambda$ is a
    limit ordinal let $\kappa_\zeta = \cf(\varepsilon_\zeta(*))$, we can find
    $\langle(S^*_\zeta,S^+_{\zeta},\bar C^\zeta):\zeta < \lambda \rangle$ 
    such that:
    \mn
    \begin{enumerate}
    \item[$(i)$]  $\{\alpha:\alpha < \lambda^+,\cf(\alpha) < \lambda\}
    = \bigcup\limits_{\zeta < \lambda} S^+_\zeta$,
    \sn
    \item[$(ii)$]  $S^*_{\zeta} \subseteq S^+_{\zeta} \subseteq 
    \{\alpha:\alpha < \lambda^+,\cf(\alpha) < \lambda\}$,
    \sn
    \item[$(iii)$]  $S^*_{\zeta} \subseteq \{\delta < \lambda^+:\cf(\delta) =
    \kappa_{\zeta}\}$,
    \sn
    \item[$(iv)$]  $\delta \in S^*_{\zeta} \Rightarrow \otp(C^\zeta_{\delta}) =
    \epsilon_{\zeta}(*)$,
    \sn
    \item[$(v)$]  $\bar C^\zeta= \langle C^\zeta_{\alpha}:\alpha \in 
    S^+_{\zeta} \rangle$ satisfies (a), (c) of \ref{7.8E}(B) (and (b) - for
    $\epsilon_{\zeta}(*))$,
    \sn
    \item[$(vi)$]  $\bar C^\zeta$ is a partial square, 
    
    \item[$(vii)$] $\langle S_{\zeta}^{\ast} \colon \zeta < \lambda \rangle$ is a sequence of pairwise disjoint sets. 
    \end{enumerate}
\end{claim}

\begin{PROOF}{\ref{7.8F}}
\noindent
\underline{Proof of \ref{7.8E}}:

We know here essentially by \cite[3.8(2)=L6.4(2)]{Sh:309} (in the arXiv version, \cite[4.8(2)]{Sh:309}) in the published one and by
\cite[\S4]{Sh:351} that there are $\langle S_{\zeta}:\zeta < \lambda
\rangle$ and $\bar C^\zeta = 
\langle C^\zeta_{\alpha} \colon \alpha \in S_{\zeta}\rangle$ for
$\zeta < \lambda$  such that:
\mn
\begin{enumerate}
\item[$(*)_1$]  $(\alpha) \quad \bar C^\zeta $ is a pre-partial square sequence of $\lambda^{+}$,
\sn
\item[${{}}$]  $(\beta) \quad$ if $\alpha > \sup(C^\zeta_\alpha)$ then
  $\cf(\alpha) \in \{1,\lambda\}$,
\sn
\item[${{}}$]  $(\gamma) \quad \lambda^{+} =  \bigcup\limits_{\zeta < \lambda} S_{\zeta}$, 
\sn
\item[${{}}$]  $(\delta) \quad |C^\zeta_{\alpha}| < \lambda$,
\sn
\item[${{}}$]  $(\varepsilon) \quad$ if $\alpha < \lambda^+$, 
then for some $\xi < \lambda$ we have:
\sn
\begin{enumerate}
\item[${{}}$]  $\bullet \quad$ if $\zeta < \lambda$ then $ \alpha \in S_\zeta \Leftrightarrow \zeta 
\ge \xi$,
\sn
\item[${{}}$]  $\bullet \quad \langle C^\zeta_\alpha:
\zeta \in [\xi,\lambda]\rangle$ is $\subseteq$-increasing,
\sn
\item[${{}}$]  $\bullet \quad$ if $\cf(\zeta) = \aleph_1 \wedge \zeta
> \xi$, then $C^\zeta_\alpha$ is the closure of $\cup\{C^\varepsilon_\alpha:\varepsilon \in [\xi,\zeta)\}$ in $\alpha$,
  
\item[${{}}$]  $\bullet \quad \cup \{C^\zeta_\alpha:\zeta \in [\xi,\lambda)\}$ is equal to $\alpha$.
\end{enumerate}
\end{enumerate}
\mn
Obviously,
\mn
\begin{enumerate}
\item[$(*)_2$]  for every $\epsilon,\xi < \lambda$ and club $E^{0}$ of $\lambda^+$ for some club $E^1$ of $\lambda^+$, for each $\delta \in E^1$ 
of cofinality $<\lambda$, for some $\zeta < \lambda$ above $\xi$ we have: $\delta = \sup(E^{0} \cap C^\zeta_{\delta})$ and $\otp(E^{0} \cap
C^\zeta_\delta)$ is divisible by $\epsilon$.
\end{enumerate}

Next, (relying on $(\ast)_{1} + (\ast)_{2}$ above)

\begin{fact}\label{7.8E3}
    We have:
    \begin{enumerate}
        \item[$(\ast)_{3}$] 

        \begin{enumerate}
            \item[$(a)$] for every stationary $S \subseteq S^{\lambda^+}_{< \lambda}$ and $\xi,\varepsilon < \lambda$ and for some $\zeta(*) < \lambda,\zeta(*) > \xi$ and  $S \cap S_{\zeta(*)}$ is stationary, moreover:

            \item[$(b)$] above, if $E$ is a club of $\lambda^+$, then $\set^1_{\zeta(*),\varepsilon} (E,S)$ is stationary where $\set^1_{\zeta(*),\varepsilon}(E,S)$ is the set of $\delta$ such that:

            \begin{itemize}
                \item $\delta \in S \cap S_{\zeta(*)} \cap E$, 

                \item $\delta = \sup(C^{\zeta(*)}_{\delta} \cap E)$,

                \item $\otp(C^{\zeta(*)}_{\delta} \cap E)$ is divisible by $\epsilon \cdot \omega$ (ordinal multiplication), 

                \item if $\alpha \in C^{\zeta(*)}_\delta \wedge \alpha > \sup(C^{\zeta(*)}_\alpha)$ then\footnote{This is \underline{not} the same as $(\ast)_{2}(\beta)$} $\cf(\alpha) \in \{1,\lambda\}$,

                \item  if $\alpha \in C^{\zeta(*)}_\delta$ and $ \alpha \in E \wedge (\alpha > \sup(\alpha \cap E) > \sup(C^{\zeta(*)}_\delta \cap E)$  then\footnote{used only for (d) from \ref{7.8F}(2)} $\cf(\alpha) = \lambda$.
            \end{itemize}
        \end{enumerate}

    \end{enumerate}
\end{fact}

\begin{PROOF}{\ref{7.8E3}}
    If not, then for every $\zeta \in [\xi,\lambda)$ there is a club $E^1_\zeta$ of $\lambda^+$ such that  $\set^1_{\zeta,\varepsilon}(E^{1}_\zeta,S)$ is not stationary, hence there is a club $E^{2}_{\zeta}$ of $\lambda^+$ disjoint to it. Let $E \coloneqq \cap\{E^1_\zeta \cap E^2_\zeta \colon \zeta \in [\xi,\lambda)\}$, it is a club of $\lambda^+$. As $E$ is a club of the regular uncountable cardinal $\lambda$, necessarily there is $\delta_* \in S$ such that $\delta_* = \otp(E \cap \delta_*)$.  Now for some club $E'$ of $\lambda^{+}$, for every $\zeta \in E'$ of cofinality $\aleph_1$ we get a contradiction to $(*)_2$,  therefore $(\ast)_{3}$ holds. 
\end{PROOF}
    
\begin{fact}\label{7.8E5}
    For regular $\kappa < \lambda$ and stationary $S \subseteq \{\alpha <
    \lambda^+:\cf(\alpha) = \kappa\}$ by induction on $\zeta < \lambda$ we
    can choose $\xi(\zeta,S) \in S$ such that: 
    
    \begin{enumerate}
        \item[$(*)_{4,S,\zeta}$]  $\xi(\zeta, S)$ is an ordinal $> \xi(\zeta_{1}, S)$ for every $\zeta_1 < \zeta$ but $< \lambda$ hence $\xi(\zeta, S) \in [\zeta,\lambda)$ such that: 

        \begin{itemize}
            \item if $E$ is a club of $\lambda^{+}$, then $\set_{\xi(\zeta)}^{1}(E)$ from $(\ast)_{3}$ is a stationary subset of $\lambda^{+}$. 
        \end{itemize}
    \end{enumerate}
\end{fact}

\begin{claim}\label{7.8E6}
    We have: 
     \begin{enumerate}
        \item[$(*)_5$]  for every regular $\kappa < \lambda$, stationary $S \subseteq \{\delta < \lambda^+:\cf(\delta) = \kappa\}$ and $\zeta < \lambda$ there are a club $E_*$ of $\lambda^+$ and ordinals $\delta_*,\delta_{1}(*),\delta_2(*)$ of cofinality $\kappa$ divisible by $\zeta$ such that: if $E$ is a club of $\lambda^+$ then the following is a stationary subset of $\lambda^+$:
        \begin{equation*}
            \begin{array}{clcr}
            \set^2_{\zeta,\delta_*}(E,E_*,S) := \{\delta:&\delta \in S \cap S_{\xi(\zeta, \delta)}, \delta = \sup(C^{\xi(\zeta, \delta)}_\delta \cap E_* \cap E),\\
            &\otp(C^{\xi(\zeta, \delta)}_\delta) = \delta_1(*),
            \otp(C^{\xi(\zeta, \delta)}_\delta \cap E_*) = \delta_2(*) \text{ and} \\
            &C^{\xi(\zeta, \delta)}_\delta \cap (E_* \cap
            E) = C^{\xi(\zeta, \delta)}_\delta \cap E_*\}.
            \end{array}
        \end{equation*}
    \end{enumerate}    
\end{claim}

\begin{PROOF}{\ref{7.8E6}}       
    Let $\langle \delta_i:i < \lambda\rangle$ list the ordinals $< \lambda$ of cofinality $\kappa$ divisible by $\zeta$, each appearing
    stationarily often. We choose by induction on $i < \lambda$, a club $E_i$ of $\lambda$, decreasing with $i$ such that $E_{i+1}$ exemplifying $(E_{i}, \delta_i)$ are not as required on $(E_*,\delta_*)$, moreover $E_{i+1}$ is disjoint to $\set^2_{\zeta,\delta_*}(E_{i+1},E_i,S)$, therefore $(\ast)_{5}$ holds.
\end{PROOF}

    Now we \underline{can prove Fact \ref{7.8E}}: apply $(*)_{4,S,\zeta}$
    for $\kappa,S$ being the $\kappa,S_*$ from \ref{7.8E} and $\zeta$
    being $\varepsilon(*) \times \omega$ and get $\xi := \xi(\zeta,S)$ as
    there, and let $(E_*,\delta_*)$ be as in $(*)_5$ of \ref{7.8E6}. 
    
    Clearly $\delta_*$ has a closed unbounded subset $C_*$ of order type
    $\varepsilon(*)$, as $\cf(\delta_*) = \kappa = \cf(\varepsilon(*))$
    and $\varepsilon(*) \cdot \omega$ divides $\delta_*$).

\underline{Continuing the proof of \ref{7.8F}}: 

Now for each $\delta \in S_*$ we choose $C_\delta$ as follows:
\begin{enumerate}
    \item[$\bullet$]  if $\delta = \sup(C^\xi_\delta \cap E_*)$ and $\delta_* = \otp(C^\xi_\delta)$ \then \, $C_\delta = \{\beta \in C^\xi_\delta \cap E_*,\otp(\beta \cap C^\xi_\delta \cap E_*) \in C_*\}$ and if \underline{otherwise} let $C_\delta$ be any closed unbounded subset of $\delta$, possible by the assumption on $S_*$ in \ref{7.8E}.
\end{enumerate}


Considering Claim \ref{7.8F}(1) is obvious.

\noindent
Considering Claim \ref{7.8E}(2) stated below 
use the last clause in $(*)_3(b)$. 

\noindent
We are left with \ref{7.8F}(3).

\noindent
3) We can find a sequence $\langle S[\zeta] \colon \zeta < \lambda \rangle$ such that: 

\begin{enumerate}
    \item[$(\ast)$] 

    \begin{enumerate}
        \item[(a)] $S[\zeta] \subseteq \{ \alpha < \lambda^{+} \colon \cf(\alpha) =  \kappa_{\varepsilon} \text{ and } \kappa \times \varepsilon \text{ divide } \delta \}$,

        \item[(b)] $S[\zeta]$ is a stationary subset of $\lambda$, 

        \item[(c)] the $S[\zeta]$-s are pairwise disjoint. 
    \end{enumerate}
\end{enumerate}

Toward this we choose $\xi(\zeta),E^*_\zeta,\delta_1(\zeta),\delta_2(\zeta)$ by induction on $\zeta$ such that:

\begin{enumerate}
    \item[$\boxplus$]  $(a) \quad E^*_\zeta$ is a club of $\lambda^+$
    \sn
    \item[${{}}$]  $(b) \quad$ if $\alpha \in E^*_\zeta$ and $\alpha >
      \sup(\alpha \cap E^*_\zeta)$ then $\cf(\alpha) \in \{1,\lambda\}$
    \sn
    \item[${{}}$]  $(c) \quad$ if $\zeta(1) < \zeta$ then $E^*_\zeta
      \subseteq E^*_{\zeta(1)}$
    \sn
    \item[${{}}$]  $(d) \quad (\zeta,E^*_\zeta,\xi(\zeta), \varepsilon_\zeta(*),\delta_1(\zeta),\delta_2(\zeta))$ are as $(\zeta,E_*,\xi,\varepsilon,\delta_1(*),\delta_2(*))$ in $(*)_5$ for $S[\zeta]$. 
\end{enumerate}

\Then \, we can find a club $E_*$ of $\lambda^+$ which is $\subseteq
\cap\{E^*_\zeta:\zeta < \lambda\}$ and satisfies $\boxplus(b)$.  We
shall define $\langle (S^*_\zeta,S^+_\zeta,\bar C_{\bar \zeta}):\zeta
< \lambda\rangle$ as required in \ref{7.8F}(3) except that:
\begin{enumerate}
\item[$\bullet$]  $\bar C_\zeta$ is now $\langle
  C_{\zeta,\alpha}:\alpha \in S^+_\zeta\rangle$
\sn
\item[$\bullet$]  we replace $\lambda = \{\alpha:\alpha < \lambda^+\}$
  by $E_*$ so renaming we get the promised result filter.
\end{enumerate}
\mn
For each $\zeta$ let $C^*_\zeta$ be a club of $\delta_2(*)$ of order
type $\varepsilon_\zeta(*)$ such that $\alpha \in C^*_\zeta \wedge
\alpha > \sup(\alpha \cap C^*_\zeta),1 = \cf(\alpha)$.

We let

\begin{enumerate}
\item[$\bullet$]  $S^+_\zeta = S_{\xi(\zeta)} \cap E_*$
\sn
\item[$\bullet$]  if $\alpha \in S^+_\zeta$ and
  $\otp(C^{\xi(\zeta)}_\alpha \cap E_* > \delta_2(\zeta)$ then
  $C^*_{\zeta,\alpha} = \{\beta \in C^{\xi(\zeta)}_\alpha \cap
  E_*:\otp(\beta \cap C^{\xi(\zeta)}_\alpha \cap E_*) > \delta$
\sn
\item[$\bullet$]  if $\alpha \in S^+_\zeta$ and
  $\otp(C^{\xi(\zeta)}_\alpha \cap E_*)$ is $< \delta_2(\zeta)$ and
  $\notin C^*_\zeta$ then $C_{\zeta,\alpha} = \{\beta \in
  C^{\xi(\zeta)}_\alpha \cap E_*:\otp(\beta \cap C^{\xi(\zeta)}_\alpha
  \cap E_*)$ is $> \sup(C^*_\zeta \cap \otp(C^{\xi(\zeta)}_\alpha \cap
  E_*)\}$
\sn
\item[$\bullet$]  if $\alpha \in S^+_\zeta$ and
$\otp(C^{\xi(\zeta)}_\alpha \cap E_*) \in C^*_\zeta
  \cup\{\delta_2(\zeta)\}$ then $C_{\zeta,\alpha} = \{\beta \in 
C^{\xi(\zeta)}_\alpha \cap E_*:\otp(C^{\xi(\zeta)}_\beta \cap E_*) \in
C^*_\zeta\}$
\sn
\item[$\bullet$]  $S^*_\zeta = \{\alpha \in
  S^+_\zeta:\otp(C_{\zeta,\alpha}) = \varepsilon_\zeta(*)\}$.
\end{enumerate}
\mn
Now check.
\end{PROOF}

\begin{remark}\label{7.8G}
    We may start with a partial square $\langle C^1_{\delta}:\delta \in S^1\rangle,S^1
    \subseteq \mu$ such that: $C_{\delta}$ is a club of $\delta$, and
    \begin{equation*}
        \begin{array}{clcr}
        S^2 =: \{\delta \in S^1:&\{\alpha:\alpha \in  C^1_{\delta} \cap S^1 
        \text{ and } \cf(\alpha) = \kappa\} \\
          &\text{ is a stationary subset of } \delta \}
        \end{array}
    \end{equation*}
    
    is a stationary subset of $\mu,\mu = \cf(\mu) > \epsilon(*),
    \cf(\epsilon(*)) = \kappa$ and find $S \subseteq \{\delta \in S^1:
    \cf(\delta) = \kappa\}$ stationary in $\mu,\bar C =
    \langle C_\delta:\delta \in S \rangle,C_{\delta}$ a club of $\delta$
    of order type $\epsilon(*)$, such that for every club $E$ of 
    $\mu$ for some  $\delta \in S,C_{\delta} \subseteq E$.
    See also \cite[\S2]{Sh:E59}, \cite[\S3]{Sh:413}.
\end{remark}

\begin{claim}\label{7.8H}
    Suppose $\lambda$ is regular, $\langle C_{\alpha}:\alpha \in
    S\rangle$ is a partial square ($S \subseteq \lambda$ stationary), $\kappa =
    \cf(\kappa) < \lambda,\varepsilon(*) < \lambda$ and
    \[
    S_1 \subseteq \{\delta \in S:\cf(\delta) = \kappa, \text{ and }
    \otp(C_{\delta}) < \delta\} \text{  is stationary.}
    \]
    \Then \, we can find $S_{2}$ and $E$ such that:
    \mn
    \begin{enumerate}
    \item[$(i)$]  $S_{2} \subseteq S_{1}, S_{2}$ stationary in $\lambda$, $E$ a club of $\lambda$, 
    \sn
    \item[$(ii)$]  for some $\epsilon(*)$ for all $\delta \in S_{2},
    \otp(C_{\delta}) = \epsilon(*)$
    \sn
    \item[$(iii)$]  $\langle C_{\delta} \cap E:\delta \in S \cap E,\delta = \sup
    C_{\delta} \cap E\rangle$ satisfies $(a) + (c)$ of \ref{7.8E}
    \sn
    \item[$(iv)$]  letting
    
    \[
    C'_{\delta}  = \begin{cases} C_\delta \cap E  &\text{ if } C_\delta\cap
    S_2 = \emptyset \\
       C_\delta \cap E \setminus [\min(S_2\cap C_\delta)+1]  &\text{ if } 
    C_\delta \cap S_2 \ne \emptyset
      \end{cases}
    \]
    
    \mn
    we have $\langle C'_{\delta}:\delta \in S \cap E\rangle$ is a partial square.
    \end{enumerate}
\end{claim}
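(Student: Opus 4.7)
The plan is to obtain clauses (i)--(ii) by a standard Fodor argument, to observe that clause (iv) holds for \emph{any} club $E$ with the trimming prescription given, and to reduce clause (iii) to a club-guessing iteration modelled on the proof of \ref{7.8A}.

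For (i)--(ii): on $S_1$ the map $\delta \mapsto \otp(C_\delta)$ is regressive (by the defining condition $\otp(C_\delta)<\delta$), so Fodor's lemma yields a stationary $S_2 \subseteq S_1$ on which it is constant, with value $\varepsilon(*)$. Since $\varepsilon(*) < \delta$ for any $\delta \in S_2$, we also get $\varepsilon(*) < \lambda$.

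For (iv), I claim that for any club $E$ the sequence $\langle C'_\delta : \delta \in S \cap E\rangle$ is a partial square. Let $\beta \in C'_\delta$ with $\delta \in S \cap E$; since $\bar C$ is a partial square, $C_\beta = C_\delta \cap \beta$, and $\beta \in E$ by construction. If $C_\delta \cap S_2 = \emptyset$, then also $C_\beta \cap S_2 = \emptyset$, hence $C'_\beta = C_\beta \cap E = (C_\delta \cap E)\cap \beta = C'_\delta \cap \beta$. Otherwise, letting $\gamma = \min(C_\delta \cap S_2)$, we have $\beta > \gamma$ by definition of $C'_\delta$, and since $\gamma \in C_\beta$ we get $\gamma = \min(C_\beta \cap S_2)$ too; thus $C'_\beta = (C_\beta \cap E) \setminus[\gamma+1] = C'_\delta \cap \beta$. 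Closedness of $C'_\beta$ in $\beta$ and the inclusion $C'_\beta \subseteq S \cap E$ are automatic.

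The hard part is (iii). Assume toward contradiction that no club $E$ has the required property. I will build a decreasing sequence $\langle E_\zeta : \zeta < |\varepsilon(*)|^+\rangle$ of clubs of $\lambda$ (the length is less than or equal to $\lambda$ since $\varepsilon(*)<\lambda$ and $\lambda$ is regular; if $|\varepsilon(*)|^+ = \lambda$ one must iterate more carefully, see below). At limit $\zeta$ take the intersection; at $\zeta+1$, by assumption there is a club $E^*_\zeta$ such that $\{\delta \in S \cap E_\zeta : \delta = \sup(C_\delta \cap E_\zeta) \text{ and } C_\delta \cap E_\zeta \subseteq E^*_\zeta\}$ is non-stationary, and we let $E_{\zeta+1}$ be a club contained in $E_\zeta \cap E^*_\zeta$ (and in particular in the accumulation points of $E_\zeta \cap E^*_\zeta$) that avoids this non-stationary set. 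Let $E^+ = \bigcap_{\zeta} E_\zeta$, still a club of $\lambda$, and pick $\delta(*) \in S_2$ which is an accumulation point of $E^+$. For each $\alpha \in C_{\delta(*)}$ (of which there are $|\varepsilon(*)|$ many) the ordinal sequence $\langle \sup(\alpha \cap E_\zeta) : \zeta < |\varepsilon(*)|^+\rangle$ is non-increasing, hence eventually constant; by $|\varepsilon(*)|^+$-regularity it stabilizes simultaneously for all $\alpha \in C_{\delta(*)}$ at some $\zeta^* < |\varepsilon(*)|^+$. Defining $C^{\zeta^*}_{\delta(*)} := \{\sup(\alpha \cap E_{\zeta^*}) : \alpha \in C_{\delta(*)}\}$ as in \ref{7.8A}, one checks that $C^{\zeta^*}_{\delta(*)} \subseteq E_{\zeta^*+1}$ (by the stabilization plus the fact that $E_{\zeta^*+1}$ was chosen inside the accumulation points of $E^*_{\zeta^*}$), contradicting the way $E_{\zeta^*+1}$ was chosen.

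The main obstacle is the length of the iteration when $|\varepsilon(*)|^+ = \lambda$ (for instance when $\lambda$ is a successor cardinal); in that case the intersection at a limit ordinal $\ge \lambda$ need not be a club, so one must instead iterate only $\kappa^+$ (or $\cf(\varepsilon(*))^+$) times and exploit the partial square coherence $C_\beta = C_\delta \cap \beta$ for $\beta \in C_\delta$ to reduce the bookkeeping back to a set of size $\kappa$, exactly as in \ref{7.8A}. With this adjustment the argument goes through, and the resulting $E$ (together with $S_2$ and $\varepsilon(*)$) satisfies all four clauses.
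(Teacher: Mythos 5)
The easy parts are handled correctly: Fodor for (i)--(ii), and the direct verification that trimming preserves the coherence for (iv), are both fine. The difficulty is entirely in (iii), and while you correctly isolate the problem, your proposed fix is not an argument. The obstacle you flag -- that when $\lambda=|\varepsilon(*)|^+$ (which happens whenever $\lambda$ is the successor of $|\varepsilon(*)|$) one cannot intersect $|\varepsilon(*)|^+$-many clubs of $\lambda$ -- is a genuine one. But the remedy ``iterate only $\kappa^+$ times and exploit the partial square coherence $C_\beta=C_\delta\cap\beta$ to reduce the bookkeeping back to a set of size $\kappa$, exactly as in \ref{7.8A}'' does not work. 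In \ref{7.8A} the clubs $C^*_\delta$ have order type $\kappa$, so one tracks only $\kappa$-many non-increasing sequences $\langle\sup(\alpha\cap E_\zeta):\zeta<\kappa^+\rangle$ and regularity of $\kappa^+$ gives simultaneous stabilization. Here $C_{\delta(*)}$ has order type $\varepsilon(*)$, hence cardinality $|\varepsilon(*)|$, and if $|\varepsilon(*)|>\kappa$ nothing forces the $|\varepsilon(*)|$-many stabilization points to lie below a single $\zeta^*<\kappa^+$; in that case $\langle C_{\delta(*)}\cap E_\zeta:\zeta<\kappa^+\rangle$ can strictly decrease at $\kappa^+$-many stages. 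The coherence identity $C_\beta=C_{\delta(*)}\cap\beta$ does not shrink the set being tracked -- it only says $\sup(\beta\cap E_\zeta)$ is intrinsic to $\beta$ and $E_\zeta$, which is trivially true regardless.

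So (iii) is not established in the regime $\lambda=|\varepsilon(*)|^+$. What is needed there is something closer to the mechanism in \ref{7.8E}: there the iteration has length $\lambda$ but takes place inside $\lambda^+$ (so the $\lambda$-fold intersection is still a club), and the coherence is exploited through an entire $\lambda$-indexed family of squares from \cite{Sh:309} rather than a single one, with a pressing-down/stabilization step carried out along $C_{\delta_*}$ rather than over the ordinal iteration. Adapting that to the present setting -- a single given partial square on $\lambda$ -- would naturally involve a diagonal intersection in place of $\bigcap_{\zeta<\kappa^+}E_\zeta$, plus a Fodor-style argument along the enumeration of $C_{\delta(*)}$ making real use of the coherence. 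That is precisely the step your sketch asserts and does not supply.
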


\begin{PROOF}{\ref{7.8H}}
Straightforward if you read \ref{7.8A}--\ref{7.8G}.
\end{PROOF}

\noindent
We now go back to bigness properties, first an easy improvement of \ref{7.6},
and then to the promises from the beginning of this section.
\begin{claim}
\label{7.8I}
If $\lambda=\cf(\lambda)>\mu+\aleph_1$ \then \, $K^\omega_{\tr}$ has the full 
$(\lambda,\lambda,\mu,\mu)$-super$^{7^+}$- bigness property.
\end{claim}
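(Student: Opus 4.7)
The plan is to revisit the proof of Claim \ref{7.6}(1), which yielded super$^{7^\pm}$, and upgrade to super$^{7^+}$ by replacing the elementary observation that $\delta(*) \in \acc(E)$ with the genuine club-guessing statement \ref{7.8A}. The hypothesis $\lambda > \mu + \aleph_1$ is precisely what is needed so that $\aleph_1 = \aleph_0^+ < \lambda$, which is the cardinal arithmetic required to apply \ref{7.8A} with $\kappa = \aleph_0$.

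First I would set up the trees. Let $S := \{\delta < \lambda : \cf(\delta) = \omega\}$ and partition $S = \bigsqcup_{\zeta < \lambda} S_\zeta$ into pairwise disjoint stationary sets. For each $\zeta < \lambda$ apply \ref{7.8A} to $S_\zeta$ to obtain $\langle C^\zeta_\delta : \delta \in S_\zeta\rangle$ with $C^\zeta_\delta$ a cofinal $\omega$-sequence in $\delta$ such that for every club $E$ of $\lambda$, $\{\delta \in S_\zeta : C^\zeta_\delta \subseteq E\}$ is stationary. Let $\eta^\zeta_\delta \in {}^\omega\lambda$ enumerate $C^\zeta_\delta$ in increasing order and set $I_\zeta := {}^{\omega>}\lambda \cup \{\eta^\zeta_\delta : \delta \in S_\zeta\}$, with the usual tree structure. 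I claim $\langle I_\zeta : \zeta < \lambda\rangle$ witnesses the full $(\lambda,\lambda,\mu,\mu)$-super$^{7^+}$-bigness property.

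To verify this, fix $\zeta(*) < \lambda$ and set $I = I_{\zeta(*)}$, $J = \sum_{\zeta \ne \zeta(*)} I_\zeta$. Given a regular $\chi^*$, a well-ordering $<^*_{\chi^*}$ of $\cH(\chi^*)$, and $x \in \cH(\chi^*)$, I would build a continuous increasing chain $\langle M^*_\alpha : \alpha < \lambda\rangle$ of elementary submodels of $(\cH(\chi^*),\in,<^*_{\chi^*})$ with $\|M^*_\alpha\| < \lambda$, $M^*_\alpha \cap \lambda \in \Ord$, $\mu \subseteq M^*_0$, all relevant parameters (including $\langle I_\zeta\rangle$, $\langle \eta^\zeta_\delta\rangle$, $\zeta(*)$, $x$) in $M^*_0$, and $\langle M^*_\beta : \beta \le \alpha\rangle \in M^*_{\alpha+1}$. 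Then $E := \{\delta < \lambda : M^*_\delta \cap \lambda = \delta\}$ is a club of $\lambda$, so by the club-guessing property there exists $\delta(*) \in S_{\zeta(*)}$ with $C^{\zeta(*)}_{\delta(*)} \subseteq E$. Setting $\eta := \eta^{\zeta(*)}_{\delta(*)}$ and $M_n := M^*_{\eta(n)}$, the clauses (i)--(iv) of $(G^-)$ and (v)$^+$, (vi)$^+$ of $(G)$ from \ref{7.3} are straightforward: (i) from monotonicity in $\alpha$; (ii) from $\mu \subseteq M^*_0 \subseteq M_0$ together with $M^*_{\eta(n)} \in M^*_{\eta(n)+1} \subseteq M^*_{\eta(n+1)} = M_{n+1}$; (iii), (iv) by construction; (v)$^+$ because $\eta \rest n$ is a finite sequence of ordinals all $< \eta(n) = M_n \cap \lambda$; (vi)$^+$ because $\eta(n) = M_n \cap \lambda \notin M_n$.

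The main obstacle, and the whole point of the improvement, is verifying clause $(vii)^+$: if $\nu \in P^J_\omega$ satisfies $\{\nu \rest \ell : \ell < \omega\} \subseteq \bigcup_n M_n$, then $\nu \in \bigcup_n M_n$. Note $\bigcup_n M_n = M^*_{\delta(*)}$ by continuity at $\delta(*)$ and cofinality of $\langle \eta(n)\rangle$ in $\delta(*)$. Any such $\nu$ is (after the canonical identification in the sum) of the form $\langle \xi\rangle \char94 \eta^\xi_\delta$ with $\xi \ne \zeta(*)$, $\delta \in S_\xi$. From the initial-segment assumption and $M^*_{\delta(*)} \cap \lambda = \delta(*)$, we get $\xi < \delta(*)$ and $\eta^\xi_\delta(k) < \delta(*)$ for all $k$, so $\delta \le \delta(*)$. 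The equality $\delta = \delta(*)$ is impossible because the $S_\zeta$'s are pairwise disjoint and $\xi \ne \zeta(*)$, hence $\delta < \delta(*)$. Since $\xi$, $\delta$ and the global object $\langle \eta^\xi_\delta : \xi < \lambda, \delta \in S_\xi\rangle$ all belong to $M^*_{\delta(*)}$, so does $\nu$, as required.
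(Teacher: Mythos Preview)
Your proof is correct and follows essentially the same route as the paper: invoke the club-guessing of \ref{7.8A} (with $\kappa=\aleph_0$, which is available since $\lambda>\aleph_1$) on each stationary piece $S_\zeta$, let $\eta^\zeta_\delta$ enumerate $C^\zeta_\delta$ in increasing order, and rerun the argument of \ref{7.6}(1). The paper's proof is literally ``repeat the proof of \ref{7.6}(1), only now $\eta_\delta$ lists $C^{S_\zeta}_\delta$ in increasing order,'' and your write-up spells this out. The key upgrade over \ref{7.6}(1), which you identified correctly, is that $C^{\zeta(*)}_{\delta(*)}\subseteq E$ forces each $\eta(n)\in E$, hence $M_n\cap\lambda=\eta(n)$, yielding $(v)^+$ and $(vi)^+$ of $(G)$ on the nose rather than only the weaker $7^\pm$ versions.
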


\begin{PROOF}{\ref{7.8I}}
For each stationary $S\subseteq \{\delta<\lambda:\cf(\delta)=
\aleph_0\}$ let $\langle C^S_\delta:\delta\in S\rangle$ be as in \ref{7.8A}
with $\kappa=\aleph_0$. Now repeat the proof of \ref{7.6}(1) only now, for 
$\delta\in S_\zeta$ the sequence $\eta_\delta$ list the set 
$C^{S_\zeta}_\delta$ in increasing order. See also the proof of case 1
in \ref{7.14}.
\end{PROOF}

\begin{remark}\label{7.8I1}
    Note: to define partial square on a club of $\lambda$ or on the set of all
    limit ordinals, usually makes minor difference (only for non-Mahlo $\lambda$,
    limit of inaccessible, we can get $\otp(C_{\delta}) < \delta$ more
    easily in the first case).
\end{remark}

\begin{PROOF}{\ref{7.8I}}
\underline{Proof of \ref{7.8}}:  
We can find $\lambda_{1}$ a successor of regular cardinal satisfying
$\mu + \chi^{+} < \lambda_{1} \le \lambda$ and $\chi^+ < \lambda_{1}$ (just
let $\lambda_1 = \mu^+ + \chi^{++}$ if $\mu$ is regular and let
$\lambda_1 = \mu^{++} + \chi^{++}$ if $\mu$ is singular, hence $\lambda_{1} \geq \mu^{+} \Rightarrow \lambda \geq \mu^{++}$). 

Also \wilog \, $\cf(\chi)=\aleph_{0}$.

\noindent
[Why?  As letting $\chi_1 = \min\{\chi_0:\chi_0 \ge \aleph_{0}$ 
and $\chi^{\aleph_{0}}_{0} = \chi^{\aleph_{0}}\}$,  we have 
$\chi_{1} \le \chi,\chi^{\aleph_{0}}_{1} = \chi^{\aleph_{0}}, \cf(\chi_{1}) = \aleph_{0}$ and: $(\forall \alpha <
\chi_{1})[|\alpha |^{\aleph_{0}} < \chi_{1}]$ or $\chi_{1} = \aleph_{0}$ (Notice that, instead changing $\chi$ we can use below in clause (a) the ordinal $\chi \times\omega)$.]

By Fact \ref{7.8E} there are a stationary  set $S \subseteq
\{\delta < \lambda_{1}:\cf(\delta) = \aleph_{0}\}$  and a sequence 
$\langle C_{\delta}:\delta \in S\rangle$ such that:

\begin{enumerate}
\item[$(*)_1$]  $(a) \quad C_{\delta}$ is a club of $\delta$ of order type $\chi$,

\item[${{}}$]  $(b) \quad$ for every club $E$ of $\lambda_{1}$ for stationary many $\delta \in S,C_{\delta} \subseteq E$.
\end{enumerate}

For any $\rho \in {}^\omega \chi$ we define

\begin{enumerate}
\item[$(*)_2$]  $I_{\rho} = {}^{\omega >}\lambda \cup 
\{\rho^{[\delta]}:\delta \in S\}$
\end{enumerate}
\mn
where $\rho^{[\delta]} \in {}^\omega(\lambda_{1})$ is defined by
$\rho^{[\delta]}(n) = $  the $\rho(n)$-th member of $C_{\delta}$.

Easily there are $\Upsilon,\bar\chi$ such that
\mn
\begin{enumerate}
\item[$(*)_3$]  $(a) \quad \Upsilon \subseteq {}^\omega \chi$ 
have cardinality $\chi^{\aleph_{0}}$,
\sn
\item[${{}}$]  $(b) \quad$ each $\rho \in \Upsilon$ is increasing with 
limit $\chi$
\sn
\item[${{}}$]  $(c) \quad$ for $\rho_{1} \ne \rho_{2}$ from
$\Upsilon,\Rang(\rho_{1}) \cap \Rang(\rho_{2})$ is finite
\sn
\item[${{}}$]  $(d) \quad \bar \chi = \langle \chi_n:n < \omega\rangle$ is a strictly increasing sequence of $\chi$-s,
\sn
\item[${{}}$]  $(e) \quad \chi = \bigcup\limits_{n < \omega} \chi_n$
\sn
\item[${{}}$]  $(f) \quad \rho \in \Upsilon \Rightarrow \rho(n) \in (\chi_{n}, \chi_{n+1})$.
\end{enumerate}

We shall show that $\{I_{\rho}:\rho \in \Upsilon\}$ exemplifies the desired conclusion: the full $(\chi^{\aleph_{0}},\lambda,\mu,\mu)$-super-bigness property.

Suppose $\rho \in \Upsilon, J = J_{\rho}  \coloneqq \sum \{I_{\nu}:\nu \in \Upsilon 
\setminus \{\rho\}\}$, for example let $\Upsilon = \{\rho_{i}:i < 
|\Upsilon|\}$  and $J = \{\langle \rangle\} \cup \{\langle \zeta
\rangle \mathop{\otimes}\limits_{\lambda} \nu:\zeta < |\Upsilon|,\rho_{\zeta}
\ne \rho$ and $\nu \in I_{\rho_{i}}\}$, where: 

\begin{enumerate}
    \item[$\boxplus$] for $\rho$ a sequence of ordinals and $\zeta <
    \lambda$ let $\langle \zeta \rangle \underset{\lambda}{\otimes}
    \rho$ or $\zeta \mathop{\otimes}\limits_\lambda \rho$ be the sequence 
    $\rho'$ of length $\ell g(\rho),\rho'(0)=\lambda \times \zeta+\rho(0),
    \rho'(1+\gamma)=\rho(1+\gamma)$.
\end{enumerate}

Clearly

\begin{enumerate}
    \item[$\oplus$] It suffice to show that $I_{\rho} \in K_{\tr}^{\omega}$ is $(\mu, \kappa)$-super-unembedabble into $J_{\rho} \in K_{\tr}^{\omega}$.  Let $\chi^*$ be regular large enough and $<^*$ a well ordering of  
    ${\cH}(\chi^*)$ and $x \in \cH(\chi^{\ast})$. 
\end{enumerate}

We choose by induction on $\alpha < \lambda_{1},M^*_{\alpha}$ such that:

\begin{enumerate}
    \item[$(*)_4$]  $(a) \quad M^*_\alpha \prec ({\cH}(\chi^*),
    \in,<^*_{\chi^*})$, 
    \sn
    \item[${{}}$]  $(b) \quad M^*_{\alpha}$ is increasing continuous,
    \sn
    \item[${{}}$]  $(c) \quad \|M^*_{\alpha}\| < \lambda_{1}$,
    \sn
    \item[${{}}$]  $(d) \quad M^*_{\alpha} \cap \lambda_{1}$ is an ordinal,
    \sn
    \item[${{}}$]  $(e) \quad \langle M^*_{\beta}:\beta \le \alpha \rangle \in M^*_{\alpha +1}$,
    \sn
    \item[${{}}$]  $(f) \quad \mu+1$ is a subset of $M^*_{0}$,
    \sn
    \item[${{}}$]  $(g) \quad \mu,I_{\rho},x,J =
    \sum\limits_{\nu \in \Upsilon \setminus \{\rho\}} I_{\nu}$ belong to $M^*_{0}$.
\end{enumerate}

Let

\begin{enumerate}
\item[$(*)_5$]  $E \coloneqq \{\delta < \lambda_{1}:M^*_{\delta} \cap \lambda_{1} =
\delta\}$.
\end{enumerate}

Clearly $E$ is a club of $\lambda_{1}$. So, by the choice of $\langle C_{\delta} :\delta \in S\rangle$, for some $\delta(*) \in  S$ we have $C_{\delta(*)} \subseteq E$.

We shall show that $\eta := \rho^{[\delta(*)]},M_{n} := M^*_{\eta(n)},
N_{n} := M^*_{\eta (n)+1}$ are as required in Definition \ref{7.1}.

Note:

\begin{enumerate}
\item[$(*)_6$]  $\eta(n)+1 \le \chi_{n+1} < \eta(n+1)$.
\end{enumerate}

Hence,

\begin{enumerate}
\item[$(*)_7$]  $(a) \quad M_{n} \in N_{n}$, 
\sn
\item[${{}}$]  $(b) \quad N_{n} \in M_{n+1}$, 
\sn
\item[${{}}$]  $(c) \quad \eta \rest n \in M_n$, 
\sn
\item[${{}}$]  $(d) \quad \eta \rest (n+1) \notin N_n$.
\end{enumerate}

[E.g. Why clause (c)?  It suffices to prove $\ell < n \Rightarrow \eta(\ell)
\in M_n$ because $\eta(\ell) < \eta(n) = \lambda_1 \cap M_{n} \subseteq M^*_{\eta(n)} = M_n$ recalling $C_{\delta(*)} \subseteq E$ and the definition of $E$. Why clause (d) holds? By $(\ast)_{6}$.]

Of course,

\begin{enumerate}
    \item[$(*)_8$]  $\mu \subseteq M_{n} \prec N_{n} \prec M_{n+1} \prec
     ({\cH}(\chi^*),\in,<^*_{\chi^*})$.
\end{enumerate}

So clearly clauses (i)-(v) of $(*)$ of \ref{7.1} holds and we 
are left with proving clause (vi). 

Let $\nu \in P^J_{\omega}$, and choose an ordinal $\alpha =
\max\{\alpha_1,\alpha_2,\alpha_3\} < \delta(*)$ where:

\begin{enumerate}
\item[$(*)_9$]  $(a) \quad$ if $\nu \in M^*_{\delta(*)}$ then $\alpha_1 <
\delta(*)$ is such that $\nu \in M_{\alpha_{1}}^*$,

\item[${{}}$]  $(b) \quad$ if for some $m < \ell g(\nu),\nu \rest m \in
M_{\delta(*)},\nu \rest (m+1) \notin  M_{\delta(*)}$, then $\alpha_2 <
\delta(*)$

\hskip25pt  is large enough such that $\nu \rest m \in M_{\alpha_2}^*$,
\sn
\item[${{}}$] $(c) \quad$ if $\nu = \langle i \rangle 
\mathop{\otimes}\limits_\lambda \rho^{[\delta(*)]}_{i}$ (so $\rho_{i} \ne
\rho)$, let $\alpha_3 \in C_{\delta(*)}$ be such that 
$(\Rang(\rho)) \cap$

\hskip25pt $\Rang(\rho_{i}) \subseteq \alpha_3 
\cap C_{\delta(*)}$  (exists by the choice of $\Upsilon$).
\end{enumerate}
\mn
It is easy to check that every $n < \omega$ such that
$\rho^{[\delta(*)]}(n) > \alpha$ is as required in Definition 
\ref{7.1}(vi), but every large enough $n < \omega$ is like that 
by the choice of $\alpha$.

So indeed $I_{\rho} \in K_{\tr}^{\omega}$ is $(\mu, \kappa)$-super-unembeddable into $J_{\rho} \in K_{\tr}^{\omega}$.

So we have proved \ref{7.8}(1). 

For proving \ref{7.8}(2) let $M'_{2n} = M_{n},M'_{2n+1} = N_{n}$.

As for proving \ref{7.8}(3), (using again $M'_n$) make the following changes.  \underline{First} in proving \ref{7.8E} we can guarantee
\[
[\sup(C_{\delta} \cap \alpha) < \alpha \in
C_{\delta} \Rightarrow \cf (\alpha) > \aleph_{0}],
\]

(apply e.g. \ref{7.8E} to $\omega_1 \times \epsilon(*)$ getting
$C'_\delta$, and let
\[
C_\delta = \{\zeta\in C_\delta:\otp(C_\delta \cap \zeta) 
\text{ divisible by } \omega_1\}).
\]

Second choosing $\Upsilon$ in $(\ast)_{3}$ above (in the proof of \ref{7.8}(11)), we can demand:
\[
\eta \in \Upsilon \Rightarrow  \Rang(\eta) 
\text{ consists of successor ordinals only).}
\]

Then the requirement holds --- check. 
\end{PROOF}

\begin{lemma}\label{7.9}
Suppose $\lambda$ is singular, $\lambda > \mu,\lambda >
\theta > \cf(\theta) = \aleph_{0},\theta \ge \mu + \cf(\lambda),
\ga_{\epsilon}$ for $\epsilon  < \cf(\lambda)$  is a set of regular uncountable
cardinals, $\omega = \otp(\ga_{\epsilon}),\theta = \sup(\ga_{\epsilon})$,
they are pairwise almost disjoint (i.e. for $\epsilon < \zeta < \cf(\lambda)$,
$\ga_{\epsilon} \cap \ga_{\zeta}$ is finite) and 
$\max \pcf_{J^{bd}_{\ga_\epsilon}}(\ga_{\epsilon}) = \theta^+ < \lambda$, 
see Definition \cite[3.16=Lprf.2]{Sh:E62}.

\Then \, $K^\omega_{\tr}$ has the full 
$(\lambda,\lambda,\mu,\mu)$-super-bigness property.
\end{lemma}

\begin{remark}\label{7.9f}
    We shall repeat this proof with some changes in \ref{7.14} case 3.
\end{remark}

\begin{PROOF}{\ref{7.9}}  
Let $\langle \mu_{\epsilon}:\epsilon < \cf(\lambda)\rangle$ be a strictly increasing sequence of regular cardinals such that
$\sum\limits_{\epsilon < \cf(\lambda)}\mu_{\epsilon} = \lambda,$ and $\mu + \theta^+ < \mu_{\epsilon}$. For $\epsilon < \cf(\lambda)$, let $\lambda_{\epsilon} =\mu^{+3}_{\epsilon}$.

We now apply~\ref{7.8F}(3) with $\lambda_{\varepsilon}$, $\langle \theta \colon \zeta < \lambda \rangle$, $\langle \aleph_{0} \colon \zeta < \lambda_{\varepsilon} \rangle$ here standing for $\lambda$, $\langle \varepsilon_{\zeta}(\ast) \colon \zeta < \lambda \rangle$, $\langle \kappa_{\zeta} \colon \zeta < \lambda \rangle$ there. Let us get $\bar{S}_{\varepsilon} = \langle S_{\epsilon, \zeta} \colon \zeta < \lambda_{\epsilon} \rangle$, $\bar{C}_{\varepsilon} = \langle C_{\delta}^{\epsilon} \colon \delta \in S_{\epsilon} \rangle$ here standing for $\langle S_{\zeta} \colon \zeta < \lambda \rangle$, $\langle \rho_{\delta}^{\zeta} \colon \delta \in S_{1}, \zeta < \lambda \rangle$ there, hence: 

\begin{enumerate}
\item[$(*)_1$]  $(i) \quad \langle S_{\epsilon,\zeta}:\zeta 
< \lambda_{\epsilon}\rangle$ is a partition of $S_{\varepsilon}$,
\sn
\item[${{}}$]  $(ii) \quad S_{\epsilon,\zeta} \subseteq
\{\delta < \lambda_{\epsilon}:\cf(\delta) = \aleph_{0}\}$ are stationary
subsets of $\lambda_{\epsilon}$,
\sn
\item[${{}}$]  $(iii) \quad$ if $\delta \in S_{\epsilon}$ then
$C^\epsilon_{\delta}$ is a club of $\delta $ and
$C^\epsilon_{\delta}$ has order type $\theta$

\hskip30pt (recall that $\cf(\theta) = \aleph_0$ by the claim's assumptions and

\hskip30pt $\delta \in S_{\varepsilon,\zeta}$ implies $  \cf(\delta) = \aleph_0$, so $\delta$ is divisible by $\theta$),
\sn
\item[${{}}$]  $(iv) \quad$ for every club $E$ of $\lambda_{\epsilon}$
and $\zeta < \lambda_{\epsilon}$,  the set $\{\delta \in 
S_{\epsilon,\zeta}:C^\epsilon_{\delta} \subseteq E\}$

\hskip30pt  is stationary
\sn
\item[${{}}$]  $(v) \quad \langle C^\epsilon_{\delta}:\delta \in S_{\epsilon}
\rangle$ is a partial square.
\end{enumerate}

For transparency, $S_{\epsilon}$ is disjoint to 
$\bigcup\limits_{\xi < \epsilon} \lambda_{\xi}$ and let $\langle \kappa_{\varepsilon, n} \colon n < \omega \rangle$ list $\mathfrak{a}_{\varepsilon}$ in increasing order. 

For each $\epsilon < \cf(\lambda)$ we can find $\langle
\rho_{\epsilon ,i}:i < \theta ^+\rangle$ such that:
\mn
\begin{enumerate}
\item[$(*)_2$]  $(i) \quad \rho_{\epsilon,i} \in \Pi \ga_{\epsilon}$ is
(strictly) increasing
\sn
\item[${{}}$]  $(ii) \quad i < j < \theta^+ \Rightarrow \rho_{\epsilon,i}
<_{J^{\bd}_{\ga_{\epsilon}}} \rho_{\epsilon,j}$;
(i.e. for every large enough 

\hskip30pt $\kappa \in \ga_{\epsilon},
\rho_{\epsilon,i}(\kappa) < \rho_{\epsilon,j}(\kappa)<\kappa)$,
\sn
\item[${{}}$]  $(iii) \quad$ for every $\rho \in \Pi \ga_\epsilon$ for
some $i < \theta^+$ we have $\rho <_{J^{\bd}_{\ga_{\epsilon}}}
\rho_{\epsilon,i}$
\sn
\item[${{}}$]  $(iv) \quad \rho_{\epsilon,i}(k_{\varepsilon, n})$ is a limit 
ordinal of uncountable cofinality
\sn
\item[${{}}$]  $(v) \quad \rho_{\epsilon,i}(\kappa) >
\sup(\ga_\varepsilon \cap \kappa)$ hence $\theta =
\cup\{\rho_{\epsilon,i}(\kappa):\kappa \in \ga_\varepsilon\}$.
\end{enumerate}

Let $\Upsilon_{\epsilon} := \{\rho_{\epsilon,i}:i < \theta^+\}$.

For $\epsilon < \cf(\lambda)$, and $\zeta$ such that
$\bigcup\limits_{{\xi} < {\epsilon}} \lambda_{\xi} \le \zeta
< \lambda_{\epsilon}$ let $I_{\zeta} = {}^{\omega >}\lambda \cup 
\{\rho^{[\delta]}:\rho \in 
\boldsymbol{\Upsilon}_{\epsilon},\delta \in S_{\epsilon,\zeta}\}$
recalling $\rho^{[\delta]}$ is an $\omega$-sequence of ordinals:
$\rho^{[\delta]}(n)=$ the $\rho(\kappa_{\varepsilon, n})$-th element of $C^\epsilon_{\delta}$
(now $\rho^{[\delta]}$ depend on $C^\epsilon_\delta$ and $\rho$ so on
$\delta,\epsilon,\rho$, but 
$\epsilon$ can be reconstructed from $\delta$, as $S_\epsilon
\subseteq [\bigcup\limits_{\xi< \epsilon} \lambda_\xi, \lambda_\epsilon)$).

We shall show that $\langle I_{\zeta}:\zeta < \lambda \rangle$
exemplify the desired conclusion, this suffices.

\noindent
So let $\epsilon(*) < \cf(\lambda),
\bigcup\limits_{{\xi <} \epsilon{(*)}} \lambda_{\xi} \le \zeta
(*) < \lambda_{\epsilon(*)},\chi^*$ regular large enough and 
$x \in {\cH}(\chi^*)$, and let $J =
\sum\limits_{\xi <\lambda,\xi \ne \zeta(*)} I_{\xi}$.  We can choose by
induction on $\alpha<\lambda_{\epsilon(*)}$ 
a model $M^*_{\alpha}$ such that:
\begin{enumerate}
\item[$(*)_3$]  $(a) \quad M^*_{\alpha} \prec ({\cH}(\chi^*),\in,
<^*_{\chi^*})$
\sn
\item[${{}}$]  $(b) \quad M^*_{\alpha}$ increasing continuous in
$\alpha$
\sn
\item[${{}}$]  $(c) \quad \langle M^*_{\beta}:\beta \le \alpha \rangle \in
M^*_{\alpha +1}$
\sn
\item[${{}}$]  $(d) \quad \|M^*_{\alpha}\| < \lambda_{\epsilon(*)}$
\sn
\item[${{}}$]  $(e) \quad M^*_{\alpha} \cap \lambda_{\epsilon(*)}$ is an
ordinal $> \mu^{+2}_\epsilon > \mu + \cf(\lambda) + 
\sum\limits_{\zeta < \epsilon(*)} \lambda_\zeta$
\sn
\item[${{}}$]  $(f) \quad$ the objects $I_{\varepsilon(*)},J$ and
$\langle < \rho_{\epsilon,j}:j < \theta^+>:\epsilon < \cf(\lambda)
\rangle,\epsilon(*),\langle \lambda_{\epsilon}:\epsilon <$

\hskip30pt $\cf(\lambda) \rangle,\langle < S_{\epsilon,\zeta}:
\zeta < \lambda_{\epsilon} >:\epsilon < \cf(\lambda)\rangle$ and 

\hskip30pt $\langle < C^\epsilon_{\delta}:\delta \in S_{\epsilon} >:
\epsilon < \cf(\lambda)\rangle$ belong to $M^*_{\alpha}$.
\end{enumerate}
\mn
Let $E = \{\delta < \lambda_{\epsilon(*)}:M^*_{\delta} \cap
\lambda_{\epsilon(*)} = \delta\}$;  clearly $E$ is a club of
$\lambda_{\epsilon(*)} $.  So for some $\delta(*) \in E \cap
S_{\epsilon(*),\zeta(*)}$,  we have $C^{\epsilon(*)}_{\delta(*)} 
\subseteq E$.

We can find $N \prec M^*_{\delta(*)}$ such that:
\begin{enumerate}
\item[$(*)_4$]  $(\alpha) \quad \|N\| = \theta$, $\theta +1 \subseteq
N$ hence $\mu+1 \subseteq N,\{\mu,\kappa\} \subseteq N$, 
and $C^{\epsilon(*)}_{\delta(*)} \subseteq N$;  
\sn
\item[${{}}$]  $(\beta) \quad$ if $\delta \in M^*_{\delta(*)},
\cf(\delta) = \aleph_{0},\delta = \sup(N \cap \delta)$ then $\delta
\in N$;
\sn
\item[${{}}$]  $(\gamma) \quad$ the following objects belong to $N$ 
\sn
\begin{enumerate}
\item[${{}}$]  $\bullet \quad \langle <\rho_{\epsilon,j}:j <
\theta^+>:\epsilon < \cf(\lambda)\rangle$,
\sn
\item[${{}}$]  $\bullet \quad I_{\zeta(*)},J,x$,
\sn
\item[${{}}$]  $\bullet \quad \epsilon(*),\langle
\lambda_{\epsilon}:\epsilon < \cf(\lambda)\rangle$,
\sn
\item[${{}}$]  $\bullet \quad \langle <S_{\epsilon,\zeta}:\zeta <
\lambda_{\epsilon} >:\epsilon < \cf(\lambda) \rangle$,
\sn
\item[${{}}$]  $\bullet \quad \langle <C^\epsilon_{\delta}:
\delta \in S_{\epsilon} >:\epsilon < \cf(\lambda)\rangle$
\end{enumerate}
\sn
\item[${{}}$]  $(\delta) \quad \langle M^*_{\alpha}:\alpha < \gamma 
\rangle \in N$ for $\gamma \in C^{\epsilon(*)}_{\delta(*)}$.
\end{enumerate}

Let

\begin{enumerate}
\item[$\boxplus_1$]  $(a) \quad W \coloneqq \{(\epsilon,\zeta,\delta):
\epsilon < \cf(\lambda),\zeta \ne \zeta(*),
\bigcup\limits_{j < \epsilon} \lambda_j \le \zeta < 
\lambda_{\epsilon},\delta \in S_{\epsilon,\zeta}$

\hskip30pt  and $\zeta \in N,
\delta = \sup(N \cap \delta) \notin N$ but $C^\epsilon_\delta
\subseteq N\}$,
\sn
\item[${{}}$]  $(b) \quad W_1 \coloneqq \{(\epsilon,\zeta,\delta) \in
W:\epsilon > \epsilon(*)\}$.
\end{enumerate}
\mn
It is enough to show that for some $\rho \in 
\boldsymbol{\Upsilon}_{\epsilon(*)}$ we have:
\mn
\begin{enumerate}
\item[$\boxplus_{2,\rho}$]  $\eta_\rho := \rho^{[\delta(*)]},
M^\rho_{n} := N \cap M^*_{\rho^{[\delta(*)]}(n)},N^\rho_{n} := N \cap
M^*_{\rho^{[\delta(*)]}(n)+1}$
\end{enumerate}
\mn
(for $n < \omega$) satisfy the requirement $(*)$ of Definition
\ref{7.1}.  

Now, for every $\rho \in \boldsymbol{\Upsilon}_{\epsilon(*)}$, consider the conditions 
(from $(*)$ of \ref{7.1}):
\mn
\begin{enumerate}
\item[$\bullet$]  $(i),(iii),(iv)$ are trivial
\sn
\item[$\bullet$]  $(v)$ holds by the definition, in fact for every
$n, \eta_\delta \rest \Upsilon \in M^\ell_n,\eta_\delta \rest (n+1) \in
N^\rho_n \backslash M^\rho_n$
\sn
\item[$\bullet$]  $(ii)$ holds as $\mu + 1 \subseteq M_{0}$ 
because $\mu \le \theta \subseteq N$.
\end{enumerate}

The main point is condition (vi) and we shall show that for some $\rho
\in \boldsymbol{\Upsilon}_{\epsilon(*)}$ it holds

\begin{enumerate}
\item[$\boxplus_3$]  let $\Lambda = \{\rho \in
\Upsilon_{\epsilon(*)} \colon$  clause (vi) of Definition \ref{7.1} fails
for $\eta_\rho = \rho^{[\delta(*)]},M^\rho_n,N^\rho_n(n < \omega)\}$
\sn
\item[$\boxplus_4$]  if $\rho \in \Lambda$, then let $\Lambda_\rho$ be the set of $\nu
\in P^J_\omega$ such that:
$\{\nu \rest \ell:\ell < \omega\} \subseteq N$ but for no  
$\alpha < \delta(*)$ do we have $\{\nu \rest \ell:\ell < \omega\}
\subseteq N \cap M^*_{\alpha}$ and for infinitely many $n$ for some $k$ 
we have $\nu \rest k \in M^\rho_n,\nu(k) \in N^\rho_n \setminus
M^\rho_n$. 

(Recall that $N$ is from $(\ast)_{4}$).

\sn
\item[$\boxplus_5$]  it suffice to find $\rho \in \Lambda$ such that $\Lambda_{\rho} = \emptyset$, so towards a contradiction, assume that: 

\begin{itemize}
    \item if $\rho \in \Lambda$, 
    then we choose $(\nu,\varrho,\epsilon,\zeta,\delta) =
    (\nu_\rho,\varrho_\rho,\epsilon_\rho,\zeta_\rho,\delta_\rho)$ such
    that (but if $\rho$ is clear from the context, we may omit the
    subscript $\rho$): 

    \begin{enumerate}
        \item[(a)] $\nu_{\rho} \in \Lambda_{\rho}$, 

        \item[(b)] $\zeta_{\rho} \in \lambda \setminus \{ \zeta(\ast) \}$, 

        \item[(c)] $\varepsilon_{\rho}$ is the unique $\varepsilon < \cf(\lambda)$ such that $\bigcup_{\xi < \epsilon} \lambda_{\xi} \leq \zeta_{\rho} < \lambda_{\epsilon}$,

        \item[(d)] $\delta_{\rho} = \delta$, 

        \item[(e)] $\varrho_{\rho} \in \Lambda_{\varepsilon_{\rho}}$ is such that $\nu_{\rho} = \varrho^{[\delta(\ast)]}$, see after $(\ast)_{2}$.
    \end{enumerate}
\end{itemize}

    [Why such a quintuple exists? If no such $\nu_{\rho} \in \Lambda_{\rho}$ exists, then we get the desired contradiction. By the definition of $\Lambda_{\rho}$ there is $\varrho$ such that $\nu = \langle \zeta \rangle \mathop{\otimes}\limits_\lambda
    \varrho^{[\delta]}$ 
    (if we use the first version in the proof of \ref{7.8I}, or $\langle
    \zeta\rangle \char94 \varrho$ if we use another one there) and 
    $\varrho \in \boldsymbol{\Upsilon}_{\epsilon},\delta \in S_{\epsilon,\zeta},
    \bigcup\limits_{\xi < \epsilon} \lambda_{\xi} <
    \zeta < \lambda_{\epsilon},\zeta \ne \zeta(*)$;  hence \wilog \,  
    $(\nu,\varrho,\epsilon,\zeta,\delta) =
    (\nu_{\rho},\varrho_{\rho},\epsilon_{\rho},\zeta_{\rho},
    \delta_{\rho})$. ]

\end{enumerate}
\mn
 
\mn
\begin{enumerate}
\item[$\boxplus_6$]  if $\rho \in \Lambda$ then
$\epsilon < \epsilon(*)$ is impossible.
\end{enumerate}
\mn
Why? In this case $\lambda_{\epsilon} \subseteq M^*_0$ (see condition
$(*)_3(e)$ on the $M^*_\alpha$'s, hence $N \cap \{\nu \rest \ell:\ell
< \omega\} \subseteq M^*_{0}$, contradiction.

Next, 

\begin{fact}\label{7.9R7}
    We have: 
    \begin{enumerate}
        \item[$\boxplus_7$]  if $\rho \in \Lambda$ then $\epsilon = \epsilon(*)$ is impossible.
    \end{enumerate}
\end{fact}

\begin{PROOF}{\ref{7.9R7}}
    As $\nu \in J$ necessarily $\zeta \ne \zeta(*)$. As $\delta \in S_{\epsilon,\zeta}$, clearly $S_{\epsilon,\zeta} \cap
    S_{\epsilon(*),\zeta(*)} = \emptyset$ so necessarily  $\delta \ne \delta(*)$.  If  $\delta > \delta(*)$,  as $\langle \nu(n):1
    \le n < \omega \rangle$ is strictly increasing with limit $\delta$, for
    some $n,\lambda_{\epsilon(*)} > \nu(n) > \delta(*)$ hence
    $\nu \rest (n+1) \notin  M^*_{\delta(*)}$ hence  $\nu \rest (n+1) \notin N$,  contradiction.  If $\delta < \delta(*)$ then for some
    $\alpha < \delta(*),\{\nu \rest \ell:\ell < \omega\} \subseteq 
    N \cap M^*_\alpha$, (remember that $\theta \subseteq N$ by $(*)_4(\alpha)$
    and  $\{\nu \rest \ell:\ell < \omega\} \subseteq N$ by the assumption on $\nu$); again impossible by $\boxplus_{4}$, so $\boxplus_7$ holds. 
\end{PROOF}

\begin{subf}\label{7.9R8}
    We have: 
    \begin{enumerate}
        \item[$\boxplus_8$]  $(\varepsilon, \zeta, \delta) \in W_1$.
    \end{enumerate}
\end{subf}

\begin{PROOF}{\ref{7.9R8}}
    By $\boxplus_6,\boxplus_7$ we have $\epsilon > \epsilon(*)$. Now (remembering $\bar C^\epsilon$ is a partial square), for $1 \le 
    n < m < \omega,C^\epsilon_{\nu(n)} = C^\epsilon_{\nu(m)} \cap \nu(n)$, 
    and as $\nu(n) \in N$ by $(*)_4(\gamma)$ necessarily $C^\epsilon_{\nu(n)} \in N$, so as $\theta \subseteq N \wedge
    |C^\epsilon_{\nu(n)}| \le |C^{\epsilon(*)}_\delta| = \theta$ clearly
    $C^\epsilon_{\nu(n)} \subseteq N$.  

    It follows that $C_{\delta}^{\epsilon} = \bigcup_{1 < n < \omega} C_{\nu(n)}^{\epsilon}$, hence $C_{\delta}^{\epsilon} \subseteq N$, so $\delta = \sup(\delta \cap N)$. Next, 
    
    \begin{subf}\label{7.9R.5}
        $\delta \notin N$.
    \end{subf}
    
    \begin{PROOF}{\ref{7.9R.5}}
        As otherwise for some $\alpha < \delta(*),\delta \in M^*_\alpha$, hence $C^\epsilon_{\delta} \subseteq M^*_\alpha$;   now from $\nu \rest 1 \in N$ it follows that $\zeta \in N$ but
        $\epsilon < \cf(\lambda) \subseteq \theta \subseteq N$ so also $\epsilon \in N$ and $\Upsilon_\epsilon \in N$. Hence $\Lambda = \{\langle \gamma,\eta,\eta^{[\gamma]}\rangle:\gamma \in  S_{\epsilon,\zeta}$ and $\eta \in \boldsymbol{\Upsilon}_{\epsilon}\} \in N$ but we are assuming $\delta \in N$, and  hence $\{\nu(n) \colon n < \omega\} \subseteq C_{\delta}^{\varepsilon} \subseteq M^*_\alpha$ but also $\{ \nu(n) \colon n < \omega \} \subseteq N$ by $\oplus_{4}$, hence $\{\nu(n) \colon n < \omega\} \subseteq N \cap M_\alpha$, therefore by $\oplus_{2, \rho}$ is $\subseteq N_n$
        for $n$ large enough, which contradicts $\oplus_{4}$.  So really $\delta \notin N$]. 
    \end{PROOF}
    
    \underline{Continuing the proof of \ref{7.9R8}}: By clause $(*)_4(\beta)$ in the choice of $N$ necessarily 
    $\delta \notin M_{\delta(*)}$ and recalling $W$ is defined in
    $\boxplus_1$ above clearly $(\epsilon,\zeta,\delta)\in W$.
    
    Clearly $(\epsilon,\zeta,\delta) \in  W_{1}$ as we have 
    shown $\epsilon>\epsilon(*)$ by $\boxplus_6 + \boxplus_7$, so
    $\boxplus_8$ holds indeed. 
\end{PROOF}

Note that
\mn
\begin{enumerate}
\item[$\boxplus_9$]   $|W_{1}| \le |W| \le \theta$.
\end{enumerate}

This is because 

\begin{enumerate}
    \item[$\bullet_{1}$] the set $W_{1, 1}$ has cardinality $\leq \theta$, where $W_{1, 1} \coloneqq \{\epsilon \colon$ for some $\zeta, \delta$ we have $(\epsilon, \zeta, \delta) \in W_{1} \}$ because $\epsilon \in W_{1, 1} \Rightarrow \epsilon \leq \cf(\lambda) \leq \theta$. 

    \item[$\bullet_{2}$] the set $W_{1, 2}$ has cardinality $\leq \theta$, where $W_{1, 2} = \{ \zeta \colon$ for some $\epsilon, \delta$ we have $(\epsilon, \zeta, \delta) \in W_{1}\}$ because $\zeta \in W_{1, 2} \Rightarrow \zeta \in N$ so $\vert W_{1, 2} \vert \leq \Vert N \Vert = \theta$. 

    \item[$\bullet_{3}$] the set $W_{1, 3}$ has cardinality $\leq \theta$, where $W_{1, 3} \coloneqq \{ \delta  \colon$ for some $\epsilon, \zeta$ we have $(\epsilon, \zeta, \delta) \in W_{1} \}$ because $\|N\| = \theta$, and a well ordering of cardinality $\le \theta$ has 
    $\le \theta$ Dedekind cuts and $\delta = \sup(\delta \cap N) > 
    \sup(\delta \cap M_{\alpha})$ for $\alpha < \delta$  
    (see $(*)_4(\beta)$ in choice of $N$).
    
    By $\bullet_{1}$, $\bullet_{2}$ and $\bullet_{3}$, clearly $\boxplus_5$ holds indeed.
\end{enumerate}

Remember, we are trying to show only that for some $\rho \in
\seteq_{\epsilon(*)}$ we have $\eta_{\rho}  =:
\rho ^{[\delta(*)]},M^\rho_{n},N^\rho_{n}$ ($n < \omega $) are as required, we shall prove more,
\mn
\begin{enumerate}
    \item[$\oplus_1$]  if $(\epsilon,\zeta,\delta) \in W_1$ \then \,
    $\Omega_{(\varepsilon,\zeta,\delta)}$ has cardinality $\le \theta$
    where $\Omega_{(\varepsilon,\zeta,\delta)} := \{\nu_\rho:\rho \in
    \Lambda$ and $(\epsilon_\rho,\zeta_\rho,\delta_\rho) = (\varepsilon, \zeta, \delta)\}$.
\end{enumerate}
\mn
as $|W_{1}| \le \theta < \theta^+ = |\seteq_{\epsilon(*)}|$, 
this will be enough. 

So let $y=(\epsilon,\zeta,\delta) \in W_{1}$ we know that $\epsilon  >
\epsilon(*)$ hence $\ga_{\epsilon} \cap \ga_{\epsilon(*)}$ is finite.
Let for $\alpha \in C^\epsilon_{\delta}$:
\[
\gamma [\alpha ] = \min\{\gamma \in C^{\epsilon(*)}_{\delta(*)}: 
\alpha \text{ belongs to } M^*_{\gamma}, 
\text {(equivalently: } C^\epsilon_{\alpha} \in N \cap
M^*_{\gamma})\}.
\]

Now, $\gamma[\alpha]$ is well defined because $C_{\delta}^{\varepsilon} \subseteq N$ (by $(\ast)_{4}$) and $N \subseteq M_{\delta(\ast)} = \bigcup \{ M_{\gamma}^{\ast} \colon \gamma \in C_{\gamma(\ast)}^{\varepsilon(\ast)} \}$ because $C_{\delta(\ast)}^{\varepsilon(\ast)}$ is unbounded in $\delta(\ast)$. Next, 

\begin{fact}\label{7.9N}
    $\langle \gamma[\alpha]:\alpha \in C^\epsilon_{\delta} \rangle$ is a non-decreasing sequence of ordinals which are non-accumulation members of $C^{\epsilon(*)}_{\delta(*)}$, (with limit $\delta(*))$.
\end{fact}

\begin{PROOF}{\ref{7.9N}}
    If $\alpha \in C^\epsilon_\delta$ then $C^\epsilon_\alpha = C_{\delta}^{\varepsilon} \cap \alpha \subseteq N\cap M^*_{\gamma[\alpha]}$ hence $\beta\in C^\epsilon_\alpha \Rightarrow C^\epsilon_\beta = C^\epsilon_\alpha\cap  \beta\subseteq C^\epsilon_\alpha
    \subseteq N\cap M^*_{\gamma[\alpha]} \Rightarrow \gamma[\beta] \le
    \gamma[\alpha]$ so $\beta < \alpha \ \&\ \alpha \in C^\epsilon_\delta
    \, \& \, \beta \in C^\epsilon_\delta \Rightarrow \gamma[\beta] \le
    \gamma [\alpha]$.  Being non-accumulation points is trivial by the definition.
\end{PROOF}

\underline{Continuation of the proof of \ref{7.9}}: 

For $\kappa \in \ga_{\epsilon(*)}$, let:

\begin{itemize}
    \item $\beta^{\epsilon(*)}(\kappa)$ be the supremum of the set
    \[
    \{ \gamma[\alpha]:\alpha \in 
        C^\epsilon_{\delta} \text{ and } \otp(\alpha \cap C^\epsilon_{\delta}) 
        \le \sup(\ga_{\epsilon} \cap \kappa) \text{ and} 
        \otp(\gamma[\alpha] \cap C^{\epsilon(*)}_{\delta(*)})<\kappa\},
    \]

    \item $\gamma^{\epsilon(*)}(\kappa) = \otp(C^{\epsilon(*)}_{\delta(*)} \cap
\beta^{\epsilon(*)}(\kappa))$. 
\end{itemize}


Note: the supremum is taken over a set of $\le \sup(\ga_\epsilon \cap
\kappa)$ ordinals $< \kappa$ but $\ga_\epsilon \cap \kappa$ is a countable (even finite) set of
cardinals $< \kappa,\kappa$ regular uncountable so
$\sup(\ga_\epsilon \cap \kappa) < \kappa$ hence clearly
$\gamma^{\epsilon(*)}(\kappa) < \kappa$.

So $\langle \gamma^{\epsilon(*)}(\kappa):\kappa \in
\ga_{\epsilon(*)}\rangle$ belongs to $\Pi \ga_{\epsilon(*)}$ hence for
some $j(y) < \partial_{\varepsilon(\ast)}$, we have:

\begin{enumerate}
    \item[$\oplus_{1.1}$]   $\rho_{\epsilon(*),j(y)}(\kappa) >
    \gamma^{\epsilon(*)}(\kappa)$ for every large enough $\kappa \in 
    \ga_{\epsilon(*)}$.
\end{enumerate}

For $\kappa \in \ga_{\epsilon}$, let:

\begin{itemize}
    \item $\beta^\epsilon(\kappa)$ be the supremum of the set of the ordinals $\alpha \in C^\epsilon_{\delta}$ such that: 
    \begin{itemize}
        \item for some $\kappa_{1} \in \ga_{\epsilon(*)}, \otp(\gamma[\alpha] \cap C^{\epsilon(*)}_{\delta(*)}) \le \kappa_{1}$, 

        \item $(\forall \beta < \alpha)[\otp(\gamma[\beta] \cap C^{\epsilon(*)}_{\delta(*)}) < \kappa_1],$ and 

        \item $\otp(\alpha \cap C^\epsilon_{\delta})<\kappa$. 
    \end{itemize}

    \item $\gamma^\epsilon(\kappa) = \otp(C^\epsilon_\gamma \cap \beta^\epsilon
    (\kappa))$.
\end{itemize}


\mn
again, the supremum is taken over a set of $< \kappa$ ordinals $< \kappa$, hence clearly
$\gamma^\epsilon(\kappa) < \kappa$.

So $\langle \gamma^\epsilon(\kappa):\kappa \in \ga_{\epsilon}\rangle$
belongs to $\Pi \ga_{\epsilon}$ hence for some $i(y) < \theta^+$, we have:
$\rho_{\epsilon,i(y)}(\kappa) > \gamma^\epsilon(\kappa)$ for
every large enough $\kappa \in \ga_{\epsilon}$.

Now recall $\seteq_{\epsilon} = \{\rho_{\epsilon,i}:i <
\theta^+\}$ and similarly for $\epsilon(*)$, so
clearly if $i(y) < i < \theta^+ \, \& \, i(y) < j < \theta^+$ then 
$\rho^{[\delta]}_{\epsilon,j}$ cannot
``hurt" $\rho^{[\delta(*)]}_{\epsilon(*),i}$, that is,
$\nu_{\rho_{\varepsilon(*),i}} \in \{\rho^{[\delta]}_{\epsilon,j}:
i(y) < j < \theta^+\}$ so $|\Omega_y| \le |i(y)|$ so $\oplus_1$ holds.

Now we shall show that each $\nu = \rho^{[\delta]}_{\epsilon,j}$ (for $j
\le i(\epsilon))$ can hurt at most $\theta$ (also $\le
2^{\aleph_{0}})$ many $\rho \in \seteq_{\epsilon(*)}$; that is:

\begin{enumerate}
    \item[$\oplus_2$]  if $\nu \in \Omega_y$ then $\Lambda_{y,\nu} =
    \{\rho \in \Lambda:(\varepsilon_\rho,\zeta_\rho,\delta_\rho)=y$ and
    $\nu_\rho = \nu\}$ has cardinality $\le \theta$, recall $\nu_{\rho}$ is from $\boxplus_{5}$. 
\end{enumerate}

Now $\Rang(\rho^{[\delta(*)]})$ has infinite intersection with
\[
B := \{\alpha<\delta(*):\text{ for some } \ell,\nu \rest \ell \in
M^*_{\alpha +1} \setminus M_{\alpha}\}
\]

so let for $\kappa \in \ga_{\epsilon(*)}$:
\[
\beta^*_{\kappa} = \sup\{\otp(C^{\epsilon(*)}_{\delta(*)} \cap
 \alpha): \alpha \in B, \otp(C^{\epsilon(*)}_{\delta(*)} \cap
\alpha) < \kappa\}.
\]

\mn
So for some $i(*) < \theta^+,\beta^*_{\kappa} < \rho_{\epsilon(*),
i(*)}(\kappa)$ for every $\kappa \in \ga_{\epsilon(*)}$ large enough;
so for every $i$, if $i(*) < i < \theta^+$, then 
$\rho_{\epsilon(*),i}$ is not hurt, that is,
$\rho_{\varepsilon(*),i(*)} \notin \Lambda_{y,\nu}$ so $\oplus_2$
holds.

We can conclude

\begin{enumerate}
    \item[$\oplus_3$]  if $y = (\varepsilon,\zeta,\delta) \in W$ then
    $\Lambda_y = \{\rho \in \Lambda:(\varepsilon_\rho,\zeta_\rho,\delta_\rho) =  y\}$ has cardinality $\le \theta$.
\end{enumerate}

[Why?  By $\oplus_1 + \oplus_2$.]

\begin{fact}\label{7.90N}
    \underline{But} we have: 

    \begin{enumerate}
        \item[$\oplus_4$]  $\Lambda$ has cardinality $\le \theta$.
\end{enumerate}
\end{fact}

\begin{PROOF}{\ref{7.90N}}
    As $|W_1| \le \theta$ and $\Lambda = \cup\{\Lambda_y:y \in
    W_1\}$ and each $\Lambda_y$ has cardinality $\le \theta\}$.
\end{PROOF}

So necessarily $\seteq_{\varepsilon(*)} \nsubseteq \Lambda$ and so for some
$\rho \in \seteq_{\varepsilon(*)} \backslash \Lambda$.  Definition
\ref{7.1} is exemplified by $\eta_\rho =
\rho^{[\delta(*)]},\mu^\rho_n,N^\rho_n$ (for $n < \omega$), so we finish.
\end{PROOF}

\centerline{$* \qquad * \qquad *$}
\bigskip

\begin{lemma}\label{7.10}
Suppose $\lambda$ is strong limit, $\lambda = \kappa^{+\omega} > \mu$.  
\Then \, $K^\omega_{\tr}$ has the full 
$(\lambda,\lambda,\mu,\aleph_{0})$-super$^6$-bigness property.
\end{lemma}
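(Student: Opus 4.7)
My plan is to adapt the pcf-plus-club-guessing construction of Claim~\ref{7.9} to the present cofinality-$\omega$ setting, exploiting that $\lambda = \kappa^{+\omega}$ is strong limit.

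First I would set $\lambda_n = \kappa^{+n}$, so $\lambda = \sup_n \lambda_n$ with $\cf(\lambda) = \aleph_0$, and by Fact~\ref{7.4}(4) assume $\mu < \lambda_1$. Pcf theory applied to the strong limit singular $\lambda$ of cofinality $\omega$ (compare Claims~\ref{7.7},~\ref{7.9} and \cite[3.22=Lpcf.8]{Sh:E62}) yields $\max \pcf_{J^{\bd}}\{\lambda_n : n\ge 1\} \ge \lambda^+$, so I fix a $<_{J^{\bd}}$-increasing cofinal scale $\bar\rho = \langle \rho_i : i < \lambda^+ \rangle$ in $\prod_{n\ge 1}\lambda_n$. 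Then for each $n\ge 1$ I apply Claim~\ref{7.8A} to partition $\{\delta < \lambda_{n+1}: \cf(\delta) = \aleph_0\}$ into $\lambda_{n+1}$ pairwise disjoint stationary sets $\langle S^n_\zeta : \zeta < \lambda_{n+1}\rangle$, each carrying a club-guessing sequence $\langle C^{n,\zeta}_\delta : \delta \in S^n_\zeta\rangle$ of order type $\omega$, and I fix a subfamily $\Upsilon_n \subseteq \bar\rho$ of size $\lambda_{n+1}$.

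For $\delta \in S^n_\zeta$ with $C^{n,\zeta}_\delta = \{c^\delta_k : k<\omega\}$ listed increasingly and $\rho \in \Upsilon_n$, I define a branch $\eta^{n,\zeta}_{\delta,\rho} \in {}^\omega\lambda$ in the style of $\rho^{[\delta]}$ from Claim~\ref{7.9}: its $k$-th coordinate is placed inside $\lambda_{n+1+k}$ and codes both $c^\delta_k$ and $\rho(n+1+k)$, so both the club-guessing index and the scale index are recoverable. Set
\[
I_{n,\zeta} = {}^{\omega >}\lambda \cup \{\eta^{n,\zeta}_{\delta,\rho} : \delta \in S^n_\zeta,\ \rho \in \Upsilon_n\};
\]
then $|I_{n,\zeta}| = \lambda$ and the family $\{I_{n,\zeta} : n\ge 1,\ \zeta < \lambda_{n+1}\}$ has cardinality $\lambda$ as required. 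To show full $(\lambda,\lambda,\mu,\aleph_0)$-super$^6$-bigness I fix a target $(n_0,\zeta_0)$, set $J = \sum_{(n,\zeta)\ne(n_0,\zeta_0)} I_{n,\zeta}$, and for regular $\chi^*$ large and $x \in \cH(\chi^*)$ build an increasing continuous $\prec$-chain $\langle M^*_\alpha : \alpha < \lambda_{n_0+1}\rangle$ of submodels of $(\cH(\chi^*),\in,<^*_{\chi^*})$ with $\|M^*_\alpha\| = \lambda_{n_0}$, $M^*_\alpha \cap \lambda_{n_0+1}$ an ordinal, $\mu + \lambda_{n_0} \subseteq M^*_0$, and all relevant parameters in $M^*_0$. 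Club-guessing on $E := \{\delta < \lambda_{n_0+1} : M^*_\delta \cap \lambda_{n_0+1} = \delta\}$ yields $\delta(*) \in S^{n_0}_{\zeta_0}$ with $C^{n_0,\zeta_0}_{\delta(*)} \subseteq E$; I set $M_k := M^*_{c^{\delta(*)}_k}$ and $\eta := \eta^{n_0,\zeta_0}_{\delta(*),\rho^*}$ for an ``unhurt'' $\rho^* \in \Upsilon_{n_0}$ selected as below. Clauses (i)--(vi) of Definition~\ref{7.3}(F) follow routinely from $\mu + \lambda_{n_0} \subseteq M^*_0$ and $M^*_{c^{\delta(*)}_k} \cap \lambda_{n_0+1} = c^{\delta(*)}_k$. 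Clause (vii) is proved by cases on the origin $I_{n',\zeta'}$ of $\nu \in P^J_\omega$: if $n' < n_0$ then the indexing data $(n',\zeta',\delta',\rho')$ lies in $M^*_0$ (the relevant index sets all have size $\le \lambda_{n_0} \subseteq M^*_0$), so $\nu \in M^*_0 \subseteq M_0$; if $n' = n_0$ with $\zeta' \ne \zeta_0$, disjointness of $S^{n_0}_{\zeta_0}$ and $S^{n_0}_{\zeta'}$ forces $\delta' \ne \delta(*)$, and one argues by whether $\delta' < \delta(*)$ (whole branch in $M^*_\alpha$ for some $\alpha < \delta(*)$) or $\delta' > \delta(*)$ (only finitely many initial segments remain in $\bigcup_m M_m$); the only genuinely hard sub-case is $n' > n_0$ with $\delta' \notin \bigcup_m M_m$ yet $C^{n',\zeta'}_{\delta'} \subseteq \bigcup_m M_m$.

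The main obstacle is exactly that last sub-case, handled by importing the pcf-counting argument $\oplus_1$--$\oplus_4$ from Claim~\ref{7.9}: letting $W_1$ be the set of such triples, one bounds $|W_1| \le |M^*_{\delta(*)}| = \lambda_{n_0}$ (only countably many $n'$ occur and $|M^*_{\delta(*)} \cap \lambda_{n'+1}| \le \lambda_{n_0}$ for each), and for each $(n',\zeta',\delta')\in W_1$ the scale inequalities $<_{J^{\bd}}$ bound the $\rho \in \Upsilon_{n_0}$ possibly ``hurt'' by the associated branches by $\lambda_{n_0}$; the total hurt set has size $\le \lambda_{n_0} < \lambda_{n_0+1} = |\Upsilon_{n_0}|$, so an unhurt $\rho^*$ exists and supplies our choice. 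The delicacy is that in~\ref{7.9} the counting was carried by pairwise almost-disjoint $\ga_\varepsilon$'s providing independent pcf directions, whereas $\cf(\lambda) = \aleph_0$ leaves only the single direction $\ga = \{\lambda_n : n\ge 1\}$; the strong-limit hypothesis on $\lambda = \kappa^{+\omega}$ is precisely what forces $\max\pcf_{J^{\bd}}(\ga) \ge \lambda^+$, making this single scale long enough to absorb the countable-model counting and leave unhurt scale elements available.
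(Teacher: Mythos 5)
Your plan transplants the pcf-plus-club-guessing machinery of Claim~\ref{7.9} to $\lambda = \kappa^{+\omega}$, but there are two genuine gaps, the first of which is structural and fatal to the argument as written. You set $M_k := M^*_{c^{\delta(*)}_k}$, a sequence depending only on $\delta(*)$, and then propose to select an ``unhurt'' $\rho^* \in \Upsilon_{n_0}$ to get $\eta$. However, clause~(vii) of Definition~\ref{7.3}(F) is a property of the pair $(\langle M_n\rangle, J)$ alone --- it never refers to $\eta$. With fixed models it either holds or fails, irrespective of the choice of $\rho^*$, so the ``pick an unhurt scale element'' strategy cannot rescue it. In Claim~\ref{7.9} this strategy works only because there the models are $M^\rho_n := N \cap M^*_{\rho^{[\delta(*)]}(n)}$, which genuinely vary with $\rho$; that wiggle room is missing in your set-up, so the counting you import has no parameter left to count over.

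Even if the models were made to depend on $\rho^*$, the imported counting $\oplus_1$--$\oplus_4$ would not transfer. It relies on the $\ga_\varepsilon$'s being pairwise almost disjoint: the bound $\gamma^{\varepsilon(*)}(\kappa) < \kappa$ in Claim~\ref{7.9} comes from $\sup(\ga_\varepsilon \cap \kappa) < \kappa$ for $\kappa \in \ga_{\varepsilon(*)}$, $\varepsilon \ne \varepsilon(*)$. You work with the single set $\ga = \{\lambda_n : n \ge 1\}$ for every $\Upsilon_n$, so this almost-disjointness is absent. Relatedly, Claim~\ref{7.9}'s clubs $C^\varepsilon_\delta$ have order type $\theta$ (singular, $\cf(\theta) = \aleph_0$) so scale values $\rho(\kappa) < \kappa < \theta$ can index into them via $\rho^{[\delta]}$; your clubs from Claim~\ref{7.8A} have order type $\omega$ and cannot absorb indices $\rho(n_0+1+k) < \lambda_{n_0+1+k}$. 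Indeed, this is precisely why $\lambda = \aleph_{\alpha+\omega}$ is isolated as Case~6 in the proof of Theorem~\ref{7.14}: when $\mu^{+\omega} = \lambda$ and $\cf(\mu) > \aleph_0$ there is \emph{no} $\theta$ with $\mu \le \theta < \lambda$ and $\cf(\theta) = \aleph_0$, so the hypotheses of Claim~\ref{7.9} cannot be met, and the (lighter) pcf route closes there. The paper's actual proof of Lemma~\ref{7.10} takes a different route: the strong-limit hypothesis is spent on defining coding functions $\cd_n$ into $\kappa^{+n}$, yielding a $\diamondsuit$-style black box on a special stationary family $\cS_n \subseteq [\kappa^{+n}]^{\kappa^{++}}$, and the trees $\langle N^\eta_A : \eta \in \cT_A\rangle$ produce both the branch $\eta^\nu_A$ and the models $M^{\nu\rest\ell}_A$, which vary with the perfect-tree parameter $\nu$ and thus do give the flexibility that clauses~(v)--(vii) require.
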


\begin{remark} \label{7.10.1}
    We use variants of this proof in case 6 of the proof of \ref{7.14}.
\end{remark}

\begin{PROOF}{\ref{7.10}}
Let $\chi > 2^\lambda$ be large enough.

\Wilog \, $\kappa > \mu$ and $\kappa^\mu = \kappa$.  
Let $\langle C_{\delta}:\delta < \lambda \rangle$ be such that
$C_{\delta}$ is a club of $\delta$ of order type $\cf(\delta)$.  If
$(\kappa^{+n})^{\kappa^{++}} = \kappa^{+n}$ then we choose a function $\cd_{n}$ from $\{M \in {\cH}(\chi) \colon M$ a model, $\|M\| \le \kappa^{++},|\tau(M)| \le
\kappa^{++}$ and $\tau(M) \in {\cH}_{<\kappa^{+3}}(\kappa^{+n})\}$
to $\kappa^{+n}$ such that:

\begin{enumerate}
    \item[$\oplus_{1}$] $\cd_{n}(M_{1}) = \cd_{n}(M_{2}) \text{ iff } M_{1} \cong M_{2} \, \& \, M_{1} \cap \kappa^{+n} = M_{2} \cap \kappa^{+n}. $
\end{enumerate}

As $\lambda$ is strong limit, $2^{\kappa^{++}} < \lambda = \kappa^{+\omega}$ hence $\cd_{n}$ is well defined for every $n$ large enough, so choose $n_{0} > 3$ such that $\cd_{n}$ is well-defined for every $n \geq n_{0}$.  \Wilog \, $\cd_{n}$ is definable in $({\cH}(\chi),\in,<^*_{\chi})$.  We call $\cd_n(M)$ the $n$-code of $M$ or a code of $M$. 

Also for every $n>0$ there are $f_{n},g_{n}$
(definable in $({\cH}(\chi),\in,<^*_{\chi}))$, two place functions from $\kappa^{+n}$ to $\kappa^{+n}$ such that for $\alpha < \kappa^{+n}$ if $\alpha \ge \kappa^{+(n-1)}$ then:

\begin{enumerate}
    \item[$\oplus_{2}$] $\{f_{n}(\alpha,i):i < \kappa^{+(n-1)}\} = \alpha \text{ and } i < \kappa^{+(n-1)} \Rightarrow g_{n}(\alpha, f_{n}(\alpha,i)) = i.$ 
\end{enumerate}

For any $n \ge 3$, let 

\begin{enumerate}
    \item[$\oplus_{3}$] $\cS_n$ is the set of the sets $A$ such that: 

    \begin{itemize}
        \item[$\boxplus_{1}$] $A \subseteq \kappa^{+n}$  and $|A| = \kappa^{++},$
        
        \item[$\bullet_{2}$] $\{ \kappa^{+\ell} + i \colon i < \omega, \ell \leq n \} \cup \kappa^{++}$, $\kappa^{++}  \subseteq A,$ 

        \item[$\bullet_{3}$] letting  $\delta_{\ell}(A) = \sup(A \cap  \kappa^{+\ell})$,  (for  $\ell = 3,...,n), $ we have: $\delta_{\ell}(A) > \kappa^{+(\ell -1)}$ as $\kappa^{+\ell} +1 \in A$, 

        \item[$\bullet_{4}$]  $\cf(\delta_{\ell}(A)) = \aleph_{0}$ 
        
        \item[$\bullet_{5}$] $\ell \leq n \Rightarrow  C_{\delta_{\ell}(A)} \subseteq A$, 

        \item[$\bullet_{6}$] $A$ is the closure of  $\{i:i<\kappa^{++}\} \cup \bigcup\limits_{\ell=3}^{n} C_{\delta_{\ell}(A)}$ under the functions  $f_{\ell}, g_{\ell},  (\ell =3,...,n)$.
    \end{itemize}
\end{enumerate}

Note that if $A \in {\cS}_{n}$, then $n$ can be reconstructed from $A$.

We can prove by induction on $n \ge 3$ that:

\begin{enumerate}
    \item[$\oplus_{4}$] for every $x \in {\cH}(\chi)$
    there is a sequence $\langle M_m:m < \omega \rangle$, such that
    $M_m \prec ({\cH}(\chi),\in,<_{\chi}),\|M_m\| = \kappa^{++},
    \kappa^{++} + 1 \subseteq M_0,x \in M_0,M_m \prec M_{m+1},M_m \in
    M_{m+1}$ (hence $\cd(M_m) \in M_{m+1})$  and
    
    \[
    \bigcup\limits_{m < \omega} M_m \cap \kappa^{+n} \in {\cS}_{n}.
    \]
\end{enumerate}

\mn
Hence for $n \ge n_0$ we know that $\diamondsuit_{{\cS}_{n}}$ holds (see
\cite[4.5(2)=Ld14]{Sh:E62}), in fact 
\mn
\begin{enumerate}
\item[$\boxplus_1$]  for $n \ge n_0$ there is $\bar N_n$ such that:
\sn
\begin{enumerate}
\item[$(a)$]   $\bar N_n = \langle N_{A}:A \in {\cS}_{n}\rangle$,
\sn
\item[$(b)$]  $N_{A}$ a model with universe $A$,
\sn
\item[$(c)$] if $(\alpha)$ then $(\beta) + (\gamma)$, where: 

\begin{enumerate}
    \item[$(\alpha)$] $N$ is a model with universe $\kappa^{+n}$ and vocabulary $\tau(N)$ of cardinality $\le \mu$ included in ${\cH}(\mu^*)$ and satisfying  $<_{\ast}$ is a member of $\tau(N),$ the vocabulary of $N$, $<_*^N = < \rest N$, 

    \item[$(\beta)$] the set $\cS_n[N] = \{A \in {\cS}_{n}:N_{A} = N \rest A\}$ is a stationary subset of $[\kappa^{+n}]^{\le \kappa^{++}}$, 

    \item[$(\gamma)$] $N_{A} = \bigcup\limits_{\ell < \omega} N^\ell_A$, where for each $\ell$, some code $\alpha^\ell_{A}$ of $N^\ell_{A}$,  belongs to $N_{A}$ and $N^\ell_A \prec N_A$. 
\end{enumerate}
\end{enumerate}
\end{enumerate}
\mn
By \cite[1.19=La54]{Sh:E62} there are $\langle N^\eta_{A}:\eta \in 
{\cT}_{A}\rangle$ for $A \in{\cS}_n$ such that:
\mn
\begin{enumerate}
\item[$\boxplus_2$]  $(a) \quad \cT_A \subseteq {}^{\omega
>}(\kappa^{++}),\cT_A$ closed under initial segments, $\langle \rangle \in
\cT_A,\eta \in {\cT}_{A} \Rightarrow$

\hskip25pt $(\exists^{\kappa^{++}}\alpha)
[\eta \char 94 \langle \alpha \rangle \in {\cT}_{A}]$,
\sn
\item[${{}}$]  $(b) \quad$ if $\eta \in {\cT}_A$ then 
$N^\eta_{A} \prec N_{A},\eta \in N^\eta_{A}$,
\sn
\item[${{}}$]  $(c) \quad N^\eta_{A}$ countable,
\sn
\item[${{}}$]  $(d) \quad N^\eta_{A} \cap \kappa = N^{<>}_{A} \cap
\kappa$,
\sn
\item[${{}}$]  $(e) \quad N^\eta_{A} \cap N^\nu_{A} = N^{\eta \cap
\nu}_{A}$,
\sn
\item[${{}}$]  $(f) \quad [\eta \ne \nu \in \cT_A \Rightarrow N^\eta_{A} \ne
N^\nu_{A}]$ and $[\neg(\eta \trianglelefteq \nu) \Rightarrow \eta 
\notin N^\nu_{A}]$,
\sn
\item[${{}}$]  $(g) \quad \{\alpha^\ell_{A}:\ell < \omega\} \cup
\bigcup\limits_{\ell=3}^{n} C_{\delta_{\ell}(A)} \subseteq N^{<>}_{A}$, recalling that  $\alpha_{A}^{\ell}$ is from $\boxplus_{1}$(c)$(\gamma)$, 
\sn
\item[${{}}$]  $(h) \quad \eta \triangleleft \nu \Rightarrow N^\eta_{A} \cap
\kappa^{++}$ is an initial segment of $N^\nu_{A} \cap \kappa^{++}$.
\end{enumerate}
\mn

\begin{enumerate}
    \item[$\boxplus_{3}$] 

    \begin{enumerate}
        \item[(a)] We let $N^\eta_{A} = \bigcup\limits_{\ell < \omega} 
        N^{\eta \rest \ell}_{A}$ when $\eta \in \lim(\cT_A)$.

        \item[(b)] Without loss of
        generality, if $N_{A},N_{B}$ are isomorphic then ${\cT}_A 
        = {\cT}_{B}$ and the (unique) isomorphisms from  
        $N_{A}$ onto $N_{B}$ carry $N^\eta_{A}$ to $N^\eta_{B}$ for each 
        $\eta \in {\cT}_A$.

        \item[(c)] For $\nu \in \lim({\cT}_{A})$, let $\eta^\nu_{A} \in {}^\omega (N^\nu_A)$ just list exactly the members of $N^\nu_{A}$
        and satisfies $\alpha^\ell_{A} = \eta^\nu_{A}(3\ell)$ (for $\ell < \omega$)\footnote{Actually it suffices if it lists $\cup \{C_{\delta_{\ell}(A)}:3 \le \ell  < n\} \cup \{\alpha^\ell_{A}:\ell < \omega\} \cup \{\nu(\ell):\ell < \omega\}$; 
        this change is needed for \ref{7.10A}.}.
    \end{enumerate}
\end{enumerate} 

Let
\[
\langle S^n_{<\gamma_{3},\gamma_{4},\ldots,\gamma_{n}>}:n < \omega
\text{ and } \ell \in \{3,\dotsc,n\} \Rightarrow 
\gamma_{\ell} < \kappa ^{+3}\rangle
\]
be a sequence of stationary subsets of $\{\delta < \kappa^{++}: \cf(\delta) = \aleph_{0}\}$, any two have a bounded intersection (exist, see \cite[4.1=Ld4]{Sh:E62} (which prove more))\footnote{We can assume  $\cup \{S^n_{<\gamma_{\ell}:\ell = 3,
\ldots,n>}:\, \gamma_{\ell} <\kappa^{+2}\}$ 
for $n < \omega$ are pairwise disjoint.}.

\begin{fact}\label{7.10R4}
    We have: 
    \begin{enumerate}
        \item[$\boxplus_{4}$] We can choose $\bar{\cS}$, $\bar{N}$ such that: 

        \begin{enumerate}
            \item[(a)] $\bar{\cS} = \langle \cS_{n} \colon n \in [n_{0}, \omega) \rangle$, where $n_{0}$ was chosen applying $(\ast)_{1}$,

            \item[(b)] $\bar{\cS}_{n} = \langle \cS_{n, \zeta} \colon \zeta < \kappa^{+n} \rangle$ (for $n \geq n_{0}$) is a partition of $\cS_{n}$. 

            \item[(c)] $\bar{N} = \langle \bar{N}^{n} \colon n \in [n_{0}, n \rangle$, where $\bar{N}^{n} = \langle N_{A} \colon A \in \cS_{n} \rangle$, 

            \item[(d)] for each $n, \zeta$ the sequence $\langle N_{A} \colon A \in \cS_{n, \zeta} \rangle$ is a diamond sequence. 
        \end{enumerate}
    \end{enumerate}
\end{fact}

\begin{PROOF}{\ref{7.10R4}}
    E.g. let $P_* \in \cH(\mu^+)$ serve as a unary predicate and
    for every $\zeta < \kappa^{+n}$ let $\cS'_{n,\zeta} = \{A \in \cS_n:P
    \in \tau(N_A)$ and $P^{N_A}_* = \{\zeta\}\}$ and for $A \in
    \cS'_{n,\zeta}$ let $N'_A = N_A \rest (\tau(N_A)) \backslash \{P_*\}$;
    renaming the vocabularies and adding $\cS_n \backslash
    \cup\{\cS'_{n,\zeta}:\zeta < \lambda\}$ to $\cS'_{n,\zeta}$, we can finish proving \ref{7.10R4}. 
\end{PROOF}

\begin{fact}\label{7.10R5}
    If $A \in \cS_{n, \zeta}$ then $\otp(N_{A} \cap \kappa^{+\ell}) < \kappa^{+3}$. 
\end{fact}

\begin{PROOF}{\ref{7.10R5}}
    This holds because $\Vert N_{A} \Vert = \kappa^{++}$ as $A \in \cS_{n}$, see its definition.  
\end{PROOF}

Now, for $\zeta \in [\kappa^{+(n-1)}, \kappa^{+n})$, where $n > n_{0}$, let: 
\begin{enumerate}
       \item[$\oplus$] 
    $I_\zeta  = {}^{\omega >}\lambda \cup \{\eta^\nu_{A}:A \in
    {\cS}_{n,\zeta} \text{ and } \nu \in Y^n_{A,\langle\otp(N_{A}\cap 
    \kappa^{+\ell}):3 \le \ell \le n\rangle}\}.$
\end{enumerate}

where (recall that $\lim(\cT_{A}) \coloneqq \{ \eta \in {\omega}^{\Ord} \colon$ if $n < \omega$ then $\eta \rest n \in \cT_{A}\}$):
\[
Y^n_{A,\gamma_3,\ldots,\gamma_n} = \{\nu:\nu \in \lim({\cT}_A), 
\text{ increasing with limit in } S^n_{<\gamma_3,\ldots,\gamma_n>}\}:
\]

\begin{enumerate}
    \item[$\boxplus_{5}$] We shall prove that the sequence $\langle I_{\zeta}:\kappa^{+n_{0}}
    \le \zeta < \lambda \rangle$ is as required; this suffices. 
\end{enumerate}

\begin{enumerate}
    \item[$\boxplus_{6}$] For this suppose that:  

    \begin{enumerate}
        \item[(a)]  $x \in {\cH}(\chi),$ where $\chi$ regular large enough,

        \item[(b)] $\zeta \in [\kappa^{+n_{0}},\lambda)$,

        \item[(c)] $J_{\zeta}  = \sum\limits\{I_{\xi}:\xi \ne \zeta \text{ and } \xi
        \in [\kappa^{+n_{0}},\lambda)\}$, and 

        \item[(d)] let $n$ be such that $\kappa^{+(n-1)} \le \zeta < \kappa^{+n}$.  Let 
        $M \prec ({\cH}(\chi),\in,<^*_{\chi})$ have cardinality $\kappa^{+n}$ 
        and be such that $\kappa^{+n} + 1 \subseteq
        M,\{x,I_{\zeta},J_{\zeta},\mu\} \in M$ and 
        $\langle C_{\delta}:\delta < \lambda \rangle,\langle \cd_{n},f_{n},g_{n}:n <
        \omega \rangle$ belong to $M$.
    \end{enumerate}
\end{enumerate}

\begin{enumerate}
    \item[$\boxplus_{7}$] 

    \begin{enumerate}
        \item[(a)]  Let $h$ be a one to one function from $|M|$  onto $\kappa^{+n}$,

        \item[(b)] let
        $N^+$ be the model with universe $\kappa^{+n}$ and all relations 
        and functions on $\kappa^{+n}$ definable (with no parameters) in $(M,h)$, 

        \item[(c)] In particular we may use $F,F_1,F_2$ such that 
        $x=\langle y,z\rangle\in M \Rightarrow F^{N^+} (h(y),h(z))=h(x),F^{N^+}_1
        (h(x))=h(y),F^{N^+}_2 (h(x)) = h(z)$.
    \end{enumerate}
\end{enumerate}

So, 

\begin{enumerate}
    \item[$\boxplus_{8}$]

    \begin{enumerate}
        \item[(a)]  For some $A \in {\cS}_{n,\zeta }$ we have $N_{A} \prec N^+$. 

        \item[(b)] We shall show
        that for some  $\nu \in Y^n_{A,\langle\otp(N_{A} \cap \kappa^{+\ell}):3 \le \ell \le n\rangle}$ we have: $\eta^\nu_{A},N^\nu_{A}$, are as required.
    \end{enumerate}
\end{enumerate}

Let $M_{A},M^\nu_{A}$ for $(\nu \in \lim({\cT}_{A}))$ be the Skolem 
Hull of $N_{A},N^\nu_{A}$ respectively in $(M,h)$.  
Note: $|M_{A}| \cap \kappa^{+n} = |N_{A}|,|M^\eta_{A}| \cap
\kappa^{+n} = |N^\eta_{A}|$.  For $\nu \in \lim ({\cT}_{A})$, let
$Z_{\nu} $ be the set of triples $(\xi,B,\rho)$ such that for some $m =
m(\xi)>n_0:\xi \ne \zeta,B \in {\cS}_{m,\xi},\xi \in 
[\kappa^{+(m-1)},\kappa^{+m}),\rho \in \lim ({\cT}_{B})$ and
$<\xi >\char 94 \eta^\rho_{B} \notin M^\nu_{A}$ but
$\{(<\xi >\char94 \eta^\rho_{B}) \rest \ell:\ell < \omega\} \subseteq 
M^\nu_{A}$.
 
We now make some important observations:

\begin{fact}\label{7.10R9}
    We have: 
    \begin{enumerate}
        \item[$(*)_{1}$]  if $(\xi,B,\rho) \in Z_{\nu},\xi \in  [\kappa^{+(n-1)},\kappa^{+n})$ (i.e. $m(\xi) = n$) then $\otp(N_{B} \cap \kappa^{+\ell}) \le \otp(N_{A} \cap \kappa^{+\ell})$ for $\ell \in [3,n]$;  and for at least one $\ell$ the inequality is strict and $B\subseteq A$. 
    \end{enumerate}
\end{fact}

\begin{PROOF}{\ref{7.10R9}}
    As $C_{\delta_{\ell}(B)} \subseteq \Rang(\eta^\rho_{B})$  we have $\bigcup\limits_{\ell=3}^{n} C_{\delta_{\ell} (B)} \subseteq
    N^\nu_{A} \subseteq A$,  hence (see the definition of ${\cS}_{n}$, using the  
    $\langle f_\ell,g_\ell:\ell=3,\ldots,n-1\rangle$ we get $B \subseteq
    A$ so the inequality  ``$\otp(N_{B} \cap \kappa^{+\ell}) \leq \otp(N_{A} \cap \kappa^{+\ell})$'' follows; but necessarily  $B \ne A$ (as
    $\langle \xi \rangle \char 94 \eta^\rho_{B} \in J_\zeta$ 
    and ${\cS}_{n,\xi } \cap {\cS}_{n,\zeta} = \emptyset)$ and if  
    $\neg(\exists \ell)(\delta_{\ell} (B) < \delta_{\ell}(A))$ then we 
    have:  $\kappa^{++} \subseteq B$,  and for $\ell \le$ and $n\ge 3$
    \[
    \sup(B \cap \kappa^{+\ell}) =
    \sup(A \cap \kappa^{+\ell}) = \sup(A \cap B \cap \kappa^{+\ell});
    \]
    
    \mn
    now use the choice of $f_{n},g_{n}$.  You can show, using $B \subseteq A$, by
    induction on  $\ell \le n$  that  $B \cap \kappa^{+\ell} = A \cap
    \kappa^{+\ell}$; for $\ell = n$  we get a contradiction.
\end{PROOF}

Next, we have: 

\begin{fact}\label{7.10R11}
    \begin{enumerate}
        \item[$(*)_{2}$]  if $(\xi,B,\rho) \in Z_{\nu}$ then $\{\delta_{\ell}(B):3
        \le \ell \le m(\xi)\}$  is included in the closure of $|M^\nu_{A}|$ 
        in the order topology, which is a countable set of ordinals; also 
    $B \subseteq M_A$.
\end{enumerate}
\end{fact}

\begin{PROOF}{\ref{7.10R11}}
    Similar argument; for $B \subseteq M_A$ use $\eta^\rho_{B}(3\ell) = \alpha^\ell_{B}$.
\end{PROOF}

\begin{fact}\label{7.10R14}
    \begin{enumerate}
    \item[$(*)_{3}$] So if $Y \subseteq \lim({\cT}_{A})$ is 
    closed with countable density, and no isolated points, then
    for some  $\nu \in Y$ (really for a co-countable set of $\nu$'s):
    \sn
    \begin{enumerate}
    \item[$\otimes$]   $(\xi,B,\rho) \in Z_{\nu} \Rightarrow (\exists k)
    [\{\alpha^\ell_{B}:\ell < \omega\} \subseteq M^{\nu \rest k}_{A}]$.
    \end{enumerate}
    \end{enumerate}
\end{fact}

\begin{PROOF}{\ref{7.10R14}}
    Why?  The point is that  $\{(\xi,B):(\exists \nu \in Y)(\exists \rho) [(\xi,B,\rho) \in Z_{\nu}]\}$  is countable (as for
    each $(\xi,B,\rho) \in Z_{\nu}$ the ordinals $\delta_{\ell}(B),3 \le \ell
    \le m(\xi)$,  are all accumulation points of 
    $\bigcup\limits_{\nu \in Y} M^\nu_{A}$,  which is countable and
    $\langle \delta_\ell (B):3 \le \ell \le m(\xi)\rangle$ 
    determine $B$ hence $\xi$, and for each 
    such $(\xi,B)$  the set of $\nu \in Y$  for which $\otimes$ fails
    is at most a singleton, using clause (e) above and the last clause in the
    definition of $Z_{\nu}$.
\end{PROOF}

Lastly,

\begin{fact}\label{7.10R17}
    We have: 

    \begin{enumerate}
        \item[$(\ast)_{4}$] $C^{\ast}$ is a club of $\kappa^{++}$, where $C^{\ast}$ is the set of $\delta < \kappa^{++}$ such that $\delta \in C'$ where: 

        \begin{itemize}
            \item $\{ \alpha_{B}^{\ell} \colon \ell < \omega \} \subseteq M_{A}^{\nu}$ for some $\nu \in {}^{\omega > } \delta$, and $m < \omega$ and $B \in \cS_{m, \xi}$, 

            \item $C'$ is the $<_{\chi}^{\ast}$-first club disjoint to $S_{\langle \otp(A \cap \kappa^{+\ell}) \colon 4 \leq \ell \leq n \rangle}^{m} \cap S_{\langle \otp(B \cap \kappa^{+ \ell}) \colon 3 \leq \ell \leq m \rangle}^{m}$. 
        \end{itemize}
    \end{enumerate}
\end{fact}

\begin{PROOF}{\ref{7.10R17}}
    Why?  Note that $\kappa^\mu = \kappa$ hence $(\kappa^+)^{\aleph_{0}} =
    \kappa ^+$,  so the number of  possible  $B$'s  for each  $\nu  \in
    {}^{\omega>}(\kappa ^{++})$ is $\le \|M^\nu_{A}\|^{\aleph_{0}} \le 
    \kappa^+$,  and use diagonal intersection.
\end{PROOF}

\begin{enumerate}
    \item[$(*)_5$]  if $\nu \in \lim({\cT}_{A}),\nu$ increasing with limit $\delta
    \in C^* \cap S^n_{\langle \otp(A \cap \kappa^{+\ell}): 3 \le \ell \le 
    n\rangle}$ then
    \[
    (\xi,B,\rho) \in Z_{\nu} \Rightarrow \neg \exists k[\{\alpha^\ell_{B}:
    \ell <\omega \} \subseteq M^{\nu \rest k}_{A}].
    \]
\end{enumerate}

[Why?  Easy by the choice of $C^*$.] 

Together we finish: by $(*)_{4}$, we can find $\delta$ as in $(*)_{5}$
and hence we can find a perfect set $Y \subseteq {\cT}_{A}$ of 
sequences with limit $\delta$;  now $(*)_{3},(*)_{5}$ give
contradictory conclusions (alternatively, see the proof of 
\ref{7.10A}).  
\end{PROOF}

\begin{claim}\label{7.10A}
    In fact in \ref{7.10} we can get (under the assumptions of \ref{7.10}) that
    $K^\omega_{\tr}$ has the full 
    $(\lambda,\lambda,\mu,\mu)$-super$^{6^+}$-bigness
    property (and moreover in \ref{7.3}(F) in clause (ii) there we get 
    ``$\mu + 1 \subseteq  M_{n}$" and $[M_n]^{\aleph_0}
    \subseteq M_n$ which implies (vii) $\Leftrightarrow$ (vii)$^+$ there).
\end{claim}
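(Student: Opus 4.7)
The proof of \ref{7.10} already delivers clauses (i)--(vi) of Definition \ref{7.3}(F) together with (vii) of (F), so the plan is to re-run that construction with only two changes: (a) replace the countable submodels $N^\eta_A$ by submodels that are closed under countable sequences and contain $\mu + 1$, and (b) observe that, once (a) is in place, (vii)$^+$ of \ref{7.3}(F$^+$) is a free consequence of (vii) of (F).

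For step (a), I would rebuild $\langle N^\eta_A : \eta \in \cT_A\rangle$ in $\boxplus_2$ of \ref{7.10} using an $\omega$-closed variant of the $\Delta$-system lemma \cite[1.18=La54]{Sh:E62} (in the spirit of clause (g) of $\boxplus$ in the proof of \ref{7.6}(2)): each $N^\eta_A$ now has cardinality $\theta := \mu^{\aleph_0}$, satisfies $[N^\eta_A]^{\aleph_0} \subseteq N^\eta_A$, and $\mu + 1 \subseteq N^{\langle\rangle}_A$. Since \Wilog\ $\kappa^\mu = \kappa$ we have $\theta \le \kappa$; and since $\lambda = \kappa^{+\omega}$ is strong limit, by enlarging $n_0$ we may assume $\theta \le \kappa^{+n_0}$, so the codings $\cd_n$ and the partial squares $\bar C_\delta$ are still available above $\theta$. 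The $\omega$-sequence $\eta^\nu_A$ can no longer enumerate all of $N^\nu_A$; instead it lists a fixed countable skeleton of $N^\nu_A$ sufficient to recover it — the codes $\alpha^\ell_A$, the chain $\bigcup_{3 \le \ell \le n} C_{\delta_\ell(A)}$, the values $\nu(\ell)$, and the parameters needed by the fitness functions $F, F_1, F_2$ — from which, together with the predetermined isomorphism type of $N^\nu_A$, the whole model is definable. The analysis of $Z_\nu$, the club $C^*$ defined in $(*)_4$, and the final perfect-set contradiction are insensitive to this enlargement: the only arithmetical input used is $(\kappa^+)^{\aleph_0} = \kappa^+$, which still holds because $\lambda$ is strong limit.

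For step (b), suppose $\nu \in P^J_\omega$ satisfies $\nu \rest \ell \in \bigcup_m M_m$ for every $\ell < \omega$. By (vii) of (F), which the modified construction still delivers, there is $n$ with $\{\nu \rest \ell : \ell < \omega\} \subseteq M_n$. Because $[M_n]^{\aleph_0} \subseteq M_n$, the sequence $\langle \nu \rest \ell : \ell < \omega\rangle$ itself belongs to $M_n$, and $\nu$ is uniquely defined from this sequence, so $\nu \in M_n \subseteq \bigcup_m M_m$, which is exactly (vii)$^+$.

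The main obstacle is the bookkeeping in step (a): verifying that the $\Delta$-system construction of \cite[1.18=La54]{Sh:E62} admits an $\omega$-closed variant at size $\theta$ while preserving both the $\Delta$-system intersection property $N^\eta_A \cap N^\nu_A = N^{\eta \cap \nu}_A$ and the non-containment $\eta \notin N^\nu_A$ for incomparable $\eta, \nu$ — and then verifying that the countable skeleton $\eta^\nu_A$ still encodes everything the proof of \ref{7.10} needs (in particular the property, used in $(*)_1$--$(*)_2$, that $\alpha^\ell_B$ lying in some $M^{\nu\rest k}_A$ forces $B$ to be controlled by that initial segment). Everything else is either literally the same as in \ref{7.10} or a routine adjustment.
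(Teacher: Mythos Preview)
Your step (b) is fine, and your diagnosis that $\eta^\nu_A$ must become a countable skeleton rather than an enumeration is correct. The gap is in step (a), where you assert that ``the analysis of $Z_\nu$, the club $C^*$ defined in $(*)_4$, and the final perfect-set contradiction are insensitive to this enlargement.'' In fact $(*)_3$ of the proof of \ref{7.10} is precisely the step that breaks. Its argument runs: for a perfect $Y \subseteq \lim(\cT_A)$ with countable dense subset $Y_0$, one has $\bigcup_{\nu\in Y} M^\nu_A = \bigcup_{\nu\in Y_0} M^\nu_A$, a countable set of ordinals, so it has only countably many accumulation points, hence only countably many candidate pairs $(\xi,B)$; since each such pair kills at most one $\nu\in Y$, a co-countable set of $\nu$'s survives. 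Once $\|N^\eta_A\| = \mu^{\aleph_0} \ge 2^{\aleph_0}$ (and the $\omega$-closure you need forces $\mu^{\aleph_0}=\mu$, so $\mu\ge 2^{\aleph_0}$), the union $\bigcup_{\nu\in Y_0} M^\nu_A$ has size $\mu$, there may be $\mu$ accumulation points and hence $\mu$ many pairs $(\xi,B)$, and the perfect set $Y$ has only $2^{\aleph_0}\le\mu$ points---so nothing survives. The paper says explicitly that the change ``kills $(*)_3$.''

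The paper's repair is not bookkeeping. It adds to the vocabulary a well-ordering predicate $<_{\oor}$ (so that through $h$ the ordinal order on $M$ is visible inside $N_A$), a cofinality-witnessing function $g_A$, and imposes an additional structural condition (labelled (i) in the proof) on the tree $\langle N^\eta_A:\eta\in\cT_A\rangle$ governing how $\{y: y<^{N_A}_{\oor} x\}$ is approximated along branches, depending on whether its cofinality is $<\kappa^{++}$ or $=\kappa^{++}$; this uses \cite[1.16=La48]{Sh:E62} rather than \cite[1.18=La54]{Sh:E62}. The replacement for $(*)_3$ then proceeds, for each $(\xi,B,\rho)\in Z_\nu$, by setting $\beta_\ell=\sup(\kappa^{+\ell}\cap\Rang(\rho))$ and $\gamma[\beta_\ell]=\min(M^\nu_A\cap\lambda\setminus\beta_\ell)$, invoking the cofinality dichotomy $\circledast^1_\ell$/$\circledast^2_\ell$ at each $\gamma[\beta_\ell]$, and shrinking to a further club $C'\subseteq C^*$ closed under the normalizing functions $h_\gamma$ that appear in the second case. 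This extra machinery is the missing idea in your proposal.
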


\begin{PROOF}{\ref{7.10A}}
    For this, we have to make several changes in the proof of \ref{7.10}.
    What more do we prove? we get $\mu+1\subseteq M_0$ and $[M_n]^{\aleph_0} 
    \subseteq M_n$. Without loss of
    generality  $\kappa^\mu = \kappa,\mu^{\aleph_0} = \mu$.
    
    Considering models $N$ with universe $\kappa^{+n}$ 
    we demand that $P_{\oor},<_{\oor}$ belong to $\tau(N)$ where we let 
    $P_{\oor},<_{\oor}$ be fixed one and two place predicates and we demand that
    $<^{N}_{\oor}$ is a well-ordering of the subset $P_{\oor}^N$ of 
    $\kappa^{+n}$.  Parallel restriction applies to $N_A$ for 
    $A\in {\cS}_n$.  Latter having $M$ and $h$,
    we demand $P_{\oor}^{N^+} = \{h(\alpha):\alpha\in M$ an ordinal$\},
    <_{\oor}^{N+} = \{(h(\alpha),h(\beta)):\alpha<\beta$ are ordinals from $M\}$. 
    For any $A\in{\cS}_{n}$, we choose a two place function $g_{A}$ such
    that:
    
    \begin{enumerate}
    \item[$\oplus$]  for every $\alpha \in P^{N_A}_{\oor}$,  for some regular
    \footnote{or $\theta = 1$ or $\theta = 0$, cases which still fit.}
    $\theta \le \kappa^{++}$
    \sn
    \begin{enumerate}
    \item[$(i)$]  $(\forall \alpha < \theta)(\forall \beta < \gamma)[g_{A}(\alpha,\beta) 
    <_{\oor} g_{A}(\alpha,\gamma)]$
    \sn
    \item[$(ii)$]  $(\forall \beta)(\exists \gamma)[\beta <_{\oor} \alpha
    \rightarrow  \gamma < \theta \, \& \,\beta \le_{\oor}
    g_{A}(\alpha,\gamma)]$
    \sn
    \item[$(iii)$]  $(\forall \beta )[\theta \le \beta \Rightarrow
    g_{A}(\alpha ,\beta ) = \alpha]$.
    \end{enumerate}
    \end{enumerate}
    
    Of course we demand that if $N_A \cong N_B,A,B \in {\cS}_n$ then the (unique) isomorphic maps $g_A$ to $g_B$.  
    
    When we choose $M$, we demand (note that: if $\Vert M \Vert^{\aleph_{0}} > \Vert M \Vert$): 
    \[
    [a \subseteq M \, \& \, \|M\|^{|a|} = \|M\| \Rightarrow a \in M].
    \]
    
    \mn
    When we choose  $\langle N^\eta_{A}:\eta \in {\cT}_{A}\rangle$ we
    replace condition (c) in $\boxplus_2$ by 
    \mn
    \begin{enumerate}
    \item[$(c)''$]  $N^\eta_{A}$ has cardinality $\mu$ and include $\mu +1$ and
    \[
    [a \subseteq N^\eta_{A} \, \& \, \|N^\eta_{A}\|^{|a|} = \|N^\eta_{A}\|
    \Rightarrow  a \subseteq  N^\eta_{A}]
    \]
    \end{enumerate}
    \mn
    (the partition theorem on trees still holds) and add, i.e. 
    we now use \cite[1.16=La48]{Sh:E62}
    \mn
    \begin{enumerate}
    \item[$(i)$]  if $\eta \triangleleft \nu$ are from  ${\cT}_{A},
    <^{N_{A}}_{\oor}$ is a well ordering of $P^{N_A}_{\oor}(\subseteq A)$  
    then for any $x \in P^{N_A}_{\oor} \cap N^\eta_A$:
    \sn
    \begin{enumerate}
    \item[$(\alpha)$]  if $\kappa^{++} > \cf(\{y:y \in P^{N_A}_{\oor},
    y <^{N_A}_{\oor} x\},<^{N_A}_{\oor})$ \then
    \[
    N^\eta_A \cap \{y:y \in P^{N_A}_{\oor},y <^{N_A}_{\oor} x\}
    \]
    is an unbounded subset of
    \[
    (\{y:y \in P^{N_A}_{\oor},y \in N^\nu_A,y <^{N_A}_{\oor} x\},<^{N_A}_{\oor})
    \]
    \item[$(\beta)$]  if $\kappa^{++} = \cf(\{y:y \in P^{N_A}_{\oor},
    y <^{N_A}_{\oor} x\},<^{N_A}_{\oor})$ \then \, for any 
    $y \in P^{N_A}_{\oor},y <^{N_A}_{\oor} x$, for some $\alpha <
    \kappa^{++}$ we have:
    $\eta \triangleleft \rho \in {\cT}_A \, \& \, \rho(\ell g(\eta)) >
    \alpha \, \& \, y^* \in N_A \cap P^{N_A}_{\oor}\, \& \, y^*
    <^{N_A}_{\oor} x \, \& \, (\forall z)[z \in  N^\eta_A \, \& \, 
    z <^{N_A}_{\oor} x \rightarrow z <^{N_A}_{\oor} y^*]
    \Rightarrow  y <^{N_{A}}_{{\rm or}} y^*$.
    \end{enumerate}
    \end{enumerate}
    \mn
    Note that as $<^{N_A}_{\oor}$ well order $P^{N_A}_{\oor}$,  this is
    possible --- see \cite[1.16=La48]{Sh:E62} and apply it to $(M_{A},g)$.
    
    But now we cannot demand ``$\eta^\nu_A$ list the members of
    $N^\nu_A$"; so we just require
    \begin{enumerate}
    \item[$\boxplus$]  $(a) \quad \alpha^\ell_A = \eta^\nu_A (3\ell)$,
    \sn
    \item[${{}}$]  $(b) \quad \langle \eta^\nu_A(3\ell +1):\ell < \omega 
    \rangle$ list $\bigcup\limits_{\ell=3}^{n} C_{\delta_{\ell} (A)}$ and 
    \sn
    \item[${{}}$]  $(c) \quad \langle \eta^\nu_A(3\ell +2):\ell <
    \omega\rangle$ is $\langle \nu(\ell):\ell < \omega \rangle$.
    \end{enumerate}
    
    Note that this holds for all $\nu \in \cT_{A}$. 
    
    This, of course, ``kills" $(*)_3$ in the proof of \ref{7.10}.  Now if
    $(\xi,B,\rho) \in Z_{\nu}$,  for $\ell = 3,\ldots,m(\xi)$ define
    $\beta_{\ell}  = \sup[\kappa^{+\ell} \cap \rang(\rho)]$, and define
    $\gamma[\beta_{\ell}] = \min(M^\nu_{A} \cap \lambda \setminus \beta)$, 
    so for some $k(*) < \omega$ we have 
    $\bigwedge\limits_{\ell \in [3,m(\xi)]} \gamma[\beta_{\ell}] \in  
    M^{\nu \rest k(*)}_A$.  So by condition (i) above for each 
    $\ell \in [3,m(\xi)]$,  either $\circledast^1_\ell$ holds or 
    $\circledast^2_\ell$ holds where:
    \mn
    \begin{enumerate} 
    \item[$\circledast^1_\ell$]  $\cf(\gamma[\beta_{\ell}]) < \kappa^{++},
    \sup[\gamma[\beta_{\ell}] \cap M^{\nu \rest k(*)}_A] = 
    \sup[\gamma[\beta_{\ell}] \cap M^{\eta'}_A]$
    whenever $\nu \rest k(*) \triangleleft \eta' \in {\cT}_A \cup 
    \lim({\cT}_{A})$ 
    \sn
    \item[$\circledast^2_\ell$]  $\kappa^{++} = \cf(\gamma[\beta_{\ell}])$ 
    and there is $h_{\gamma[\beta_{\ell}]}:\kappa^{++} \rightarrow \gamma(\beta)$
    increasing continuous with limit $\gamma[\beta_{\ell}]$ such that
    \sn
    \begin{enumerate}
    \item[$\bullet$]  $\nu \rest k(\beta) \triangleleft \eta' \in 
    \lim({\cT}_A) \Rightarrow \sup(N^{\eta'}_A \cap \gamma[\beta_{\ell}])$
    \sn
    \item[$\bullet$]  $\sup(M^{\eta'}_{A} \cap
      \Rang(h_{\gamma[\beta_{\ell}]})) =h(\sup[M^{\eta'}_{A} \cap 
    \kappa^{++}])$.
    \end{enumerate}
    \end{enumerate}
    \mn
    As $\mu \le \kappa$, we can finish easily: we can find a club 
    \[
    C' = \{\delta \in C^*: \text{ if } \nu \in {}^{\omega >}\delta,
    \ell \in [3,\omega) \text{ and } \gamma \in N^\nu_A \text{ then }
    \delta \text{ is closed under } h_\gamma\}.
    \]
    
    of $\kappa^{++}$ and choose $\delta \in C'$.
\end{PROOF}

\begin{theorem}
\label{7.11}
1) If $\lambda > \mu$  \then \, $K^\omega_{\tr}$ has the full
$(\lambda,\lambda,\mu,\aleph_{0})$-super-bigness property and also the
$(2^\lambda,\lambda,\mu,\aleph_{0})$-super bigness property.

\noindent
2) Similarly replacing $\aleph_0$ by $\mu$.
\end{theorem}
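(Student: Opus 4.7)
The plan is to deduce Theorem \ref{7.11} by a case split on $\lambda$ that exhausts all possibilities for $\lambda > \mu$, in each case citing one of the combinatorial claims already established in \S 1 and \S 2, and then invoking the implication diagram in \ref{7.4}(1) to convert any super$^m$-bigness into super-bigness (which by \ref{7.4}(5) is the same as super$^{4^+}$-bigness). Observe first that the $(2^\lambda,\lambda,\mu,\aleph_0)$-super-bigness part is immediate from the full $(\lambda,\lambda,\mu,\aleph_0)$-super-bigness via \ref{7.5}(1), so the work is to prove the full bigness in each case; monotonicity (\ref{7.4}(4)) is used freely to decrease the first parameter or $\mu$.

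The cases I would use are: \textbf{(I)} $\lambda$ regular --- apply \ref{7.8I} (or \ref{7.6}(1)) to get full $(\lambda,\lambda,\mu,\mu)$-super$^{7^+}$-bigness. \textbf{(II)} $\lambda$ singular and not strong limit, and there exists $\chi$ with $\mu + \aleph_0 \le \chi = \chi^{\aleph_0} < \lambda \le 2^\chi$ --- apply \ref{7.6}(2) to get full $(2^\chi,\lambda,\chi,\aleph_0)$-super$^6$-bigness and then use monotonicity to descend $\chi$ to $\mu$ and the first argument to $\lambda$. \textbf{(III)} $\lambda$ singular and not strong limit, but no such $\chi$ as in (II) exists (the case $(\beta)$ from the introduction) --- here one picks $\chi < \lambda$ minimal with $\chi^{\aleph_0} \ge \lambda$, checks that the side conditions $\lambda \ge \mu^+ + \chi^{+2}$ and $[\cf(\mu) < \mu \Rightarrow \lambda > \mu^+]$ are forced by the minimality of $\chi$ and the failure of (II), and then applies \ref{7.8} to get full $(\chi^{\aleph_0},\lambda,\mu,\mu)$-super-bigness, from which monotonicity produces $(\lambda,\lambda,\mu,\mu)$-super-bigness. \textbf{(IV)} $\lambda$ singular and strong limit --- split further on $\cf(\lambda)$: if $\cf(\lambda) > \aleph_0$ (resp.\ $> \mu$ for part (2)) apply \ref{7.6}(3); if $\cf(\lambda) = \aleph_0$ and $\lambda = \kappa^{+\omega}$ apply \ref{7.10} (and \ref{7.10A} for the $(\mu,\mu)$ strengthening); if $\cf(\lambda) = \aleph_0$ and $\lambda$ is not of the form $\kappa^{+\omega}$, invoke the pcf inequalities of \cite[\S 3]{Sh:E62} to produce cofinally many $\theta < \lambda$ satisfying condition $(*)^2_\theta$ of \ref{7.7}, and then apply \ref{7.7} (or, after also verifying the stronger hypothesis of \ref{7.9}, use \ref{7.9}).

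The main obstacle is case (III): this is precisely the open case $(\beta)$ identified in the introduction, where in related formulations the negation was shown consistent in \cite{Sh:100}, \cite{Sh:262}. Handling it is the \emph{raison d'\^etre} of \S 2: the guessing-club machinery \ref{7.8A}, \ref{7.8B}, \ref{7.8E}, \ref{7.8F} and their consequences \ref{7.8C}--\ref{7.8H} are designed so that the proof of \ref{7.8} can pull a sequence $\langle C_\delta \rangle$ of short clubs out of the non-strong-limit-ness of $\lambda$ and use it to code $\chi^{\aleph_0}$ pairwise super-unembeddable trees into $\lambda$. A secondary obstacle is subcase (IVb) with $\lambda$ not of the form $\kappa^{+\omega}$, where one must extract the pcf witness for \ref{7.7}; here the key fact is that for strong limit singular $\lambda$ of cofinality $\aleph_0$ one has cofinally many $\theta < \lambda$ with $\pp(\theta) > \theta^+$ (so $(*)^1_\theta$ holds), which is exactly \cite[3.22=Lpcf.8]{Sh:E62} cited in \ref{7.7}.

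For part (2) of the theorem (replacing $\aleph_0$ by $\mu$), the same case split goes through because nearly every cited claim already yields the stronger $(\mu,\mu)$-super-unembeddability: \ref{7.8I}/\ref{7.6}(1) give $(\mu,\mu)$-super$^{7^+}$; \ref{7.8} gives $(\mu,\mu)$-super; \ref{7.9} gives $(\mu,\mu)$-super; \ref{7.10A} gives $(\mu,\mu)$-super$^{6^+}$; and for \ref{7.6}(2) one uses its final clause (assuming $\mu^{\aleph_0} = \mu$, after replacing $\mu$ by the minimal $\mu_1 \ge \mu$ with $\mu_1^{\aleph_0} = \mu_1$, still $< \lambda$). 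The one delicate point is subcase (IVa), where \ref{7.6}(3) requires $\cf(\lambda) > \mu$; if instead $\cf(\lambda) \le \mu$, the strong-limit hypothesis on $\lambda$ forces $\cf(\lambda) = \aleph_0$ and one falls back on the pcf-based subcase (IVb).
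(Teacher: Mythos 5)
Your case split, the appeals to \ref{7.6}(1)--(3), \ref{7.8}, \ref{7.10}/\ref{7.10A}, the derivation of the side hypotheses of \ref{7.8} from minimality of $\chi$ and $\lambda$ being a limit cardinal, and the reduction of the $2^\lambda$ version to the full $\lambda$-version via \ref{7.5}(1) (with \ref{7.4}(1),(4) for the implication/monotonicity bookkeeping) all reproduce the paper's argument faithfully.

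The one place you go wrong is subcase (IVb2), $\lambda$ strong limit singular of countable cofinality not of the form $\kappa^{+\omega}$. You route this through \ref{7.7} and claim as "the key fact" that for such $\lambda$ one has cofinally many $\theta<\lambda$ with $\pp(\theta)>\theta^+$. That is not a ZFC theorem: for a strong limit singular $\theta$ of cofinality $\aleph_0$ it is consistent that $\pp(\theta)=\theta^+$ (indeed $2^\theta=\theta^+$ is consistent), and as noted in the proof of \ref{7.7} the only available implication goes $(*)^1_\theta\Rightarrow(*)^2_\theta$, so if $\pp(\theta)=\theta^+$ everywhere cofinally both $(*)^1_\theta$ and $(*)^2_\theta$ can fail and \ref{7.7} simply does not apply. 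This is exactly why the paper remarks just before \ref{7.7} that it will not actually use \ref{7.7} in proving \ref{7.11}, and instead invokes \ref{7.9} in this subcase. The ZFC content supplied by \cite[3.22=Lpcf.8]{Sh:E62} is the existence, for a suitable $\theta$ with $\mu+\cf(\lambda)\le\theta<\lambda$ and $\cf(\theta)=\aleph_0$, of $\cf(\lambda)$ pairwise almost-disjoint sets $\ga_\epsilon$ of regular cardinals of order type $\omega$ cofinal in $\theta$ with $\max\pcf_{J^{\bd}_{\ga_\epsilon}}(\ga_\epsilon)=\theta^+$ — i.e.\ exactly the hypothesis of \ref{7.9} — and this says nothing about $\pp(\theta)$ exceeding $\theta^+$. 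So your parenthetical fallback "use \ref{7.9}" is in fact the argument, not an optional stronger variant; relatedly, \ref{7.9}'s hypothesis is weaker (not stronger, as you write) than \ref{7.7}'s, which is why it is the one usable in ZFC. With the primary route replaced by \ref{7.9} and that pcf claim dropped, the proof is correct and coincides with the paper's.
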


\begin{PROOF}{\ref{7.11}}
We prove both parts of \ref{7.11} together. The first phase implies the second by \ref{7.5}(1) hence we concentrate on the
first phrase.  This will follow by combining the previous Lemmas.  
We shall use all the time \ref{7.4}(1) to get ``our super", 
the one from Definition \ref{7.1}, i.e. super$^{4^+}$.  
If $\lambda$ is regular, use \ref{7.6}(1) so assume
$\lambda$ singular; if $(\exists \mu_{1})[\mu \le \mu_{1} =
\mu^{\aleph_{0}}_{1} < \lambda \le 2^{\mu_{1}}]$  use \ref{7.6}(2),
for part (2) note ``(even the full ...") and if
$(\exists \theta)[\theta < \lambda \le \theta^{\aleph_0}]$ let
$\chi$ be minimal such that $\lambda \le \chi^{\aleph_0}$; so $<
\lambda$ hence $\mu + \chi < \lambda$, but $\lambda$ is a limit
cardinal so $\mu^+ + \chi^{++} \le \lambda$ and use \ref{7.8}.  So
assume the last two cases fail, hence  $\lambda$ is singular strong limit.  If
$\cf(\lambda) > \aleph_{0}$ use \ref{7.6}(3), if $\cf(\lambda) =
\aleph_{0},\lambda = \aleph_\delta,\delta$ divisible by $\omega^2$, 
choose $\theta,\mu < \theta < \lambda,\cf(\theta) = \aleph_0$ 
and apply \ref{7.9}, $(\langle {\ga}_{\epsilon}:\epsilon <
\cf(\lambda)\rangle$  exists by \cite[3.22=Lpcf.8]{Sh:E62}). 
The remaining case is $\lambda = \aleph_{\delta} = 
\aleph_{\alpha +\omega}$ strong limit and use \ref{7.10} for part (1),
use \ref{7.10A} for part (2).
\end{PROOF}
\newpage

\section {Applications and generalizations}

Conclusion \ref{7.12}(1) (though not \ref{7.12}(2),(3)) tell
us that unstable and unsuperstable has many models, and the proof 
use only a version of the definition from \cite{Sh:E59}. 
Theorem \ref{7.13} tell us more in this direction 
but the proof of \ref{7.13} in case $\lambda=\lambda(T),
T_1=T$ stable require knowledge of stability theory (and is not used
later), this case appear as end-segment of the 
proof of \ref{7.13}, i.e. starting with
the third paragraph of the proof of \ref{7.13} and with \ref{7.13f}).
We restart in \ref{7.14} resuming our investigations of bigness properties and 
then deal with abelian separable $\primep$-group.

\subsection {The Many pairwise Unembeddable Models} \

\begin{conclusion}
\label{7.12}
1)  If $T \subseteq T_{1}$ are complete first order theories 
and $\lambda > |T_{1}|$  \then \, $\numbIE(\lambda,T_{1},T) = 
2^\lambda$ whenever $T$ is unsuperstable.

\noindent
2) If $\lambda > \mu$ \then \, $K^\omega_{\tr}$ has the full strong
$(\lambda,\lambda,\mu,\aleph_{0})$-bigness property and
$(2^\lambda,\lambda,\mu,\aleph_{0})$-bigness property (see Definition
\cite[2.5(3)=L2.3(3)]{Sh:E59}).

\noindent
3) If $\Phi,\langle \varphi_n(x,\bar y): n< \omega\rangle$ are as in
\cite[1.11=Lb17(2)]{Sh:E59}, and $\lambda > |\tau(\Phi)|$, see \ref{7.12A} \then \,  we can find $I_\alpha\in K^\omega_\tr$ of cardinality $\lambda$ 
for $\alpha < 2^\lambda$ such that letting $M_\alpha = 
\EM_{\tau(T)}(I_\alpha,\Phi)$, for any $\alpha \ne \beta$, 
there is no function from $M_\alpha$ into $M_\beta$ preserving the 
$\pm \varphi_n$.
\end{conclusion}

\begin{remark}\label{7.12A}
    Recall that $\tau(\Phi)$ is the vocabulary of $\Phi$, that is, for a linear order $I$, the Ehrenfeucht-Mostoswki model, $\EM(I, \Phi)$ has the vocabulary $\tau(\Phi)$. Also for $\tau \subseteq \tau(\Phi)$, $\EM_{\tau}(I, \Phi)$ is the $\tau$-reduct of $\EM(I, \Phi)$, recall $\tau= \tau(T)$ is the vocabulary of the theory $T$. 
\end{remark}

\begin{PROOF}{\ref{7.12}}
1) Let $\Phi$ be a template proper for $K^\omega_\tr$ as in
\cite[1.11=Lb17(2)]{Sh:E59}; i.e. $|\tau(\Phi)| = |T_1| +\aleph_0,
\tau_{T_1} \subseteq \tau(\Phi)$, every $\EM(I,\Phi)$ is a model of $T_1$
and for some first order formulas $\varphi_n(x,\bar y_n)$ of 
$\bbL_{\omega,\omega}(\tau_T)$ for $s \in P^I_n,t \in P^I_\omega,
I \in K^\omega_\tr$ we have $\EM(I,\Phi) \vDash \varphi (a_t,\bar a_s)$ 
iff $I \models s \vartriangleleft t$.
By \ref{7.12}(2) (which is proved below) and the definition, the conclusion follows
reading the definition of $\numbIE$ (see \cite[1.4=L1.4new]{Sh:E59})
 and the definition of the bigness property.

\noindent
2) By \ref{7.11} and \ref{7.5}(2).

\noindent
3) Included in the proof above.
\end{PROOF}

\begin{theorem}\label{7.13}
Suppose $T$ is (a first-order, complete) unsuperstable theory and
$\lambda \ge \lambda(T) +  \aleph_{1}$ (see below \ref{7.13a}(1)).

\noindent
1) $T$ has $2^\lambda $ pairwise non-isomorphic strongly
$\aleph_{\epsilon}$-saturated models of cardinality $\lambda$, see
\ref{7.13a}(2),(3). 

\noindent
2) If in addition $T$ is stable or $\lambda > \lambda(T) + \aleph_{0}$,
\then \, $T$ has $2^\lambda,\aleph_{\epsilon}$-saturated (see \ref{7.13a}(21)) models of power
$\lambda$ no one elementarily embeddable into another.

\noindent
3) We can in part (2) weaken the assumption to $\lambda > |T| + 
\aleph_0$ but then have to weaken the conclusion 
to ``strongly $\aleph_0$-homogeneous (see \ref{7.13a}(3) below) 
models of cardinality $\lambda$ (omitting the 
``$\aleph_\epsilon$-saturated"; interesting when $\lambda = |T| + 
\aleph_1,T$ stable).

\noindent
4) If $T \subseteq T_{1},T_{1}$ first order, we can demand 
above that the models are in $\PC_{\tau(T)}(T_1,T)$, that is are 
reducts of models of $T_{1}$, provided that: in \ref{7.13}(1)+(2) we demand  
$\lambda>\lambda (T)+|T_{1}| + \aleph_{0}$, in \ref{7.13}(3) we 
demand $\lambda > |T_{1}| + \aleph_{0}$.
\end{theorem}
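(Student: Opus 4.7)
\medskip

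\noindent\textbf{Proof proposal.} My plan is to reduce the four statements to the combinatorial bigness properties already established. Start from Theorem~\ref{7.11}: for every $\lambda > \mu$, $K^\omega_{\tr}$ has the full $(2^\lambda,\lambda,\mu,\aleph_0)$-super-bigness property. Fix a family $\langle I_\alpha : \alpha < 2^\lambda\rangle \subseteq (K^\omega_{\tr})_\lambda$ witnessing this, with $\mu := \lambda(T)+\aleph_0$ (or $\mu := |T|+\aleph_0$ for part (3)), so that each $I_\alpha$ is $(\mu,\aleph_0)$-super-unembeddable into $J_\alpha := \sum_{\beta \neq \alpha} I_\beta$.

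\medskip

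\noindent\textbf{Step 1 (the template).} I build an EM-template $\Phi$ proper for $K^\omega_{\tr}$ with $|\tau(\Phi)| \le \mu$, $\tau(T) \subseteq \tau(\Phi)$, such that every $\EM_{\tau(T)}(I,\Phi)$ with $|I|=\lambda$ has three features: first, it is a model of $T$ of cardinality $\lambda$; second, it has the desired saturation/homogeneity property (strongly $\aleph_\epsilon$-saturated in part (1), $\aleph_\epsilon$-saturated in part (2), strongly $\aleph_0$-homogeneous in part (3)); third, there are formulas $\varphi_n(x,\bar y_n)$ in $\tau(T)$ such that for $s\in P^I_n$ and $t\in P^I_\omega$, $\EM(I,\Phi)\models\varphi_n(a_t,\bar a_s) \Leftrightarrow s\triangleleft^I t$, as in \cite[1.11=L1.8(2)]{Sh:E59}. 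For stable $T$ the template comes from an ${\bf F}^f_{\aleph_0}$-construction indexed by $I$ along the skeleton $\{a_t : t\in P^I_\omega\}$; this is where the jump from $\aleph_0$ to $\aleph_1$ enters, since countable closure of the bases is needed to secure $\aleph_\epsilon$-saturation. For unstable (not necessarily stable) $T$, saturation/homogeneity is obtained from the classical order-indiscernibility machinery of \cite[Ch.VIII]{Sh:a}.

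\medskip

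\noindent\textbf{Step 2 (non-embeddability).} Set $M_\alpha := \EM_{\tau(T)}(I_\alpha,\Phi)$. Any elementary embedding $f : M_\alpha \to M_\beta$ (for (1), any isomorphism) would, by pushing the tree skeleton across $f$ and applying the $\varphi_n$, determine a function $\widehat f:I_\alpha \to \cM_{\mu,\aleph_0}(I_\beta) \subseteq \cM_{\mu,\aleph_0}(J_\alpha)$ strongly finitary on $P^{I_\alpha}_\omega$ in the sense of \cite[2.5(5)=L2.3(5)]{Sh:E59}. This is exactly what is refuted by the full strong $(2^\lambda,\lambda,\mu,\aleph_0)$-bigness property for $\varphi_{\tr}$, which by Claim~\ref{7.5}(2) and Conclusion~\ref{7.12}(2) follows from Theorem~\ref{7.11} (using \ref{7.4}(1) to pass between the super$^\ell$ variants). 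Hence no such $\widehat f$, and so no such $f$, can exist for $\alpha \neq \beta$, giving $2^\lambda$ pairwise non-embeddable models; part (1) follows because non-isomorphism is weaker than non-embeddability.

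\medskip

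\noindent\textbf{Step 3 (parts (3) and (4)).} Part (3) is the same argument with $\mu = |T|+\aleph_0$ and a template that only produces strongly $\aleph_0$-homogeneous models (no ${\bf F}^f_{\aleph_0}$-construction is required), using Theorem~\ref{7.11} in the stronger form ``$\lambda > |T|+\aleph_0$''. For part (4), replace $\Phi$ by a template $\Phi'$ proper for $K^\omega_{\tr}$ with $\tau(T_1)\subseteq \tau(\Phi')$, $|\tau(\Phi')|\le |T_1|+\aleph_0$, so every $\EM(I,\Phi')$ is a model of $T_1$ while the $\tau(T)$-reduct retains features (b) and (c) of Step~1; then apply Step~2 inside $\tau(T)$, noting that ``elementary embedding'' is required only as a $\tau(T)$-elementary map so the reduction is automatic, and observe that the cardinality hypotheses of (4) are exactly what Step~1 needs for $\Phi'$.

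\medskip

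\noindent\textbf{Main obstacle.} The delicate case is part (2) with $T$ stable (not superstable) and $\lambda = \lambda(T)+\aleph_1$, where one must build the ${\bf F}^f_{\aleph_0}$-template over $I$ so that its skeleton is an indiscernible tree whose types over countable bases truly detect the $\triangleleft^I$-structure, \emph{and} so that the induced $\widehat f$ falls within the reach of Subclaim~\ref{7.5A}. Verifying that super-unembeddability of trees defeats even the partial elementary maps arising from an ${\bf F}^f_{\aleph_0}$-construction at this precise cardinality threshold is the delicate point, and is exactly where the step from $\lambda(T)+\aleph_0$ to $\lambda(T)+\aleph_1$ becomes essential: one needs that every countable fragment of the type corresponds to a finitary term over a countable base inside $\cM_{\mu,\aleph_0}(I_\beta)$, and the $\aleph_1$ buys precisely that room.
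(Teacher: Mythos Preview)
Your outline matches the paper's approach for the ``easy'' cases: $T$ unstable with $\lambda > \lambda(T)+\aleph_0$, and $T$ stable with $\lambda > \lambda(T)$. There the paper does exactly what you do---produce a template $\Phi$ with $|\tau(\Phi)|=\lambda(T)$ giving (strongly) $\aleph_\epsilon$-saturated EM-models, and then invoke the bigness machinery (Conclusion~\ref{7.12}).

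The genuine gap is the case you flag but do not actually resolve: part (2) with $T$ stable not superstable and $\lambda = \lambda(T)+\aleph_1 = \lambda(T)$ (i.e.\ $\lambda(T)\ge\aleph_1$). Your Step~2 goes through Claim~\ref{7.5}(2)/Conclusion~\ref{7.12}(2), and those rest on Theorem~\ref{7.11}, which needs $\lambda > \mu$; since any template making $\EM_\tau(I,\Phi)$ $\aleph_\epsilon$-saturated has $|\tau(\Phi)|\ge\lambda(T)$, you are forced into $\lambda>\lambda(T)$. Your ``Main obstacle'' paragraph correctly locates the difficulty, but the proposed fix---arranging that $\widehat f$ lands in the scope of Subclaim~\ref{7.5A}---does not help, because the input super-bigness itself is unavailable at $\lambda=\mu$. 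The ``$\aleph_1$ buys room'' remark is not an argument.

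The paper handles this case by abandoning the template/$\varphi_{\tr}$ route entirely. For each $I\in K^\omega_{\tr}$ it builds $M_I$ by a direct $\mathbf F^f_{\aleph_0}$-construction $\mathcal A^I$ over the non-forking tree $\{a_\eta:\eta\in I\}$, simultaneously growing a system of partial automorphisms to obtain strong $\aleph_\epsilon$-saturation (the ``automorphic construction'' of Definition~\ref{3.3Enew} and Claims~\ref{3.3H}--\ref{3.3Jnew}). Non-embeddability is then argued \emph{directly from Definition~\ref{7.1}}, not via $\varphi_{\tr}$: given an elementary $f:M_I\to M_J$, take the witnesses $\eta,\langle M_n,N_n\rangle$ for super-unembeddability with $\mathcal A^I,\mathcal A^J,f\in M_0$; Fact~\ref{7.13B} shows $\tp(a_\eta, M_I\cap N_n)$ forks over $M_I\cap M_n$, while Fact~\ref{7.13A} (using clause~(vi) of Definition~\ref{7.1} applied to the finitely many $\nu\in P^J_\omega$ occurring in the construction data for $f(a_\eta)$) shows $\tp(f(a_\eta), M_J\cap N_n)$ does not fork over $M_J\cap M_n$ for large $n$. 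This forking contradiction never passes through $\mathcal M_{\mu,\kappa}(J)$ or Claim~\ref{7.5}, which is exactly why it survives at $\lambda=\lambda(T)$.
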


\begin{remark}\label{7.13a}
    0) In the notion of ``$M$ is $\aleph_{\varepsilon}$-saturated'' the $\varepsilon$ is not a variable, it try to say ``a little more than being $\aleph_{0}$-saturated''; see \ref{7.13a}(2) (see \cite[IV]{Sh:c}). 
    
    1) $\lambda(T)$ can be defined as the minimal cardinality of
    an $\aleph_\varepsilon$-saturated model of $T$, see (2) below.
    
    \noindent
    2)
    
    \begin{enumerate}
        \item[(a)] $M$ is $\aleph_{\epsilon}$-saturated when it is $\bfF_{\aleph_{0}}^{a}$-saturated, where: 
    
        \item[(b)] A model $M$ is $\bfF_{\kappa}^{a}$-saturated \underline{if} for every $N, M \prec N$ and $A \subseteq M$ of cardinality $< \kappa$ and (finite) $\bar{b} \in N$ there is $\bar{b}' \in M$ realizing $\{ \vartheta(\bar{x}, \bar{b}, \bar{a}) \colon \vartheta(\bar{x}, \bar{y}, \bar{z})$ is a first-order formula and in $N$, $\bar{a} \in {}^{\lg(\bar{z})} A$, and the formula $\vartheta(\bar{x}, \bar{y}, \bar{a})$ is an equivalence relation with finitely many equivalent classes$\}$. 
    \end{enumerate}
    
    \noindent
    3) $M$ is strongly $\aleph_\varepsilon$-saturated \If \, in addition
    it is strongly $\aleph_0$-homogeneous which means that for any $\bar a,
    \bar b \in {}^{\omega>} M$ realizing the same type, there is an
    automorphism of $M$ mapping $\bar a$ to $\bar b$.
    
    \noindent
    4) The restrictions in \ref{7.13} are reasonable as, e.g. by
    \cite{Sh:100}: it is consistent with ZFC that for $T$ the theory of 
    dense linear orders (which is an unstable one) there is $T_1\supseteq T$
    (first order complete theory) of cardinality $\aleph_1$ such
    that for any models $M_1,M_2$ in $\PC(T_1,T)$ of cardinality
    $\aleph_1,M_1$ can be embeddable into $M_2$.
    
    \noindent
    5) Recall ${\gC}^{\eq}$ is extending ${\gC}$ by giving names to equivalence classes, see \cite{Sh:c}. 
    Let us say that $M^{\eq} \prec {\gC}^{\eq}$ is strongly$^+$-$
    \aleph_\varepsilon$-saturated if it is $\aleph_{\varepsilon}$-saturated and: for any finite $A,B \subseteq
    M^{\eq}$ and $(M^{\eq},M^{\eq})$-elementary mapping ${\bf f}$ from
    $\acl(A,M^{\eq})$ onto $\acl(B,M^{\eq})$ there is an automorphism
    ${\bf f}^+$ of $M^{\eq}$ extending ${\bf f}$.
    
    \noindent
    6) The reader may wonder if we can get in \ref{7.13}, models which are strongly$^+$
    $\aleph_\varepsilon$-saturated.
    
    Let $\lambda'(T)$ be the first $\lambda \ge \lambda(T)$ such that 
    for any $M^{\eq},A,B$ as above, the number of ${\bf f}$ as above
    is $\le \lambda$.
    
    Now we can in \ref{7.13} demand the models to be strongly$^+$
    $\aleph_\varepsilon$-saturated if $\lambda\geq \lambda'(T)+ \aleph_1$ 
    (or $\lambda > \lambda'(T) + \aleph_0$, as natural). The proof is 
    essentially the same.
    
    \noindent
    7) In fact the proof indicated in \ref{7.13a}(6) is simpler and gives in some 
    respect more information. We can easily prove:
    
    \begin{enumerate}
    \item[$(*)_1$]  if $A \subseteq {\gC}^{\eq},|A| \le \lambda$ 
    then there is an $(\mathbf F,\cP)$-construction ${\cA}$ 
    (see context \ref{3.3Anew}, Definition \ref{3.3Bnew}),such that:
    
    \begin{enumerate}
    \item[$(i)$] $A_0[{\cA}]=A$,
    \sn
    \item[$(ii)$]  $\mathrm{lng}({\cA})$ is divisible by $\lambda$ and 
    $\cf(\mathrm{lng}({\cA})) \ge \kappa$
    
    \item[$(iii)$]  if $D \in \cP$ and $i < \mathrm{lng}({\cA}),B_1 \subseteq A_i[{\cA}],B_2 \subseteq D$ and $\mathbf f$ is an  element mapping  from $\acl(B_2,{\gC}^{\eq})$ onto $\acl(B_1,{\gC}^{\eq}),|B_1|<\kappa,
    |B_2|<\kappa$ then $\mathrm{lng}({\cA}) = \otp\{\beta < \mathrm{lng}{\cA}$: there is  an elementary mapping $\mathbf f'$ from $D$ onto $D_\beta[{\cA}]$  extending $\mathbf f,B_\beta[{\cA}] = B_1\}$
    \end{enumerate}
    \sn
    \item[$(*)_2$]  if ${\cA}^1_,{\cA}^2$ are as in $(*)_1$, 
    and $A_0 [{\cA^{1}}] = \emptyset = A_0[{\cA}^2]$ then
    $A[{\cA}^1],A[{\cA}^2]$ are isomorphic $\mathbf F_\kappa^{a}$-saturated models (see \ref{7.13a}(21)).  
    
    This replaces \ref{3.3Cnew}-\ref{3.3Jnew}, (but use some of those proofs).
    After that, we can continue as in \ref{3.3Knew}.
    \end{enumerate}
    
    8) For the case $T_1=T,\kappa = \cf(\kappa) \le \kappa_{r}(T)$ 
    we can replace in the proof $\aleph_\epsilon$-saturated by 
    ${\bf F}^a_\kappa$-saturated, etc. 

    9) Recall that for a formula $\varphi$ and statement \textbf{stat}, $\varphi^{\textbf{if(stat)}}$ mean $\varphi$ when the statement is true and mean $\neg \varphi$, the negation of $\varphi$ when the statement is false. 
\end{remark}

\begin{PROOF}{\ref{7.13}}
Let $\tau=\tau_T$. 

First assume $T$ is unstable; note:

\begin{fact}\label{7.13ab}
    There is   a template $\Phi$,  proper for linear orders, $|\tau_\Phi| = \lambda(T)$ such that every model $M$ of the form $\EM_{\tau}(I,\Phi)$ is an $\aleph_{\epsilon}$-saturated model of $T$ and $M \models  \varphi[\bar a_{s}, \bar a_{t}]$ iff $s <_{I} t$  for $s,t \in I$, where $I$ is a linear order.
\end{fact}

\begin{PROOF}{\ref{7.13ab}}
    Apply
    \cite[1.26=L1.24new]{Sh:E59} as follows.  As $T$ is unstable there are
    $\varphi(\bar x,\bar y),\bar a_\ell$ ($\ell< \omega$)
    and $M$ such that $M$ is a model of $T,\bar a_\ell \in M,n = 
    \mathrm{lng}(\bar x) = \mathrm{lng}(\bar y) = \mathrm{lng}(\bar a_\ell)$ and $M \vDash
    \varphi(\bar a_\ell,\bar a_k)^{\textbf{if}(\ell< k)}$ (recall \ref{7.13}(9)).  We can also find a
    vocabulary $\tau_1,\tau \subseteq \tau_1,|\tau_1| = \lambda(T)$
    and $\psi \in \bbL_{|T|^+, \aleph_{0}}(\tau_1)$ such that a model of $T$ is
    $\aleph_\epsilon$-saturated iff it can be expanded to a model of $\psi$.
    
    For every $\lambda$ we can find a strongly $|T|^+$-saturated model
    $M_\lambda$ of $T$ and $\bar a^\lambda_\alpha \in M_\lambda$ such that $M_\lambda \models \varphi(a^\lambda_\alpha,a^\lambda_\beta) \Leftrightarrow (\alpha< \beta)$, hence there is an expansion $M^+_\lambda$ of $M_\lambda$ to
    a model of $\psi$. Lastly, check that \cite[1.26=L1.24new]{Sh:E59} gives the desired conclusion.
\end{PROOF}

\underline{Continuing the proof of \ref{7.13}}: 

Now part (1) (of \ref{7.13}) holds by \cite[3.19=L3.9]{Sh:E59} (with $M_I$
being $\EM(I,\Phi)$, it is as required in \cite[3.19=L3.9A]{Sh:E59} by
\cite[3.8=L3.4]{Sh:E59}).

Also part (2) (of \ref{7.13}) holds by
\ref{3.3Knew} (interpreting $I \in K^\omega_{\tr}$ as a linear order as
in \cite[2.4=L2.2]{Sh:E59}) noting that we have $\lambda > \lambda(T) +
\aleph_0 = |\tau_\Phi|$ as we are assuming $T$ is unstable.  The proof
of part (3) is similar, replacing $\tau_\Phi$ by $\tau'$ of cardinality
$\lambda + \aleph_1,|T_1| + \aleph_1$.
Lastly for part (4) \wilog \, every model 
$\EM_\tau(I,\Phi)$ is a reduct of a model of $T_1$, so we are done
by \ref{7.12}(3).

So without loss of generality, $T$ is stable.  As $T$ is unsuperstable, by 
\cite[Proof of 2.1]{Sh:225} (or a proof similar to the first paragraph), 

\begin{enumerate}
    \item[$(\ast)$]  There is a template $\Phi$  proper for $K^\omega_{\tr},|\tau_{\Phi}| =
    \lambda(T)$ as in \cite[1.11(2)=L1.8(2)]{Sh:E59} such that 
    every $\EM_\tau(I,\Phi)$
    is strongly $\aleph_\epsilon$-saturated.  
\end{enumerate}

If $\lambda > \lambda(T)$, 
note that \ref{7.13}(1) follows by \ref{7.12}(3) and \ref{7.13}(3) 
by decreasing $\tau_\Phi$.

In all those proofs we can restrict ourselves to models of $T$ which are reducts of models of $T_{1}$, i.e. demand
that for any suitable $I \in K_{\tr}^{\omega}$, the model $\EM(I,\Phi)$ is a model of $T_1$
so part (4) follows.  We are left with part (2) the case 
$T$ is stable, and the proof is restricted to elementary
classes: the proof needs some knowledge of forking, but it is not used later, so a reader can skip it. We also use the notation of \cite{Sh:c}.

\begin{enumerate}
    \item[$\boxplus_{1}$] Let $\varphi_{n}(\bar x,\bar y_{n})$ (for $n < \omega$), $\bar a_{\eta}(\eta
    \in {}^{\omega \ge}\lambda)$  witness unsuperstability, i.e. be as
    in \cite[Ch.III,\S3]{Sh:a}, so there is $\langle \bar a_{\eta}:
    \eta \in {}^{\omega >}\lambda \rangle$ which is a non-forking tree
    (that is $\eta \in {}^{\omega >}\lambda \Rightarrow \tp(\bar a_\eta,
    \cup\{\bar a_\nu:\neg(\eta \trianglelefteq \nu),\nu \in 
    {}^{\omega>}\lambda\})$ does not fork over $\cup\{\bar a_{\eta \rest \ell}: 
    \ell < \mathrm{lng}(\eta)\}$), and for $\eta \in {}^\omega \lambda,
    \tp(\bar a_{\eta},\cup \{\bar a_{\nu}:\nu  \in {}^{\omega >}\lambda\})$  
    does not fork over $\bigcup\limits_{\ell < \omega} \bar a_{\eta \rest
    \ell}$ and $\tp(\bar a_\eta,\bigcup\limits_{\ell < k}
    \bar a_{\eta \rest \ell})$ forks over $\bigcup\limits_{\ell < k}
    \bar a_{\eta \rest \ell}$ for $k < \omega$. 
\end{enumerate}

Let 
$I \subseteq {}^{\omega \ge}\lambda$ be closed under initial segments, $|I| =
\lambda$ and we shall construct a model $M_I$.  We work in $\gC^{\eq}$, so
without loss of generality $\bar a_\eta = \langle a_\eta \rangle$ so the
$a_\eta$'s are pairwise distinct.

By induction on $\alpha < \lambda^{+}$ we choose $(\bar A^\alpha,\bar f^\alpha) \in
\mathbf K_\alpha$ where
\mn
\begin{enumerate}
\item[$\boxplus_{2}$]  $(\bar A,\bar f) \in \mathbf K_\alpha$ \Iff \, $\bar
  A = \langle A_{i}:i \le \alpha
 \rangle$ and $\bar f = \langle f^i_{c,d}:c,d \in A_{i},\tp(c,\emptyset) = 
\tp(d,\emptyset)$ and $i \le \alpha\rangle$ satisfies:
\sn
\begin{enumerate}
\item[$(A)$]   $\bar A = \langle A_{i}:i \le \alpha \rangle$ is increasing
continuous: $|A_i| = \lambda,A_i \subseteq \gC$
\sn
\item[$(B)$]   $f^i_{c,d}$ is an elementary mapping, $f^i_{c,d}(c) = d,
f^i_{d,c} = (f^i_{c,d})^{-1}$, and for $c,d \in A_j$ the sequence $\langle
f^i_{c,d} \colon i \in [j, \alpha] \rangle$ is increasing continuous, and:
if $c,d \in A_{i}$, but $\bigwedge\limits_{j < i} [\{c,d\} \nsubseteq
  A_j]$ then $\Dom(f^i_{c,d}) = \{c\}$
\sn
\item[$(C)$]  for each $i$:  either
\sn
\item[${{}}$]  $(i) \quad A_{i+1} = A_{i} \cup \{a_i\},\tp(a_i,A_i)$
does not fork over some finite subset 

\hskip30pt $B_i$ of $A_i$
\\
or
\sn
\item[${{}}$]  $(ii) \quad$ for some $\bfc(i), \bfd(i) \in A_i$, (such that
$\tp(\bfc(i),\emptyset) = \tp(\bfd(i),\emptyset))$

\hskip30pt  we have:
\[
A_{i+1} = A_{i} \cup f^{i+1}_{\bfc(i), \bfd(i)}(A_{i})
\]

and
\[
(\exists j < i)[\Rang(f^i_{\bfc(i), \bfd(i)}) = A_{j}] \vee [\Dom(f^i_{\bfc(i), \bfd(i)})
= \{\bfc(i)\}].
\]
\sn
\item[$(D)$]  for every $c,d \in A_{i+1}$ such that $\tp(c,\emptyset) =
\tp(d,\emptyset)$:

\item[${{}}$]  $(i) \quad$ if $\{c,d\}$ is not a subset of $A_i$, then $\Dom
(f^{i+1}_{c,d}) =\{c\}$

\item[${{}}$]  $(ii) \quad$ if $c,d \in A_i$, case (i) of (C) holds or case
(ii) of (C) holds but 

\hskip30pt $\langle c,d \rangle \notin
\{\langle \bfd(i), \bfd(i)\rangle,\langle \bfd(i), \bfc(i)\rangle\}$, \then \, 
$f^{i+1}_{c,d} = f^i_{c,d}$
\sn
\item[${{}}$]  $(iii) \quad$ if  $c = \bfc(i), d = \bfd(i)$ and case (ii) of 
(C) holds, then $\tp(f^{i+1}_{\bfc(i), \bfd(i)}(A_{i}),A_{i})$ does not fork
over $\Rang(f^i_{\bfc(i), \bfd(i)})$ and $\Dom(f^{i+1}_{\bfc(i), \bfd(i)}) =
A_{i}$ and recall $f^{i+1}_{d,c}=(f^{i+1}_{c,d})^{-1}$
\sn
\item[(E)]   $A_{0} = \cup\{{\bar a}_{\eta}:\eta \in I\}$.
\end{enumerate}
\end{enumerate}
Note that we can prove by induction on $\alpha$ that for any such
construction $(\bar A,\bar f) \in \mathbf K$:

\begin{enumerate}
\item[$(*)$]  If $\Dom( f^i_{c,d}) \ne \{c\}$, then 

\begin{enumerate}
\item[$(i)$]  $(\exists \delta \le i)[\Dom(f^i_{c,d}) = A_{\delta} =
\Rang(f^i_{c,d})]$ so $\delta$ is a limit ordinal or

\item[$(ii)$]  $(\exists \epsilon < \zeta \le i)[\Dom(f^i_{c,d}) =
A_{\zeta} \, \& \, \Rang(f^i_{c,d}) = A_{\epsilon} \cup (A_{\zeta+1} \setminus
A_{\zeta})]$ or
\sn
\item[$(iii)$]   $(\exists \epsilon < \zeta \le i)[\Rang(f^i_{c,d}) = A_{\zeta}
\, \& \, \Dom(f^i_{c,d}) = A_{\epsilon} \cup (A_{\zeta +1} \setminus
A_{\zeta})]$.
\end{enumerate}
\end{enumerate}

We can clearly find $\alpha < \lambda^+$ and $(\bar A,\bar f) \in
\mathbf K_\alpha$, i.e. $A_{i},$ (for $i \leq \alpha$)  $f^i_{c,d}$
(for $i < \alpha$) satisfying (A) - (E) such that:
\mn
\begin{enumerate}
\item[$(**)$]  $(i) \quad$ for every finite $B \subseteq A_{\alpha}$ 
and $b \in \gC$,
\sn
\sn
\begin{enumerate}
\item[${{}}$]  $\bullet \quad$ if $\lambda \ge \lambda(T)$ 
then $\stp(b,B)$ is realized by some $a \in A_{\alpha}$, moreover for 
$\lambda$ ordinals $i < \alpha$ clause (i) of (C) holds, 
$B = B_i \subseteq A_i$ and $a_i$ realizes $\stp(b,B)$,
\sn
\item[${{}}$]  $\bullet \quad$ if $|T| \le \lambda < \lambda(T)$ if 
$\bar a$ list $B$ and $\models \varphi[b,\bar a]$ then for $\lambda$ 
ordinals $i<\alpha,\models \varphi[a_i,\bar a]$ and $B_i=B$
\end{enumerate}
\sn
\item[${{}}$]  $(ii) \quad$ for every $c,d \in A_{\alpha},
\Dom(f^\alpha_{c,d}) = A_{\alpha} = \Rang(f^\alpha_{c,d})$.
\end{enumerate}
\mn
This is easy by reasonable bookkeeping and clause (C) above.  Hence
$A_{\alpha}$ is the universe of a strongly 
$\aleph_{\epsilon}$-saturated model if $\lambda \ge \lambda(T)$, 
and strongly $\aleph_0$-homogeneous 
(in both cases of model cardinality $\lambda$), if $\lambda <
\lambda(T)$ (remember we work in $\gC^{\eq}$).  We call it $M_{I}$
(and should  have written $\alpha_{I} < \lambda^+,A^I_{i}$, etc).

This is close to \cite[Sh.IV,5.13, pg.~213 + \S3]{Sh:c}. Well, we have constructed the models, but we still need to show the non-embeddability. This is proved just before \ref{7.13f}, i.e., the end of the sub-section, which deals with the context \ref{3.3Anew} and use \ref{3.3A1new}, \ref{xyz.1}, and in particular \ref{7.13D}. 

In \ref{3.3Anew} we can restrict ourselves to Pos. 1.
So till we finish the proof of \ref{7.13} we adapt the context 
\ref{3.3Anew}, and for notational simplicity only assume 
$\lambda \ge \lambda(T)$, (otherwise Claim \ref{3.3Jnew} has to be revised).
\end{PROOF}

\begin{context}\label{3.3Anew}
$T$ is a stable (first-order) theory, $A, B, C, D$ denote subsets of the monster $\gC = \gC_{T}$ of cardinality $< \kappa$, $\gC$ is $\kappa$-saturated. 

\noindent
\underline{Pos. 1}, ${\bf F} =
{\bf F}^f_{\aleph_0},\kappa = \aleph_0,{\cP} = {\cP}_I = \{D_I\}$
where $D_I = \{a_\eta:\eta \in I\}$ for some $I$, $\langle a_\eta:
\eta\in I\rangle$ as

\begin{enumerate}
    \item[$\boxplus_{2}$] In the proof of \ref{7.12} above, $\lambda \geq |D_I| + \lambda(T)$. 
\end{enumerate}
\noindent
\underline{Pos. 2}:  $T$ is a stable theory, 
${\bf F}={\bf F}^f_\kappa$, see \cite[IV, 3.14]{Sh:c} and
$\kappa = \kappa_r(T)$, so a regular cardinal, ${\cP}$ a family of sets ($\subseteq {\gC} = \gC_{T}$) and 
$\lambda = \lambda^{<\kappa} + \lambda (T) \ge \sup \limits_{D \in {\cP}}
|D|$, and we shall assume

\begin{itemize}
    \item if $|B| \le \lambda,$ then
    \[\lambda \ge |\{\tp(\bar d,B):
    \lg(\bar d) < \kappa \text{ and } \Rang(\bar d) \cup B \text{ is }
    \bF\text{-atomic over } B\}|
    \]
\end{itemize}

\mn
(recall we say $A'$ is ${\bf F}$-atomic over $A$ if for every finite
$\bar d \subseteq A'$ we have $\tp(\bar d,A) \in {\bf F}(B)$ for some
$B \subseteq A$ of cardinality $< \kappa$).

\noindent
\underline{Pos 3}: $T$ and $F$ are as in \emph{Pos 2} but $\cP = \{ (B, D) \}$, where $B \subseteq D$.

\noindent
\underline{Pos 4}: As in \emph{Pos 3}, but $T$ is just a singleton (no used below).  

Now we define the relevant constructions and prove that  
the demands parallel to non-forking calculus hold.
\end{context}

\noindent
We can work in Pos 2 because
\begin{observation}
\label{3.3A1new}

\noindent
1) If Pos 1, then Pos 2.

\noindent
2) If Pos 2, then Pos 3 when we identify $D \in \cP$ with $(\emptyset, D)$. 
\end{observation}

\begin{definition}\label{3.3Bnew}
1) We say ${\cA} = \langle (A_\alpha,D_\alpha,B_\alpha):\alpha < 
\alpha_*\rangle$ is an $(\bF,{\cP})$-construction (we may  omit $(\mathbf
F,\cP)$ when clear from the context) \when:
\mn
\begin{enumerate}
\item[$(a)$]  $\langle A_\alpha \colon \alpha < \alpha_{\ast} \rangle$ is increasing continuous (and we stipulate
$A_{\alpha_{\star}} = \bigcup\limits_{\alpha < \alpha_{\ast}}  (A_\alpha \cup D_\alpha))$
\sn
\item[$(b)$]  $A_{\alpha+1} = A_\alpha \cup D_\alpha$
\sn
\item[$(c)$]  $B_\alpha \subseteq A_\alpha \cap D_\alpha$ 
\sn
\item[$(d)$] for every finite $\bar d \subseteq D_\alpha$
(or just $\bar d \subseteq D_\alpha \setminus B_\alpha$)
we have $\tp(\bar d,A_\alpha) \in \bF(B_\alpha)$
\sn
\item[$(e)$] moreover, for some $B = B_{\alpha}[\bar{d}, \cA] \subseteq B_{\alpha}$, we have $\tp(\bar{d}, A_{\alpha}) \in \bfF(B)$ and $\vert B_{\alpha} \vert < \kappa \Rightarrow B = B_{\alpha}$. 

\item[$(f)$]  for each $\alpha$ one of the following occurs: 

\begin{enumerate}
    \item[$\bullet_{1}$] $D_\alpha$     has cardinality $<\kappa$, 

    \item[$\bullet_{2}$] For \emph{Pos 2}: for some $D'_\alpha \in {\cP},D_\alpha \cong D'_\alpha$ 
    which means that there is an elementary mapping $h_\alpha$ from $D'_{\alpha}$ 
    onto $D_\alpha$ (where elementary mapping means in the sense of $\gC$ of course),

    For Pos 4: the pair $(D_{\alpha}, B_{\alpha})$ is isomorphic to some pair from $\cP$. 

    \item[$\bullet_{3}$] $D_{\alpha} \cong D_{\alpha}'$ for some $D_{\alpha}' \subseteq A_{\alpha}$ and $h_{\alpha}$ is an elementary mapping from $D_{\alpha}'$ onto $D_{\alpha}$. 
\end{enumerate}

\end{enumerate}
\mn
2)  For a construction ${\cA}$ as above we let 
$\alpha_* = \mathrm{lng}({\cA}),A_\alpha[{\cA}] = A_\alpha$ for $\alpha \le 
\alpha_*,D_\alpha = D_\alpha[{\cA}],B_\alpha = B_\alpha[{\cA}]$ 
and $A[{\cA}] = A_{\alpha_*}$.

\noindent
2A) We say that $\cA'$ is a \emph{successor} of $\cA$, \underline{when} (both are $(\cF, \cP)$-constructions) and: 

\begin{enumerate}
    \item[(a)] $\mathrm{lng}(\cA') = \mathrm{lng}(\cA)$,

    \item[(b)] $\cA \unlhd \cA'$.  
\end{enumerate}

\noindent
3)  We can replace $\alpha^*$ by any well ordering. We may replace
$D_\alpha[{\cA}]$ by (or add to it) $D'_\alpha[{\cA}] \in \cP$ and
$h_\alpha[{\cA}]$ from clause (e) if $|D_\alpha| \ge \kappa$.

\noindent
4) For $\alpha < \mathrm{lng}({\cA})$ we let $w_\alpha[{\cA}] = 
( \{\beta < \alpha \colon B_\alpha[{\cA}] \cap (A_{\beta+1}[{\cA}] \setminus 
A_\beta[{\cA}]) \ne \emptyset\})$ so $w_\alpha[{\cA}]$ has cardinality 
$< \kappa$ by clause (c) of Part (1) so $w_0 =\emptyset$.

\noindent
5) We call ${\cA}$ \emph{standard} \when:

\begin{enumerate}
    \item[(a)]  $\beta \in w_\alpha [{\cA}] 
    \Rightarrow w_\beta [{\cA}] \subseteq w_\alpha [{\cA}] \, \& \, 
    B_\beta[{\cA}] \subseteq B_\alpha[{\bar \cA}]$, 

    \item[(b)] if $\beta < \alpha_{\ast}$, $B_{\beta}[\cA]$ is of cardinality $< \kappa$ and $\beta \in w_{\alpha}[\cA]$, \underline{then} $B_{\beta}[\cA] \subseteq B_{\alpha}[\cA]$, 

    \item[(c)] if $\beta \in w_{\alpha}[\cA]$ and $\bar{b} \in {}^{\omega >}(D_{\beta}[\cA] \cap B_{\alpha}[\cA])$, \underline{then} $\tp(\bar{b}, A_{\beta})$ do not fork over $(B[\cA] \cap A_{\beta})$. (Equivalently, for every $\alpha < \mathrm{lng}(\cA)$, the pair $(w_{\alpha}, B_{\alpha}[\cA])$ is $\cA$-closed, see below.)
\end{enumerate}

\noindent
6) We say that the pair $w, B$ is $\cA$-closed, \underline{when}: 

\begin{enumerate}
    \item[$(a)$] $w \subseteq \mathrm{lng}(\cA)$ and $\beta \in w \Rightarrow w_{\beta}[\cA] \subseteq w$, 

    \item[$(b)$] $B \subseteq A[\cA]$, 

    \item[$(c)$] if $\beta < \mathrm{lng}(\cA)$ and $B \cap (A_{\beta + 1}[\cA] \setminus A_{\beta}[\cA]) \neq \emptyset$, then $\beta \in w$, 

    \item[$(d)$] if $\beta < \alpha_{\ast}$, $B_{\beta}[\cA]$ is of cardinality $< \kappa$  and $\beta \in w$, \underline{then} $B_{\beta}[\cA] \subseteq B$, 

    \item[$(e)$] if $\beta \in w$ and $\bar{b} \in {}^{\omega >}(D_{\beta}[\cA] \cap B_{\alpha}[\cA])$, \underline{then} $\tp(\bar{b}, A_{\beta})$ do not fork over $(B_{\alpha}[\cA] \cap B_{\alpha}[\cA])$, \underline{then} $\tp(\bar{b}, A_{\beta})$. 
\end{enumerate}

\noindent
6A) We say that $(w, B)$ is $(\cA, \kappa)$-closed if in addition $B$ is of cardinality $< \kappa$. 

\noindent
7)  For $\beta \le \mathrm{lng}(\cA)$ let ${\cA} \restriction \beta$
be defined naturally such that $\mathrm{lng}(\cA(\beta)) = \beta$.

\noindent
8) For $b \in A[{\cA}] = A_{\alpha_{\ast}}[\cA]$, let $\alpha(b)=\alpha(b,{\cA})$ be the 
$\beta$ such that $b \in A_{\beta+1}[{\cA}] \setminus A_\beta[{\cA}]$ 
but for $b \in A_0[{\cA}]$ we stipulate $\alpha(b) = -1$.

\noindent
9) For $b \in A[\cA]$ let 
$w_b [{\cA}] = w_{\alpha(b)}[\cA]$ (where we stipulate $w_{-1}[\cA]=\emptyset$, and
for a sequence $\bar b = \langle b_i:i < \ell g(\bar b)\rangle$ we let
$w_{\bar b}[{\cA}] = \bigcup \{w_{b_i}[\cA]:i < \ell g(\bar b)\}$ and 
$B_{\bar b}[{\cA}]=\cup\{ \{ b_\ell\} \cup B_{\alpha(b_\ell)}[{\cA}]:\ell < 
\mathrm{lng}(\bar b)\}$.

\noindent
10)  We may omit ${\cA}$ when clear from the context.
\end{definition}

\begin{fact}
\label{3.3Cnew}
1) For any $w \subseteq \mathrm{lng}(\cA)$ and $B \subseteq A[\cA]$, there is an $\cA$-closed pair $(w', B')$ such that $w \subseteq w'$, $B \subseteq B'$ and $w', B'$ are of cardinality $< \kappa + \vert w \vert^{+} + \vert B \vert^{+}$. 

\noindent
2) For any $(\bF, {\cP})$-construction ${\cA}$ there is a standard
$(\bF,{\cP})$-construction ${\cA}'$ such that:
\begin{enumerate}
\item[$(a)$]  $\mathrm{lng}({\cA}') = \mathrm{lng}({\cA})$
\sn
\item[$(b)$]  $A_\alpha[{\cA}'] = A_\alpha[{\cA}]$
\sn
\item[$(c)$]  $D_\alpha[{\cA}'] = D_\alpha[{\cA}]$ (and
$D'_\alpha[{\cA}'] = D'_\alpha[{\cA}],h_\alpha[{\cA}'] =
  h_\alpha[{\cA}]$)
\sn
\item[$(d)$]  $B_\alpha[{\cA}'] \supseteq B_\alpha[{\cA}]$
\sn
\item[$(e)$]  $w_\alpha[{\cA}'] \supseteq w_\alpha[{\cA}]$.
\end{enumerate}
\end{fact}

\begin{PROOF}{\ref{3.3Cnew}}
    1) Straightforward, choose $B_\alpha$, $w_\alpha$
    by introduction on $\alpha \leq \alpha
    _{\ast}$ for $\cA \rest \alpha$, $w \cap \alpha$, $B \cap \cA_{\alpha}[\cA]$ recalling that $\kappa$ is regular by
    \ref{3.3Anew}.

    2) Follows. 
\end{PROOF}

\begin{claim}\label{3.3Dnew}
Assume that:
\mn
\begin{enumerate}
\item[$(a)$]  ${\cA}$ is a standard $(\bF,{\cP})$-construction. 

\end{enumerate}  

\noindent  
1) Assume in addition that: 

\sn
\begin{enumerate}  
\item[$(b)$]  $\pi$ is a one-to-one function from $\alpha = \mathrm{lng}({\cA})$ onto
the ordinal $\alpha'$,
\sn
\item[$(c)$]  if $\beta\in w_\alpha[{\cA}]$ then $\pi(\beta)< \pi(\alpha)$, and

\item[$(d)$] If $\beta < \mathrm{lng}(\cA)$, $\vert B_{\alpha} \vert \geq \kappa$ and $\bar{b} \subseteq D_{\beta}[\cA] \setminus B_{\beta}[\cA]$, \underline{then}
\[
\bar{b} \subseteq \bigcup \{ D_{\gamma} \colon \gamma < \beta \wedge \pi(\gamma) < \pi(p) \}.
\]
\end{enumerate}
\mn
\Then \, there is a standard $(\bF,{\cP})$-construction ${\cA}'$ such
that:
\mn
\begin{enumerate}
\item[$(i)$]  $\mathrm{lng}({\cA}') = \alpha'$
\sn
\item[$(ii)$]  $D_\alpha[{\cA}] = D_{\pi(\alpha)}[{\cA}']$
\sn
\item[$(iii)$]  $w_{\pi(\alpha)}[{\cA}'] = \{\pi(\beta):\beta \in
w_\alpha[{\cA}]\}$ and $B_{\pi(\alpha)}[{\cA}'] = B_\alpha[{\cA}]$
\sn
\item[$(iv)$]  $A_0[{\cA}'] = A_0[{\cA}]$
\sn
\item[$(v)$]  $A_{\pi(\alpha)}[{\cA}'] = A_0[{\cA}'] \cup \bigcup
\{D_\beta[{\cA}'] \colon  \pi(\beta) < \pi(\alpha)\}$.
\end{enumerate}

\noindent
2) If $(w', B')$ and $(w', B'')$ are $\cA$-closed, then $(w' \cap w'', B' \cap B'')$ is $\cA$-closed.




\noindent
3) Assume also that   $B \subseteq A_{\lg(\cA)}[{\cA}] $ and $|B|< \kappa$.

\mn
\Then \, there is a $(\bF,{\cP})$-construction ${\cA}'$ satisfying:
\mn
\begin{enumerate}
\item[$(\alpha)$]  ${\cA}' = \langle A'_\alpha,D'_\alpha,B'_\alpha:
\alpha < 1 + \mathrm{lng}({\cA}')\rangle$, we use ordinal addition,
\sn
\item[$(\beta)$]  $A'_0 = A_0[{\cA}]$
\sn
\item[$(\gamma)$]  $A'_1 = A'_0 \cup B$
\sn
\item[$(\delta)$]  $D'_0 = B$
\sn
\item[$(\varepsilon)$]  $A'_{1+\alpha} = A'_{1} \cup A_\alpha [{\cA}]$
\sn
\item[$(\zeta)$]  $D'_{1+\alpha} = D_\alpha$
\sn
\item[$(\eta)$]  $B'_{1+\alpha} \supseteq B_\alpha$.
\end{enumerate}
\mn
4) In part (3), if for some ${\cA}$-closed $u\subseteq \mathrm{lng}({\cA})$ 
we have $\cup \{B_\alpha:\alpha\in u\} \subseteq B \subseteq \cup\{
D_\alpha:\alpha\in u\}$ \then \, we can let $B'_{1+\alpha}=B_\alpha\cup B$. 
\end{claim} 

\begin{PROOF}{\ref{3.3Dnew}}
    For \ref{3.3Dnew}, recall that in Definition \ref{3.3Bnew}(1)(d) the set $B_{\alpha}[\bar{b}, \cA]$ is part of $\cA$ and then see \ref{3.3Bnew}(6)(d). This helps in particular in \ref{3.3Dnew}(2). For the others, recall that the proof of \cite[Ch.IV 3.3,3.2,pg.176]{Sh:c}, (of course, 
    we can strengthen \ref{3.3Dnew}(1),(3)); [e.g. for part (4) show by 
    induction on $\alpha \le \mathrm{lng}({\cA})$ then $\bar d \subseteq B
    \Rightarrow \tp(\bar d,A_\alpha[{\cA}]) \in {\bf F}(B \cap
    A_\alpha[{\cA}])$; for part (3), just find $B' \supseteq B$ which is 
    as in part (4); part (2) can be proved by induction on $\mathrm{lng}({\cA})$].
\end{PROOF}

\begin{definition}\label{3.3Enew}
1)  We say $({\cA},\bar f)$ is a automorphic
    $(\bF,{\cP})$-construction \when \,:
\mn
\begin{enumerate}
\item[$(a)$] ${\cA}$ is a standard $(\bF,{\cP})$-construction
\sn
\item[$(b)$]  $A_0[{\cA}]= \emptyset$
\sn
\item[$(c)$]  $\mathrm{lng}({\cA}) < \lambda^+$ 
\sn
\item[$(d)$] $\bar f = \langle f_{i, g}: i \le \mathrm{lng}({\cA}),g \in 
{\cG}_{A_i[{\cA}]}\rangle$ where ${\cG}_A$ is a set of elementary mappings
from a subset of $A$ into a subset of $A$ such that $g \in \cG_{A_{i}[\cA]}$ implies $g^{-1} \in \cG_{A_{i}[\cA]}$, 
\sn
\item[$(e)$]  $f_{i,g}$ is an elementary mapping with domain and range
$\subseteq A_i[{\cA}]$
\sn
\item[$(f)$] $f_{i,g}$ is increasing continuous with $i,f_{i,g^{-1}}
= (f_{i, g})^{-1}$
\sn
\item[$(g)$]  if $g \in {\cG}_{A_i[{\cA}]} \setminus \bigcup\limits_{j<i}
{\cG}_{A_j[{\cA}]}$ then $f_{i,g} = g$

\item[$(h)$] Let $\cG_{A_{< i}[\cA]} = \bigcup_{j < i} \cG_{A_{j}[\cA]}$ for $i \leq \mathrm{lng}(\cA)$. 
\end{enumerate}
\mn
2)  The cardinality of $({\cA},\bar f)$ written $\card({\cA},
\bar f)$ is the one of ${\cA}$, i.e. $|\mathrm{lng}({\cA})| + |A[{\cA}]|$.

\noindent
3)  For $\beta \le \mathrm{lng}({\cA})$ let $({\cA},\bar f)
\restriction \beta$ be defined naturally.
\end{definition}

\begin{fact}\label{3.3F}
    Assume that $({\cA},\bar f)$ is an automorphic 
    $(\bF,{\cP})$-construction. Let $\alpha = \mathrm{lng}({\cA})$, and $B \subseteq D$, $B^{\ast} \subseteq A[\cA]$, $B_{1} \subseteq A[\cA]$ and $g$ an elementary mapping from $B_{1}$ into $\cA[A]$. Then:

    1) If $\vert D^{\ast} \vert < \kappa$, \underline{then} there is some $\cA'$ such that: 

    \begin{enumerate}
        \item[$(a)$] $\cA'$ is a successor of $\cA$, 

        \item[$(b)$] $B_{\alpha}[\cA'] = B$, 

        \item[$(c)$] $D_{\alpha}[\cA_{\alpha}]$ is isomorphic over $B$ to $D^{\ast}$.

        \item[$(d)$] $\cG_{A_{\alpha}[\cA ' ]} 
           = \cG_{A_{< \alpha}[\cA]}$, 

        \item[$(e)$] $f_{\alpha, g}[\cA'] = f_{\beta, g}[\cA]$, when $\beta < \alpha$, $g \in \cG_{A_{\beta[\cA]}}$.  
    \end{enumerate}

    \noindent
    2) If $D' \in \cP$, $D^{\ast} \cong D'$ and $\vert B \vert < \kappa$, \underline{then} there is $\cA'$ such that (a)-(e) above holds. 

    \noindent
    3) If $g \notin \cG_{A_{< \alpha}[\cA]}$, $\vert D^{\ast} \vert < \kappa$, \underline{then} there is $\cA'$ such that (a), (b), (c), and (e) as in first part holds and

    \begin{enumerate}
        \item[(d)] $\cG_{A_{\alpha}[\cA]'} = \cG_{A_{< \alpha}[\cA]} \cup \{ g, g^{-1} \}$. 
    \end{enumerate}

    \noindent
    4) A special case of part (3) is that $u \subseteq \alpha$ has no last member, $g_{\beta} \in \cG_{A_{\beta}[\cA]}$ for $\beta \in u$ is $\subseteq$-increasing with $\beta$ and $g \coloneqq \{ g_{\beta} \colon \beta \in u \}$.

    \noindent
    5) Another special case of part (3) is $\beta < \alpha$, $g_{\beta} \in \cG_{A_{\beta}[\cA]}$ and $g \in \cG_{A_{\beta}[\cA]}$ extend $g_{\beta}$ and has domain $A_{\alpha}[\cA]$.








\end{fact}

\begin{PROOF}{\ref{3.3F}}
Straightforward (by the existence of non-forking extensions), see more details in \ref{3.3H}. 
\end{PROOF}

\begin{definition}
\label{3.3Gnew}
For automorphic $(\bF,{\cP})$-constructions $({\cA}^1,\bar f^1)$,
$({\cA}^2,\bar f^2)$ let $({\cA}^1,\bar f^1) \le ({\cA}^2,
\bar f^2)$ means: $\bar{\cA}^1 \trianglelefteq \bar{\cA}^2$ and 
$\bar f^1 = \langle f^2_{i,g}: i \le \mathrm{lng}({\cA}^1),g \in 
{\cG}_{A_i[{\cA}^1]}\rangle$.
\end{definition}

\begin{claim}
\label{3.3H}
If $({\cA}, \bar f)$ is an automorphic $(\bF,{\cP})$-construction,
$i \le \mathrm{lng}({\cA}),g_* \in {\cG}_{A_i}[{\cA}]$ \then \,  for
some automorphic $(\bF,{\cP})$-construction $({\cA}',\bar f')$ we
have:
\mn
\begin{enumerate}
\item[$(a)$]  $({\cA},\bar f) \le ({\cA}',\bar f')$
\sn
\item[$(b)$]  $\card({\cA}',\bar f') \le \card({\cA},\bar f) +
  \aleph_0$
\sn
\item[$(c)$]  $\Dom((f_{j, g_{\ast}})[\cA']) = A_i[\cA]$ where $j = \mathrm{lng}({\cA}')$.
\end{enumerate}
\end{claim}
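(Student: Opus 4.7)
The plan is to extend $\cA$ by transfinite induction, appending one new $D$-piece per $\beta < i$ to serve as the image of $D_\beta[\cA]$ under an elementary extension of $g_*$. The enabling observation is that $B_\beta[\cA] \subseteq A_\beta[\cA] = \bigcup_{\gamma \in w_\beta[\cA]} D_\gamma[\cA]$ and $w_\beta[\cA] \subseteq \beta$, so the natural ordinal ordering on $\{\beta : \beta < i\}$ is already compatible with the ``base'' relation required by the construction, and no preliminary reshuffling via Claim \ref{3.3Dnew}(1) is needed.

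I would construct an increasing sequence $\langle (\cA^\beta, \bar f^\beta) : \beta \le i\rangle$ of automorphic $(\bF,\cP)$-constructions with $(\cA^0, \bar f^0) = (\cA, \bar f)$, together with an increasing sequence of elementary mappings $h_\beta$ extending $g_*$ such that $\Dom(h_\beta) = \bigcup_{\gamma < \beta} D_\gamma[\cA]$ and $\Rang(h_\beta) \subseteq A[\cA^\beta]$, and arrange that $h_\beta = f^\beta_{\lg(\cA^\beta), g_*}$. At a successor step $\beta + 1$: apply Fact \ref{3.3F} to append one new stage to $\cA^\beta$ with $B^{\cA^{\beta+1}}_{\lg(\cA^\beta)} := h_\beta(B_\beta[\cA])$ and $D^{\cA^{\beta+1}}_{\lg(\cA^\beta)}$ chosen to realize the image under $h_\beta$ of $\tp(D_\beta[\cA], A_\beta[\cA])$. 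Since $\tp(D_\beta[\cA], A_\beta[\cA]) \in \bF(B_\beta[\cA])$ by clause (d) of Definition \ref{3.3Bnew}, such a realization exists (this is where the stability hypothesis of \ref{3.3Anew} is used), and by uniqueness of non-forking $\bF$-extensions the amalgamated map $h_{\beta+1} := h_\beta \cup (\text{bijection } D_\beta[\cA] \to D^{\cA^{\beta+1}}_{\lg(\cA^\beta)})$ is elementary. Update $\bar f^{\beta+1}$ by keeping $f^\beta_{\gamma, g}$ on the old pairs, taking $f^{\beta+1}_{\cdot, g} := g$ for freshly appearing $g$ (forced by clause (g) of \ref{3.3Enew}), and setting $f^{\beta+1}_{\lg(\cA^{\beta+1}), g_*} := h_{\beta+1}$. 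Take unions at limits.

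Setting $(\cA', \bar f') := (\cA^i, \bar f^i)$ and $j := \lg(\cA')$, clause (c) then holds with $g' = g_*$ since $\Dom(f'_{j, g_*}) = \bigcup_{\beta < i} D_\beta[\cA] = A_i[\cA]$, the second equality using $A_0[\cA] = \emptyset$ from Definition \ref{3.3Enew}(b). Clause (a) is built into the construction; for (b), at each of the $|i|$ successor steps we add one index and at most $|D_\beta[\cA]| + \aleph_0 \le |A_\beta[\cA]| + \aleph_0$ elements, giving $\card(\cA', \bar f') \le |i| + |A[\cA]| + \aleph_0 \le \card(\cA, \bar f) + \aleph_0$. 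The main obstacle is verifying throughout that clause (h) of Definition \ref{3.3Enew} is preserved (so that the witnessing $w$-sets remain $\cA'$-closed) and that each $h_\beta$ remains elementary; both reduce to the standardness of the growing construction, which Fact \ref{3.3Cnew} allows us to maintain, and to the uniqueness of $\bF$-extensions of types supplied by the stability assumption in context \ref{3.3Anew}.
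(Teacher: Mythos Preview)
Your argument is essentially the paper's: append to $\cA$ the $h$-images of the pieces $D_\beta[\cA]$ for $\beta<i$, with $h$ an elementary map extending the current $f_{\cdot,g_*}$, chosen so that each new piece is $\bF$-isolated over what came before. The one organizational difference is that the paper first invokes Claim~\ref{3.3Dnew}(1) to reorder $\cA\rest i$ so that $\Dom(f_{i,g_*})$ becomes an initial segment $A_{j_1}[\cA^1]$, and then constructs $h$ in a single step over the remaining indices $[j_1,\lg(\cA^1))$; you instead iterate over all $\beta<i$ directly. That reordering is exactly what smooths over the bookkeeping glitch in your base case: since $(\cA,\bar f)\le(\cA',\bar f')$ together with clause~(f) of Definition~\ref{3.3Enew} forces $f'_{\lg(\cA'),g_*}\supseteq f_{\lg(\cA),g_*}$, your $h_0$ must already equal $f_{\lg(\cA),g_*}$ rather than $g_*$, contradicting your stated $\Dom(h_0)=\bigcup_{\gamma<0}D_\gamma=\emptyset$. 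The fix is to set $\Dom(h_\beta)=\Dom(f_{\lg(\cA),g_*})\cup\bigcup_{\gamma<\beta}D_\gamma[\cA]$ and to skip (or treat trivially) those $\beta$ with $D_\beta$ already in that domain; with this adjustment your proof and the paper's coincide.
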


\begin{PROOF}{\ref{3.3H}}
Let ${\cA}^0 = {\cA} \rest i$, then by \ref{3.3Dnew} 
we can find a standard $(\bF,{\cP})$-construction ${\cA}^1$ and 
$j_1 \le \mathrm{lng}({\cA}^1)$ such that $A_0[{\cA}^1] = A_0[\cA^0],
A[\cA^1] = A[\cA^0]$, and $A_{j_1}[\cA^1]$ is $\Dom(f_{i,g_*}[{\cA}])$.
We can find an elementary mapping $h$ such that: $\Dom(h) =
A[{\cA}^0],h$ extends $f_{i,g_*}$, and for every $\beta 
\in [j_1,\mathrm{lng}({\cA}^1)]$, we have

\[
\bar d \subseteq h(D_\beta[{\cA}^1]) \Rightarrow \tp(\bar d,
A[{\cA}]\cup h(A_\beta[{\cA}^1])) \in \bF(h(B_\beta)).
\]

\mn
Now we define the automorphic $(\bF,{\cP})$-construction ${\cA}':
\mathrm{lng}({\cA})' = \mathrm{lng}({\cA})+ (\mathrm{lng}({\cA}^1)-j_1)$, and 
${\cA}' \restriction \mathrm{lng}({\cA}) = {\cA},D_{\mathrm{lng}({\cA})+\zeta}[{\cA}']
 = h(D_{j_1+\zeta}[{\cA}^1]),B_{\mathrm{lng}({\cA}) + \zeta}[{\cA}'] =
h(B_{j_1+\zeta}[{\cA}^j])$.  Define $\bar f' = \langle f'_{\alpha,g}:
\alpha \le \mathrm{lng}({\cA}'),g \in {\cG}_{A_\alpha[{\cA}']}\rangle$ as follows: for 
$\alpha \le \mathrm{lng}({\cA}'),g \in {\cG}_{A_\alpha[{\cA}']}$ we let:
\mn
\begin{enumerate}
\item[$(\alpha)$]  if $\alpha \le \mathrm{lng}({\cA})$, then $f'_{\alpha,g} 
= f_{\alpha,g}$

\item[$(\beta)$]  if $\alpha \ge \mathrm{lng}({\cA})$ and $g \notin 
{\cG}_{A_{\mathrm{lng}({\cA})}}[\cA]$ then $f'_{\alpha,g} = g$

\item[$(\gamma)$]  if $g \in {\cG}_{A_{\mathrm{lng}({\cA})}}[\cA]$, and 
$g \ne g_*,g^{-1}_*$ then $f'_{\alpha,g} = f_{\mathrm{lng}({\cA}),g}$
\sn
\item[$(\delta)$]  if $g=g_*$ and $\alpha < \mathrm{lng}({\cA}')$ then let
$f'_{\alpha,g}$ be $f_{\mathrm{lng}({\cA}),g}$
\sn
\item[$(\epsilon)$]  if $g=g_*$ and $\alpha = \mathrm{lng}({\cA}')$ then let 
$f'_{\alpha,g}$ be $h$. 
\end{enumerate}
\mn
Now check.
\end{PROOF}

\begin{claim}\label{3.3Inew}
$\delta < \lambda^+$ is a limit ordinal and if 
$\langle({\cA}^\zeta,\bar f^\zeta):\zeta<\delta\rangle$ is
increasing (sequence of automorphic $(\bF,{\cP})$-constructions),
\then \, it has a $\lub({\cA}^\delta,\bar f^\delta)$ i.e.
\[
\zeta < \delta \Rightarrow ({\cA}^\zeta,\bar f^\zeta) \le 
({\cA}^\delta,\bar f^\delta)
\]
\[
\mathrm{lng}({\cA}^\delta) = \bigcup\limits_{\zeta< \delta} \mathrm{lng}({\cA}^\zeta)
\]
\[
\card({\cA}^\delta) \le |\delta| + \sup\limits_{\zeta< \delta}
\card({\cA}^\zeta).
\]
\end{claim}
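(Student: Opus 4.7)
The plan is to take coordinatewise unions: since $(\cA^\zeta,\bar f^\zeta) \le (\cA^{\zeta'},\bar f^{\zeta'})$ forces $\cA^\zeta \trianglelefteq \cA^{\zeta'}$ on their common initial segment and forces $\bar f^{\zeta'}$ to extend $\bar f^\zeta$ on that segment, everything at the limit is determined. First I would set $\lg(\cA^\delta) := \sup_{\zeta<\delta} \lg(\cA^\zeta)$, which lies below $\lambda^+$ by regularity since $\delta<\lambda^+$ and each $\lg(\cA^\zeta)<\lambda^+$. For $\alpha<\lg(\cA^\delta)$, pick any $\zeta<\delta$ with $\alpha<\lg(\cA^\zeta)$ and declare $D_\alpha[\cA^\delta]=D_\alpha[\cA^\zeta]$, $B_\alpha[\cA^\delta]=B_\alpha[\cA^\zeta]$; independence of the choice of $\zeta$ follows from $\trianglelefteq$. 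The sets $A_\alpha[\cA^\delta]$ are then forced by the increasing continuous requirement. Verifying that $\cA^\delta$ is a standard $(\bF,\cP)$-construction is routine: each clause of Definition~\ref{3.3Bnew} is local and is inherited from some $\cA^\zeta$, and the standardness condition $\beta\in w_\alpha\Rightarrow w_\beta\subseteq w_\alpha\wedge B_\beta\subseteq B_\alpha$ survives since $w_\alpha[\cA^\delta]=w_\alpha[\cA^\zeta]$ whenever $\alpha\le\lg(\cA^\zeta)$.

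Next I would define $\bar f^\delta$ by cases. For $i\le\lg(\cA^\delta)$ and $g\in \cG_{A_i[\cA^\delta]}$: if $i\le\lg(\cA^\zeta)$ for some $\zeta<\delta$, then $A_i[\cA^\delta]=A_i[\cA^\zeta]$, so $g\in \cG_{A_i[\cA^\zeta]}$ and we set $f^\delta_{i,g}:=f^\zeta_{i,g}$ (well-defined by the $\le$-compatibility of the $\bar f^\zeta$'s). The only genuinely new case is $i=\lg(\cA^\delta)>\lg(\cA^\zeta)$ for all $\zeta<\delta$: if $g$ already belongs to $\cG_{A_{j_0}[\cA^\delta]}$ for some least $j_0<i$, set $f^\delta_{i,g}:=\bigcup\{f^\delta_{j,g}:j_0\le j<i,\,g\in\cG_{A_j[\cA^\delta]}\}$; otherwise clause (g) of Definition~\ref{3.3Enew} forces $f^\delta_{i,g}:=g$.

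Then I would check clauses (a)--(h) of Definition~\ref{3.3Enew} for $(\cA^\delta,\bar f^\delta)$. Clauses (a),(b),(c) follow from the previous step; clause (g) is built into the definition. Clauses (d),(e),(f) and the increasing continuity with $i$ all follow because $f^\delta_{i,g}$ is, by design, a directed union of compatible elementary maps $f^\zeta_{i',g}$. For clause (h), $i<\lg(\cA^\delta)$ is inherited from the relevant $\cA^\zeta$ (noting that $\cA^\zeta$-closedness of $w\subseteq\lg(\cA^\zeta)$ coincides with $\cA^\delta$-closedness because $w_\beta[\cA^\delta]=w_\beta[\cA^\zeta]$ for $\beta\le\lg(\cA^\zeta)$); for $i=\lg(\cA^\delta)$ with $g\in\cG_{A_{j_0}[\cA^\delta]}$, each $\Dom(f^\zeta_{\lg(\cA^\zeta),g})$ is $\bigcup\{D_\beta[\cA^\zeta]:\beta\in w_\zeta\}$ for an $\cA^\zeta$-closed $w_\zeta$, and $\bigcup_\zeta w_\zeta$ is $\cA^\delta$-closed with the right union as domain; the "fresh" subcase is trivial. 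Finally $(\cA^\zeta,\bar f^\zeta)\le(\cA^\delta,\bar f^\delta)$ is immediate from case (i) of the definition, and the cardinality bound $\card(\cA^\delta,\bar f^\delta)\le |\delta|+\sup_{\zeta<\delta}\card(\cA^\zeta,\bar f^\zeta)$ follows since $A[\cA^\delta]=\bigcup_\zeta A[\cA^\zeta]$.

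The main (minor) obstacle is preserving clause (h) at the new level $i=\lg(\cA^\delta)$, which reduces to the observation that an increasing union of $\cA^\zeta$-closed sets (where $D_\beta[\cA^\zeta]=D_\beta[\cA^\delta]$ and $w_\beta$ is preserved) is $\cA^\delta$-closed. All other verifications are mechanical.
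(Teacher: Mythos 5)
Your proposal is correct and follows the same (essentially forced) route as the paper, which dispatches the claim with the single word "Straight.": define the limit construction by taking coordinatewise unions, observe that the $\le$-relation makes all choices consistent, and verify that the clauses of Definitions \ref{3.3Bnew} and \ref{3.3Enew} (in particular continuity, which pins down $f^\delta_{i,g}$ at the new top level, and clause (h) via a union of closed sets) pass to the limit.
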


\begin{PROOF}{\ref{3.3Inew}}
    Straightforward.
\end{PROOF}

\begin{claim}
\label{3.3Jnew}
For every $\theta = \cf(\theta) \in [\kappa,\lambda]$
there is an automorphic $(\bF,{\cP})$-construction ${\cA}$ of
cardinality $\lambda$ such  that $\cf(\mathrm{lng}({\cA}))=\theta$ and
\mn
\begin{enumerate}
\item[$\otimes_1$]  for $g \in
  {\cG}_{A[{\cA}]},f_{\mathrm{lng}({\cA}),g}[{\cA}]$ is 
an automorphism of $A[{\cA}]$ 
\sn
\item[$\otimes_2$]  if $B \subseteq A[{\cA}],|B|<\kappa,B \subseteq B'$
 and $|B'|< \kappa$ or $B'$ is isomorphic to some $B'' \in {\cP}$
 \then \,

\begin{equation*} 
\begin{array}{clcr}
\mathrm{lng}({\cA}) = \otp\{\alpha:& \text{ there is an elementary mapping } h 
\text{ from } B' \text{ onto } D_\alpha\\
   & \text{ which is the identity on } B, \text{ and } B_\alpha = B\}.
\end{array}
\end{equation*}
\end{enumerate}
\end{claim}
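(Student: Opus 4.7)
The plan is to build $({\cA},\bar f)$ by transfinite induction, constructing a $\le$-increasing continuous sequence $\langle({\cA}^\alpha,\bar f^\alpha):\alpha\le\alpha^*\rangle$ of automorphic $(\bF,\cP)$-constructions with $\lg({\cA}^\alpha)=\alpha$, where $\alpha^*<\lambda^+$ is an ordinal of cardinality $\lambda$ and cofinality $\theta$ (for instance $\alpha^*=\lambda\cdot\theta$). At limit stages I invoke Claim~\ref{3.3Inew}, maintaining $\card({\cA}^\alpha,\bar f^\alpha)\le\lambda$. At each successor stage a bookkeeping function prescribes one of two tasks.

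The first task (serving $\otimes_2$) selects a triple $(B,B',h)$ with $B\subseteq A[{\cA}^\gamma]$, $|B|<\kappa$, $B\subseteq B'$, and either $|B'|<\kappa$ (take $h=\id_{B'}$) or $B'$ isomorphic to some $B''\in\cP$ via an elementary mapping $h$ with $h\restriction B=\id_B$; Fact~\ref{3.3F} then extends by one step so that $B_\gamma[{\cA}^{\gamma+1}]=B$ and $D_\gamma[{\cA}^{\gamma+1}]$ is a copy of $B'$ over $B$, in particular witnessing the set in $\otimes_2$. The second task (serving $\otimes_1$) selects a pair $(i,g)$ with $i\le\gamma$ and $g\in\cG_{A_i[{\cA}^\gamma]}$, and applies Claim~\ref{3.3H} to guarantee $A_i[{\cA}^\gamma]\subseteq\Dom(f_{\gamma+1,g}[{\cA}^{\gamma+1}])$; a parallel step handles $g^{-1}$ to control the range.

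Since $|A[{\cA}^{\alpha^*}]|\le\lambda$, the cardinality bound in Pos~2 of context~\ref{3.3Anew} caps the number of relevant triples $(B,B',h)$ (up to equivalence) at $\lambda$, and similarly $|\cG_{A[{\cA}^{\alpha^*}]}|\le\lambda$. I arrange the bookkeeping so that: (a) every triple $(B,B',h)$ is processed by the first task at a set of stages of order type $\alpha^*$, feasible because $\alpha^*=\lambda\cdot\theta$ partitions into $\lambda$ subsets each of order type $\alpha^*$ (distributing tasks across the $\theta$-many blocks of order type $\lambda$); (b) each pair $(i,g)$, which enters the pool only once $g\in\cG_{A_i[{\cA}^i]}$, is processed by the second task on a cofinal subset of $\alpha^*$. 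Because the pool of tasks grows with the construction, the bookkeeping is diagonal, assigning to each stage only slots whose defining data has already appeared.

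For verification: $\otimes_1$ holds since any $g\in\cG_{A[{\cA}]}$ has $\Dom(g)\cup\Rang(g)$ of size $<\kappa\le\cf(\alpha^*)=\theta$, hence lies in $A_{i_0}[{\cA}]$ for some $i_0<\alpha^*$ by continuity, making $g\in\cG_{A_{i_0}[{\cA}^{i_0}]}$; cofinal handling of $(i,g)$ for $i\ge i_0$ combined with the continuity clause (f) of Definition~\ref{3.3Enew} forces $\Dom(f_{\alpha^*,g})=\bigcup_{i<\alpha^*}A_i[{\cA}]=A[{\cA}]$, and symmetrically for the range. For $\otimes_2$, the $\alpha^*$-many stages devoted to $(B,B',h)$ witness $\otp\{\alpha:B_\alpha=B,\,D_\alpha\cong B'\text{ over }B\}\ge\alpha^*=\lg({\cA})$, while the reverse inequality is automatic. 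The main obstacle I anticipate is the bookkeeping, specifically arranging the set of stages serving each admissible $(B,B',h)$ to have order type exactly $\lg({\cA})$ (giving equality, not merely inequality, in $\otimes_2$); the remaining work is routine given Facts~\ref{3.3F}, \ref{3.3H}, Claim~\ref{3.3Inew}, and the cardinality hypotheses of Pos~2.
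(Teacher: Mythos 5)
Your proposal is correct and is exactly the argument the paper intends: the paper's entire proof is ``By bookkeeping and the assumptions (in \ref{3.3Anew}) on $\lambda$'', and you have written out that bookkeeping, using \ref{3.3F}, \ref{3.3H}, and \ref{3.3Inew} for the successor and limit steps. The obstacle you flag is not actually one: $\alpha^*=\lambda\cdot\theta$ is additively indecomposable (every infinite cardinal is, so $(\xi+1)+\theta=\theta$ and hence $\gamma+\lambda\cdot\theta=\lambda\cdot\theta$ for $\gamma<\lambda\cdot\theta$), so every end-segment $[\gamma_0,\alpha^*)$ still has order type $\alpha^*$; and, as you already suggest, partitioning each block $[\lambda\xi,\lambda(\xi+1))$ into $\lambda$ sets of cardinality $\lambda$ -- each automatically of order type $\lambda$, being a full-cardinality subset of a cardinal -- and taking the union across blocks of the $i$-th pieces gives $\lambda$ disjoint sets each of order type $\sum_{\xi<\theta}\lambda=\alpha^*$, so a task that only enters the pool at stage $\gamma_0$ simply gets its assigned set intersected with $[\gamma_0,\alpha^*)$, which by indecomposability still has order type $\alpha^*$.
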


\begin{PROOF}{\ref{3.3Jnew}}
By bookkeeping and the assumptions (in \ref{3.3Anew}) on $\lambda$.
\end{PROOF}

\begin{claim}
\label{3.3Lnew}
Suppose:
\mn
\begin{enumerate}
\item[$(a)$]  ${\cA}$ is an $(\bF,{\cP})$-construction,
\sn
\item[$(b)$]  $\chi^*$ large enough and $N_1\prec N_2 \prec
  ({\cH}(\chi^{\ast}),\in)$
\sn
\item[$(c)$]  ${\cA} \in N_1$ and the monster model ${\gC}$ belongs
to $N_1$ and $N_1 \cap \kappa$ is an ordinal (possibly $\kappa$ itself, if
$\kappa=\aleph_0$ this is necessarily the case)
\sn
\item[$(d)$]  $\bar b \in {}^{\omega>}(A[{\cA}])$ and 
$w_{\bar b}[{\cA}] \cap N_2 \subseteq N_1$ (on $w_{\bar b}$ see 
Definition \ref{3.3Bnew}(9))
\sn
\item[$(e)$]  if $\alpha \in w_{\bar b}\cap N_1$ then 
$\tp(\bar b \rest D_\alpha[{\cA}],N_2 \cap A[{\cA}])$ does not fork 
over $N_1 \cap A[{\cA}]$ where for $\bar b = \langle b_\ell:
\ell < n\rangle$ we let $\bar b \rest D_\alpha =
\langle b_\ell:\ell<n,b_\ell\in D_\alpha\rangle$.
\end{enumerate}
\mn
\Then \, $\tp(\bar b,A[{\cA}] \cap N_2)$ does not fork over  $A[\cA] \cap N_1$.
\end{claim}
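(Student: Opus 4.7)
My plan is to combine the localization of non-forking provided by Claim~\ref{3.3Dnew}(2) with the layer-by-layer hypothesis (e), using the standard non-forking calculus for stable theories. First I would normalize the setup: by \ref{3.3Cnew} assume $\cA$ is standard, and since $\bar b$ is finite, truncate $\cA$ to length $\max\{\alpha(b_i)+1 : i < \lg(\bar b)\}$, which preserves all hypotheses. Because $\cA \in N_1 \prec N_2$, the map $\alpha \mapsto w_\alpha[\cA]$ is definable from $\cA$, each $|w_\alpha[\cA]| < \kappa$, and hypothesis (c) gives $N_1 \cap \kappa \in \mathrm{Ord}$; so for every $\alpha \in N_\ell \cap \lg(\cA)$ we have $w_\alpha[\cA] \subseteq N_\ell$. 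Consequently $u_\ell := N_\ell \cap \lg(\cA)$ is $\cA$-closed for $\ell \in \{1,2\}$, and $u_1 \subseteq u_2$.

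The main structural step is Claim~\ref{3.3Dnew}(2). I would set $w := w_{\bar b} \cup \{\alpha(b_i) : b_i \notin A_0\}$, $w_1 := w \cap u_1$, $w_2 := w$. By hypothesis (d), together with standardness of $\cA$ (so $\beta \in w_\alpha \Rightarrow w_\beta \subseteq w_\alpha$), we get $w \cap u_2 \subseteq u_1$, whence $w_1 \cap w_2 = w_1$ once we enlarge to the $\cA$-closed envelopes $u_1 \subseteq u_2$. Applied to each finite sub-tuple of $\bar b$, \ref{3.3Dnew}(2) then shows $\tp(\bar b, \bigcup_{\beta \in u_1}D_\beta \cup \bigcup_{\gamma \in u_1}B_\gamma)$ already controls $\tp(\bar b, \bigcup_{\beta \in u_2}D_\beta \cup \bigcup_{\gamma \in u_2}B_\gamma)$ in the sense of non-forking. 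Because $b \in A[\cA] \cap N_\ell$ forces $\alpha(b) \in N_\ell$ (since $\alpha(b)$ is definable from $b$ and $\cA \in N_1$), we have $A[\cA] \cap N_\ell \subseteq A_0 \cup \bigcup_{\alpha \in u_\ell}D_\alpha$, so this translates back into a statement about $A[\cA] \cap N_1 \subseteq A[\cA] \cap N_2$.

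Finally I would use hypothesis (e) to eliminate the residual forking contributions layer by layer. For each $\alpha \in w_{\bar b} \cap N_1$, hypothesis (e) directly gives $\tp(\bar b \rest D_\alpha, A[\cA] \cap N_2)$ non-forking over $A[\cA] \cap N_1$. For indices $\alpha = \alpha(b_i)$ with $\alpha \notin w_{\bar b}$, one invokes the intrinsic construction property $\tp(\bar b \rest D_\alpha, A_\alpha[\cA]) \in \bF(B_\alpha[\cA])$ combined with $B_\alpha \subseteq A_0 \cup \bigcup_{\beta \in w_\alpha}D_\beta \subseteq A_0 \cup \bigcup_{\beta \in w_{\bar b}}D_\beta$, placing the base inside $A[\cA] \cap N_1$ via (d). Chaining these layerwise non-forking statements in the order of increasing $\alpha(b_i)$, using symmetry, transitivity and additivity of non-forking in the stable theory $T$, yields the desired conclusion.

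The main obstacle I foresee is the bookkeeping for the components $b_i$ with $\alpha(b_i) \notin w_{\bar b}$: hypothesis (e) is vacuous for them, so they must be handled via the intrinsic $\bF$-atomicity of $D_{\alpha(b_i)}$ over $B_{\alpha(b_i)}$, and one needs to organize the transitivity-of-non-forking chain so that these ``marginal'' layers are absorbed after the $(e)$-layers have been reduced to non-forking over $A[\cA] \cap N_1$. Ensuring that this composition never leaves $A[\cA] \cap N_1$ as the base set, and that the additivity appeals are applied to independent pieces rather than entangled ones, is where the proof needs care.
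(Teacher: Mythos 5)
Your plan shares the paper's starting point -- the paper's own proof is the one line ``By \ref{3.3Dnew}'' -- but two of the concrete inferences you rely on are not justified, and they are exactly the steps that make the claim nontrivial. First, with $w := w_{\bar b} \cup \{\alpha(b_i) : b_i \notin A_0\}$ and $u_\ell := N_\ell \cap \lg(\cA)$, your assertion that ``$w \cap u_2 \subseteq u_1$'' fails in general: hypothesis (d) only bounds $w_{\bar b} \cap N_2$, and says nothing about whether $\alpha(b_i) \in N_2$ implies $\alpha(b_i) \in N_1$. Indeed the case $\alpha(b_i) \in N_2 \setminus N_1$ is the typical one in the intended application (cf.\ \ref{7.13C}), so this cannot be waved away.

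Second, and more seriously, for a ``marginal'' level $\alpha = \alpha(b_i) \notin w_{\bar b}$ you invoke $\bF$-atomicity of $D_\alpha$ over $B_\alpha$ and then assert that $B_\alpha \subseteq A_0 \cup \bigcup_{\beta \in w_\alpha} D_\beta \subseteq A_0 \cup \bigcup_{\beta \in w_{\bar b}} D_\beta$ ``places the base inside $A[\cA] \cap N_1$ via (d).'' This does not follow: (d) constrains only which \emph{indices} of $w_{\bar b}$ lie in $N_2$, whereas the sets $D_\beta$ for $\beta \in w_{\bar b}$ (even when $\beta \in N_1$) can be the large sets from $\cP$ and are not contained in $N_1$. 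Hence $B_\alpha$ need not lie in $A[\cA] \cap N_1$, the non-forking statement coming from the construction does not have a base inside $N_1$, and the subsequent chain of symmetry/transitivity applications never gets off the ground for these layers. You flag this exact point as ``the main obstacle,'' but as written it is a genuine gap rather than a bookkeeping nuisance: the argument needs to route through the $\cA$-closed sets $u_1 \subseteq u_2$ and the fact that $w_{\alpha(b_i)} \cap u_2 \subseteq u_1$ (which \emph{does} follow from (d) and $|w_\alpha| < \kappa$), rather than attempting to push $B_{\alpha(b_i)}$ itself into $N_1$.
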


\begin{PROOF}{\ref{3.3Lnew}}
By \ref{3.3Dnew}.
\end{PROOF}

\noindent
Now to complete the proof of \ref{7.13} we turn back to the model
$M_I$ we have constructed before \ref{3.3Anew}.

\begin{fact}
\label{3.3Knew}
For the context \ref{3.3Anew}(Pos 1), (so $I \in K^\omega_{\tr},I \subseteq
{}^{\omega\ge}\lambda$ is closed under initial segments of cardinality
$\le \lambda)$, letting $\kappa = \aleph_0,\cP = \{D_I\}$ 
(see \ref{3.3Anew}(Pos 1)) for some ${\cA} = {\cA}^I$ we have
\mn
\begin{enumerate}
\item[$(A)$]  ${\cA}$ is a standard $(\bF,{\cP}_I)$-construction
$A_1[{\cA}] = D_I$ and $A_0[\cA] = \emptyset$
\sn
\item[$(B)$]  $A[{\cA}^I]$ is strongly $\aleph_\epsilon$-saturated of
cardinality $\lambda$
\sn
\item[$(C)$]  $A[{\cA}^I]$ is equal to the model $M_I$ constructed
during the beginning of the proof of \ref{7.12}.
\end{enumerate}
\end{fact}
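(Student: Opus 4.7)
The plan is to reorganize the construction of $M_I$ (given just after \ref{3.3Anew} in the proof of \ref{7.13}) as an $(\bF,\cP_I)$-construction. Set $A_0[\cA^I]=\emptyset$, $D_0[\cA^I]=D_I$, $B_0[\cA^I]=\emptyset$; so $A_1[\cA^I]=D_I$, clause (e) of \ref{3.3Bnew}(1) holds since $D_I\in\cP_I$, and clause (d) is trivial. Then walk through the indices $i$ of the original $\mathbf{K}$-construction: if case (C)(i) was used at $i$, insert one step with $D_\beta[\cA^I]=\{a_i\}$ and $B_\beta[\cA^I]=B_i$; clause (d) holds by the choice of $B_i$. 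If case (C)(ii) was used at $i$, insert a block that adds the new elements $b_\gamma:=f^{i+1}_{c(i),d(i)}(a_\gamma)$ one at a time (with $a_\gamma$ enumerating $A_i$); for each $\gamma$ set $D_\beta[\cA^I]=\{b_\gamma\}$ and, using the finite character of non-forking together with the stationarity of finitely-based types in stable $T$, choose a finite $B_\beta[\cA^I]$ inside $\Rang(f^i_{c(i),d(i)})\cup\{b_{\gamma'}:\gamma'<\gamma\}$ witnessing $\tp(b_\gamma,A_i\cup\{b_{\gamma'}:\gamma'<\gamma\})\in\bF(B_\beta[\cA^I])$. Finally apply \ref{3.3Cnew} to upgrade $\cA^I$ to a standard construction, establishing (A) and (C).

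For (B), observe that condition $(**)$(i) at the definition of $M_I$ guarantees that every stationary type $\stp(b,B)$ over every finite $B\subseteq A[\cA^I]$ is realized inside $A[\cA^I]$ by some $a_i$ added via case (C)(i); this gives $\aleph_\epsilon$-saturation of cardinality $\lambda$. Condition $(**)$(ii), namely $\Dom(f^\alpha_{c,d})=A_\alpha=\Rang(f^\alpha_{c,d})$ for $c,d\in A_\alpha$, shows that $A[\cA^I]$ is closed under a rich family of automorphisms realizing all partial elementary maps between singletons; this delivers strong $\aleph_0$-homogeneity, hence (with $\aleph_\epsilon$-saturation) strong $\aleph_\epsilon$-saturation.

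The main technical obstacle is the decomposition in case (C)(ii): the block non-forking statement ``$\tp(f^{i+1}_{c(i),d(i)}(A_i),A_i)$ does not fork over $\Rang(f^i_{c(i),d(i)})$'' must be converted, element by element, into an $(\bF,\cP_I)$-appropriate finite base for each single $b_\gamma$. This uses finite character, symmetry and transitivity of non-forking, together with the fact that in stable $T$ every type in $\bF^f_{\aleph_0}$ has a finite base (which is the defining feature of $\bF^f_{\aleph_0}$ in Pos.~1 of \ref{3.3Anew}); once this reduction is checked, standardness comes from \ref{3.3Cnew}, and (B), (C) are immediate from the translation.
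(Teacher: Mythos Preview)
Your approach differs from the paper's: the paper does not retrofit the $\bold K$-construction of $M_I$ into an $(\bF,\cP_I)$-construction, but instead builds $\cA^I$ directly via Claim~\ref{3.3Jnew} (an automorphic $(\bF,\cP)$-construction satisfying $\otimes_1,\otimes_2$), which immediately yields (A) and (B); clause (C) is then handled by noting that the bookkeeping in \ref{3.3Jnew} parallels that of the $\bold K$-construction (via \ref{3.3Enew}, \ref{3.3H}), or simply by redefining $M_I := A[\cA^I]$, as Remark~\ref{3.14new3} points out.

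Your route has a genuine gap in the treatment of case (C)(ii). You propose to add the elements $b_\gamma = f^{i+1}_{c(i),d(i)}(a_\gamma)$ one at a time, choosing for each a \emph{finite} base $B_\beta$ by appealing to ``finite character'' and ``the fact that in stable $T$ every type in $\bF^f_{\aleph_0}$ has a finite base''. But this last clause is circular (it is the definition of $\bF^f_{\aleph_0}$), and the substantive claim you need --- that every type arising here is finitely based --- is \emph{false} precisely because $T$ is unsuperstable. Concretely: $D_I \subseteq A_i$, so $f^{i+1}(D_I)$ sits inside the block you are decomposing; for $\eta \in P^I_\omega$ the element $b = f^{i+1}(a_\eta)$ has $\tp\bigl(b,\{f^{i+1}(a_{\eta\restriction n}):n<\omega\}\bigr)$ forking over every finite subset (this is exactly the unsuperstability witness), so no finite $B_\beta$ exists once those predecessors are present. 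The whole point of allowing $|D_\alpha|\ge\kappa$ with $D_\alpha\cong D'\in\cP$ in clause~(e) of Definition~\ref{3.3Bnew}(1) is to absorb such infinite-character pieces in a single step.

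The repair is to carry the $(\bF,\cP_I)$-structure inductively: assuming $A_i$ already has a construction $\cA^i$, push $\cA^i$ forward by $f^{i+1}$, reorder so that $\Dom(f^i_{c(i),d(i)})$ is an initial segment (via \ref{3.3Dnew}), and append the tail --- including the copies of $D_I$ as single $(\bF,\cP_I)$-steps --- to $\cA^i$. This is exactly the content of Claim~\ref{3.3H}, which the paper invokes. (A minor separate issue: Definition~\ref{3.3Bnew}(1)(c) requires $B_\beta\subseteq A_\beta\cap D_\beta$, so in your case (C)(i) you must take $D_\beta = \{a_i\}\cup B_i$, not $D_\beta=\{a_i\}$.)
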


\begin{remark}
\label{3.14new3}
We do not actually use clause (C), as we can just let $M_I$ be 
the model with universe $A[{\cA}^I]$.
\end{remark}

\begin{PROOF}{\ref{3.3Anew}}
    Straightforward for clause (C) recall Definition \ref{3.3Enew}, Claim
    \ref{3.3H} (or just use the model constructed in \ref{3.3Jnew}).
\end{PROOF}

\begin{fact}
\label{7.13A}
If $\chi$ is regular large enough, $\cA^I \in \cH(\chi),{\cA}^{I} \in N_{1}
\prec N_{2} \prec ({\cH}(\chi),\in,<^*_{\chi}),N_\ell \cap 
\kappa_r (T) \in \kappa_r (T)+1,\bar b \in M_{I}$, and 
$w_{\bar b}[{\cA}^I] \cap N_{2} \subseteq N_{1}$ and 
$\alpha \in w_{\bar b}[{\cA}] \Rightarrow \tp(\bar b \rest
D_\alpha[\cA^I],N_2 \cap M_I)$ does not fork over $N_1\cap
M_I$. \Then \, $\tp(\bar b,N_{2} \cap M_{I})$ does not fork over 
$N_{1} \cap M_{I}$.
\end{fact}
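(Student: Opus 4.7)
\medskip

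My plan is to recognize Fact \ref{7.13A} as essentially a direct restatement of Claim \ref{3.3Lnew} in the specific setting of Fact \ref{3.3Knew}, where the construction $\cA$ of \ref{3.3Lnew} is instantiated as $\cA^I$ and the universe $A[\cA^I]$ is identified with the model $M_I$ (by \ref{3.3Knew}(C), or simply by taking $M_I$ to be this universe as permitted by Remark \ref{3.14new3}). Thus I would open the proof by citing \ref{3.3Lnew} and just checking that each of its hypotheses (a)--(e) is met by the present data.

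For the hypothesis check I would proceed as follows. Hypothesis (a) of \ref{3.3Lnew} holds because $\cA^I$ is, by \ref{3.3Knew}(A), a \emph{standard} $(\bF,\cP_I)$-construction, hence in particular an $(\bF,\cP)$-construction. Hypothesis (b) is literally the chain $N_1 \prec N_2 \prec (\cH(\chi),\in,<^*_\chi)$ in the hypothesis of \ref{7.13A}. For hypothesis (c), $\cA^I \in N_1$ is given, the monster model $\gC$ lies in $N_1$ since $\gC \in \cH(\chi)$ is a fixed parameter (and $\chi$ is taken large enough so that $\gC$ is named/definable in $(\cH(\chi),\in,<^*_\chi)$), and the requirement ``$N_1 \cap \kappa$ is an ordinal $\le \kappa$'' translates to the assumption $N_\ell \cap \kappa_r(T) \in \kappa_r(T)+1$: indeed, in Pos.~2 of \ref{3.3Anew} one has $\kappa = \kappa_r(T)$, so the assumption is exactly what is needed, while in Pos.~1 one has $\kappa = \aleph_0$ and the condition is automatic from $N_\ell \prec (\cH(\chi),\in)$. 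Hypothesis (d) is the given condition $w_{\bar b}[\cA^I] \cap N_2 \subseteq N_1$. Finally, hypothesis (e) of \ref{3.3Lnew} follows from the assumption: for every $\alpha \in w_{\bar b}[\cA^I]$ (and in particular for $\alpha \in w_{\bar b}[\cA^I] \cap N_1$), $\tp(\bar b \rest D_\alpha[\cA^I], N_2 \cap M_I)$ does not fork over $N_1 \cap M_I$, which is precisely the non-forking condition of (e) once $M_I$ is identified with $A[\cA^I]$.

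With all five hypotheses verified, Claim \ref{3.3Lnew} yields that $\tp(\bar b, A[\cA^I] \cap N_2)$ does not fork over $A[\cA^I] \cap N_1$, which, after the identification $M_I = A[\cA^I]$, is the required conclusion: $\tp(\bar b, N_2 \cap M_I)$ does not fork over $N_1 \cap M_I$.

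The only subtlety (and there is really no serious obstacle) is the translation between the two phrasings of the condition on $N_\ell$: \ref{3.3Lnew} phrases it as ``$N_1 \cap \kappa$ is an ordinal (possibly $\kappa$)'', while \ref{7.13A} phrases it as ``$N_\ell \cap \kappa_r(T) \in \kappa_r(T)+1$''. These are the same statement under the identification $\kappa = \kappa_r(T)$ appropriate to the position in context \ref{3.3Anew} in which we are working, so no additional argument is needed beyond noting this matching. Everything else is bookkeeping that is already carried out in \ref{3.3Lnew}, whose proof in turn rests on the associativity-type property of $(\bF,\cP)$-constructions recorded in \ref{3.3Dnew}(2).
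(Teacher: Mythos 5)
Your proposal is correct and is exactly the paper's proof: the paper proves Fact \ref{7.13A} by the single citation ``By \ref{3.3Lnew}'', and the routine verification you spell out (instantiating $\cA$ as $\cA^I$, identifying $M_I$ with $A[\cA^I]$ via \ref{3.3Knew}(C)/\ref{3.14new3}, matching $\kappa$ with $\kappa_r(T)$, and checking hypotheses (a)--(e) of \ref{3.3Lnew}) is precisely the bookkeeping that citation silently delegates to the reader. No difference of approach to report.
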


\begin{PROOF}{\ref{7.13A}}
By \ref{3.3Lnew}.
\end{PROOF}

\noindent
For the rest for simplicity assume $\kappa=\aleph_0$. 
\begin{fact}
\label{7.13B}
If $\chi$ is regular, $I \in K^\omega_{\tr},{\cA}^{I} \in {\cH}(\chi),
{\cA}^{I} \in  N_{1} \prec N_{2} \prec ({\cH}(\chi),\in,<^*_{\chi}),
N_\ell\cap \kappa_r(T)\in \kappa_r(T)+1$ 
and $\eta \in P^I_{\omega},n < \omega,\eta \rest n \in N_{1},\eta(n)
\in N_{2} \setminus N_{1}$ and ${\gC}\in N_1$ \then \, 
$\tp(\bar a_{\eta},N_{2} \cap M_{I})$ fork over $N_{1} \cap M_{I}$.
\end{fact}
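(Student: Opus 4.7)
Set $\rho := \eta \rest (n+1)$; then $\rho \rest n = \eta \rest n \in N_1$ whereas $\rho(n) = \eta(n) \in N_2 \setminus N_1$, and since $\rho$ is computable from $\rho \rest n$ and $\rho(n)$ via the apparatus $\cA^I \in N_1$, we get $\rho \in N_2 \setminus N_1$. Consequently, using that the map $\rho' \mapsto \bar a_{\rho'}$ is definable from $\cA^I \in N_1$ and is one-to-one (the $a_\eta$'s are pairwise distinct, as noted in the proof of \ref{7.13}), we have $\bar a_\rho \in N_2 \cap M_I$ but $\bar a_\rho \notin N_1$, while $\bar a_{\le n} := \langle \bar a_{\eta \rest \ell}: \ell \le n\rangle \subseteq N_1 \cap M_I$.

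The plan is to exhibit an explicit formula in $\tp(\bar a_\eta, N_2 \cap M_I)$ that forks over $N_1 \cap M_I$. By the unsuperstability property of the non-forking tree used to construct $M_I$, the type $\tp(\bar a_\eta, \bar a_{\le n} \cup \{\bar a_\rho\})$ forks over $\bar a_{\le n}$; fix a formula $\psi(\bar x, \bar a_\rho, \bar c) \in \tp(\bar a_\eta, \bar a_{\le n} \cup \{\bar a_\rho\})$ with $\bar c$ a finite tuple from $\bar a_{\le n}$ witnessing this forking (so in the stable context, $\psi(\bar x, \bar a_\rho, \bar c)$ divides over $\bar a_{\le n}$).

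The key lemma is: $\tp(\bar a_\rho, N_1 \cap M_I)$ does not fork over $\bar a_{\le n}$. To see this, first note that, by the non-forking tree property, $\tp(\bar a_\rho, \bigcup\{\bar a_\nu : \nu \in I,\ \neg(\rho \trianglelefteq \nu)\})$ does not fork over $\bar a_{\le n}$. Since $\rho \notin N_1$, no $\nu \in I \cap N_1$ can extend $\rho$ (else $\rho = \nu \rest (n+1) \in N_1$, contradiction); hence $\tp(\bar a_\rho, D_I \cap N_1)$ does not fork over $\bar a_{\le n}$. The transfer from $D_I \cap N_1$ to the full $N_1 \cap M_I$ comes from analyzing the $(\bF, \cP)$-construction $\cA^I$ inside $N_1$: every $c \in N_1 \cap M_I \setminus D_I$ was added at a stage $\gamma \in N_1$ with $\tp(c, A_\gamma[\cA^I]) \in \bF(B_\gamma[\cA^I])$ for a finite $B_\gamma[\cA^I] \subseteq A_\gamma[\cA^I] \cap N_1$, and an inductive argument on the construction (using symmetry of non-forking and Claim \ref{3.3Lnew}, along the lines of Fact \ref{7.13A}) shows that each such addition preserves non-forking of $\bar a_\rho$ over $\bar a_{\le n}$.

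Granted this lemma, finish by a Morley-sequence argument: take a Morley sequence $\langle \bar d_i: i < \omega\rangle$ of $\tp(\bar a_\rho, N_1 \cap M_I)$ over $N_1 \cap M_I$ (with $\bar d_0 = \bar a_\rho$); by the lemma it is also a Morley sequence of $\tp(\bar a_\rho, \bar a_{\le n})$ over $\bar a_{\le n}$, so the dividing of $\psi(\bar x, \bar a_\rho, \bar c)$ over $\bar a_{\le n}$ yields $k$-inconsistency of $\{\psi(\bar x, \bar d_i, \bar c): i < \omega\}$ for some $k < \omega$. Hence $\psi(\bar x, \bar a_\rho, \bar c)$ divides (equivalently, in stable $T$, forks) over $N_1 \cap M_I$; since $\psi(\bar x, \bar a_\rho, \bar c) \in \tp(\bar a_\eta, N_2 \cap M_I)$, we conclude that $\tp(\bar a_\eta, N_2 \cap M_I)$ forks over $N_1 \cap M_I$. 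The main obstacle will be the transfer step in the key lemma: ensuring that enlarging the base by the non-tree elements of $N_1 \cap M_I$ produced by the $(\bF, \cP)$-construction does not disturb non-forking of $\bar a_\rho$ over $\bar a_{\le n}$. This parallels the techniques in Fact \ref{7.13A} and Claim \ref{3.3Lnew} but requires a careful induction on the construction stages visible to $N_1$.
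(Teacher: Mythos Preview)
Your argument is correct, but it takes a slightly more roundabout path than the paper's. Both proofs rest on the same transfer step --- using the $(\bF,\cP)$-construction to show that for any $\bar c \subseteq N_1 \cap M_I$ the type $\tp(\bar c, A^I_1)$ does not fork over $N_1 \cap A^I_1$, hence by symmetry $\tp(A^I_1, N_1 \cap M_I)$ does not fork over $N_1 \cap A^I_1$. The difference is what you do with this. The paper applies it to $\bar a_\eta$ itself (which lies in $A^I_1$): combined with the non-forking tree property and the fact that no $\nu \in N_1 \cap I$ extends $\eta \rest (n+1)$, one gets directly that $\tp(\bar a_\eta, N_1 \cap M_I)$ does not fork over $\bar a_{\le n}$. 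Since $\bar a_{\eta \rest (n+1)} \in N_2 \cap M_I$ forces $\tp(\bar a_\eta, N_2 \cap M_I)$ to fork over $\bar a_{\le n}$, the conclusion follows by transitivity of non-forking, with no Morley-sequence step needed. You instead prove non-forking of the \emph{parameter} $\bar a_\rho$ over $N_1 \cap M_I$ and then push dividing of $\psi$ up to $N_1 \cap M_I$ via a Morley sequence; this is a legitimate equivalent manoeuvre (essentially the lemma that if $\psi(\bar x,\bar b)$ forks over $A$ and $\bar b$ is free from $B \supseteq A$ over $A$ then $\psi(\bar x,\bar b)$ forks over $B$), but the paper's route through transitivity is shorter and keeps the focus on $\bar a_\eta$ throughout.
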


\begin{PROOF}{\ref{7.13B}}
Let $\cA = \cA^{I}$ be as in \ref{3.3Knew} and let 
$A_i^I = A_i[{\cA}^I]$, for $i \le \alpha_I = \mathrm{lng}({\cA}^I)$, 
and recall $A^I_1 = \{a_\eta:\eta\in I\}$.
For $\bar c \subseteq N_\ell \cap M_I$, clearly $\tp(\bar c,A^I_1)$ does
not fork over $\bigcup\{B_\gamma \cap A^I_1:\gamma \in w_{\bar c}\}
\cup (\bar c \cap A^I_1) \subseteq N_\ell\cap A^I_1$, hence $\tp(A^I_1,
N_\ell \cap M_I)$ does not fork over $N_\ell\cap A^I_1$ recalling
$a_\eta \in A^I_1$ we have hence
$\tp(a_\eta,N_\ell \cap M_I)$ does not fork over $N_\ell \cap A^I_0$.

But $\tp(\bar a_{\eta},\{\bar a_{\nu}:\nu \in I,
\neg(\eta \rest n \triangleleft \nu)\})$
does not fork over $\{\bar a_{\nu}:\nu \trianglelefteq \eta \rest n\}$,
(why? as $\langle \bar a_\eta:\eta\in I \rangle$ is a non-forking
tree).  Now the set $\{\bar a_\nu:\nu \trianglelefteq \eta\rest n\}$
is  a subset of $N_1$ hence $\tp(\bar a_{\eta},\{\bar a_{\nu}:a_{\nu} 
\in N_1$ and $\neg(\eta \rest n \triangleleft \nu)\})$ 
does not fork over $\{\bar a_{\nu}:\nu \trianglelefteq \eta \rest n\}$
so by transitivity and previous the  sentence, $\tp(\bar a_{\eta},M_{I} \cap
N_{1})$ does not fork over $\{a_{\eta \rest m}:m \le n\}$.

On the other hand $\tp(\bar a_{\eta},M_{I} \cap  N_{2})$ forks over
it (otherwise $\tp(a_{\eta},\{\bar a_{\eta \rest \ell}:\ell \le n + 1\})$
does not fork over $\{a_{\eta \rest \ell}:\ell \le n\}$, contradiction), so the
conclusion follows. 
\end{PROOF}

\begin{fact}
\label{7.13C}
If $I$ is super unembeddable into $J$ \then \, $M_{I}$ is not
isomorphic to $M_{J}$.
\end{fact}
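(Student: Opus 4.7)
\textbf{Proof proposal for Fact \ref{7.13C}.} The plan is to argue by contradiction: suppose $h : M_I \to M_J$ is an isomorphism, pick $\chi^*$ regular and large enough together with a well-ordering $<^*_{\chi^*}$ of $\cH(\chi^*)$, and set $x = \langle h,\cA^I,\cA^J,I,J,\langle a_\eta:\eta\in I\rangle,\langle a_\nu:\nu\in J\rangle\rangle$. Applying the super-unembeddability of $I$ into $J$ with this $x$, $\mu$ and $\kappa=\aleph_0$ as in Definition \ref{7.1}, we obtain $\eta \in P^I_\omega$ and a tower $M_n \prec N_n \prec M_{n+1} \prec (\cH(\chi^*),\in,<^*_{\chi^*})$ with $x \in M_0$, satisfying clauses (v) and (vi) there.

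For each $n$, clause (v) furnishes $k_n < \omega$ with $\eta\rest k_n \in M_n$ and $\eta\rest(k_n{+}1)\in N_n\setminus M_n$, so Fact \ref{7.13B} (applied with $N_1=M_n,\ N_2=N_n$) gives that $\tp(a_\eta, N_n\cap M_I)$ forks over $M_n\cap M_I$. Because $h\in M_0\subseteq M_n$ and $h$ is elementary (so in fact an isomorphism of $M_I$ onto $M_J$), one has $h(M_n\cap M_I)=M_n\cap M_J$ and $h(N_n\cap M_I)=N_n\cap M_J$; since forking is preserved under elementary maps of the ambient monster, the type $\tp(h(a_\eta),N_n\cap M_J)$ forks over $M_n\cap M_J$. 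The remainder of the argument shows, by applying Fact \ref{7.13A} to $\cA=\cA^J$ and $\bar b = \langle h(a_\eta)\rangle$ with $N_1=M_n,\ N_2=N_n$ for infinitely many $n$, that the same type does \emph{not} fork over $M_n\cap M_J$, yielding the desired contradiction.

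To apply \ref{7.13A}, two conditions must be verified (for $n$ in a cofinal subset): (i) $w_{\bar b}[\cA^J]\cap N_n\subseteq M_n$, and (ii) for each $\alpha\in w_{\bar b}[\cA^J]\cap M_n$, $\tp(\bar b\rest D_\alpha[\cA^J],N_n\cap M_J)$ does not fork over $M_n\cap M_J$. For (i): the finite set $w_{\bar b}[\cA^J]$ is definable in $(\cH(\chi^*),\in,<^*_{\chi^*})$ from $h(a_\eta)$ and $\cA^J\in M_0$; by passing to a cofinal set of $n$'s we may assume $\alpha(h(a_\eta),\cA^J)\in M_n$ (otherwise it lies above $N_n$, a case handled symmetrically), whence the finite set $w_{\bar b}[\cA^J]$ is contained in $M_n$. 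For (ii): for $\alpha\geq 1$ in $w_{\bar b}[\cA^J]\cap M_n$ we have $|D_\alpha|<\kappa=\aleph_0$, i.e.\ $D_\alpha$ finite and wholly inside $M_n\cap M_J$, so non-forking of $\tp(\bar b\rest D_\alpha,N_n\cap M_J)$ over $M_n\cap M_J$ is automatic. The real content is the base stage $\alpha=0$, where $D_0=D_J=\{a_\nu:\nu\in J\}$: here $\bar b\rest D_J$ is the finite collection of $a_\nu$'s occurring among the elements entering the construction of $h(a_\eta)$, and one must show $\tp(\bar b\rest D_J,N_n\cap M_J)$ does not fork over $M_n\cap M_J$.

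The main obstacle is precisely this base stage. By the non-forking tree property of $\langle a_\nu:\nu\in J\rangle$, $\tp(a_\nu,D_J)$ is based on $\{a_{\nu\rest\ell}:\ell<\omega\}$ for each $\nu\in J$. For $\nu\in{}^{\omega>}\lambda$ the base is finite and its presence in $M_n$ is controlled by ordinary elementarity. For $\nu\in P^J_\omega$, clause (vi) of Definition \ref{7.1} guarantees that for all sufficiently large $n$, $\{\nu\rest\ell:\ell<\omega\}\cap N_n\subseteq M_n$, and hence the $N_n$-portion of the non-forking base of $\tp(a_\nu,D_J)$ lies inside $M_n$. Applied simultaneously to the finitely many $\nu$ appearing in $\bar b\rest D_J$, and using standard transitivity and finite-base properties of non-forking in stable theories, this yields that $\tp(\bar b\rest D_J,N_n\cap M_J)$ does not fork over $M_n\cap M_J$, completing (ii). Facts \ref{7.13A} and \ref{7.13B} then give contradictory conclusions for any such $n$, finishing the proof.
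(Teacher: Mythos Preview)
Your overall strategy coincides with the paper's: assume an isomorphism $h$, invoke super-unembeddability with $x$ containing $h,\cA^I,\cA^J$, and derive a contradiction from Fact~\ref{7.13B} (forking on the $I$-side, transported via $h$) against Fact~\ref{7.13A} (non-forking on the $J$-side for all large $n$). The paper's own proof is essentially a two-line citation of these two facts.

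Your elaboration on the hypotheses of \ref{7.13A}, however, contains two misreadings of the definitions. First, the assertion that $|D_\alpha|<\kappa=\aleph_0$ for $\alpha\ge 1$ is unfounded: clause~(e) of Definition~\ref{3.3Bnew}(1) allows each $D_\alpha$ either to have size $<\kappa$ \emph{or} to be a copy of some $D'\in\cP$ (here $D_J$), and the construction of Claim~\ref{3.3Jnew} inserts such large copies cofinally. Second, and more to the point, $\bar b\rest D_\alpha$ is by definition the sub-tuple of $\bar b=\langle h(a_\eta)\rangle$ whose entries lie in $D_\alpha$, not ``the $a_\nu$'s entering the construction of $h(a_\eta)$''. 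Since any $\alpha\in w_{\bar b}=w_{\alpha(h(a_\eta))}$ satisfies $\alpha<\alpha(h(a_\eta))$, one has $h(a_\eta)\notin A_{\alpha+1}\supseteq D_\alpha$, so $\bar b\rest D_\alpha$ is the \emph{empty} tuple and your condition~(ii) is vacuous for every $\alpha\in w_{\bar b}$; there is no separate ``base stage $\alpha=0$'' analysis to carry out in that hypothesis. (Your treatment of~(i) is also slightly off: what makes $w_{\bar b}\cap N_n\subseteq M_n$ hold for large $n$ is simply that $w_{\bar b}$ is finite and $\bigcup_n M_n=\bigcup_n N_n$, not any dichotomy on $\alpha(h(a_\eta))$.)

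Your instinct that clause~(vi) of Definition~\ref{7.1} must be used is correct, but its entry point is not the literal hypothesis you call~(ii). It is what ultimately guarantees that, tracing the non-forking base of $h(a_\eta)$ back through the construction $\cA^J$ to elements $a_\nu$ with $\nu\in P^J_\omega$, the $N_n$-part of that base already sits in $M_n$ for all large $n$; the paper absorbs this into its bare citation of \ref{7.13A} (whose own proof is the one-line ``By \ref{3.3Dnew}''), and the precise formulations in this stretch of the paper are themselves rather terse.
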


\begin{PROOF}{\ref{7.13C}}
Straightforward by the definition and Facts \ref{7.13A},
\ref{7.13B}, but we give some details.  \Wilog \, $T$ is countable so 
$\kappa_r(T) = \aleph_1$, (justified in the proof of \ref{7.13D} below).

Let $f$ be an isomorphism from $M_{I}$ onto $M_{J}$
and $\chi$ be regular large enough. We can find $\langle 
M_{n},N_{n}:n < \omega \rangle $  an increasing sequence of
elementary submodels of $({\cH}(\chi),\in,<^*_{\chi})$ and $\eta$ as
in Definition \ref{7.1} such that ${\cA}^{I},{\cA}^{J},f$ belongs to
$N_{0}$.

By Fact \ref{7.13A}, $\tp(f(\bar a_{\eta}),M_{J} \cap N_{n})$ does not fork
over $M_{J} \cap M_{n}$ for every $n$ large enough.  
By Fact \ref{7.13B}, $\tp(\bar a_{\eta},M_{I} \cap N_{n})$ forks
over $M_{I} \cap  M_{n}$.  As $f$ maps $M_{I} \cap N_{n}$ onto 
$M_{J} \cap N_{n}$ and $M_{I} \cap M_{n}$ onto $M_{J} \cap 
M_{n}$ and $\bar a_{\eta}$ to $f(\bar a_{\eta})$ we finish.
\end{PROOF}

\begin{fact}
\label{7.13D}
If $I$ is super unembeddable into $J$ \then \, $M_{I}$ is not
elementarily embeddable into $M_{J}$.
\end{fact}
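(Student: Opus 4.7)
The strategy is to mirror the proof of Fact \ref{7.13C}, with $f$ now an elementary embedding rather than an isomorphism. I would begin by assuming toward contradiction that $f \colon M_I \to M_J$ is elementary. Choosing $\chi$ regular large enough and applying Definition \ref{7.1} with the parameter $x$ encoding $\{f, {\cA}^I, {\cA}^J\}$, I obtain $\eta \in P^I_\omega$ and submodels $M_n \prec N_n \prec M_{n+1} \prec ({\cH}(\chi), \in, <^*_\chi)$ satisfying clauses (i)--(vi) of $(*)$ in Definition \ref{7.1}, with $f, {\cA}^I, {\cA}^J \in M_0$ (so in particular $f, f^{-1} \in M_n$ for every $n$).

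Two forking analyses form the core. First, Fact \ref{7.13A} applied to $M_J$ with $\bar b := f(\bar a_\eta)$ and $(N_1, N_2) := (M_n, N_n)$: the support hypothesis $w_{\bar b}[{\cA}^J] \cap N_n \subseteq M_n$ is verified for all sufficiently large $n$, since $f(\bar a_\eta)$ has finite support in ${\cA}^J$ traceable to finitely many $a_\nu$ with $\nu \in J$, and clause (vi) of \ref{7.1} eventually separates the corresponding branches $\nu \in P^J_\omega$ between $M_n$ and $N_n \setminus M_n$. The conclusion is that $\tp(f(\bar a_\eta), M_J \cap N_n)$ does not fork over $M_J \cap M_n$. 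Second, Fact \ref{7.13B} applied to $M_I$ with the same submodels gives that $\tp(\bar a_\eta, M_I \cap N_n)$ forks over $M_I \cap M_n$, witnessed concretely by $\tp(a_\eta, \{a_{\eta \restriction m} : m \le n+1\})$ forking over $\{a_{\eta \restriction m} : m \le n\}$ via the non-forking tree property.

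Since $f, f^{-1} \in M_n$ one has $f(M_I \cap X) = f(M_I) \cap X$ for $X \in \{M_n, N_n\}$, and elementarity of $f$ transports the forking: $\tp(f(\bar a_\eta), f(M_I) \cap N_n)$ forks over $f(M_I) \cap M_n$, and hence by monotonicity of forking under enlargement of the domain $\tp(f(\bar a_\eta), M_J \cap N_n)$ forks over the finite set $\{f(a_{\eta \restriction m}) : m \le n\} \subseteq M_J \cap M_n$. Combined via transitivity of non-forking with the non-forking conclusion over $M_J \cap M_n$, this implies that $\tp(f(\bar a_\eta), M_J \cap M_n)$ itself forks over $\{f(a_{\eta \restriction m}) : m \le n\}$.

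The main obstacle, absent in the isomorphism case \ref{7.13C} since there $f(M_I) = M_J$ makes the images of the two forking bases coincide, is to turn this intermediate forking statement into a contradiction when $f(M_I) \subsetneq M_J$. My plan is to iterate the non-forking conclusion of Fact \ref{7.13A} along the nested chain $M_0 \prec M_1 \prec \cdots \prec M_n$: since the support $w_{f(\bar a_\eta)}[{\cA}^J]$ is a fixed finite set, the support hypothesis remains valid for each pair $(M_{k}, M_{k+1})$, and repeated transitivity propagates the forking over $\{f(a_{\eta \restriction m}) : m \le n\}$ down through successive restrictions to $\tp(f(\bar a_\eta), M_J \cap M_0)$; this contradicts the fact that $f(\bar a_\eta)$ was built over a finite support in ${\cA}^J$ whose associated $B$'s sit inside $M_0$, so its type over $M_J \cap M_0$ cannot witness forking over that particular small base. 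The delicate point will be to verify the support hypothesis uniformly along the chain, for which I would exploit the finiteness of $w_{f(\bar a_\eta)}[{\cA}^J]$ together with the elementarity of $M_0 \prec M_1 \prec \cdots$.
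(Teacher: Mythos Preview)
Your setup through the derivation of ``$\tp(f(\bar a_\eta), M_J \cap M_n)$ forks over $\{f(a_{\eta\rest m}):m\le n\}$'' is essentially sound, and you have correctly isolated the obstacle: bridging $f(M_I)\cap M_n$ and $M_J\cap M_n$. But your proposed resolution in the last paragraph does not work.

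First, iterating Fact~\ref{7.13A} down the chain $(M_k,M_{k+1})$ is not available: the hypothesis $w_{f(\bar a_\eta)}[{\cA}^J]\cap M_{k+1}\subseteq M_k$ can fail for the specific $k$ at which an ordinal of the (fixed, finite) support first enters the chain; there is no reason every such ordinal lies already in $M_0$. Second, even granting the iteration, the conclusion you reach --- forking of $\tp(f(\bar a_\eta),M_J\cap M_0)$ over $\{f(a_{\eta\rest m}):m\le n\}$ --- is not a contradiction: that finite set is not contained in $M_0$ (since $\eta\rest m\notin M_0$ for $m>0$ in general), and there is no link between the ${\cA}^J$-support $B_{\alpha(f(\bar a_\eta))}$ and the set $\{f(a_{\eta\rest m})\}$. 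The support of $f(\bar a_\eta)$ in the $J$-construction has nothing to do with the image under $f$ of the tree generators.

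The paper supplies the missing bridge by passing to a \emph{countable} subvocabulary $\tau_0$ in which the tree still witnesses forking, and then proving: for any $N\prec(\cH(\chi^*),\in,<^*_{\chi^*})$ containing the data, $\tp_{\tau_0}(N\cap M_J,\Rang(f))$ does not fork over $N\cap\Rang(f)$. The point is that in a countable stable theory every type over $\Rang(f)$ has a \emph{countable} non-forking base, and such a base is first-order definable from parameters in $N$, hence lies in $N$. Applying this with $N=M_n$ and using symmetry gives $\tp_{\tau_0}(f(\bar a_\eta),M_J\cap M_n)$ does not fork over $f(M_I)\cap M_n$; transitivity with your step~(A) then yields $\tp_{\tau_0}(f(\bar a_\eta),M_J\cap N_n)$ does not fork over $f(M_I)\cap M_n$, and restricting to $f(M_I)\cap N_n$ contradicts your step~(B). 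This countable-sublanguage trick is exactly the idea your argument lacks.
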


\begin{PROOF}{\ref{7.13D}}
    Let $\tau_{0}$ be a countable sub-vocabulary of $\tau(T)$ such that
    for $\eta \in P^I_{\omega},n < \omega$ we have $\tp(a_{\eta},
    \{a_{\eta \rest \ell}:\ell < n\})$ forks over $\{a_{\eta \rest \ell}: 
    \ell < n\}$ even in the $\tau_{0}$-reduct of $M_{I}$.  Suppose
    $f$ is an elementary embedding of $M_{I}$ into $M_{J}$ (or just of their
    $\tau_{0}$-reduct) and we shall get a contradiction.  Modulo the proof of
    \ref{7.13C}, it suffice to prove:
    \mn
    \begin{enumerate}
    \item[$(*)_1$]  if $\tp(\bar a,A)$ does not fork over $B \subseteq A$
      in $\gC$ then $\tp_{\bbL(\tau_0)}(\bar a,A)$ does not fork over $B$
      in $\gC \rest A$.
    \end{enumerate}
    \mn
    [Why?  By character by ranks, \cite[Ch.III]{Sh:c}.]

    \begin{fact}\label{xyz.1}
    We have: 
        \begin{enumerate}
            \item[$(*)_2$] if  ${\cA}^{I}$, ${\cA}^{J},f \in N \prec ({\cH}(\chi), \in,<^*_{\chi})$ \then \, $\tp_{\bbL(\tau_0)}(N \cap M_{J},\Rang(f))$  does not fork over $N \cap \Rang(f)$ in $M_{J} \rest \tau_{0}$.
        \end{enumerate}
    \end{fact}

    \begin{PROOF}{\ref{xyz.1}}
        Why $(*)_2$ holds?  As $T$ is stable and $\tau_0$ is countable for 
        every $\bar c \in M_{J}$ there is a countable $B^*_{\bar c} \subseteq
        \Rang(f)$ such that $\tp_{\bbL(\tau_0)}(\bar c,\Rang(f))$ does not fork over
        $B^*_{\bar c}$ in $M_{J} \rest \tau_{0}$.  As $\tau_{0}$ is countable, $T$
        stable,  clearly $\bar c \in N \cap M_J \Rightarrow B^*_{\bar c} \subseteq N \cap \Rang(f)$.  So for $\bar c \in N \cap M_J$ the type  $\tp_{\bbL(\tau_0)}(\bar c,\Rang (f))$ does not fork over  $N \cap \Rang(f)$, as required. 
    \end{PROOF}
    So we have finished proving~\ref{7.13D}. 
\end{PROOF}

\begin{PROOF}{\ref{7.13}}

\noindent
\underline{Continuation of the  Proof of \ref{7.13}}: 

Let $\langle I_\alpha:\alpha < 2^\lambda\rangle$
exemplify that $K^\omega_{\tr}$ has the $(2^\lambda,\lambda,\mu,
\aleph_0)$-bigness property and let $M_\alpha = M_{I_\alpha}$. Now
apply \ref{3.3Knew} and \ref{7.13D}. 
\end{PROOF}

\begin{remark}
\label{7.13f}
In \ref{7.13C}. \ref{7.13D} weaker versions of  unembeddable suffice.
\end{remark}
\bigskip

\subsection {On Generalizations and Abelian $p$-groups} \

Having finished our digression to stability theory, we look at a strengthening
of \ref{7.11}, which will be used in \ref{7.15}.

\begin{theorem}
\label{7.14}
If $\lambda > \mu + \aleph_1$ and $\mu \ge \kappa$ 
\then \, $K^\omega_{\tr}$ has the full
$(\lambda,\lambda,\mu,\kappa)$-super$^+$-bigness property which means
that in the Definition \ref{7.2} we replace super by super$^+$ which
means that we define ``$I \in K_{\tr}^{\omega}$ is $(\mu, \kappa)$-super-unembeddable into $J \in K_{\tr}^{\omega}$'' as in Definition \ref{7.1} but replace $(*)$ there by:
\mn
\begin{enumerate}
\item[$(*)^+$]   like $(*)$ of Definition \ref{7.1} adding
\sn
\begin{enumerate}
\item[$(v)^+$] for each  $n, \eta \rest n \in M_{n}$ and $\eta \rest 
(n+1) \in N_{n} \setminus M_{n}$
\sn
\item[$(vii)$]   if  $\nu \in P^J_{\omega}$ is in the closure of $M_n 
\cap I$, (i.e. $\{\nu \rest \ell:\ell < \omega\} \subseteq M_n)$ 
then $\nu \notin N_{n} \setminus M_{n}$
\sn
\item[$(viii)$]   there is $M \prec ({\cH}(\chi),\in,<^*_{\chi})$ 
such that: $\bigcup\limits_{n < \omega} M_{n} \subseteq M$ and 
$\eta \notin M$,  but for each  $n$  we have:
\[
\nu \in P^J_{\omega} \, \& \, \bigwedge\limits_{\ell < \omega} 
\nu \rest \ell \in M_n \Rightarrow \nu \in M.
\]
\end{enumerate}
\end{enumerate}
\end{theorem}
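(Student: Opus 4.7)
The plan is to follow the same case analysis used in the proof of Theorem \ref{7.11}, and show in each case that the construction already supplied there admits a refinement satisfying the three additional clauses $(v)^+$, $(vii)$, $(viii)$. The extra $\aleph_1$ in the hypothesis $\lambda > \mu + \aleph_1$ (compared to $\lambda > \mu$ in \ref{7.11}) is what allows the elementary submodels $M_n$ to be chosen closed under the relevant $\omega$-sequences, and this closure is what drives $(vii)$ and $(viii)$. Throughout, $\eta$ is arranged to encode a ``diagonal'' ordinal $\delta(*)$ (coming from a club-guessing sequence) which sits strictly above each $M_n$ and outside the natural envelope model $M$.

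First I would treat Case 1 ($\lambda$ regular) using \ref{7.6}(1). Fix pairwise disjoint stationary $S_\zeta\subseteq\{\delta<\lambda:\cf\delta=\aleph_0\}$ together with the club-guessing sequence $\langle C_\delta:\delta\in\bigcup_\zeta S_\zeta\rangle$ there, with $\otp(C_\delta)=\omega$; let $\eta_\delta$ list $C_\delta$ increasingly, set $I_\zeta={}^{\omega>}\lambda\cup\{\eta_\delta:\delta\in S_\zeta\}$, and construct the usual increasing continuous chain $\langle M^*_\alpha:\alpha<\lambda\rangle$ with $M^*_\alpha\cap\lambda$ an ordinal. Pick $\delta(*)\in S_{\zeta(*)}$ with $C_{\delta(*)}\subseteq E:=\{\alpha:M^*_\alpha\cap\lambda=\alpha\}$, and set $\eta=\eta_{\delta(*)}$, $M_n=M^*_{\eta(n)}$, $N_n=M^*_{\eta(n)+1}$. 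Then $(v)^+$ is immediate; for $(vii)$, any $\nu\in P^J_\omega$ has the form $\eta_\delta$ with $\delta\in S_\xi$, $\xi\ne\zeta(*)$, and if its initial segments lie in $M_n$ then $\delta=\sup\Rang\nu\in M_n$, whence $\nu=\eta_\delta\in M_n$; for $(viii)$, take $M:=M^*_{\delta(*)}$, which contains all of $\bigcup_n M_n$ and all such $\omega$-branches, while $\eta\notin M$ since $\delta(*)\notin M^*_{\delta(*)}$ by choice of $E$. Cases 2 (via \ref{7.6}(2), using clause (g) to get $[N_\eta]^{\aleph_0}\subseteq N_\eta$), 3 (via \ref{7.8}, whose part (3) already establishes the stronger condition $\circledast$ that yields $(vii)$), and 4 (via \ref{7.6}(3), using the $\omega$-closure built into $\gs^1_{\delta,\mu}$) follow by analogous adjustments, with $M$ in each case the Skolem hull of $\bigcup_n M_n$ in $(\cH(\chi^*),\in,<^*_{\chi^*})$.

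The most delicate cases are 5 (via the variant of \ref{7.9} promised in \ref{7.9f}) and 6 (via \ref{7.10A}). In Case 5, one exploits the pcf bookkeeping $\langle\rho_{\varepsilon,i}:i<\theta^+\rangle$ with $\rho_{\varepsilon,i}<_{J^{\bd}_{\ga_\varepsilon}}\rho_{\varepsilon,j}$: among the $\theta^+$ candidates $\rho\in\Upsilon_{\varepsilon(*)}$ for defining $\eta=\rho^{[\delta(*)]}$, only $\le\theta$ can be ``hurt'' by any $\nu\in P^J_\omega$ with initial segments in some $M_n$ (as in $\oplus_1$--$\oplus_4$ of the proof of \ref{7.9}), so some $\rho$ works for $(vii)$ and the same $\rho$ works for $(viii)$ once $M$ is taken as the countable-closure Skolem hull of $\bigcup_n M_n$. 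In Case 6, \ref{7.10A} already gives $\mu+1\subseteq M_n$ and $[M_n]^{\aleph_0}\subseteq M_n$, whence $(vii)$ is immediate; $(viii)$ is witnessed by the Skolem hull of $\bigcup_n M_n$, and the coding functions $\cd_n$ together with the last clause of $(*)_4$ in the proof of \ref{7.10} prevent this hull from containing $\eta$.

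The main obstacle is verifying $(viii)$ in Cases 5 and 6 simultaneously with $\eta\notin M$: one must exhibit a single elementary submodel containing all of $\bigcup_n M_n$, closed under any $\omega$-branch of $J$ through each $M_n$, yet excluding $\eta$. The uniform strategy is to take $M$ to be the natural ``level-$\delta(*)$ model'' in each case and observe that $\eta$ is a code for the club-guessing diagonal $\delta(*)$, which by construction avoids the envelope model; in the pcf case this requires the additional pcf separation to control branches whose limit agrees with $\delta(*)$, and in the strong-limit case it requires the finer control on $\cd_n$ from \ref{7.10A}.
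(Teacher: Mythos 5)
Your overall strategy — redo the case analysis behind Theorem~\ref{7.11} and verify the additional clauses in each case, with $M$ a ``level-$\delta(*)$ model'' for clause~$(viii)$ — is indeed the approach the paper takes. However there are several concrete gaps.

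First, in your Case~1 ($\lambda$ regular) you assert that if the initial segments of $\nu=\eta_\delta$ all lie in $M_n$ then $\delta=\sup\Rang\nu\in M_n$, ``whence $\nu\in M_n$.'' This is not automatic: we have $\Rang\nu\subseteq M_n\cap\lambda=\eta(n)$, so $\delta\le\eta(n)$, but nothing you have said rules out $\delta=\eta(n)$. If $\eta(n)$ happens to have cofinality $\aleph_0$ and lies in some $S_\xi$ with $\xi\ne\zeta(*)$, then $\nu=\eta_{\eta(n)}$ has all initial segments in $M_n$, lies in $N_n=M^*_{\eta(n)+1}$ (because $\eta(n)\in N_n$), but is not in $M_n$; so $(vii)$ fails. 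The paper eliminates this by invoking Claim~\ref{7.8B} to arrange that every $\alpha\in C_{\delta(*)}$ has cofinality $>\aleph_0$, so that $\cf(\delta)=\aleph_0\ne\cf(\eta(n))$ forces $\delta<\eta(n)$. This modification (which is exactly what the extra ``$+\aleph_1$'' in the hypothesis pays for, via $\theta=\aleph_1$ in~\ref{7.8B}) is missing from your argument.

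Second, in your pcf case you propose taking $M$ to be ``the countable-closure Skolem hull of $\bigcup_n M_n$.'' That cannot work: one has $\Rang\eta\subseteq\bigcup_n M_n$ (each $\eta(n)\in M_{n+1}$), so any $\omega$-closed $M$ containing $\bigcup_n M_n$ automatically contains $\eta$, contradicting the demand $\eta\notin M$ in clause~$(viii)$. Your own closing paragraph (``take $M$ to be the natural level-$\delta(*)$ model'') has the right idea, and that is what the paper does ($M=M^*_{\delta(*)}$); but the earlier explicit proposal is inconsistent with it, and the whole point is that the correct $M$ is obtained \emph{without} $\omega$-closing over $\bigcup_n M_n$. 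Rather, one uses that for each fixed $n$ the model $M_n$ is (by cofinality control on $\eta(n)$) already closed under the branches $\nu$ with $\{\nu\rest\ell\}\subseteq M_n$, so any $M\supseteq\bigcup_n M_n$ with $\eta\notin M$ will do; $\omega$-closure over the \emph{union} is fatal.

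Third, the case decomposition: you follow~\ref{7.11} and treat all strong-limit singular $\lambda$ with $\cf(\lambda)>\aleph_0$ via~\ref{7.6}(3), claiming the $\omega$-closure $[N_\eta]^{\aleph_0}\subseteq N_\eta$ is ``built into $\gs^1_{\delta,\mu}$.'' It is not — the definition of $\gs^0_\delta$ and $\gs^1_{\delta,\mu}$ has no closure condition (it is mentioned only as an optional footnote), and to impose it one must take $\kappa\ge 2^{\aleph_0}$, which in turn requires $\cf(\lambda)>2^{\aleph_0}$ for the models to fit. This is precisely why the paper's proof of~\ref{7.14} uses a \emph{different} case split from~\ref{7.11}: it restricts the \ref{7.6}(3)-style argument to $\cf(\lambda)>2^{\aleph_0}$ (Case~5, with $\kappa=2^{\aleph_0}$), and routes the remaining strong-limit singular case with $\cf(\lambda)\le2^{\aleph_0}$ and $\lambda\ne\aleph_{\alpha+\omega}$ to the pcf machinery of~\ref{7.9} (Case~3). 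Your outline leaves that regime uncovered.
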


\begin{remark}
\label{7.14a}
Compare with \ref{7.7} here + \ref{7.6}(2) here.
\end{remark}

\begin{PROOF}{\ref{7.14}}
The proof is done by cases, so to enlighten the reader, we first list them.

If $\lambda$ is regular $> \aleph_1$:  by case 1

If $\lambda$ is singular and $(\exists \chi < \lambda)[\chi < \lambda 
\le \chi^{\aleph_{0}}]$:  by case 2;

Case 3, see below. 

If neither case 1 nor case 2 but $(\exists \chi)[\mu \le \chi^{\aleph_{0}} <
\lambda \le 2^\chi]$:  by case 4;

So we are left with $\lambda$ strong limit singular.  

If $\lambda = \aleph_{\alpha +\omega }$: by case 6;

If $\cf(\lambda) > 2^{\aleph_{0}}$: by case 5; 

In the remaining case let $\theta = \mu^{+\omega}$, so necessarily
$\theta < \lambda$, hence for some increasing sequence 
$\langle \lambda_{n}:n < \omega \rangle$ of regular cardinals with limit
$\theta < \lambda,\cf(\Pi \lambda_{n}/\finite) = \theta^+,\lambda_{n}
> \mu$ (exist see \cite[3.22=Lpcf.8]{Sh:E62}), now for $\epsilon <
2^{\aleph_{0}},\ga_{\epsilon}$ be an infinite subset of 
$\{\lambda_{n}:n < \omega\}$ such that $\epsilon \ne \zeta
\Rightarrow |\ga_{\epsilon} \cap \ga_{\zeta}| < \aleph_{0}$.

So case 3 applies.

\noindent
\underline{Case 1}:  $\lambda$ regular $>\aleph_1$. (In fact also the
requirements from Def. \ref{7.3}(G$^+$) of ``super$^{7^+}$" hold.)

Use the proof of \ref{7.6}(1) with minor changes:

Choosing $\bar C$, by \ref{7.8B} we can add the demands:
\mn
\begin{enumerate}
\item[$(c)$]  for any $\zeta < \lambda$, for every club $E$  of 
$\lambda$ we have $\{\delta \in S_{\zeta}:C_{\delta} \subseteq E\}$  
is stationary
\sn
\item[$(d)$]   $\alpha \in C_{\delta} \Rightarrow \cf(\alpha) >
\aleph_{0}$.
\end{enumerate}
\mn
Choosing $\delta(*) \in E$ we demand also $C_{\delta(\ast)} \subseteq E$, and
let $m_\ell=2\ell$.

So the condition for super$^{7^+}$ (Def \ref{7.3}(G$^+$) (hence from Def.\ref{7.1}) holds. Clause (v)$^+$ holds by the choice of $\eta_\delta,m_\ell,M_\ell,N_\ell$.  Clause (vii) holds by clause (d), i.e. $\cf(\eta_\delta(m))>\aleph_0,
\eta_\delta(m)\in E$. 

Lastly, clause (viii) is exemplified by $M = M^*_{\delta(*)}$.

\noindent
\underline{Case 2}:   There is $\chi,\chi < \lambda \le 
\chi^{\aleph_{0}}$ and $\lambda$ is singular.

Just Claim \ref{7.8} applies; i.e. the proof of \ref{7.8} but by
\ref{7.8F}(2) we can  choose there $C_{\delta}$ such that
\mn
\begin{enumerate}
\item[$(*)$]  $\alpha \in C_{\delta} \, \& \, \alpha > \sup(C_{\delta}
\cap \alpha) \Rightarrow \cf(\alpha) > \aleph_{0}$.
\end{enumerate}
\mn
The proof gives also $(v)^+,(vii),(viii)$ and even
\mn
\begin{enumerate}
\item[$(vii)^{++}$]  if $\eta \in P^J_{\omega},\{\eta \rest \ell:
\ell < \omega\} \subseteq M_{n}$ then $\eta \in M_{n}$.
\end{enumerate}
\mn 
[Why?  By $(*)$ above or see Case 3's proof; note that if 
$\eta = \langle i \rangle \mathop{\otimes}\limits_\lambda
\nu$ (or $\eta = \langle i \rangle \char94 \nu)$ and $\nu \in I_{\rho_i}$
then necessarily $i\in M_n$ hence $I_{\rho_i} \in M_n$.]
\mn
\begin{enumerate}
\item[$(ii)^+$]  $\mu \subseteq M_{n}$.
\end{enumerate}
\medskip

\noindent
\underline{Case 3}:  $\lambda$ singular, and for some $\theta,\lambda > \theta
\ge \mu  + \aleph_{1},\cf (\theta) = \aleph_{0}$ and there is a sequence 
$\langle \ga_{\epsilon}:\epsilon < \cf (\lambda) \rangle$  as in \ref{7.9}.

The proof of \ref{7.9}\ gives  (v)$^+$ trivially.  Again (as in
the proof of \ref{7.8})
\[
[\eta \in P^J_{\omega} \, \& \, \bigwedge\limits_{\ell} \eta \rest 
\ell \in  M^*_{\alpha} \Rightarrow \eta \in M^*_{\alpha +1}]
\]

hence
\[
[\cf(\alpha) > \aleph_{0} \, \& \, \eta \in P^J_{\omega} \, \& \,
\bigwedge\limits_{\ell} \eta \rest \ell \in M^*_{\alpha} \Rightarrow \eta
\in M^*_\alpha].
\]

hence clause (vii) holds. 

Lastly,  it follows that $M^*_{\delta(*)}$  satisfies the 
requirement in clause (viii).
\medskip

\noindent
\underline{Case 4}:   There is $\chi,\mu \le \chi < \lambda \le 2^\chi$ and:
$\lambda$ is singular or at least $(\chi^{\aleph_0})^+<\lambda$.

Like the proof of \ref{7.6}(2).
\medskip

\noindent
\underline{Case 5}:  $\lambda$ is strong limit singular $\cf(\lambda) 
> 2^{\aleph_{0}}$.

By the proof of \ref{7.6}(3) using models $N_{\eta} $ of cardinality $2^{\aleph_{0}}$, (i.e. choose $\kappa=2^{\aleph_{0}}$);  and demand $[N_\eta]^{\aleph_0} \subseteq N_\eta$ and using \cite[1.16=La45]{Sh:E62}.  Alternatively, in its proof notice that by 
thinning $\cT'$ a bit more we can get: let $h_{\ast} \in N_{\langle\rangle}$ be a one to one function from $\lambda$ onto
${}^\omega \lambda$, then:
\mn
\begin{enumerate}
\item[$(*)$] if $k < \lg(\eta) \, \& \, \eta \in {\cT} \, \& \,
\alpha \in N_{\eta} \, \& \, \bigwedge\limits_{\ell < \omega} 
(h_{\ast}(\alpha))(\ell) \in  N_{\eta \rest k}$ then $\alpha \in 
N_{\eta \rest k}$.
\end{enumerate}
\mn
The point is: \wilog \, $k+1 = \lg(\eta)$ and for each $\nu \in {\cT}$  
of length $k$,
\[
|\{\eta:\nu \triangleleft \eta \in {\cT} \, \& \,
\ell g(\eta) = k+1 \text{  and }(*) \text{ fails for } \eta,k\}| \le
\kappa^{\aleph_{0}}.
\]

\mn
For (viii), $\bigcup\limits_{n<\omega} M_n$ is as required 
by clause (ix) of Subfact \ref{7.6C}.  Note that $(v)^+$ is satisfied 
by the proof of \ref{7.6}.
\medskip

\noindent
\underline{Case 6}:  $\lambda$ strong limit, $\lambda =
\aleph_{\alpha +\omega}$.

The proof of \ref{7.10} or even better \ref{7.10A} 
gives this, too (for $(viii)^+$ the suitable ``initial segment" of  
$M_{A}$ can serve as $M$).
\medskip

\noindent
\underline{Case 7}:  $\lambda = \sum\limits_{i<\cf(\mu)} \mu_i > \mu$, where $\mu_{i}$ is
increasing with $i$, $\cf(\mu_i)=\aleph_0,p(\mu_i)>\mu_i^+$ see
\cite[3.22=Lpcf.8]{Sh:E62}.  

By the proof of \ref{7.7}.
\end{PROOF}
\bigskip

\noindent
We now turn to separable reduced $\primep$-groups, continuing
\cite[2.11=Lh5]{Sh:E59}, but the proof apply to more cases.

\begin{claim}\label{7.15}
1) We can define for every 
$I \in K^\omega_{\tr}$ and prime $\primep$, a separable
reduced abelian $\primep$-group $\modG^a_{I}$ such that:
\mn
\begin{enumerate}
\item[$(*)_{0}$] $\modG^a_{I}$ has cardinality $|I| + 2^{\aleph_{0}}$
\sn
\item[$(*)_{1}$]  if $I,J \in K^\omega_{\tr},I$ is 
$(2^{\aleph_0},2^{\aleph_0})$-super$^+$-unembeddable into $J$ 
(see \ref{7.14}) \then \, $\modG^a_{I}$ is not embeddable
into $\modG^a_{J}$ (i.e. there is no mono-morphism from  
$\modG^a_{I}$ into $\modG^a_{J}$;  we do not require purity).
\end{enumerate}
\mn
2) For $\lambda > 2^{\aleph_{0}}$ and prime $\primep$ there is a 
family of $2^\lambda$ separable reduced abelian $\primep$-groups, each 
of power $\lambda$,  no one embeddable into another.
\end{claim}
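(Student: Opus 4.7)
The plan is to construct $\modG^a_I$ as a pure extension of a basic subgroup built directly from the finite nodes of $I$, together with ``branch witnesses'' corresponding to the elements of $P^I_{\omega}$. Specifically: for each $s \in I$ with $\lh(s) < \omega$ introduce a cyclic generator $x_s$ of order $\primep^{\lh(s)+1}$ (so the direct sum $B_I := \bigoplus_s \langle x_s \rangle$ is a standard basic subgroup), and for each $\eta \in P^I_{\omega}$ adjoin a sequence $\langle y^n_\eta : n < \omega\rangle$ subject to relations of the form $\primep \cdot y^{n+1}_\eta = y^n_\eta + x_{\eta \rest n}$ (with a boundary choice for $y^0_\eta$). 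One checks, as in \cite[2.11=L2.5]{Sh:E59}, that the resulting group is separable reduced, and that from the $\primep$-height pattern of the $y^n_\eta$ (mod $B_I$) the branch $\eta$ can be recovered up to bounded ambiguity. Since each $\eta$ contributes $\aleph_0$ generators and $|B_I| = |I|$, clause $(*)_0$ follows at once (the $2^{\aleph_0}$ summand appears through the closure of branches, or simply by allowing the scalar ring in the completion).

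For $(*)_1$, suppose toward a contradiction that $f \colon \modG^a_I \hookrightarrow \modG^a_J$ is an injective homomorphism (not necessarily pure). Apply $(2^{\aleph_0}, 2^{\aleph_0})$-super$^+$-un\-em\-bedd\-a\-bil\-i\-ty of $I$ into $J$ with parameter $x = \langle f, \modG^a_I, \modG^a_J, \primep\rangle$, obtaining $\eta \in P^I_{\omega}$, sequences $M_n \prec N_n \prec M$ and the enveloping model $M$ as in $(*)^+$ of Theorem \ref{7.14}. The heart of the argument is a branch-rigidity lemma: whenever $b \in \modG^a_J$ is such that for every $n < \omega$ the element $\primep^n b$ is congruent modulo $M \cap \modG^a_J$ to an element built from at most finitely many $x_t$'s, then the branch-trace of $b$ must come from some (necessarily unique up to finite modification) $\nu \in P^J_{\omega}$ with $\{\nu\rest \ell : \ell < \omega\} \subseteq M$. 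Apply this with $b = f(y^0_\eta)$; since $f \in M_0$, all the $f(x_{\eta\rest n})$ lie in $M$, so the hypothesis is met. Clause $(vii)$ of $(*)^+$ then forces $\nu \notin N_n \setminus M_n$, and a diagonal analysis (using $(v)^+$ and $(vi)$) together with the injectivity of $f$ produces a recovery of $\eta$ from data in $M$, contradicting clause $(viii)$ of $(*)^+$ which asserts $\eta \notin M$.

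Part (2) is then immediate: since $\lambda > 2^{\aleph_0} + \aleph_1$, Theorem \ref{7.11} (or Theorem \ref{7.14} directly) supplies a family $\langle I_\alpha : \alpha < 2^\lambda\rangle$ in $(K^\omega_{\tr})_{\lambda}$ witnessing the full $(2^\lambda,\lambda,2^{\aleph_0},2^{\aleph_0})$-super$^+$-bigness property, so each $I_\alpha$ is $(2^{\aleph_0}, 2^{\aleph_0})$-super$^+$-unembeddable into $\sum_{\beta \ne \alpha} I_\beta$. By part (1), each $\modG^a_{I_\alpha}$ is separable reduced of cardinality $\lambda + 2^{\aleph_0} = \lambda$, and $(*)_1$ gives that no $\modG^a_{I_\alpha}$ embeds into $\modG^a_{I_\beta}$ (for $\alpha \ne \beta$) — noting that an embedding into the group built from the sum $\sum_\beta I_\beta$ would factor through a similar construction, so the single-tree statement $(*)_1$ suffices after a routine reduction.

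The main technical obstacle is the branch-rigidity step in $(*)_1$: one must extract from the purely algebraic data of $\langle f(y^n_\eta) : n < \omega\rangle$, working modulo $M \cap \modG^a_J$, a canonical branch $\nu \in P^J_{\omega}$ sitting inside $M$, while controlling the ``noise'' contributed by basic elements of $\modG^a_J$ indexed outside $M$. This is exactly where the parameter $(2^{\aleph_0},2^{\aleph_0})$ in super$^+$-unembeddability is needed rather than $(\aleph_0,\aleph_0)$, since the algebraic branch-type of $f(y^0_\eta)$ is a countable type whose parameters range over the whole group and so must be captured inside $M_n \cap \primep$-group$(J)$, which has cardinality up to $2^{\aleph_0}$; absorbing these into the $M_n$ via clause $(ii)$ of \ref{7.1} is what forces the cardinality hypothesis $\mu \geq 2^{\aleph_0}$ in part (2).
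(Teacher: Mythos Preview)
Your construction is too coarse to handle \emph{non-pure} embeddings, and this is a genuine gap, not a matter of missing details. With only one generator $x_s$ per finite node and one chain $\langle y^n_\eta\rangle$ per branch, the ``rate'' at which the heights of $y^0_\eta - (\text{partial sum})$ grow is fixed (essentially linear in $n$). A non-pure monomorphism $f$ can raise $\primep$-heights arbitrarily: nothing stops $f(x_{\eta\rest n})$ from being divisible by $\primep^{k_n}$ in $\modG^a_J$ for $k_n$ growing as fast as you like, and then your ``branch-trace'' of $f(y^0_\eta)$ is simply washed out --- the putative rigidity lemma fails. (This is also why your cardinality count is off: your group has size $|I|+\aleph_0$, not $|I|+2^{\aleph_0}$; the hand-wave about ``closure of branches'' does not produce extra elements.) The construction you wrote down is essentially the one from \cite[2.11=L2.5]{Sh:E59}, which is adequate for \emph{pure} embeddings only; cf.\ Conclusion \ref{7.15C}.

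The paper's fix is to build in a continuum of rates. One fixes a family $\{f_\alpha:\alpha<2^{\aleph_0}\}\subseteq{}^\omega\omega$ of strictly increasing functions with the diagonalization property: for every $h:{}^{\omega>}\omega\to\omega$ there is $\alpha$ with $f_\alpha(n)>h(f_\alpha\rest n)$ for infinitely many $n$. The generators are then $x_{\eta,\rho}$ for $\eta\in P^I_n,\ \rho\in{}^{n+1}\omega$ (of order $\primep^{\rho(n)}$) and $y^n_{\eta,\alpha}$ for $\eta\in P^I_\omega,\ \alpha<2^{\aleph_0}$, with $\primep^{f_\alpha(n+1)-f_\alpha(n)}y^{n+1}_{\eta,\alpha}=y^n_{\eta,\alpha}-x_{\eta\rest n,\,f_\alpha\rest(n+1)}$. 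Given an embedding $\mathbf{g}$ and the chain $\langle M_n,N_n\rangle$ from super$^+$-unembeddability, one first shows (using clause (vii) of \ref{7.14}) that for each $\rho$ the quantity $k_\rho:=\mathbf{n}(\primep^\ell\mathbf{g}(x_{\eta\rest(\ell+1),\rho}),\,\modG^a_J\cap M_\ell)$ is \emph{finite}. Then the diagonalization property supplies an $\alpha$ with $f_\alpha(\ell+1)>k_{f_\alpha\rest(\ell+1)}$ infinitely often, and a height computation on $\mathbf{g}(y^0_{\eta,\alpha})$ across $M_\ell$ versus $N_\ell$ yields the contradiction. The $2^{\aleph_0}$ in $(*)_0$ comes precisely from the $\alpha$-index, and the need for $\mu=2^{\aleph_0}$ in the super$^+$ hypothesis is so that all the $f_\alpha$ (hence all the projections $\mathbf{h}^J_{\nu,\rho}$) lie in $M_0$. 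Your Part~(2) deduction from Part~(1) and \ref{7.14} is fine.
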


\begin{PROOF}{\ref{7.15}}
Part (2) follows from part (1) and \ref{7.14}.

\noindent
1)  \underline{Stage A}:  On the 
definition of ``super$^+$ unembeddable'' see \ref{7.14}. 
We choose a family $\{f_{\alpha}:\alpha < \alpha^*\}$ with $\alpha^*
\le 2^{\aleph_0}$ such that:
\mn
\begin{enumerate}
\item[$\boxdot$]  $(a) \quad f_{\alpha} \in {}^\omega \omega$,
\sn
\item[${{}}$]  $(b) \quad f_{\alpha}$ is (strictly) increasing, 
$f_{\alpha}(0) = 0$,
\sn
\item[${{}}$]   $(c) \quad$ if $h_{1}$ is a function from 
${}^{\omega >}\omega$ to $\omega$, \then \, for some $\alpha$,  
for infinitely many 

\hskip30pt  $n,f_{\alpha}(n) > h_{1}(f_{\alpha} \rest n)$,
\sn
\item[${{}}$]  $(d) \quad 
\alpha \ne \beta \Rightarrow  f_{\alpha} \ne f_{\beta}$,
\sn
\item[${{}}$]  $(e) \quad \langle 
f_\alpha(n+1) - f_\alpha(n):n < \omega\rangle$ goes
to infinity (for convenience).
\end{enumerate}
\mn
Obviously, there is such a sequence  with $\alpha^* = 2^{\aleph_{0}}$.

For any $I \in K^\omega_{\tr}$ let $\modG^a_{I}$ be the abelian group
generated by
\[
\{x_{\eta,\rho}:\eta \in P^I_{n},\rho \in {}^{n+1}\omega 
\text{ and } n<\omega\} \cup \{y^n_{\eta,\alpha}:\eta \in
P^I_{\omega},\alpha < \alpha^* \text{ and } n<\omega\}
\]

freely except the equations:
\mn
\begin{enumerate}
\item[${{}}$]  $\primep^{\rho(n)}x_{\eta,\rho} = 0$ for $\eta \in P^I_{n},
\rho \in {}^{n+1}\omega,n < \omega$
\sn
\item[${{}}$]  $(\primep^{f_{\alpha}(n+1) - f_{\alpha}(n)} 
y^{n+1}_{\eta,\alpha}) = y^n_{\eta,\alpha} - x_{\eta \rest n,
f_{\alpha} \rest (n+1)}$ for $\eta \in P^I_{\omega},\alpha < \alpha^*,
n < \omega$
\end{enumerate}
\mn
so actually
\[
y^n_{\eta,\alpha} = \sum\{\primep^{f_{\alpha}(\ell)-f_{\alpha}(n)}
x_{\eta \rest \ell,f_{\alpha} \rest (\ell +1)}:\ell \in
\Dom(f_{\alpha}),\ell \ge n\}
\]

\mn
Recall $\modG^a_I$ is a separable reduced 
abelian $\primep$-group (see \cite{Fu}) and:
\mn
\begin{enumerate}
\item[$\odot$]  in $\modG^a_I,\|-\|_\primep$ is a norm \underline{where} $\|x\|_{\primep} = \inf\{2^{-n}:x$ is divisible by $\primep^n$ in $\modG^a_I\}$.

Now, 

\item[$(*)_0$]   for any $n < \omega,\eta \in P^I_{n}$, and $\rho \in
{}^{n+1}\omega$, there is a projection ${\bf h} = {\bf h}^I_{\eta,\rho}$ of
$\modG^a_{I}$ (i.e. an endomorphism of this group which is the identity 
on its range) defined as follows:
\sn
\begin{enumerate}
\item[${{}}$]  $(\alpha) \quad$ if $m < \omega,\nu \in P^I_{m},\varrho \in
{}^{m+1}\omega$ then
\[
(\nu,\varrho) \ne (\eta,\rho) \Rightarrow {\bf h}(x_{\nu,{\varrho}}) = 0
\]
and 
\[
(\nu,\varrho) = (\eta,\rho) \Rightarrow {\bf h}(x_{\nu,\varrho}) = 
x_{\nu,\varrho}
\]
\sn
\item[${{}}$]  $(\beta) \quad$ if $\nu \in P^I_{\omega},\alpha < 
\alpha^*,m  < \omega$ then:
\[
(\nu \rest n,f_{\alpha} \rest (n+1)) \ne (\eta,\rho) \Rightarrow
{\bf h}(y^m_{\nu,\alpha}) = 0,
\]
\item[${{}}$]  $(\gamma) \quad m > n \, \& \, (\nu \rest n,
f_{\alpha} \rest (n +1)) = (\eta,\rho) \Rightarrow {\bf h}(y^m_{\nu,\alpha}) = 0$,
\sn
\item[${{}}$]  $(\delta) \quad m \le n \, \& \,(\nu \rest n,f_{\alpha} 
\rest (n + 1)) = (\eta,\rho) \Rightarrow {\bf h}(y^m_{\nu,\alpha}) 
= \primep^{f_{\alpha} (n)-f_{\alpha}(m)}x_{\eta,\rho}$.
\end{enumerate}
\end{enumerate}
\mn
Also note:
\mn
\begin{enumerate}
\item[$(*)_1$]  if $I \in K^\omega_{\tr}$ for every $z \in \modG^a_I$ and $m$
there is $z'\in \modG^a_I$ such that
\sn
\begin{enumerate}
\item[$(a)$]  $z - z'$ is divisible by $\numbp^m$ in $\modG^a_I$
\sn
\item[$(b)$]  $z' \in \sum\{\bbZ x_{\eta,\rho}$: for some $n < \omega$
 we have: $\eta \in P^I_n$ and $\rho \in {}^{n+1}\omega\}$.
\end{enumerate}
\end{enumerate}
\mn
\underline{Stage B}:  For proving the claim toward contradiction 
we assume:
\mn
\begin{enumerate}
\item[$\boxplus$]   $I \in K^\omega_{\tr}$ is super$^+$-unembeddable into  $J
\in K^\omega_{\tr}$, (i.e. as in \ref{7.14}) but ${\bf g}$ is an embedding of
$\modG^a_{I}$ into $\modG^a_{J}$.
\end{enumerate}
\mn
Let $\chi$ be large enough and let
$\eta \in P^I_\omega,\langle M_{n},N_{n}:n < \omega \rangle$ and $M$ be as guaranteed in \ref{7.14}, and the following belongs to $M_{0}$:

${\bf g},I,J,\modG^a_I,\modG^a_J$
and the functions $(\eta,\rho) \mapsto x_{\eta,\rho},(\eta,\alpha,n) 
\mapsto y^n_{\eta,\alpha}$ and so $(\eta,\rho) \mapsto {\bf h}^I_{\eta,\rho},
(\eta,\rho) \mapsto {\bf h}^J_{\eta,\rho}$  
and $\{\alpha:\alpha < \alpha^*\}\subseteq M_0$.

Remember $\eta \rest (n+1) \in N_{n}\setminus M_{n}$ (by $(v)^+$
there).  For $\iota < \omega,\rho \in {}^{\iota +2}\omega$ let
\[
k_{\rho} \coloneqq {\bf n}(\numbp^\iota \mathbf{g}(x_{\eta \rest (\iota +1),\rho}),
\modG^a_{J} \cap M_{\ell})
\]

\mn
where for $y \in \modG^a_{J}$ and $\modG \subseteq \modG^a_{J}$ we
let:
\[
{\bf n}(y,\modG) = \sup \{k: \text{ for some } z \in \modG, y - z 
\text{ is divisible in } \modG^a_{J} \text{ by } \numbp^k\}.
\]

\noindent
\underline{Stage C}:

Now, 

\begin{fact}\label{7.15V}
    We have: 
    
    \begin{enumerate}
    \item[$\otimes$]  $k_{\rho} < \omega$ \when \, $\rho \in {}^{\ell +2}
      \omega,\ell < \omega$.
    \end{enumerate}
\end{fact}

\begin{PROOF}{\ref{7.15V}}
    Otherwise, we can let
    \begin{enumerate}
    \item[$(*)_2$] $\modG^a_J \vDash {\bf g}(x_{\eta \rest (\ell +1),\rho}) =
    \sum\limits_{(\nu,\rho) \in u_1} a_{\nu,\rho} x_{\nu,\rho} + 
    \sum\limits_{(\eta ,\beta) \in u_2} b_{\eta,\beta}
    y^{m(\eta,\beta)}_{\eta,\beta}$ with
    \sn
    \begin{enumerate}
    \item[$(a)$]   $u_{1} \subseteq \{(\nu,\rho):\nu \in P^J_{k}$ and $\rho \in
    {}^{k+1}\omega \text{ for some } k < \omega\}$,
    \sn
    \item[$(b)$]   $u_{2} \subseteq \{(\nu,\beta):\nu \in P^J_{\omega}$
     and $\beta < \alpha^*\}$,
    \sn
    \item[$(c)$]  $a_{\nu,\rho},b_{\nu, \beta} \in \bbZ$,
    \sn
    \item[$(d)$]  $m(\nu,\beta) < \omega$,
    \sn
    \item[$(e)$]  $\modG_J \models `` a_\nu,x_{\nu,\rho} \ne 0$,
    and $b_{\nu,\beta} y^\ell_{\nu,\beta} \ne 0$ (in $\modG^a_{J})$
    \sn
    \item[$(f)$]  $u_1,u_2$ are finite.
    \end{enumerate}
    \end{enumerate}

    By the way $\modG^a_J$ was defined we can replace $y^{\bfn(\nu,\beta)}_{\nu,
    \beta}$ by $\numbp^{f_\beta(\bfn(\nu,\beta)+1) - f_\beta(\bfn(\nu,\beta))} y^{\bfn(\nu,
     \beta)+1}_{\nu,\beta} + x_{\nu \rest (\bfn(\nu,\beta)+1),\beta}$ and
     repeat this, hence using clause (e) of $\boxdot$, \wilog \, for some $m_0<
     m_1< \omega$ large enough:
    \mn
    \begin{enumerate}
    \item[$(*)_3$]  $(a) \quad (\eta_1,\beta_1) \in u_2 \, \& \, 
    (\eta_2,\beta_2) \in u_2 \, \& \, \eta_1 \ne \eta_2 \Rightarrow 
    \eta_1\rest m_0 \ne \eta_2 \rest m_0$
    \sn
    \item[${{}}$]  $(b) \quad (\eta,\beta_1) \in u_2 \, \&
    \,(\eta,\beta_2) \in u_2 \, \& \, \beta_1 \ne \beta_2 \Rightarrow  
    f_{\beta_1} \rest m_0 \ne f_{\beta_2} \rest m_0$
    \sn
    \item[${{}}$]   $(c) \quad$ if $(\eta,\beta) \in u_2$ then
    \sn
    \begin{enumerate}
    \item[${{}}$]  $(\alpha) \quad \bfn(\eta,\beta)> m_0$
    \sn
    \item[${{}}$]  $(\beta) \quad$ if $m_0 \le m < \bfn(\eta,\beta)$ 
    then $(\eta \rest m,f_\beta \rest (m+1)) \in u_1$ and 
    
    \hskip30pt $\numbp^{f_\beta(m(\eta,\beta))-f_\beta(m)} 
    a_{\eta \rest m,f_\beta \rest (m+1)} = b_{\eta, \beta}$
    \sn
    \item[${{}}$]  $(\gamma) \quad |a_{\eta \rest m_0,f_\beta
      \rest(m_0+1)}| < m_1$
    \sn
    \item[${{}}$]  $(\delta) \quad b_{\eta,\beta} 
    y^{\bfn(\eta,\beta)}_{\eta,\beta}$ is divisible by $\numbp^{m_1}$ in
    $\modG^a_J$.
    \end{enumerate}
    \sn
    \item[${{}}$]   $(d) \quad$ if ($\nu,\rho) \in u_1$ then $a_{\nu,\rho}
    x_{\nu,\rho}$ is not divisible by $\numbp^{m_1}$ in $\modG^a_J$.
    \end{enumerate}
    
    So, using $(*)_{0} + (*)_1 + (*)_2$ in  $\modG^a_J$ and our assumption
    toward contradiction that $k_\rho = \omega$, necessarily $u_{1} \in
    M_{\ell}$, hence $(\nu,\rho) \in u_1 \Rightarrow a_{\nu,\rho} 
    x_{\nu,\rho} \in M_\ell$.  Repeating this increasing $m_1$ (hence the
    $\bfn(\eta,\beta)$'s) we get also $(\nu,\beta  \in  u_{2} \Rightarrow 
    \bigwedge\limits_{i < \omega} \nu \rest i \in M_\ell$, 
    hence by clause (vii) of \ref{7.14} we have $(\nu,\beta) \in 
    u_2 \Rightarrow \nu \in M_\ell \Rightarrow y^m_{\nu,\beta} \in 
    M_\ell \Rightarrow b_{\nu,\beta} y^m_{\nu,\beta} \in M_\ell$. 
    Together by $(*)_2$ in $\otimes$ we have $\mathbf g(x_{\eta \rest
    (\ell+1),\rho}) \in M_\ell$, but $\mathbf g \in M_0$ is one to one, hence $\eta \rest (\ell+1)\in M_n$, contradiction.  So really $k_\rho <
    \omega$,  i.e. $\otimes$ holds.
\end{PROOF}

\noindent
\underline{Stage D}:

By the choice of  $\langle f_{\alpha}:\alpha < \alpha^* \rangle$ for some
$\alpha < \alpha^*$,  for infinitely many $\ell < \omega$  we have:
$f_{\alpha}(\ell +1) > k_{f_{\alpha} \rest (\ell +1)}$.

Now in $\modG^a_{I}$ for each $m < \omega,y^0_{\eta ,\alpha } -
\sum\limits_{n<m} \numbp^{f_{\alpha}(n)}x_{\eta \rest n,f_{\alpha} 
\rest (n+1)}$ is divisible by $\numbp^{f_{\alpha}(m)}$ hence for each  
$m < \omega$:
\mn
\begin{enumerate}
\item[$(*)_{4}$]  ${\bf g}(y^0_{\eta,\alpha}) - 
\sum\limits_{n<m} \numbp^{f_{\alpha}(n)}{\bf g}(x_{\eta \rest n,
f_{\alpha} \rest (m+1)})$ is divisible by $\numbp^{f_{\alpha}(m)}$ 
in $\modG^a_J$.
\end{enumerate}
\mn
Now ${\bf g}(y^0_{\eta,\alpha})$ has, for some $n(0)$, the form
\[
\sum \{b_{\eta,\alpha} y^{n(0)}_{\eta,\alpha}:\eta \in \seteq_{0},
\alpha \in u_{0}\} + \sum \{a_{\eta,\rho} x_{\eta,\rho}:(\eta,{\rho}) 
\in \seteq_{1}\}
\]

\mn
where:

$u_{0}$ a finite subset of $\alpha^*$

$\seteq_{0}$ a finite subset of $P^J_{\omega}$

$\seteq_{1}$ a finite subset of $\bigcup\limits_{n < \omega} 
(P^J_n \times {}^{n+1}\omega)$

$b_{\eta,\alpha},a_{\eta,\rho} \in \bbZ$.

\noindent
Let
\[
\seteq'_{1} = \{\eta:\text{ for some } \rho \in {}^{\omega >}\omega 
\text{ we have }(\eta,{\rho}) \in \seteq_{1}\}.
\]

\mn
We can find $n(1) < \omega$ such that:
\mn
\begin{enumerate}
\item[$(\alpha)$]   $n(1) > n(0)$
\sn
\item[$(\beta)$]   $\seteq'_{1} \cap (\bigcup\limits_{n < \omega}
  M_{n}) \subseteq M_{n(1)}$
\sn
\item[$(\gamma)$]  $\eta \in \seteq_{0} \, \& \, n \ge n(1) \Rightarrow
\{\eta \rest \ell:\ell < \omega\} \cap N_{n} \subseteq M_{n}$.
\end{enumerate}
\mn
(For $(\gamma)$ use clause (vi) of $(*)$ of \ref{7.14}, i.e. of \ref{7.1}.)

So by the choice of $\alpha$ we can find $\ell$ such that:
\[
n(1) < \ell  < \omega
\]
\[
f_{\alpha} (\ell +1) > k_{f_{\alpha} \rest (\ell +1)}.
\]

\mn
Now by $(*)_{4}$
\[
{\bf n}({\bf g}(y^0_{\eta,\alpha}),\modG^a_{J} \cap N_\ell) \ge
f_{\alpha}(\ell + 1)
\]

\mn
as exemplified by $\sum\limits_{i \le \ell} \numbp^{f_{\alpha}(i)}{\bf g}
(x_{\eta \rest i,f_{\alpha} \rest (i+1)})$.

Now if
\[
{\bf n}({\bf g}(y^0_{\eta,\alpha}),\modG^a_{J} \cap M_{\ell})
\text{ is } \ge f_{\alpha}(\ell + 1)
\]

\mn
then we get (use again $(*)_{4}$)
\[
{\bf n}(\sum\limits_{i\le\ell} \numbp^{f_{\alpha}(i)}{\bf g}
(x_{\eta \rest i,f_{\alpha} \rest (i+1)}),\modG^a_{J} \cap M_{\ell}) 
\text{ is } \ge f_\alpha(\ell+1)
\]

\mn
but for $i < \ell$
\[
{\bf g}(x_{\eta \rest i,f_{\alpha} \rest (i+1)}) \in
M_{i} \text{ (as } \eta \rest i,\mathbf{g} \in M_{i})
\]

\mn
so we get
\[
{\bf n}(\numbp^{f_{\alpha} (\ell)}{\bf g}(x_{\eta \rest \ell,f_{\alpha} \rest
(\ell +1)}),\modG^a_{J} \cap M_{\ell}) \text{ is } \ge f_{\alpha}(\ell+1) >
k_{f_{\alpha} \rest (\ell +1)}.
\]

\mn
But this contradicts the definition of $k_{f_{\alpha} \rest (\ell +1)}$.

So
\[
{\bf n}({\bf g}(y^0_{\eta,\alpha}),\modG^a_{J} \cap M_{\ell}) <
f_{\alpha}(\ell + 1) \le {\bf n}({\bf g}(y^0_{\eta,\alpha}),\modG^a_{J} \cap
N_{\ell}).
\]

\mn
But this contradicts $\ell > n(1)$.    
\end{PROOF}

\begin{remark} \label{7.15A}
    Really, the proof of \ref{7.15} is a kind of simple black box:
    we attach to every $\eta \in P^{I_\zeta}_\omega$, a first order 
    theory $T_{\eta}$ such that:
    
    if $I = I_\zeta,J = \sum\limits_{\xi \ne \zeta} I_{\xi},$ and $\chi^*$ is 
    regular large enough, $x \in {\cH}(\chi^*)$, then we can find $\eta,
    \langle M_{\eta},N_{n}:n < \omega \rangle$ as in \ref{7.1},
    \ref{7.14} and $T_{\eta}$ is the first order theory of 
    \newline
    $(\bigcup\limits_{n}
    M_{n},M_{m},N_{m},\eta \rest m)_{m <\omega}$. 
    We need of course $\kappa \ge 2^{\aleph_0}$.
    \end{remark}
    
    \begin{remark}
    \label{7.15B}
    1)  We could have used in the proof only ($(*)$ of Def.\ref{7.1} and)
    (vii) of \ref{7.14}; but as we have used also $(v)^+$ from \ref{7.14}
    we can add:
    \mn
    \begin{enumerate}
    \item[$(*)$]  $\alpha^* = {\gb} = \min\{|{\cF}|:\cF$ is a set of
      functions from $\omega$ to $\omega$ such that for every  
    $g \in {}^\omega \omega$ for some  $f \in {\cF}$ we have 
    $(\exists^\infty n)[g(n) < f(n)]\}$.
    \end{enumerate}
    \mn
    Hence we can improve \ref{7.15} in two ways:
    \mn
    \begin{enumerate}
    \item[$(\alpha)$]  we can omit (viii) in \ref{7.14} and add $|\modG^a_{I}|
    = |I| + {\gb}$
    \end{enumerate}
    \mn
    \underline{or}
    \mn
    \begin{enumerate}
    \item[$(\beta)$]  we can weaken the ``super$^+$" assumption and omit
    $(v)^+,(viii)$ from \ref{7.14}.
    \end{enumerate}
\end{remark}

\noindent
Of course (assuming less, getting less)
\begin{conclusion}
\label{7.15C}
If $\lambda > \aleph_0$ then there are $2^\lambda$ separable reduced
abelian $\numbp$-groups of cardinality $\lambda$ no one purely embeddable into
another.
\end{conclusion}

\begin{PROOF}{\ref{7.15C}}
    By \ref{7.11}. In detail, by \ref{7.11} there is $\langle I_\alpha: \alpha <
    2^\lambda\rangle$ such that $\alpha \ne \beta$ implies $I_\alpha$ is
    $(\aleph_0, \aleph_0)$-super unembeddable into $I_\beta$. But $\alpha
    \neq \beta$ implies $I_\alpha$ is strongly
    $\varphi_{\tr}$-unembeddable into $I_\alpha$. Now $\modG_{I_\alpha}$ is
    defined in \cite[2.11=Lh5]{Sh:E59}. By
    \cite[2.13=h8]{Sh:E59} we have $\modG_{I_\alpha}$ is a separable
    reduced abelian $\numbp$-group. Lastly,  ``$\modG_{I_\alpha}$ not purely
    embeddable into $\modG_{I_\beta}$'' by \cite[2.14 = Lh11]{Sh:E59}.
\end{PROOF}

\newpage

\bibliographystyle{amsalpha}
\bibliography{shlhetal}

\end{document}